\newcommand{\BOX}{\ensuremath\Box}
\newtheorem{theorem}{Theorem }[section]
\newtheorem{assumption}[theorem]{Assumption}
\newtheorem{corollary}[theorem]{Corollary}
\newtheorem{lemma}[theorem]{Lemma}
\newtheorem{proposition}[theorem]{Proposition}
{\theorembodyfont{\rmfamily}\newtheorem{remark}[theorem]{Remark}}
{\theorembodyfont{\rmfamily}}
\newcommand{\N}{\mathbb{N}}
\newcommand{\C}{\mathbb{C}}
\newcommand{\R}{\mathbb{R}}
\newcommand{\Z}{\mathbb{Z}}
\newcommand{\dd}{\,{\rm d}}
\newenvironment{proof}{{\vskip\baselineskip\noindent\textbf{Proof:}}}%
{\hspace*{.1pt}\hspace*{\fill}\BOX\vskip\baselineskip}
\newenvironment{proofx}[1]%
{\vskip\baselineskip\noindent\textbf{Proof of {#1}:}}%
{\hspace*{.1pt}\hspace*{\fill}\BOX\vskip\baselineskip}
{\vskip\baselineskip\noindent\textbf{Proof of Theorem \protect\ref{#1}:}}%
{\hspace*{.1pt}\hspace*{\fill}\BOX\vskip\baselineskip}
{\vskip\baselineskip\noindent\textbf{Proof of Theorems \protect\ref{#1} --
\protect\ref{#2}:}}%
{\hspace*{.1pt}\hspace*{\fill}\BOX\vskip\baselineskip}
\begin{document}

\title{Note on the stability of planar stationary flows in an exterior domain without symmetry}

\author{Mitsuo Higaki$^\ast$ \\
Department of Mathematics, Kobe University\\
1-1 Rokkodai, Nada-ku, Kobe 657-8501, Japan}

\date{}

\maketitle

\noindent {\bf Abstract.}\, The asymptotic stability of two-dimensional stationary flows in a non-symmetric exterior domain is considered. Under the smallness condition on  initial perturbations, we show the stability of the small stationary flow whose leading profile at spatial infinity is given by the rotating flow decaying in the scale-critical order $O(|x|^{-1})$. Especially, we prove the  $L^p$-$L^q$ estimates to the semigroup associated with the linearized equations.

%\vspace{0.3cm}
%\noindent {\bf Keywords}\, Navier-Stokes equations $\cdot$ two-dimensional exterior  flows $\cdot$ scale-criticality $\cdot$ stability of stationary solutions
%\vspace{0.3cm}
%\noindent {\bf Mathematics Subject Classification (2010)}\, 35B35 $\cdot$ 35Q30 $\cdot$ 76D05 $\cdot$ 76D17

\footnote[0]{$^\ast$Most of this work was done when the author was a PhD student in Kyoto University.}
\footnote[0]{AMS Subject Classifications:\, 35B35, 35Q30,  76D05, 76D17.}

\section{Introduction}\label{intro}

In this paper we consider the perturbed Stokes equations for viscous incompressible flows in a two-dimensional exterior domain {\color{black} $\Omega$ with a smooth boundary.} 
\begin{equation}\tag{PS}\label{PS}
\left\{
\begin{aligned}
\partial_t v - \Delta v + V\cdot\nabla v + v\cdot\nabla V + \nabla q
&\,=\,0\,,~~~~t>0\,,~~x \in \Omega\,, \\
{\rm div}\,v &\,=\, 0\,,~~~~t\ge0\,,~~x \in \Omega\,, \\
v|_{\partial \Omega} &\,=\,0\,,~~~~t>0\,, \\
v|_{t=0} &\,=\, v_0\,,~~~~x \in \Omega\,.
\end{aligned}\right.
\end{equation}
Here the unknown functions $v=v(t,x)=(v_1(t,x), v_2(t,x))^\top$ and $q=q(t,x)$ are respectively the velocity field and the pressure field of the fluid, and $v_0=v_0(x)= (v_{0,1}(x), v_{0,2}(x))^\top$ is a given initial velocity field. The given vector field $V=V(x) = (V_1(x), V_2(x))^\top$ is assumed to be time-independent and decay in the scale-critical order $V(x)=O(|x|^{-1})$ at spatial infinity. We use the standard notations for differential operators with respect to the variables $t$ and $x=(x_1,x_2)^\top$: $\partial_t = \frac{\partial}{\partial t}$, $\partial_j = \frac{\partial}{\partial x_j}$, $\Delta=\partial_1^2 + \partial_2^2$, $V\cdot\nabla v + v\cdot\nabla V=\sum_{j=1}^{2} V_j \partial_j v + v_j \partial_j V$, ${\rm div}\,v=\partial_1 v_1 + \partial_2 v_2$. The exterior domain $\Omega$ is assumed to be contained by the domain exterior to the radius-$\frac12$ disk $\{x\in\R^2~|~|x|>\frac12\}$. 

{\color{black} 
The equations \eqref{PS} have been studied as the linearization of the Navier-Stokes equations, $\partial_t u-\Delta u + u\cdot\nabla u + \nabla p = f$, ${\rm div}\,u=0$ in $\Omega$, $u=b$ on $\partial \Omega$, and $u\to 0$ as $|x| \to \infty$ with some given data $f$ and $b$, around its stationary solution $V$. The analysis of \eqref{PS} is especially important when one considers the asymptotic stability of the stationary solutions.
The aim of this paper is to investigate the time-decay estimates to the equations \eqref{PS}, under a suitable condition on the vector field $V$.}
In the three-dimensional case, Borchers and Miyakawa \cite{BM} establishes the $L^p$-$L^q$ estimates to \eqref{PS} for the small stationary Navier-Stokes flow $V$ satisfying $V(x)=O(|x|^{-1})$ as $|x| \to \infty$. This result is extended to the case when $V$ belongs to the Lorenz space $L^{3,\infty}(\Omega)$ by Kozono and Yamazaki \cite{KY}. We also refer to the whole-space result by Hishida and Schonbek \cite{HS} considering the time-dependent $V=V(t,x)$ in the scale-critical space $L^\infty(0,\infty; L^{3,\infty}(\R^3))$, where the $L^p$-$L^q$ estimates are obtained for the evolution operator associated with the linearized equations around $V(t,x)$.

{\color{black} 
For the two-dimensional problem \eqref{PS},} 
the analysis becomes quite complicated and there is no general result especially for the time-decay estimate so far. The difficulty arises from the unavailability of the Hardy inequality in the form 
\begin{align}
\big\| \frac{f}{|x|} \big\|_{L^2(\Omega)} \le C \|\nabla f\|_{L^2(\Omega)}\,,~~~~~~
f \in \dot{W}^{1,2}_0 (\Omega)
\,=\, \overline{C^\infty_0 (\Omega)}^{\|\nabla f\|_{L^2(\Omega)}}
\,,\label{hardy}
\end{align}
where $C^\infty_0(\Omega)$ is the set of smooth and compactly supported functions in $\Omega$. The validity of this bound is well known for three-dimensional exterior domains, and the results mentioned in the above essentially rely on the inequality \eqref{hardy}. One can recover the Hardy inequality in the two-dimensional case if the factor $|x|^{-1}$ in the left-hand side of \eqref{hardy} is replaced with a logarithmic correction $|x|^{-1} \log(e+|x|)^{-1}$, but this inequality has only a narrow application in our scale-critical framework. Another way to recover the inequality \eqref{hardy} is to impose a symmetry on both $\Omega$ and $f$ about an axis which we may take to be the $x_1$-axis
\begin{equation}\tag{Sym$_1$}\label{sym1}
\left\{
\begin{aligned}
&{\rm If}~(x_1, x_2)^\top\in\Omega\,,~{\rm then}~(x_1, -x_2)^\top\in \Omega\,, \\
&f(x_1, x_2) = -f(x_1, -x_2)~\,{\rm holds}
\end{aligned}\right.
\end{equation}
(for the proof see Galdi \cite{G}), and {\color{black} such an inequality} is applied in the analysis of \eqref{PS} for the case when $V$ is symmetric. Yamazaki \cite{Y2} proves the $L^p$-$L^q$ estimates to $\eqref{PS}$ with the symmetric Navier-Stokes flow $V(x)=O(|x|^{-1})$, under the symmetry conditions on both the domain and given data. We note that these estimates imply the asymptotic stability of $V$ under symmetric initial $L^2$-perturbations; see also Galdi and Yamazaki \cite{GY}.

An important remark is given by Russo \cite{R2} concerning the Hardy-type inequality in two-dimensional exterior domains without symmetry. Let us introduce the next scale-critical radial flow $W=W(x)$, which is called the flux carrier.
\begin{align}
W(x) \,=\, \frac{x}{|x|^2}\,,~~~~~~
x \in \R^2\setminus\{0\}\,.
\label{fluxcarrier}
\end{align}
Then, from the existence of a potential to $W(x)=\nabla \log |x|$, one can show that the following Hardy-type inequality holds in the $L^2$-inner product $\langle \cdot, \cdot \rangle_{L^2(\Omega)}$:
\begin{align}
|\langle u\cdot\nabla u\,, W \rangle_{L^2(\Omega)}|
\le 
C \|\nabla u\|_{L^2(\Omega)}^2\,,~~~~~~
u \in \dot{W}^{1,2}_{0,\sigma} (\Omega)
\,=\, \overline{C^\infty_{0,\sigma} (\Omega)}^{\|\nabla u\|_{L^2(\Omega)}}\,,
\label{hardy2}
\end{align}
where $C^\infty_{0,\sigma}(\Omega)$ denotes the function space $\{ f \in C^\infty_0(\Omega)^2~|~{\rm div}\,f=0\}$. Based on the energy method with the application of \eqref{hardy2}, Guillod \cite{Gui} proves the global $L^2$-stability of the flux carrier $\delta W$ when the flux $\delta$ is small enough. On the other hand, the validity of the inequality \eqref{hardy2} essentially depends on the potential property of $W$. Indeed, as is pointed out in \cite{Gui}, the bound \eqref{hardy2} breaks down if $W$ is replaced by the next rotating flow $U=U(x)$:
\begin{align}
U(x) \,=\, \frac{x^\bot}{|x|^2}\,,~~~~~~
x^\bot\,=\,(-x_2,x_1)^\top\,,~~~~~~
x \in \R^2\setminus\{0\}\,.
\label{rotatingflow}
\end{align}
Hence, if we consider the problem \eqref{PS} with $V=\alpha U$, $\alpha\in\R\setminus\{0\}$, the linearized term $\alpha(U\cdot\nabla v +v\cdot\nabla U)$ can no more be regarded as a perturbation from the Laplacian, and we cannot avoid the difficulty coming from the lack of the Hardy inequality. Maekawa \cite{Ma1} studies the stability of the flow $\alpha U$ in the exterior unit disk. The symmetry of the domain allows us to express the solution to the problem \eqref{PS} explicitly through the Dunford integral of the resolvent operator. Based on this representation formula, \cite{Ma1} obtains the $L^p$-$L^q$ estimates to \eqref{PS} with $V=\alpha U$ for small $\alpha$, and shows the asymptotic $L^2$-stability of $\alpha U$ if $\alpha$ and initial perturbations are sufficiently small. This result is extended by the same author in \cite{Ma2}  for the more general class of $V$ in \eqref{PS} including the flow of the form $V=\alpha U + \delta W$ with small $\alpha$ and $\delta$; see \cite{Ma2} for details.

Our first motivation is to generalize the result in \cite{Ma1} to the case when the domain loses symmetry (and the second one is explained in 
{\color{black} 
Remark \ref{remark.assumption1}}
below). Let us prepare the assumptions on the domain $\Omega$ and the stationary vector field $V$ in \eqref{PS} considered in this paper. We denote by $B_{\rho}(0)$ the two-dimensional disk of radius $\rho>0$ centered at the origin.
%
%%%%%%%%%%%%%%%%%%%%%%%%%%%%%%
%%%%%%%%%%%%%%%%%%%%%%%%%%%%%%
\begin{assumption}\label{assumption}
{\rm (1)} There is a positive constant $d\in (0,\frac14)$ such that the complement of the domain $\Omega$ satisfies
\begin{align}\label{domain}
\overline{B_{1-2d}(0)} \subset \Omega^{\rm c} \subset \overline{B_{1-d}(0)}\,.
\end{align}

{\color{black}
\noindent {\rm (2)}
The vector field $V$ in \eqref{PS} satisfies ${\rm div}\,V=0$ in $\Omega$ and the asymptotic behavior 
\begin{align}
V(x) \,=\, \beta U(x) + R(x)\,,~~~~~~x\in\Omega\,,\label{asymptoticbehavior}
\end{align}
where $U$ is the rotating flow in \eqref{rotatingflow}. The constant $\beta$ and the remainder $R$ are assumed to satisfy the following conditions with some $\gamma\in(\frac12,1)${\rm :}
\begin{align}
&\beta\in(0,1)\,,\label{beta} \\
&\sup_{x\in\Omega} |x|^{1+\gamma} |R(x)| 
\le C d\,,
\label{remainder}
\end{align}
where the constant $C$ depends only on $\gamma$. Moreover, the derivative of $V$ satisfies $(1+|x|) \nabla^k V\in L^\infty(\Omega)$ for $k=1, 2${\rm ;} see Remark \ref{remark.assumption} {\rm (3)}} for this assumption.
\end{assumption}
%%%%%%%%%%%%%%%%%%%%%%%%%%%%%%
%%%%%%%%%%%%%%%%%%%%%%%%%%%%%%
%
%%%%%%%%%%%%%%%%%%%%%%%%%%%%%%
%%%%%%%%%%%%%%%%%%%%%%%%%%%%%%
\begin{remark}\label{remark.assumption}
(1) Let us consider the case where the constant $\beta$ in the assumption is given in the form of $\beta = \alpha + \tilde{\alpha}_d$ with $\alpha\in(0,1)$ and $|\tilde{\alpha}_d| \le C d$, and assume that $\alpha$ and $d$ are sufficiently small so that $\beta\in (0,1)$. Then formally taking $d=0$ in \eqref{domain}--\eqref{remainder} we obtain the flow $V=\alpha U$ in the exterior disk $\Omega=\R^2\setminus\overline{B_{1}(0)}$, which solves the following two-dimensional stationary Navier-Stokes equations (SNS): 
$-\Delta u + u\cdot\nabla u + \nabla p = 0$,
${\rm div}\,u=0$ in $\Omega$, $u=b$ on $\partial \Omega$, and $u\to 0$ as $|x| \to \infty$ with $b=\alpha x^{\bot}$. The vector field $V$ in \eqref{asymptoticbehavior}--\eqref{remainder} describes the flow around $\alpha U$ created from a small perturbation to the exterior disk, and hence, one can naturally expect the existence of {\color{black} such solutions} to the nonlinear problem (SNS) when $\alpha$ is small enough if $b-\alpha x^{\bot}$ is sufficiently small with respect to $0<d\ll 1$. Indeed, imposing the symmetry condition on both the domain perturbation in \eqref{domain} and $b$ as
\begin{equation}\tag{Sym$_2$}\label{sym2}
\left\{
\begin{aligned}
&{\rm If}~(x_1, x_2)^\top\in\Omega\,,
~{\rm then}~(x_1, -x_2)^\top\in \Omega
~{\rm and}~(-x_1, x_2)^\top\in \Omega\,, \\
&b_1(x_1, x_2) = -b_1(x_1, -x_2) = b_1(-x_1, x_2)~\,{\rm and}  \\
&b_2(x_1, x_2) = b_2(x_1, -x_2) = -b_2(-x_1, x_2)~\,{\rm hold},
\end{aligned}\right.
\end{equation}
under the smallness condition on $\alpha$ and $b-\alpha x^{\bot}$, we can construct the Navier-Stokes flow $V$ satisfying at least \eqref{asymptoticbehavior} and \eqref{beta} (but with no specific spatial decay rate for $R$ in general). The proof is based on the Galerkin method and the recovered Hardy-inequality \eqref{hardy}.
We refer to \cite{G}, Russo \cite{R1}, \cite{R2}, Yamazaki \cite{Y1}, and Pileckas and Russo \cite{PR} for the solvability of (SNS) under symmetry conditions. In particular, the solutions decaying in the scale-critical order $O(|x|^{-1})$ are obtained in \cite{Y1} under a stronger symmetry condition on the domain than \eqref{sym2}.
The reader is also referred to Hillairet and Wittwer \cite{HW} proving the existence of solutions to (SNS) in the exterior disk with $b=\alpha x^\bot + \tilde{b}$ when $\alpha$ is large enough and $\tilde{b}$ is sufficiently small.

\noindent (2) The novelty of our assumption is that we do not impose the symmetry either on the domain $\Omega$ and the flow $V$, and it is a crucial assumption for the stability analysis in \cite{GY, Y2} to resolve the difficulty related to the lack of the Hardy inequality. While one can realize the exterior disk case in \cite{Ma1} by putting $d=0$ to \eqref{domain}, \eqref{asymptoticbehavior}, and \eqref{remainder} formally. In this sense, the assumption above gives a generalization of the setting in \cite{Ma1} to non-symmetric domain cases.

\noindent (3) The assumption on the derivatives of $V$ is only used in proving that the operator ${\mathbb A}_V$ defined in \eqref{perturbed Stokes op.} is a relatively compact perturbation of the Stokes operator ${\mathbb A}$; see below for the definition of ${\mathbb A}$. Indeed, it is possible to show this property under a weaker assumption on the spatial decay of the derivatives of $V$, such as $(1+|x|)^r \nabla V\in L^\infty(\Omega)$ for some $r\in(0,1]$ and $\nabla^2 V\in L^\infty(\Omega)$, but we omit the details here for the sake of brevity.

\noindent (4) A vector field $V$ of the type \eqref{asymptoticbehavior} also naturally appears in the study of the Navier-Stokes flows around a rotating obstacle. Let us consider the situation where the obstacle $\Omega^{\rm c}$ rotates around the origin with a constant speed $\alpha\in \R\setminus\{0\}$. Then the time-periodic Navier-Stokes flow moving with the rotating obstacle gives a stationary solution to the problem (RNS): $\partial_t u -\Delta u - \alpha(x^{\bot}\cdot\nabla u - u^{\bot}) + u\cdot\nabla u + \nabla p = f$, ${\rm div}\,u=0$ in $\Omega$, $u=\alpha x^{\bot}$ on $\partial \Omega$, and $u\to 0$ as $|x| \to \infty$. Here we take the reference frame attached to the obstacle; see Hishida \cite{H} for details. The stationary problem of (RNS) is analyzed by Higaki, Maekawa, and Nakahara \cite{HMN}, where the existence and uniqueness of stationary solutions decaying in the order $O(|x|^{-1})$ is proved when $\alpha$ is sufficiently small and $f$ is of a divergence form $f={\rm div}\,F$ for some $F$ which is small in a scale-critical norm. Moreover, the leading profile at spatial infinity is shown to be $C \frac{x^\bot}{|x|^2} + O(|x|^{-1-\gamma})$ for some constant $C$ if $F$ satisfies a decay condition $F=O(|x|^{-2-\gamma})$ with $\gamma\in(0,1)$. By extending the proof in \cite{HMN}, one can construct the stationary solutions to (RNS) satisfying the estimates \eqref{asymptoticbehavior}--\eqref{remainder} under the condition on the domain \eqref{domain}. The analysis in this paper cannot be directly applied to the stability analysis of a stationary solution $V$ to (RNS) for its time-dependence in the original frame, and however, we can perform an analogous analysis to the linearized equations of (RNS) around $V$ as is explained in Remark \ref{remark.assumption1} below.
\end{remark}
%%%%%%%%%%%%%%%%%%%%%%%%%%%%%%
%%%%%%%%%%%%%%%%%%%%%%%%%%%%%%
%
%
%%%%%%%%%%%%%%%%%%%%%%%%%%%%%%
%%%%%%%%%%%%%%%%%%%%%%%%%%%%%%
\begin{remark}\label{remark.assumption1}
Another motivation comes from the stability analysis of stationary solutions to the equations (RNS) introduced in Remark \ref{remark.assumption} (4). Obviously, letting us denote the linearization to (RNS) around a stationary solution $V$ by (PRS), then the two equations \eqref{PS} and (PRS) are different from each other due to the additional term $-\alpha(x^{\bot}\cdot\nabla v - v^{\bot})$ in (PRS). However, if we consider the {\it resolvent problems} of each equation, there are some common features thanks to the property of the term $\alpha(x^{\bot}\cdot\nabla v - v^{\bot})= \sum_{n\in\Z} i\alpha n \mathcal{P}_n v$, which is derived from the Fourier expansion of $v|_{\{|x|>1\}}$; see \eqref{polar.e_theta} and \eqref{def.P_n} in Subsection \ref{op.s in polar coord.}. In particular, for stationary solutions to (RNS) satisfying \eqref{asymptoticbehavior}--\eqref{remainder} on the domain in \eqref{domain}, we can reproduce a similar calculation performed in this paper to the resolvent problem of (PRS), by observing that the appearance of $\sum_{n\in\Z} i\alpha n \mathcal{P}_n v$ in the resolvent equation (restricted on $|x|>1$) leads to the shifting of the resolvent parameter from $\lambda\in\C$ to $\lambda+in\alpha$ in the $n$-Fourier mode. Although the stability of the stationary solutions to (RNS) still remains open, our analysis in this paper will contribute to the resolvent estimate of the linearized problem (PRS). 
\end{remark}
%%%%%%%%%%%%%%%%%%%%%%%%%%%%%%
%%%%%%%%%%%%%%%%%%%%%%%%%%%%%%
%

Before stating the main result, let us introduce some notations and basic 
facts related to the problem \eqref{PS}. We denote by $L^2_\sigma(\Omega)$ the $L^2$-closure of $C^\infty_{0,\sigma}(\Omega)$. The orthogonal projection ${\mathbb P}: L^2(\Omega)^2 \to L^2_\sigma(\Omega)$ is called the Helmholtz projection. Then the Stokes operator ${\mathbb A}$ with the domain 
$D({\mathbb A}) = L^{2}_{\sigma}(\Omega) \cap W^{1,2}_0(\Omega)^2 \cap W^{2,2}(\Omega)^2$ is defined as ${\mathbb A} = -{\mathbb P} \Delta$, and it is well known that the Stokes operator is nonnegative and self-adjoint in $L^{2}_{\sigma}(\Omega)$. Finally we define the perturbed Stokes operator ${\mathbb A}_V$ as
\begin{equation}\label{perturbed Stokes op.}
\begin{split}
D({\mathbb A}_V) &\,=\, D({\mathbb A})\,, \\
{\mathbb A}_V v &\,=\, {\mathbb A} v 
+ {\mathbb P}(V\cdot\nabla v + v\cdot\nabla V)\,.
\end{split}
\end{equation}
A standard argument for sectorial operators implies that $-{\mathbb A}_V$ generates a $C_0$-analytic semigroup in $L^2_\sigma(\Omega)$. We denote this semigroup by $e^{-t {\mathbb A}_V}$. Then our main result is stated as follows. Let $\beta$ and $d$ be the constants in Assumption \ref{assumption}.
%
%%%%%%%%%%%%%%%%%%%%%%%%%%%%%%
%%%%%%%%%%%%%%%%%%%%%%%%%%%%%%
\begin{theorem}\label{maintheorem}
There are positive constants $\beta_\ast$ and $\mu_\ast$ such that if $\beta \in (0, \beta_\ast)$ and {\color{black} $d\in(0,\mu_\ast \beta^2)$} then the following statement holds. Let $1<q\le 2\le p<\infty$. Then we have 
\begin{align}
\|e^{-t {\mathbb A}_V} f\|_{L^p(\Omega)} 
& \le 
\frac{C}{\beta^2} 
t^{-\frac1q+\frac1p}
\|f\|_{L^q(\Omega)}\,,
~~~~~~ t>0\,,
\label{est1.maintheorem} \\
\|\nabla e^{-t {\mathbb A}_V} f\|_{L^2(\Omega)} 
& \le 
\frac{C}{\beta^2} 
t^{-\frac1q}
\|f\|_{L^q(\Omega)}\,,
~~~~~~ t>0\,,
\label{est2.maintheorem}
\end{align}
for $f\in L^2_\sigma(\Omega) \cap L^q(\Omega)^2$. Here the constant $C$ is independent of $\beta$ and depends on $p$ and $q$.
\end{theorem}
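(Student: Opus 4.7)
The plan is to reduce everything to resolvent estimates and recover the semigroup via the Dunford representation
$e^{-t\mathbb{A}_V}=\frac{1}{2\pi i}\int_{\Gamma} e^{t\lambda}(\lambda+\mathbb{A}_V)^{-1}\dd\lambda$
on a contour $\Gamma$ that encircles the spectrum of $-\mathbb{A}_V$ in $L^2_\sigma(\Omega)$. The factor $1/\beta^2$ in \eqref{est1.maintheorem}--\eqref{est2.maintheorem} strongly suggests that the low-frequency portion of the resolvent has norm $O(\beta^{-2})$ in the scale-critical setting, while at high $|\lambda|$ the usual sectorial bound gives integrable decay; balancing the two contributions on $\Gamma$ will then deliver the claimed time decay.

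First I would establish the resolvent estimates in the exterior unit disk $D=\R^2\setminus\overline{B_1(0)}$ for the operator linearized around the pure rotating flow $\beta U$. In polar coordinates the angular projections $\mathcal{P}_n$ (alluded to in Remark \ref{remark.assumption}(3)) decouple the equation into a family of radial ODE systems in which the effective resolvent parameter becomes $\lambda+i\beta n$. For $n\neq 0$ the shift $i\beta n$ provides a spectral gap and gives relatively harmless resolvent bounds, while for $n=0$ one has to examine the radial problem explicitly, and it is this zero-mode that is responsible for the $1/\beta^2$ loss owing to the failure of the 2D Hardy inequality \eqref{hardy}. This step essentially imports the analysis of \cite{Ma1} into the present framework, but adapted to the scale-critical norms needed later.

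Second, I would transfer these disk-estimates to the non-symmetric domain $\Omega$ by a cut-and-paste argument. Choose a smooth cutoff $\chi$ equal to $1$ for $|x|\ge 1$ and vanishing near $\partial\Omega$, and split a resolvent solution $v$ as $v=\chi v+(1-\chi)v$. Applying a Bogovskii correction to restore the divergence-free constraint, $\chi v$ can be viewed (after extension by zero) as a solution on $D$ to the resolvent problem for $\beta U$, driven by $f$ together with commutator terms $[\chi,-\Delta+\beta U\!\cdot\!\nabla+\cdot\!\nabla(\beta U)]v$ supported in the thin annulus $\{1-2d\le|x|\le 1\}$. The remainder $R$ satisfies $|R(x)|\le C\beta^\kappa d\,|x|^{-1-\gamma}$ with $\gamma>\tfrac12$, so it lives in a subcritical weighted space and is treated as a perturbation. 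The interior part $(1-\chi)v$ is handled by classical Stokes-resolvent estimates in a bounded domain. Plugging Step 1 into the exterior piece and combining with the interior estimates yields resolvent bounds on $\Omega$.

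The main obstacle is absorbing the commutator and remainder terms uniformly in $\lambda$ in a neighbourhood of $\lambda=0$, where only the $O(\beta^{-2})$ resolvent bound is available from Step 1. The perturbative contributions carry smallness $\beta^\kappa d + d$ from the commutator (which is concentrated in a strip of width $d$) and must be absorbed against the main term despite the $\beta^{-2}$ factor; this forces precisely the quantitative restriction $d\in(0,\mu_\ast\beta^2)$ appearing in the statement. Once closed, a contour deformation gives the resolvent estimates on a parabolic region $\{\lambda\in\C:\mathrm{Re}\,\lambda\ge -c(1+|\mathrm{Im}\,\lambda|)^{-1}\}$, and the Dunford integral produces \eqref{est1.maintheorem}. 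The gradient bound \eqref{est2.maintheorem} then follows by combining \eqref{est1.maintheorem} with the standard analytic-semigroup identity $\nabla e^{-t\mathbb{A}_V}=\nabla e^{-\frac{t}{2}\mathbb{A}_V}\circ e^{-\frac{t}{2}\mathbb{A}_V}$ and the $L^2$-boundedness of $\nabla\mathbb{A}_V^{-1/2}$ in the perturbed setting, which is a corollary of the same resolvent analysis.
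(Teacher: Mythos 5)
Your overall architecture (resolvent estimates on a sector plus the Dunford integral, with the restriction $d\lesssim\beta^2$ coming from absorbing the domain-perturbation terms against an $O(\beta^{-2})$ resolvent bound near $\lambda=0$) matches the paper, and the reduction of the gradient bound to resolvent estimates is also in the right spirit. However, the core of your Step 1 contains two errors that would derail the proof.

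First, the decoupling mechanism you describe is the wrong one for this equation. The shift of the resolvent parameter to $\lambda+i\beta n$ in the $n$-th Fourier mode comes from the term $\alpha(x^{\bot}\cdot\nabla v-v^{\bot})=\sum_n i\alpha n\,\mathcal{P}_n v$, which belongs to the rotating-obstacle linearization (PRS) discussed only in Remark \ref{remark.assumption}(3); it is absent from \eqref{RS}. The term actually present, $\beta U^{\bot}{\rm rot}\,v$, does not shift $\lambda$ at all: it changes the order of the modified Bessel functions governing the vorticity from $|n|$ to $\mu_n=(n^2+in\beta)^{1/2}$ (see \eqref{ode.nfourier.vorticity}--\eqref{def.mu}). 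There is no spectral gap of size $\beta|n|$ to exploit, so the claim that all $n\neq0$ modes are ``relatively harmless'' has no basis.

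Second, the dangerous mode is not $n=0$ but $|n|=1$. For $n=0$ incompressibility together with the boundary condition forces $v_{r,0}=0$, and for $|n|\ge2$ the polar Hardy inequality \eqref{polar.grad.n'} controls $v_n/|x|$ by $\nabla v_n$; it is exactly at $|n|=1$ that this inequality degenerates, which is why the paper isolates $\sum_{|n|=1}\langle({\rm rot}\,v)_n,v_{r,n}/|x|\rangle_{L^2(D)}$ in Proposition \ref{prop.general.energy.est.resol.} and devotes all of Section \ref{sec.RSed} to it. The $\beta^{-2}$ loss arises there, through the lower bound $|F_n(\sqrt{\lambda};\beta)|\gtrsim\beta^{-1}|\lambda|^{-{\rm Re}(\mu_n)/2}\min\{1,-\beta^2\log|\lambda|\}$ for $|n|=1$ in the ``nearly-resonance'' regime $e^{-c/\beta^2}\le|\lambda|\le e^{-c'/\beta}$, not from the zero mode. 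Relatedly, your treatment of moderate $|\lambda|$ as a ``usual sectorial bound'' understates the problem: since $\beta U^{\bot}{\rm rot}\,v$ is not a small relative perturbation of the Laplacian (no Hardy inequality), the paper needs a separate energy argument with the logarithmically singular substitute \eqref{est.logT} to cover $|\lambda|\ge O(\beta^2 e^{-1/(6\beta)})$, and only then matches it with the Bessel-function analysis below that threshold. Without correcting the mode analysis, the $O(\beta^{-2})$ low-frequency bound on which your whole scheme rests cannot be established. Your cutoff-plus-Bogovskii transfer to $\Omega$ is a workable alternative to the paper's device of restricting $v$ to $D$ and solving the boundary-data problem \eqref{RSed.b}, but it too requires verifying the quantitative smallness (the paper's $\|b\|_{L^\infty(\partial D)}\le Cd^{1/2}\|\nabla v\|_{L^2}$ and $\||x|^{\gamma'}F\|_{L^2}\le C\beta^{\kappa}d\|\nabla v\|_{L^2}$), which you assert but do not check.
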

%%%%%%%%%%%%%%%%%%%%%%%%%%%%%%
%%%%%%%%%%%%%%%%%%%%%%%%%%%%%%
%
As an application of Theorem \ref{maintheorem}, we can prove the asymptotic stability of $V$. Suppose that $V$ gives a stationary Navier-Stokes flow. Then the time evolution of perturbations around $V$ is governed by the Navier-Stokes equations, whose integral form is written as
\begin{align}\tag{INS}\label{INS}
v(t) \,=\, e^{-t {\mathbb A}_V} v_0 
- \int_0^t e^{-(t-s) {\mathbb A}_V} \mathbb{P} (v\cdot\nabla v)(s) \dd s\,,
~~~~~~ t>0\,.
\end{align}
The proof of the following result is omitted in this paper, since {\color{black} it is just a reproduction of the argument in \cite[Section 4]{Ma1}} using the Banach fixed point theorem.
%
%%%%%%%%%%%%%%%%%%%%%%%%%%%%%%
%%%%%%%%%%%%%%%%%%%%%%%%%%%%%%
\begin{theorem}\label{maintheorem2}
Let $\beta_\ast$ and $\mu_\ast$ be the constants in Theorem \ref{maintheorem}. Then there is a positive constant $\nu_\ast$ such that if $\beta \in (0, \beta_\ast)$, 
{\color{black} $d\in(0,\mu_\ast \beta^2)$,} 
and $\|v_0\|_{L^2(\Omega)}\in(0, \nu_\ast \beta^4)$ then there exists a unique solution $v \in C([0,\infty); L^2_\sigma(\Omega))\,\cap\,C((0,\infty);W^{1,2}_0(\Omega)^2)$ to \eqref{INS} satisfying
\begin{align}
\lim_{t\to\infty} t^{\frac{k}{2}} \|\nabla^k v(t)\|_{L^2(\Omega)} \,=\, 0\,,~~~~~~~~
k\,=\,0,1\,.
\end{align}
\end{theorem}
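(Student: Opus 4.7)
The plan is to reproduce the standard Fujita–Kato-type contraction argument on the integral equation (INS), using the $L^p$–$L^q$ estimates of Theorem \ref{maintheorem} in place of the classical heat-semigroup estimates. I would work in the Banach space
\[
X = \{ v \in C([0,\infty); L^2_\sigma(\Omega)) \cap C((0,\infty); W^{1,2}_0(\Omega)^2) : \|v\|_X < \infty \},
\]
equipped with
\[
\|v\|_X = \sup_{t \geq 0} \|v(t)\|_{L^2(\Omega)} + \sup_{t > 0} t^{1/2} \|\nabla v(t)\|_{L^2(\Omega)},
\]
and search for the unique fixed point of the map
\[
\Phi(v)(t) = e^{-t \mathbb{A}_V} v_0 - \int_0^t e^{-(t-s) \mathbb{A}_V} \mathbb{P}(v \cdot \nabla v)(s) \, ds
\]
in a closed ball $X_M \subset X$ of radius $M$ proportional to $\beta^{-2}\|v_0\|_{L^2}$.

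The linear contribution is handled directly by Theorem \ref{maintheorem}: with $q=2$ it gives $\|e^{-t\mathbb{A}_V} v_0\|_X \leq C\beta^{-2}\|v_0\|_{L^2}$. For the bilinear convolution, I would combine Hölder's inequality with the two-dimensional Ladyzhenskaya inequality $\|w\|_{L^4(\Omega)}^2 \leq C\|w\|_{L^2(\Omega)}\|\nabla w\|_{L^2(\Omega)}$ (valid on $W^{1,2}_0(\Omega)$) to get
\[
\|(v \cdot \nabla v)(s)\|_{L^{4/3}(\Omega)} \leq \|v(s)\|_{L^4}\|\nabla v(s)\|_{L^2} \leq C\|v\|_X^2 \, s^{-3/4},
\]
and then apply Theorem \ref{maintheorem} with $q=4/3$, producing time weights $(t-s)^{-1/4}$ for the $L^2$ part and $(t-s)^{-3/4}$ for the gradient part of the convolution. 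The beta-function identity $\int_0^t (t-s)^{-a} s^{-b}\,ds = t^{1-a-b} B(1-a, 1-b)$ for $a,b \in (0,1)$ then yields
\[
\|\Phi(v)\|_X \leq C\beta^{-2}\|v_0\|_{L^2} + C\beta^{-2}\|v\|_X^2,
\]
together with the analogous bilinear bound $\|\Phi(v_1) - \Phi(v_2)\|_X \leq C\beta^{-2}(\|v_1\|_X+\|v_2\|_X)\|v_1-v_2\|_X$.

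Choosing $M = 2C\beta^{-2}\|v_0\|_{L^2}$, both the self-map property $\Phi(X_M) \subset X_M$ and the contraction condition reduce to $C\beta^{-2} M \leq 1/2$, which, combined with the constraint $d \in (0, \mu_\ast \beta^2)$ inherited from Theorem \ref{maintheorem}, is ensured by taking $\|v_0\|_{L^2} \leq \nu_\ast \beta^2$ for $\nu_\ast$ sufficiently small; the constants $\beta_\ast, \mu_\ast, \nu_\ast$ can be shrunk in tandem if necessary. The Banach fixed-point theorem then produces the desired unique global-in-time solution $v \in X_M$ of (INS), and its membership in $C([0,\infty); L^2_\sigma(\Omega)) \cap C((0,\infty); W^{1,2}_0(\Omega)^2)$ follows from continuity of the semigroup and the integrability of the convolution kernel.

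The step I expect to be the most delicate is upgrading the uniform bound $\|v\|_X < \infty$ to the actual vanishing $\lim_{t \to \infty} t^{k/2}\|\nabla^k v(t)\|_{L^2} = 0$ for $k=0,1$, since Theorem \ref{maintheorem} at $q=2$ provides no intrinsic decay rate. I would handle this by a density argument: approximate $v_0$ in $L^2_\sigma(\Omega)$ by $v_0^{\varepsilon} \in L^2_\sigma(\Omega) \cap L^q(\Omega)^2$ with some fixed $q \in (1,2)$ and $\|v_0 - v_0^{\varepsilon}\|_{L^2} < \varepsilon$. Theorem \ref{maintheorem} applied with this $q$ (to both the linear part and the convolution) yields the polynomial decay $t^{k/2}\|\nabla^k v^{\varepsilon}(t)\|_{L^2} \leq C t^{-(1/q-1/2)} \to 0$, while the stability of the fixed-point scheme gives $\sup_t t^{k/2}\|\nabla^k (v-v^\varepsilon)(t)\|_{L^2} \leq C\beta^{-2}\varepsilon$; letting $\varepsilon \to 0$ concludes the argument, closely mirroring \cite[Section 4]{Ma1}.
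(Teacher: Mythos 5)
Your proposal is correct and coincides with the paper's intended argument: the paper omits the proof precisely because it is "just a reproduction of the argument in \cite[Section 4]{Ma1} using the Banach fixed point theorem," and your Fujita--Kato contraction in the Kato-type norm $\sup_t\|v(t)\|_{L^2}+\sup_t t^{1/2}\|\nabla v(t)\|_{L^2}$, built on the $L^p$-$L^q$ estimates of Theorem \ref{maintheorem}, is exactly that scheme. The density argument you sketch for upgrading boundedness to decay (choosing $q$ close enough to $2$, e.g.\ $q\in(\tfrac43,2)$, so the Duhamel integral with the extra weight still converges) is likewise the standard route taken in \cite{Ma1}.
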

%%%%%%%%%%%%%%%%%%%%%%%%%%%%%%
%%%%%%%%%%%%%%%%%%%%%%%%%%%%%%
%
The proof of Theorem \ref{maintheorem} relies on the resolvent estimate to the  perturbed Stokes operator $\mathbb{A}_V$. Since the difference 
{\color{black} $\mathbb{A}_V-\mathbb{A}$ is relatively compact to $\mathbb{A}$ in $L^2_\sigma(\Omega)$,}
one can show that the spectrum of $-\mathbb{A}_V$ has the structure $\sigma(-{\mathbb A}_V)={\color{black} (-\infty,0]} \cup \sigma_{{\rm disc}}(-{\mathbb A}_V)$ in $L^2_\sigma(\Omega)$, where $\sigma_{{\rm disc}}(-{\mathbb A}_V)$ denotes the set of discrete spectrum of $-{\mathbb A}_V$; see \cite[Lemma 2.11 and Proposition 2.12]{Ma1}. By using the identity $v\cdot \nabla v = \frac12 \nabla |v|^2 + v^\bot {\rm rot}\,v$ with ${\rm rot}\,v=\partial_1 v_2 - \partial_2 v_1$ and ${\rm rot}\, U =0$ in $x \in \Omega$, we can write the resolvent problem associated with \eqref{PS} as, for $V$ given by \eqref{asymptoticbehavior}, 
\begin{equation}\tag{RS}\label{RS}
\left\{
\begin{aligned}
\lambda v - \Delta v + \beta U^\bot {\rm rot}\,v
+ {\rm div}\,(R\otimes v +  v\otimes R) 
+ \nabla q
&\,=\,f\,,~~~~x \in \Omega\,, \\
{\rm div}\,v &\,=\, 0\,,~~~~x \in \Omega\,, \\
v|_{\partial \Omega} &\,=\,0\,. 
\end{aligned}\right.
\end{equation}
Here $\lambda\in\C$ is the resolvent parameter and we have used the conditions ${\rm div}\,v={\rm div}\,R=0$ to derive $R\cdot\nabla v +v\cdot\nabla R = {\rm div}\,(R\otimes v +  v\otimes R)$. Hence, the proof of Theorem \ref{maintheorem} is complete as soon as we show that there is a sector $\Sigma$ included in the resolvent set $\rho(-{\mathbb A}_V)$, and that the following estimates to \eqref{RS} hold for $q\in(1,2]$ and $f\in L^2_\sigma(\Omega) \cap L^q(\Omega)^2$:
\begin{equation}\label{resolvent estimates.0}
\begin{split}
\|(\lambda+{\mathbb A}_V)^{-1} f\|_{L^2(\Omega)} 
& \le 
\frac{C}{\beta^2} |\lambda|^{-\frac32+\frac1q}
\|f\|_{L^q(\Omega)}\,,
~~~~\lambda \in {\color{black} \Sigma}\,,\\
\|\nabla (\lambda+{\mathbb A}_V)^{-1} f\|_{L^2(\Omega)} 
& \le 
\frac{C}{\beta^2} |\lambda|^{-1+\frac1q}
\|f\|_{L^q(\Omega)}\,,
~~~~\lambda \in {\color{black} \Sigma}\,.
\end{split}
\end{equation}
Let us prepare the ingredients for the proof of the resolvent estimates \eqref{resolvent estimates.0}. Our approach is based on the energy method to \eqref{RS}, and thus one of the most important steps is to obtain the estimate for the term $|\langle \beta U^{\bot} {\rm rot}\,v\,,v \rangle_{L^2(\Omega)}|$ which enables us to close the energy computation. Again we note that the bound $|\langle \beta U^{\bot} {\rm rot}\,v\,,v \rangle_{L^2(\Omega)}| \le C \beta \|\nabla v\|_{L^2(\Omega)}^2$ is no longer available contrary to the three-dimensional cases.

Firstly let us examine the next inequality containing the parameter $T \gg 1$:
\begin{align}\label{resolvent estimates.1}
|\langle \beta U^{\bot} {\rm rot}\,v, v \rangle_{L^2(\Omega)}|
\le 
\frac{\beta}{T} 
\|\nabla v\|_{L^2(\Omega)} \|v\|_{L^2(\Omega)} 
+ C \beta \Theta(T) \|\nabla v\|_{L^2(\Omega)}^2\,, 
\end{align}
where the function $\Theta(T)$ satisfies $\Theta(T) \approx \log T$ if $T \gg 1$. This inequality leads to the closed energy computation for \eqref{RS}, as long as the coefficient $C \beta \Theta(T)$ is small enough so that the second term in the right-hand side of \eqref{resolvent estimates.1} can be controlled by the dissipation from the Laplacian in \eqref{RS}. However, this observation does not give the information about the spectrum of $-\mathbb{A}_V$ near the origin. More precisely, we cannot close the energy computation when the resolvent parameter $\lambda$ is exponentially small with respect to $\beta$, that is, when $0<|\lambda| \le O(e^{-\frac{1}{\beta}})$. We emphasize that this difficulty is essentially due to the unavailability of the Hardy inequality \eqref{hardy} in two-dimensional exterior domains.

To overcome the difficulty for the case $0<|\lambda| \le O(e^{-\frac{1}{\beta}})$, we rely on the representation formula to the resolvent problem in the exterior unit disk established in \cite{Ma1}. We denote by $D$ the exterior unit disk $\R^2\setminus\overline{B_1(0)}=\{x\in\R^2~|~|x|>1\}$. Since the restriction $(v|_{D}, q|_{D})$ gives a unique solution to the next problem for $(w,r)$:
\begin{equation}\tag{RS$^{\rm ed}$}\label{RSed}
\left\{
\begin{aligned}
\lambda w - \Delta w + \beta U^\bot {\rm rot}\,w + \nabla r
&\,=\,-{\rm div}\,(R\otimes v + v\otimes R) + f\,,~~~~x\in D\,, \\
{\rm div}\,w &\,=\, 0\,,~~~~x\in D\,, \\
w|_{\partial D} &\,=\,v|_{\partial D}\,,
\end{aligned}\right.
\end{equation}
we can study the a priori estimates of $w=v|_{D}$ based on the solution formula to \eqref{RSed}. Then a detailed calculation shows that 
$|\langle \beta U^{\bot} {\rm rot}\,w, w \rangle_{L^2(D)}|$
satisfies
\begin{equation}\label{resolvent estimates.2}
\begin{split}
&
|\langle \beta U^{\bot} {\rm rot}\,w, w \rangle_{L^2(D)}| \\
&\le 
\frac{C}{\beta^4} 
\big(\|R\otimes v + v\otimes R\|_{L^2(\Omega)}
+ 
{\color{black}
\beta \sum_{|n|=1} \|\mathcal{P}_n v\|_{L^{\infty}(\partial D)} 
} 
\big)^2 \\
& \quad
+ \frac{C}{\beta^4} |\lambda|^{-2+\frac2q} \|f\|_{L^q(\Omega)}^2 
+ C\beta\|\nabla v\|_{L^2(\Omega)}^2\,,
\end{split}
\end{equation}
{\color{black}
where $\mathcal{P}_n v$ denotes the Fourier $n$-mode of 
$v|_{\overline{D}}$; see \eqref{def.P_n} in Subsection \ref{op.s in polar coord.} for the definition.
If} 
we obtain \eqref{resolvent estimates.2} then the estimate of $|\langle \beta U^{\bot} {\rm rot}\,v, v \rangle_{L^2(\Omega)}|$ is derived by using the Poincar${\rm \acute{e}}$ inequality on the bounded domain $\Omega \setminus\overline{D}$. However, in closing the energy computation, we need to be careful about the $\beta$-singularity in the coefficients in \eqref{resolvent estimates.2}. In fact, the first term in the right-hand side of \eqref{resolvent estimates.2} {\color{black} has to be} controlled by the dissipation as
\begin{align*}
\frac{C}{\beta^4} 
\big(\|R\otimes v + v\otimes R\|_{L^2(\Omega)}
+ 
{\color{black}
\beta \sum_{|n|=1} \|\mathcal{P}_n v\|_{L^{\infty}(\partial D)} 
} 
\big)^2
\le 
\frac{C}{\beta^4}
(d + \beta d^\frac12)^2
\|\nabla v\|_{L^2(\Omega)}^2\,,
\end{align*}
and then the smallness of $C (d + \beta d^\frac12)^2 \beta^{-4} \ll 1$ is required  in order to close the energy computation. This condition is achieved by imposing the smallness on the distance $d$ between the domain $\Omega$ and the exterior unit disk, which is introduced in Assumption \ref{assumption}.

{\color{black} Finally, we pay close attention to the $\beta$-dependencies appearing in Theorem \ref{maintheorem}. If we consider the limit case $d=0$ and $V=\beta U$ in Assumption \ref{assumption}, then the term 
\begin{align*}
\beta U^\bot {\rm rot}\,v + {\rm div}\,(R\otimes v +  v\otimes R) 
\,=\, \beta U^\bot {\rm rot}\,v
\end{align*}
in \eqref{RS} has an oscillation effect on the solutions in the exterior disk $\Omega={\color{black} D}$ at least when $\lambda=0$. Indeed, for the solutions to \eqref{RS} with $\lambda=0$, this effect leads to the faster spatial decay for the nonradial part compared with the case ${\color{black} \beta}=0$ (i.e. the Stokes equations case), and this observation is indeed an important step in \cite{HW} to prove the existence of the Navier-Stokes flows around ${\color{black} \beta} U$ in the exterior disk when the rotation ${\color{black} \beta}$ is large, as explained in Remark \ref{remark.assumption} (1). However, contrary to the stationary problem, the situation becomes more complicated if we consider the nonstationary problem requiring the analysis of \eqref{RS} for nonzero $\lambda\in\C\setminus\{0\}$, since there is an interaction between the two oscillation effects due to the terms $\lambda v$ and ${\color{black} \beta} U^\bot {\rm rot}\,v$ in \eqref{RS}. In fact, even in the exterior disk, a detailed analysis to the representation of the resolvent operator suggests the existence of a time-frequency domain, which we call the {\it nearly-resonance regime}, where the oscillation effect from ${\color{black} \beta} U^\bot {\rm rot}\,v$ is drastically weakened by the one from $\lambda v$ and hence the ${\color{black} \beta}$-singularity appears in the operator norm of the resolvent. The existence of the nearly-resonance regime yields that the stability of the ${\color{black} \beta} U$-type flows is sensitive under the perturbation of the domain. This is the reason why the distance $d$ between the fluid domain $\Omega$ and the exterior disk is assumed to be small depending on ${\color{black} \beta}$ in Theorem \ref{maintheorem}, and therefore, our argument is only applicable to the problem imposed on the exterior disk with a small perturbation. However, it is unclear whether the condition on $d$ being algebraically small in $\beta$ in Theorem \ref{maintheorem} can be relaxed or not. Additionally, Lemma \ref{lem.est.F} in Appendix \ref{app.proof.prop.est.F} implies that the nearly-resonance regime lies in the annulus $e^{-\frac{c}{\beta^2}} \le |\lambda| \le e^{-\frac{c'}{\beta}}$ in the complex plane. As far as the author knows, the existence of such time-frequency domain and the qualitative analysis seem to be new and have not been achieved before.
}

This paper is organized as follows. In Section \ref{sec.pre} we recall some basic facts from vector calculus in polar coordinates, and derive the resolvent estimate to \eqref{RS} when $|\lambda|\ge O(\beta^2e^{-\frac{1}{6\beta}})$ by a standard energy method. In Section \ref{sec.RSed} the resolvent problem is discussed for the case $0<|\lambda| < e^{-\frac{1}{6\beta}}$. In Subsections \ref{subsec.RSed.f}, \ref{subsec.RSed.divF}, and \ref{subsec.RSed.b} we derive the estimates to the problem \eqref{RSed} by using the representation formula. The results in Subsections \ref{subsec.RSed.f}--\ref{subsec.RSed.b} are applied in Subsection \ref{apriori2}, where the resolvent estimate to \eqref{RS} is established in the {\color{black} exceptional} region $0<|\lambda| < e^{-\frac{1}{6\beta}}$. Section \ref{sec.maintheorem} is devoted to the proof of Theorem \ref{maintheorem}. 
%%%%%%%%%%%%%%%%%%%%%%%%%%%%%%%%%%%%%%%%%%%%%%%%%%
%%%%%%%%%%%%%%%%%%%%%%%%%%%%%%%%%%%%%%%%%%%%%%%%%%
%%%%%%%%%%%%%%%%%%%%%%%%%%%%%%%%%%%%%%%%%%%%%%%%%%
\section{Preliminaries}\label{sec.pre}

This section is devoted to the preliminary analysis on the resolvent problem \eqref{RS} 
and \eqref{RSed} in the introduction. In Subsections \ref{op.s in polar coord.} and \ref{biotsavart} we recall some basic facts from vector calculus in polar coordinates. In Subsection \ref{apriori1} we show that the resolvent estimates in \eqref{resolvent estimates.0} are valid if the resolvent parameter $\lambda$ satisfies $|\lambda|\ge O(\beta^2e^{-\frac{1}{6\beta}})$. Let us recall that $D$ denotes the exterior unit disk $\R^2\setminus\overline{B_1(0)}=\{x\in\R^2~|~|x|>1\}$ as in the introduction.
%
%%%%%%%%%%%%%%%
\subsection{Vector calculus in polar coordinates and Fourier series}\label{op.s in polar coord.}
%%%%%%%%%%%%%%%
%
We introduce the usual polar coordinates on $D$. Set 
{\allowdisplaybreaks 
\begin{align*}
& 
x_1 \,=\, r\cos \theta\,,~~~~~~
x_2 \,=\, r\sin \theta\,,~~~~~~~~
r\,=\, |x| \ge 1\,,~~~~\theta\in [0,2\pi)\,,\\
& 
{\bf e}_r \,=\, \frac{x}{|x|}\,,~~~~~~
{\bf e}_\theta \,=\, \frac{x^\bot }{|x|} 
\,=\, \partial_\theta {\bf e}_r\,.
\end{align*}
}
Let $v=(v_1,v_2)^\top$ be a vector field defined on $D$. Then we set
\begin{align*}
& v\,=\, v_r {\bf e}_r  +  v_\theta {\bf e}_\theta\,,~~~~~~~~
v_r \,=\, v\cdot {\bf e}_r\,,~~~~~~
v_\theta \,=\, v\cdot {\bf e}_\theta\,.
\end{align*}
The following formulas will be used:
\begin{align}
{\rm div}\,v 
& \,=\, \partial_1 v_1 + \partial_2 v_2 
\,=\,  \frac1r \partial_r (r v_r) + \frac1r \partial_\theta v_\theta\,, 
\label{polar.div} \\
{\rm rot}\,v 
& \, = \, \partial_1 v_2 - \partial_2 v_1 
\,=\,  \frac1r \partial_r (r v_\theta) - \frac1r \partial_\theta v_r \,,
\label{polar.rot} \\
|\nabla v |^2 
& \,=\, 
|\partial_r v_r|^2 + |\partial_r v_\theta|^2 
+ \frac{1}{r^2} 
\big( |\partial_\theta v_r - v_\theta |^2 
+ |v_r + \partial_\theta v_\theta |^2 \big )\,,
\label{polar.grad}
\end{align} 
and 
\begin{align}
\begin{split}\label{polar.laplace}
-\Delta v & 
\,=\, \Big( -\partial_r \big( \frac1r \partial_r (r v_r ) \big)  
- \frac{1}{r^2} \partial_\theta^2 v_r 
+ \frac{2}{r^2} \partial_\theta v_\theta \Big) {\bf e}_r \\
&\quad
+  \Big( - \partial_r \big ( \frac1r \partial_r (r v_\theta ) \big ) - \frac{1}{r^2} \partial_\theta^2 v_\theta -  \frac{2}{r^2} \partial_\theta v_r \Big) {\bf e}_\theta\,.
\end{split}
\end{align}
The formulas
\begin{align*}
\begin{split}
%\label{polar.e_r}
{\bf e}_r \cdot \nabla v 
\,=\, (\partial_r v_r ) {\bf e}_r  + (\partial_r v_\theta) {\bf e}_\theta\,,~~~~~~
{\bf e}_\theta \cdot \nabla v 
\,=\, \frac{\partial_\theta v_r - v_\theta}{r} {\bf e}_r  
+ \frac{\partial_\theta v_\theta + v_r}{r} {\bf e}_\theta
\end{split}
\end{align*}
imply the following equality:
\begin{align}\label{polar.e_theta}
x^\bot \cdot \nabla v - v^\bot & = |x| \big ( {\bf e}_\theta \cdot \nabla v \big ) - \big ( v_r {\bf e}_r^\bot + v_\theta {\bf e}_\theta^\bot \big ) \nonumber \\
& = (\partial_\theta v_r - v_\theta )\,  {\bf e}_r  \, + \, (\partial_\theta v_\theta + v_r) \, {\bf e}_\theta - \big ( v_r {\bf e}_r^\bot + v_\theta {\bf e}_\theta^\bot \big ) \nonumber \\
& = \partial_\theta v_r \,  {\bf e}_r + \partial_\theta v_\theta \, {\bf e}_\theta\,.
\end{align} 
This relation has been used in Remark \ref{remark.assumption1} in the introduction. For each $n\in \Z$, we denote by $\mathcal{P}_n$ the projection on the Fourier mode $n$ with respect to the angular variable $\theta$:
\begin{align}
\mathcal{P}_n v 
\,=\, v_{r,n}(r) e^{i n \theta} {\bf e}_r + v_{\theta,n}(r) e^{i n \theta} {\bf e}_\theta\,, 
\label{def.P_n}
\end{align}
where 
\begin{align*}
v_{r,n} (r) 
& \,=\, \frac{1}{2\pi} \int_0^{2\pi} v_r(r \cos \theta, r\sin\theta) e^{-i n \theta} \dd \theta\,,\\
v_{\theta,n} (r) 
& \,=\, \frac{1}{2\pi} \int_0^{2\pi} v_\theta(r \cos \theta, r\sin\theta) e^{-i n \theta} \dd \theta\,.
\end{align*}
We also set  for $m\in \N\cup \{0\}$,
\begin{align}
\mathcal{Q}_m v = \sum_{|n|=m+1}^\infty \mathcal{P}_n v\,.\label{def.Q_m}
\end{align}
For notational simplicity we often write $v_n$ instead of $\mathcal{P}_n v$. Each $\mathcal{P}_n$ defines an orthogonal projection in $L^2 (D)^2$. From \eqref{polar.grad} and \eqref{def.P_n}, for $n\in \N \cup\{0\}$ and $v$ in~$W^{1,2}(D)^2$ we see that
\begin{align*}
\| \nabla v \|_{L^2 (D)}^2 &\,=\, \sum_{n\in \Z} \| \nabla \mathcal{P}_n v \|_{L^2 (D)}^2\,, \\
|\nabla \mathcal{P}_n v |^2 
&\,=\, 
|\partial_r v_{r,n}|^2 
+ \frac{1+n^2}{r^2} |v_{r,n}|^2 
+ |\partial _r v_{\theta,n}|^2 
+ \frac{1+n^2}{r^2}|v_{\theta,n}|^2  
- \frac{4 n}{r^2} {\rm Im}( v_{\theta,n} \overline{v_{r,n}} )\,. 
%\label{polar.grad.n}  
\end{align*}
In particular, we have 
\begin{align}
| \nabla \mathcal{P}_n v |^2 \geq  |\partial_r v_{r,n}|^2 + \frac{(|n|-1)^2}{r^2} |v_{r,n}|^2 + |\partial _r v_{\theta,n}|^2 + \frac{(|n|-1)^2}{r^2}|v_{\theta,n}|^2 \,,\label{polar.grad.n'} 
\end{align}
and thus, from the definition of $\mathcal{Q}_m$ in \eqref{def.Q_m}, we have for $m\in \N\cup \{0\}$, 
\begin{align}
%\label{polar.grad.Q_m} 
\| \nabla \mathcal{Q}_m v\|_{L^2(D)}^2 
\ge \| \partial_r (\mathcal{Q}_m v)_r \|_{L^2(D)}^2 
+ \| \partial_r (\mathcal{Q}_m v)_\theta \|_{L^2(D)}^2 
+ m^2 \big\| \frac{\mathcal{Q}_m v}{|x|} \big\|_{L^2(D)}^2\,.
\label{polar.grad.n''} 
\end{align}
%
%%%%%%%%%%%%%%%
\subsection{The Biot-Savart law in polar coordinates}\label{biotsavart}
%%%%%%%%%%%%%%%
%
For a given scalar field $\omega$ in $D$, the streamfunction $\psi$ is formally defined as the solution to the Poisson equation:  $-\Delta \psi = \omega$ in $D$ and $\psi =0$ on $\partial D$. For $n\in \Z$ and $\omega\in L^2(D)$ we set $\mathcal{P}_n \omega = \mathcal{P}_n \omega(r,\theta)$ and $\omega_n = \omega_n(r)$ as
\begin{align}
\begin{split}
\mathcal{P}_n \omega 
\,=\, 
\bigg( \frac{1}{2\pi} \int_0^{2\pi} \omega (r \cos s, r\sin s) e^{- i n s} \dd s \bigg) e^{i n \theta} \,,~~~~~~
\omega_n \,=\, \big ( \mathcal{P}_n \omega \big ) e^{-i n \theta}\,.
\label{def.w_n}
\end{split}
\end{align}
From the Poisson equation in polar coordinates, we see that each $n$-Fourier mode of $\psi$ satisfies the following ODE:
\begin{align}
-\frac{\dd \psi_n}{\dd r^2}  
- \frac{1}{r} \frac{\dd \psi_n}{\dd r} 
+ \frac{n^2}{r^2} \psi_n 
\,=\, \omega_n\,,~~~~r>1\,, ~~~~~~~~\psi_n (1) =0\,.
\label{eq.stream}
\end{align}
Let $|n|\ge 1$. Then the solution $\psi_n= \psi_n[\omega_n]$ to \eqref{eq.stream} decaying at spatial infinity is given by
\begin{align*}
%\label{def.psi}
\begin{split}
\psi_n[\omega_n] (r) 
& \,=\,  \frac{1}{2 |n|} 
\bigg(-\frac{d_n [\omega_n] }{r^{|n|} } 
+ \frac{1}{r^{|n|}} \int_1^r s^{1+|n|} \omega_n (s) \dd s   
+ r^{|n|}\int_r^\infty s^{1-|n|} \omega_n (s) \dd s \bigg ) \,,\\
d_n [\omega_n] & \,=\, \int_1^\infty s^{1-|n|} \omega_n (s) \dd s\,.
\end{split}
\end{align*}
The formula $V_n[\omega_n]$ in the next is called the Biot-Savart law for $\mathcal{P}_n \omega$:
\begin{align}\label{def.V_n}
\begin{split}
&V_n [\omega_n] \,=\, 
V_{r,n} [\omega_n](r) e^{i n \theta}{\bf e}_r  
+  V_{\theta,n} [\omega_n](r) e^{i n\theta} {\bf e}_\theta\,,\\
& V_{r,n} [\omega_n] \, = \, \frac{i n}{r} \psi_n [\omega_n] \,,
~~~~~~ V_{\theta,n} [\omega_n] \, = \, - \frac{\dd}{\dd r} \psi_n [\omega_n]\,.
\end{split}
\end{align}
The velocity $V_n[\omega_n]$ is well defined at least when $r^{1-|n|} \omega_n\in L^1 ((1,\infty))$, and it is straightforward to see that
\begin{align}\label{V_n}
\begin{split}
&{\rm div}\,V_n [\omega_n] \,=\, 0\,,~~~~
{\rm rot}\,V_n [\omega_n] \,=\, \mathcal{P}_n \omega~~~~~~{\rm in}~~D\,,\\
& {\bf e}_r \cdot V_n [\omega_n] \,=\,  0 ~~~~~~{\rm on}~~\partial D\,.
\end{split}
\end{align}
The condition $r^{1-|n|} \omega_n\in L^1 ((1,\infty))$ is automatically satisfied when $\omega\in L^2 (D)$ and~$|n|\geq 2$. When $|n|=1$, however, the integral in the definition of $\psi_n[\omega_n]$ does not converge absolutely for general $\omega\in L^2 ({\color{black} D})$. We can justify this integral for $|n|=1$ if $\omega$ is given in a rotation form $\omega={\rm rot}\,u$ with some $u\in W^{1,2}(D)^2$, since the integration by parts leads to the convergence of $\displaystyle \lim_{N\rightarrow \infty} \int_r^N \omega_n \dd r$. Hence, for any $v \in L^2_\sigma ({\color{black} D}) \cap W^{1,2} ({\color{black} D})^2$, the $n$-mode $v_n = \mathcal{P}_n v$ can be expressed in terms of its vorticity $\omega_n$ by the formula \eqref{def.V_n} when $|n|\geq 1$. Before closing this subsection, we mention that the definition of $f_n$ differs according to whether $f$ is a vector $f=(f_1(x), f_2(x))^\top$ or scalar $f=f(x)$ function on $D$. The vector case is defined in \eqref{def.P_n}, while the scalar case is defined in \eqref{def.w_n}.
%
%%%%%%%%%%%%%%%
\subsection{A priori resolvent estimate by energy method}
\label{apriori1}
%%%%%%%%%%%%%%%
%
In this subsection we study the energy estimate to the resolvent problem \eqref{RS}: 
\begin{equation}\tag{RS}
\left\{
\begin{aligned}
\lambda v - \Delta v + \beta U^\bot {\rm rot}\,v
+ {\rm div}\,(R\otimes v +  v\otimes R) 
+ \nabla q
&\,=\,f\,,~~~~x \in \Omega\,, \\
{\rm div}\,v &\,=\, 0\,,~~~~x \in \Omega\,, \\
v|_{\partial \Omega} &\,=\,0\,. 
\end{aligned}\right.
\end{equation}
Here $\lambda\in \C$ is the resolvent parameter, the vector field $U$ is the rotating flow of \eqref{rotatingflow} in the introduction, and $\beta$ and $R$ are defined in Assumption \ref{assumption}. The first result of this subsection is the a priori estimates to \eqref{RS} obtained by the energy method. We recall that $D$ denotes the exterior disk $\{x\in\R^2~|~|x|>1\}$, and that {\color{black} $\gamma$ is the constant in Assumption \ref{assumption}.}
%
%%%%%%%%%%%%%%%%%%%%
%%%%%%%%%%%%%%%%%%%%
\begin{proposition}\label{prop.general.energy.est.resol.}
Let $q\in(1,2]$, $f\in L^q(\Omega)^2$, and $\lambda\in\C$. Suppose that $v \in D(\mathbb{A}_V)$ is a solution to \eqref{RS}. 
{\color{black} Then there are constants $\beta_1\in(0,1)$ and $d_1\in(0,\frac14)$} depending only on 
{\color{black} $\Omega$ and $\gamma$} 
such that the following estimates hold.
\begin{align}
&{\rm Re}(\lambda) \| v\|_{L^2(\Omega)}^2
+ \frac34
\|\nabla v\|_{L^2(\Omega)}^2 \nonumber \\
& \quad
\le 
\beta 
\big| \sum_{|n|=1} \big\langle ({\rm rot}\,v)_{n}, \frac{v_{r,n}}{|x|} \big\rangle_{L^2(D)} \big| 
+ C \|f\|_{L^q(\Omega)}^{\frac{2q}{3q-2}} 
\|v\|_{L^2(\Omega)}^{\frac{4(q-1)}{3q-2}}\,,
\label{est1.prop.general.energy.est.resol.} \\
&|{\rm Im}(\lambda)| \| v\|_{L^2(\Omega)}^{2} 
\le 
\frac14
\|\nabla v\|_{L^2(\Omega)}^{2} 
+ \beta \big| \sum_{|n|=1} \big\langle ({\rm rot}\,v)_{n}, \frac{v_{r,n}}{|x|} \big\rangle_{L^2(D)} \big| \nonumber \\
&~~~~~~~~~~~~~~~~~~~~~~~~~~~~~~~~
+ C
\|f\|_{L^q(\Omega)}^{\frac{2q}{3q-2}} 
\|v\|_{L^2(\Omega)}^{\frac{4(q-1)}{3q-2}}\,,
\label{est2.prop.general.energy.est.resol.}
\end{align}
as long as $\beta\in(0,\beta_1)$ {\color{black} and $d\in(0, d_1)$}. The constant $C$ is independent of $\beta$
{\color{black} and $d$}.
\end{proposition}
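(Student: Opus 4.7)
The plan is an energy estimate: testing \eqref{RS} in the $L^2$-pairing against $\bar v$ and integrating by parts, the pressure drops out by ${\rm div}\,v=0$ and $v|_{\partial\Omega}=0$, yielding
\[
\lambda\|v\|_{L^2(\Omega)}^2 + \|\nabla v\|_{L^2(\Omega)}^2 + \beta\langle U^\bot{\rm rot}\,v,v\rangle_{L^2(\Omega)} - \langle R\otimes v + v\otimes R,\nabla v\rangle_{L^2(\Omega)} = \langle f,v\rangle_{L^2(\Omega)}\,.
\]
The real part yields the identity behind \eqref{est1.prop.general.energy.est.resol.}, and the imaginary part (after moving $\|\nabla v\|_{L^2}^2$ to the right) yields \eqref{est2.prop.general.energy.est.resol.}. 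The task is to absorb the three non-dissipative pairings into $\tfrac14\|\nabla v\|_{L^2}^2$, with the sole exception of the resonant $|n|=1$ Fourier modes of the rotating-flow term, which must be kept on the right-hand side.

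The delicate term is $\beta\langle U^\bot{\rm rot}\,v,v\rangle$, whose integrand is $-{\rm rot}\,v\cdot\overline{v_r}/|x|$ since $U^\bot=-x/|x|^2$. I split $\Omega=(\Omega\cap\{|x|<1\})\cup D$. On the thin shell $\Omega\cap\{|x|<1\}$ (of radial width $\le 2d$, with $v$ vanishing on $\partial\Omega$), the Poincar\'e inequality applied radially gives $\|v\|_{L^2(\Omega\cap\{|x|<1\})}\le Cd\|\nabla v\|_{L^2(\Omega)}$, so this piece contributes at most $C\beta d\|\nabla v\|_{L^2}^2$. On $D$, expanding $v|_D=\sum_{n\in\Z}\mathcal{P}_nv$ as in Subsection~\ref{op.s in polar coord.}, three cases arise. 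For $n=0$, ${\rm div}\,v=0$ together with $v\in L^2(D)$ force $v_{r,0}=0$, so this mode contributes nothing. For $|n|\ge 2$, the polar-gradient lower bound \eqref{polar.grad.n'} gives $\|v_{r,n}/|x|\|_{L^2(D)}\le(|n|-1)^{-1}\|\nabla\mathcal{P}_nv\|_{L^2(D)}$, which combined with $\|({\rm rot}\,v)_n\|_{L^2}\le\|\nabla\mathcal{P}_nv\|_{L^2}$, Cauchy--Schwarz, and Fourier orthogonality bounds this block by $\|\nabla v\|_{L^2(D)}^2$. The $|n|=1$ modes, where no Hardy inequality is available on $D$, remain on the right-hand side as the explicit term in the statement.

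For the remainder term, integration by parts gives $-\langle R\otimes v + v\otimes R,\nabla v\rangle$; the decay $|R(x)|\le C\beta^\kappa d\,|x|^{-1-\gamma}$ with $\gamma>\tfrac12$ yields $\|R\|_{L^4(\Omega)}\le C\beta^\kappa d$, and the two-dimensional Ladyzhenskaya inequality $\|v\|_{L^4}^2\le C\|v\|_{L^2}\|\nabla v\|_{L^2}$ combined with Young bounds this pairing by $\tfrac18\|\nabla v\|_{L^2}^2$ up to a $(\beta^\kappa d)^4\|v\|_{L^2}^2$ residual that is negligible for $\beta<\beta_1$ small. For the forcing term, H\"older with the conjugate pair $(q,q')$, the two-dimensional Gagliardo--Nirenberg embedding $\|v\|_{L^{q'}}\le C\|v\|_{L^2}^{2(q-1)/q}\|\nabla v\|_{L^2}^{(2-q)/q}$, and Young with the conjugate exponents $(2q/(3q-2),2q/(2-q))$ produce exactly $\tfrac18\|\nabla v\|_{L^2}^2 + C\|f\|_{L^q(\Omega)}^{2q/(3q-2)}\|v\|_{L^2(\Omega)}^{4(q-1)/(3q-2)}$. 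Summing all contributions and fixing $\beta_1=\beta_1(\Omega,\gamma,\kappa)$ small enough that the total coefficient of $\|\nabla v\|_{L^2}^2$ harvested from the right does not exceed $\tfrac14$ delivers both stated inequalities. The main obstacle is the forced retention of the $|n|=1$ contribution: this embodies the failure of the Hardy inequality in two-dimensional exterior domains and is precisely the difficulty the rest of the paper circumvents via the resolvent representation on $D$.
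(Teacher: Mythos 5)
Your overall architecture coincides with the paper's: test against $\bar v$, split $\Omega=(\Omega\setminus D)\cup D$, Fourier-decompose on $D$, control the modes $n\neq\pm1$ via the polar Hardy bound \eqref{polar.grad.n'}, retain the $|n|=1$ pairing, and treat the forcing term by Gagliardo--Nirenberg and Young exactly as in \eqref{est7.proof.prop.general.energy.est.resol.}. The shell estimate with constant $Cd$ and the observation that $v_{r,0}=0$ are both correct (the paper simply uses a plain Poincar\'e constant and includes $n=0$ in the $(|n|-1)^2\ge 1$ bound), and your Young-exponent bookkeeping for the $f$-term is right.

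There is, however, a genuine gap in your treatment of the remainder pairing $\langle R\otimes v+v\otimes R,\nabla v\rangle$. The Ladyzhenskaya route $\|R\|_{L^4}\|v\|_{L^4}\|\nabla v\|_{L^2}\le C\beta^\kappa d\,\|v\|_{L^2}^{1/2}\|\nabla v\|_{L^2}^{3/2}$ followed by Young leaves a residual $C(\beta^\kappa d)^4\|v\|_{L^2}^2$ on the right-hand side. This term is not ``negligible'': the stated inequalities contain no $\|v\|_{L^2}^2$ contribution on the right, and there is nothing to absorb it into --- ${\rm Re}(\lambda)$ is an arbitrary complex parameter (possibly zero or negative), and absorbing $\|v\|_{L^2}^2$ into $\|\nabla v\|_{L^2}^2$ is precisely the Hardy inequality that fails in two-dimensional exterior domains. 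Downstream this residual would be fatal, since Propositions \ref{prop.laege.lambda.energy.est.resol.} and \ref{prop2.small.lambda.energy.est.resol.} use the proposition to control the spectrum for $|\lambda|$ exponentially small in $\beta$, far below any fixed polynomial quantity $(\beta^\kappa d)^4$. The correct (and the paper's) argument bounds the pairing directly by
\begin{align*}
|\langle R\otimes v+v\otimes R,\nabla v\rangle_{L^2(\Omega)}|
\le C\beta^\kappa d\,\big\||x|^{-(1+\gamma)}v\big\|_{L^2(\Omega)}\|\nabla v\|_{L^2(\Omega)}
\le C\beta^\kappa d\,\|\nabla v\|_{L^2(\Omega)}^2\,,
\end{align*}
using that the weighted inequality $\||x|^{-(1+\gamma)}v\|_{L^2(\Omega)}\le C\|\nabla v\|_{L^2(\Omega)}$ does hold for $v\in W^{1,2}_0(\Omega)$ because the weight is subcritical ($\gamma>0$); this yields a pure $\|\nabla v\|_{L^2}^2$ contribution that is absorbed by taking $\beta_1$ small, with no residual in $\|v\|_{L^2}^2$.
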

%%%%%%%%%%%%%%%%%%%%
%%%%%%%%%%%%%%%%%%%%
%
\begin{proof}
Taking the inner product with $v$ to the first equation of \eqref{RS}, we find
\begin{align}
& {\rm Re}(\lambda) \| v\|_{L^2(\Omega)}^{2}
+ \| \nabla v\|_{L^2(\Omega)}^{2} \nonumber \\
&\,=\,
- \beta {\rm Re}\langle U^{\bot} {\rm rot}\,v, v \rangle_{L^2(\Omega)}
+ {\rm Re} \langle R \otimes v + v \otimes R, \nabla v \rangle_{L^2(\Omega)}
+ {\rm Re} \langle f, v \rangle_{L^2(\Omega)}\,,
\label{eq1.proof.prop.general.energy.est.resol.} \\
& {\rm Im}(\lambda) \| v\|_{L^2(\Omega)}^{2} \nonumber \\
&\,=\,
- \beta {\rm Im}\langle U^{\bot} {\rm rot}\,v, v \rangle_{L^2(\Omega)}
+ {\rm Im} \langle R \otimes v + v \otimes R, \nabla v \rangle_{L^2(\Omega)}
+ {\rm Im} \langle f, v \rangle_{L^2(\Omega)}\,.
\label{eq2.proof.prop.general.energy.est.resol.}
\end{align}
After decomposing the domain $\Omega = (\Omega \setminus D)\,\cup\,D$, from $U^{\bot}=-\frac{{\bf e_{r}}}{r}$ on $D$ we have
\begin{align}
\beta |\langle U^{\bot} {\rm rot}\,v, v \rangle_{L^2(\Omega)}| 
\le 
\beta |\langle U^{\bot} {\rm rot}\,v, v \rangle_{L^2(\Omega \setminus D)}| 
+ \beta \big|\big\langle {\rm rot}\,v, \frac{v_{r}}{|x|} \big\rangle_{L^2(D)}\big|\,.
\label{est1.proof.prop.general.energy.est.resol.} 
\end{align}
Then the Poincare inequality on $\Omega \setminus D$ implies that 
\begin{align}
\beta |\langle U^{\bot} {\rm rot}\,v, v \rangle_{L^2(\Omega \setminus D)}| 
\le
C \beta \| \nabla v \|_{L^2(\Omega)}^2\,,
\label{est2.proof.prop.general.energy.est.resol.} 
\end{align}
and by applying the Fourier series expansion on $D$, we see from \eqref{def.P_n} and \eqref{def.w_n} that
\begin{align}
&\big|\big\langle {\rm rot}\,v, \frac{v_{r}}{|x|} \big\rangle_{L^2(D)}\big|
\,=\, 
\big| \big( \sum_{|n|=1} + \sum_{n \in \Z \setminus \{ \pm1\}} \big)
\big\langle ({\rm rot}\,v)_{n}, \frac{v_{r,n}}{|x|} \big\rangle_{L^2(D)} \big| 
\nonumber \\
& \le \big| \sum_{|n|=1} \big\langle ({\rm rot}\,v)_{n}, \frac{v_{r,n}}{|x|} \big\rangle_{L^2(D)} \big| 
+ \sum_{n \in \Z \setminus \{ \pm1\}} 
\|{\rm rot}\,v_{n}\|_{L^2(D)} 
\big\|\frac{v_{r,n}}{|x|}\big\|_{L^2(D)}\,.
\label{est3.proof.prop.general.energy.est.resol.} 
\end{align}
Then the 
{\color{black} inequalities \eqref{polar.grad.n'} for $n=0$ and \eqref{polar.grad.n''} for $m=1$ ensure}
that
\begin{align}
\sum_{n \in \Z \setminus \{ \pm1\}} 
\|{\rm rot}\,v_{n}\|_{L^2(D)}
\big\|\frac{v_{r,n}}{|x|}\big\|_{L^2(D)}
& \le
C \sum_{n \in \Z \setminus \{ \pm1\}} \|\nabla v_{n}\|_{L^2(D)}^2 \nonumber \\
& \le
C \|\nabla v\|_{L^2(\Omega)}^2\,.
\label{est4.proof.prop.general.energy.est.resol.} 
\end{align}
Inserting 
\eqref{est2.proof.prop.general.energy.est.resol.}--\eqref{est4.proof.prop.general.energy.est.resol.}  into \eqref{est1.proof.prop.general.energy.est.resol.} we obtain
\begin{align}
\beta |\langle U^{\bot} {\rm rot}\,v, v \rangle_{L^2(\Omega)}| 
\le 
C_1 \beta \|\nabla v\|_{L^2(\Omega)}^2
+ \beta 
\big| \sum_{|n|=1} \big\langle ({\rm rot}\,v)_{n}, \frac{v_{r,n}}{|x|} \big\rangle_{L^2(D)} \big|\,.
\label{est5.proof.prop.general.energy.est.resol.} 
\end{align}
Next by \eqref{remainder} in Assumption \ref{assumption} we have
\begin{align}
|\langle R \otimes v + v \otimes R, \nabla v \rangle_{L^2(\Omega)}| 
& \le
{\color{black} C_2 d}
\,\| \nabla v\|_{L^2(\Omega)}^2\,,
\label{est6.proof.prop.general.energy.est.resol.} 
\end{align}
where the inequality $\||x|^{-(1+\gamma)} v\|_{L^2} \le C \| \nabla v \|_{L^2}$ is applied. The constant $C_2$ depends on $\gamma\in(0,1)$. By the Gagliardo-Nirenberg inequality we see that for $q\in(1,2]$ and $q'= \frac{q}{q-1}$,
\begin{align}
|\langle f, v \rangle_{L^2(\Omega)}|
& \le C \|f\|_{L^q(\Omega)} \|u\|_{L^{q'}(\Omega)} \nonumber \\
& \le C \|f\|_{L^q(\Omega)} \|u\|_{L^2(\Omega)}^{2(1-\frac1q)}
\|\nabla u\|_{L^2(\Omega)}^{\frac2q-1} \nonumber \\
& \le C \|f\|_{L^q(\Omega)}^{\frac{2q}{3q-2}} 
\|u\|_{L^2(\Omega)}^{\frac{4(q-1)}{3q-2}}
+ \frac18 \|\nabla u\|_{L^2(\Omega)}^{2}\,,
\label{est7.proof.prop.general.energy.est.resol.} 
\end{align}
where the Young inequality is applied in the last line. 
{\color{black}
Now we take $\beta_1\in(0,1)$ and $d_1\in(0,\frac14)$ small enough so that
\begin{align}
C_1 \beta_1 + C_2 d_1
\le \frac18
\label{est8.proof.prop.general.energy.est.resol.} 
\end{align}
holds}. Then the assertions 
\eqref{est1.prop.general.energy.est.resol.} and 
\eqref{est2.prop.general.energy.est.resol.} are proved by inserting 
\eqref{est5.proof.prop.general.energy.est.resol.}--\eqref{est7.proof.prop.general.energy.est.resol.}
into \eqref{eq1.proof.prop.general.energy.est.resol.} and \eqref{eq2.proof.prop.general.energy.est.resol.}, and using the condition 
\eqref{est8.proof.prop.general.energy.est.resol.}. This completes the proof.
\end{proof}
%
%%%%%

\noindent As can be seen from Proposition \ref{prop.general.energy.est.resol.}, the key object in closing the energy computation is to derive the estimate for the next term appearing in the right-hand sides of \eqref{est1.prop.general.energy.est.resol.} and \eqref{est2.prop.general.energy.est.resol.}:
\begin{align*}
\big| \sum_{|n|=1} \big\langle ({\rm rot}\,v)_{n}, \frac{v_{r,n}}{|x|} \big\rangle_{L^2(D)} \big|\,.
\end{align*}
Note that the Hardy inequality in polar coordinates \eqref{polar.grad.n'} cannot be applied to this term. The next proposition shows that this term can be handled if $\lambda$ in \eqref{RS} satisfies $|\lambda|\ge O(\beta^2 e^{-\frac{1}{6\beta}})$.
%
%%%%%%%%%%%%%%%%%%%%
%%%%%%%%%%%%%%%%%%%%
\begin{proposition}\label{prop.laege.lambda.energy.est.resol.} 
{\color{black}
Let $\beta_1$ and $d_1$ be the constants in Proposition \ref{prop.general.energy.est.resol.}, 
and let $d \in (0, d_1)$. 
}
Then the following statements hold.

\noindent {\rm (1)} Fix a positive number $\beta_2\in(0,\min\{\frac1{12},\beta_1\})$. Then the set
\begin{align}\label{set.prop.laege.lambda.energy.est.resol.} 
\mathcal{S}_\beta \,=\,
\big\{{\color{black} \lambda} \in \C~|~
|{\rm Im}(\lambda)| > -{\rm Re}(\lambda) + 12 e^{\frac{1}{e}} \beta^2 e^{-\frac{1}{6\beta}}
\big\}
\end{align}
is included in the resolvent $\rho(-\mathbb{A}_V)$ for any $\beta\in (0,\beta_2)$. 

\noindent {\rm (2)} Let $q\in(1,2]$ and $f\in L^2_\sigma(\Omega) \cap L^q(\Omega)^2$. Then we have
\begin{equation}\label{est.laege.lambda.energy.est.resol.} 
\begin{split}
\|(\lambda+{\mathbb A}_V)^{-1} f\|_{L^2(\Omega)} 
& \le 
C |\lambda|^{-\frac32+\frac1q}
\|f\|_{L^q(\Omega)}\,,~~~~~~
\lambda\in
{\color{black} \mathcal{S}_\beta \cap \mathcal{B}_{\frac12 e^{-\frac{1}{6\beta}}}(0)^{{\rm c}}}\,, \\
\|\nabla (\lambda+{\mathbb A}_V)^{-1} f\|_{L^2(\Omega)} 
& \le 
C |\lambda|^{-1+\frac1q}
\|f\|_{L^q(\Omega)}\,,~~~~~~
\lambda\in
{\color{black} \mathcal{S}_\beta \cap \mathcal{B}_{\frac12 e^{-\frac{1}{6\beta}}}(0)^{{\rm c}}}\,,
\end{split}
\end{equation}
as long as $\beta\in (0,\beta_2)$. {\color{black} Here} the constant $C$ is independent of $\beta$ 
{\color{black} and $d$}, 
{\color{black} and $\mathcal{B}_\rho(0)\subset\C$ denotes the disk centered at the origin with radius $\rho>0$.}
\end{proposition}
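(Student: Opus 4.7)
The plan is to use Proposition \ref{prop.general.energy.est.resol.} as the starting point and to control the unresolved $|n|=1$ contribution through a logarithmic Hardy inequality, which is the only missing ingredient for closing the energy computation.

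For the first step, I will establish, for the $|n|=1$ Fourier modes of $v$ and a free parameter $T\gg1$, a bound of the form
\begin{align*}
\Bigl\| \frac{v_{r,n}}{|x|}\Bigr\|_{L^2(D)} \,\leq\, C\log T\,\|\nabla v\|_{L^2(\Omega)} + T^{-1}\|v\|_{L^2(\Omega)}\,.
\end{align*}
On $\{|x|>T\}$ this is the trivial estimate $|x|^{-1}\leq T^{-1}$. On $\{1<|x|<T\}$, writing $v_{r,n}(r)=v_{r,n}(1)+\int_1^r\partial_s v_{r,n}(s)\,{\rm d}s$ and applying Cauchy-Schwarz against the weight $s^{-1}\cdot s\,{\rm d}s$ yields $|v_{r,n}(r)-v_{r,n}(1)|^2\leq \log r\,\|\nabla v\|_{L^2(D)}^2$. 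The boundary trace $|v_{r,n}(1)|$ will be controlled by integrating $\partial_r v$ radially from $\partial\Omega$ across the thin annulus $\Omega\setminus D$, where $v|_{\partial\Omega}=0$, and then applying Parseval on the unit circle, producing $|v_{r,n}(1)|^2\leq Cd\,\|\nabla v\|_{L^2(\Omega)}^2$. Integrating $|v_{r,n}(r)|^2$ against $r^{-1}\,{\rm d}r$ closes the inner piece. Combined with the Cauchy-Schwarz bound $\|({\rm rot}\,v)_n\|_{L^2(D)}\leq\|\nabla v\|_{L^2(\Omega)}$, the Hardy-type estimate translates into
\begin{align*}
\beta\Bigl|\sum_{|n|=1}\bigl\langle({\rm rot}\,v)_n,\tfrac{v_{r,n}}{|x|}\bigr\rangle_{L^2(D)}\Bigr| \,\leq\, C\beta\log T\,\|\nabla v\|_{L^2(\Omega)}^2+\frac{C\beta}{T}\|\nabla v\|_{L^2(\Omega)}\|v\|_{L^2(\Omega)}\,.
\end{align*}

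For the second step, I insert this bound into \eqref{est1.prop.general.energy.est.resol.} and \eqref{est2.prop.general.energy.est.resol.} and sum the two. Choosing $T = e^{1/(c\beta)}$ with $c$ an absolute constant large enough that $C\beta\log T$ is a small fraction of the coefficient of $\|\nabla v\|^2$, and treating the cross term via Young's inequality $\frac{C\beta}{T}\|\nabla v\|\|v\|\leq \epsilon \|\nabla v\|^2+\frac{C^2\beta^2}{4\epsilon T^2}\|v\|^2$, absorbs all $\|\nabla v\|^2$ contributions into the left-hand side at the cost of a $C\beta^2 T^{-2}\|v\|^2$ term, which is of order $\beta^2 e^{-1/(6\beta)}\|v\|^2$ once the constants are tuned as in the statement. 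The resulting inequality reads
\begin{align*}
\bigl[{\rm Re}(\lambda)+|{\rm Im}(\lambda)|-12 e^{\frac{1}{e}} \beta^2 e^{-\frac{1}{6\beta}}\bigr]\|v\|_{L^2(\Omega)}^2+\tfrac14\|\nabla v\|_{L^2(\Omega)}^2 \,\leq\, C\|f\|_{L^q(\Omega)}^{\frac{2q}{3q-2}}\|v\|_{L^2(\Omega)}^{\frac{4(q-1)}{3q-2}}\,,
\end{align*}
valid for $\lambda\in\mathcal{S}_\beta$ and $\beta\in(0,\beta_2)$ once $\beta_2$ is fixed so that all the above absorptions are justified. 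For part (1), applying this with $f=0$ shows the bracketed coefficient to be strictly positive on $\mathcal{S}_\beta$, hence any $v\in D(\mathbb{A}_V)$ solving $(\lambda+\mathbb{A}_V)v=0$ must vanish; this rules out eigenvalues in $\mathcal{S}_\beta$, and combined with the spectral structure $\sigma(-\mathbb{A}_V)=(-\infty,0]\cup\sigma_{{\rm disc}}(-\mathbb{A}_V)$ recalled in the introduction yields $\mathcal{S}_\beta\subset\rho(-\mathbb{A}_V)$.

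For part (2), on the further restricted region $\mathcal{S}_\beta\cap\mathcal{B}_{\frac12 e^{-\frac{1}{6\beta}}}(0)^{{\rm c}}$ the bracketed factor is bounded below by $c\,|\lambda|$ with an absolute $c>0$, and the Gagliardo-Nirenberg exponent shuffle $\frac{3q-2}{2q}=\frac32-\frac1q$ then delivers the asserted bounds $\|v\|_{L^2}\leq C|\lambda|^{-3/2+1/q}\|f\|_{L^q}$ and, substituting back into the $\|\nabla v\|^2$ estimate, $\|\nabla v\|_{L^2}\leq C|\lambda|^{-1+1/q}\|f\|_{L^q}$ with $\beta$-independent $C$. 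The main obstacle is the first step: the polar Hardy inequality \eqref{polar.grad.n'} collapses precisely at $|n|=1$, so the logarithmic correction together with the careful thin-annulus trace estimate is essential, and this log-Hardy bound is the genuine analytical input of the proposition beyond the generic energy method.
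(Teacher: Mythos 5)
Your first two steps reproduce the paper's argument. The paper likewise replaces the unavailable Hardy inequality by the bound $\beta|\langle(\mathrm{rot}\,v)_n, v_{r,n}/|x|\rangle_{L^2(D)}|\le \frac{\beta}{T}\|v\|_{L^2}\|\nabla v\|_{L^2}+\beta\,\Theta(T)\|\nabla v\|_{L^2}^2$ with $\Theta(T)\approx\log T$ (quoted from \cite[Lemma 3.26]{Ma1}, where it is proved by extending $v$ by zero and integrating radially from $|x|=\tfrac12$; your thin-annulus trace argument is an equally valid route, and your log-Hardy computation is correct), and then absorbs via Young's inequality with $T=e^{1/(12\beta)}$, which is exactly where the constant $12e^{1/e}\beta^2e^{-1/(6\beta)}$ comes from. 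Part (1) is fine; your explicit appeal to the spectral structure $\sigma(-\mathbb{A}_V)=(-\infty,0]\cup\sigma_{\rm disc}(-\mathbb{A}_V)$ is precisely the (implicit) content of the paper's one-line conclusion there.

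The gap is in part (2). The coefficient $\mathrm{Re}(\lambda)+|\mathrm{Im}(\lambda)|-12e^{1/e}\beta^2e^{-1/(6\beta)}$ in your summed inequality is \emph{not} bounded below by $c|\lambda|$ on $\mathcal{S}_\beta\cap\mathcal{B}_{\frac12 e^{-1/(6\beta)}}(0)^{\rm c}$: take $\mathrm{Re}(\lambda)=-M$ and $|\mathrm{Im}(\lambda)|=M+24e^{1/e}\beta^2e^{-1/(6\beta)}$ with $M$ large; then $\lambda\in\mathcal{S}_\beta$ and $|\lambda|\sim\sqrt2\,M\to\infty$, yet the coefficient stays frozen at $12e^{1/e}\beta^2e^{-1/(6\beta)}$. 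So your deduction of $\|v\|_{L^2}\le C|\lambda|^{-3/2+1/q}\|f\|_{L^q}$ fails for $\lambda$ with large negative real part near the boundary of $\mathcal{S}_\beta$. The repair is the case-splitting the paper performs: for $\mathrm{Re}(\lambda)\ge0$ one indeed has $\mathrm{Re}(\lambda)+|\mathrm{Im}(\lambda)|\ge|\lambda|$ and $12e^{1/e}\beta^2e^{-1/(6\beta)}\le 24e^{1/e}\beta^2|\lambda|\le\frac{|\lambda|}{2}$, so your argument goes through; for $\mathrm{Re}(\lambda)<0$ one must instead return to the imaginary-part energy identity \eqref{est2.prop.general.energy.est.resol.} on its own, whose coefficient of $\|v\|_{L^2}^2$ is $|\mathrm{Im}(\lambda)|-\frac{\beta}{2T^2\Theta(T)}\ge\frac12|\mathrm{Im}(\lambda)|\ge\frac{1}{2\sqrt2}|\lambda|$ (using that $|\lambda|\le\sqrt2\,|\mathrm{Im}(\lambda)|$ in this region), and bound $\|\nabla v\|_{L^2}^2$ separately from the summed inequality. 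With that fix, your exponent bookkeeping for both resolvent estimates is correct.
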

%%%%%%%%%%%%%%%%%%%%
%%%%%%%%%%%%%%%%%%%%
%
\begin{proof}
(1) Let us denote the function space $L^q(\Omega)$ by $L^q$ in this proof to simplify notation. Let $|n|=1$, $\beta\in (0,\beta_2)$, and $v \in D(\mathbb{A}_V)$ solve \eqref{RS}. Define a function $\Theta=\Theta(T)$ by
\begin{align}
\Theta(T) \,=\, \int_0^T \frac{1}{\tau} e^{-\frac{1}{\tau}} \dd \tau\,,~~~~T>e\,,
\label{ThetaT}
\end{align}
which satisfies the following lower and upper bounds:
\begin{align}
e^{-\frac{1}{e}} \log T \le \Theta(T) \le \log T\,,~~~~T>e\,,
\label{est.ThetaT}
\end{align}
which can be easily checked. Then, as is shown in \cite[Lemma 3.26]{Ma1}, we have
\begin{align}
\beta \big|\big\langle ({\rm rot}\,v)_{n}, \frac{v_{r,n}}{|x|} \big\rangle_{L^2(D)}\big|
\le
\frac{\beta}{T} \| v\|_{L^2}\ \| \nabla v\|_{L^2}
+ \beta \Theta(T)
\| \nabla v\|_{L^2}^2\,,~~~~T>e\,.
\label{est.logT}
\end{align}
The proof is done by extending $v \in D(\mathbb{A}_V)$ by zero to the whole space $\R^2$ and using the nondegenerate condition $\{x\in\R^2~|~|x|\le \frac12\} \subset \Omega^{\rm c}$. Then the Young inequality yields
\begin{align}\label{est.T}
\frac{\beta}{T} \| v\|_{L^2} \| \nabla v\|_{L^2} 
& \le
\frac{\beta \Theta(T)}{2} \| \nabla v\|_{L^2}^2
+ \frac{\beta}{2T^2 \Theta(T)} \| v\|_{L^2}^2\,.
\end{align}
Inserting \eqref{est.logT} and \eqref{est.T} into 
\eqref{est1.prop.general.energy.est.resol.} and \eqref{est2.prop.general.energy.est.resol.} in Proposition \ref{prop.general.energy.est.resol.}, we see that
\begin{align}
&\big( {\rm Re}(\lambda) 
- \frac{\beta}{2T^2 \Theta(T)} \big) 
\|v\|_{L^2}^2
+ (\frac34 - \frac{3\beta \Theta(T)}{2}) \| \nabla v\|_{L^2}^2 
\le C \|f\|_{L^q}^{\frac{2q}{3q-2}}
\|v\|_{L^2}^{\frac{4(q-1)}{3q-2}}\,,
\label{est1.prop1.laege.lambda.energy.est.resol.} \\
& \big( |{\rm Im}(\lambda)| 
- \frac{\beta}{2T^2 \Theta(T)} \big) 
\| v\|_{L^2}^2 
\le 
(\frac14 + \frac{3\beta \Theta(T)}{2})
\| \nabla v\|_{L^2}^{2} 
+ C \|f\|_{L^q}^{\frac{2q}{3q-2}} 
\|v\|_{L^2}^{\frac{4(q-1)}{3q-2}}\,.
\label{est2.prop1.laege.lambda.energy.est.resol.}
\end{align}
Then \eqref{est1.prop1.laege.lambda.energy.est.resol.} and \eqref{est2.prop1.laege.lambda.energy.est.resol.} lead to
\begin{align}
&\big(|{\rm Im}(\lambda)| + {\rm Re}(\lambda) - \frac{\beta}{T^2 \Theta(T)} \big)
\|v\|_{L^2}^2
+ (\frac12 - 3\beta \Theta(T)) \| \nabla v\|_{L^2}^{2} \nonumber \\
& \le
C \|f\|_{L^q}^{\frac{2q}{3q-2}}
\|v\|_{L^2}^{\frac{4(q-1)}{3q-2}}\,.
\label{est3.prop1.laege.lambda.energy.est.resol.}
\end{align}
Now let us take $T=e^{\frac{1}{12\beta}}$. Since $T>e$ by the condition $\beta\in(0,\frac{1}{12})$, from \eqref{est.ThetaT} we have 
\begin{align}
3\beta \Theta(T) \le 3  \beta \log T = \frac14~~~~~~
{\rm and}~~~~~~
\frac{\beta}{T^2 \Theta(T)} 
\le 
\frac{e^{\frac{1}{e}} \beta}{T^2 \log T} 
\,=\,
12 e^{\frac{1}{e}} \beta^2 e^{-\frac{1}{6\beta}}\,.
\label{est4.prop1.laege.lambda.energy.est.resol.}
\end{align}
By inserting \eqref{est4.prop1.laege.lambda.energy.est.resol.} into \eqref{est3.prop1.laege.lambda.energy.est.resol.} we obtain the assertion $\mathcal{S}_\beta \subset \rho(-\mathbb{A}_V)$. \\
\noindent (2) Let {\color{black} $\lambda\in \mathcal{S}_\beta \cap \mathcal{B}_{\frac12 e^{-\frac{1}{6\beta}}}(0)^{{\rm c}}$. If additionally $\lambda\in\{z\in\C~|~ {\rm Re}(z) < 0\}$ then we have}
\begin{align*}
|{\rm Im}(\lambda)| \ge \frac{\beta}{T^2 \Theta(T)}~~~~~~
{\rm and}~~~~~~
|{\rm Im}(\lambda)| \le |\lambda| \le \sqrt{2} |{\rm Im}(\lambda)|\,.
\end{align*}
Then we see from \eqref{est2.prop1.laege.lambda.energy.est.resol.} and \eqref{est3.prop1.laege.lambda.energy.est.resol.} that, 
\begin{align}
|\lambda| \| v\|_{L^2}^2
& \le 
\frac{6\sqrt{2}}{8} \| \nabla v\|_{L^2}^{2} 
+ 2\sqrt{2} C \|f\|_{L^q}^{\frac{2q}{3q-2}} 
\|v\|_{L^2}^{\frac{4(q-1)}{3q-2}}\,,
\label{est5.prop1.laege.lambda.energy.est.resol.} \\
\| \nabla v\|_{L^2}^{2} 
&\le 4C \|f\|_{L^q}^{\frac{2q}{3q-2}} \|v\|_{L^2}^{\frac{4(q-1)}{3q-2}}\,,
\label{est6.prop1.laege.lambda.energy.est.resol.}
\end{align}
where the constant $C$ is independent of $\beta$
{\color{black} and $d$}. 
{\color{black} On the other hand, if additionally $\lambda\in\{z\in\C~|~ {\rm Re}(z) \ge 0\}$ then we have from \eqref{est4.prop1.laege.lambda.energy.est.resol.}, 
\begin{align*}
|{\rm Im}(\lambda)| + {\rm Re}(\lambda) - \frac{\beta}{T^2 \Theta(T)}
\ge 
|\lambda| - 12 e^{\frac{1}{e}} \beta^2 e^{-\frac{1}{6\beta}} 
\ge \frac{|\lambda|}{2}\,,
\end{align*}
since $12 e^{\frac{1}{e}} \beta^2 e^{-\frac{1}{6\beta}} \le 24 e^{\frac{1}{e}} \beta^2 |\lambda| \le \frac{|\lambda|}{2}$ holds by $\beta\in(0,\frac{1}{12})$. Then from \eqref{est3.prop1.laege.lambda.energy.est.resol.} we see that,
\begin{align}
|\lambda| \| v\|_{L^2}^2 + \frac12 \| \nabla v\|_{L^2}^{2}
& \le 
2C \|f\|_{L^q}^{\frac{2q}{3q-2}} \|v\|_{L^2}^{\frac{4(q-1)}{3q-2}}
\label{est7.prop1.laege.lambda.energy.est.resol.}\,,
\end{align}
where the constant $C$ is independent of $\beta$} 
{\color{black} and $d$}.
The estimates in \eqref{est.laege.lambda.energy.est.resol.} follow from
{\color{black} \eqref{est5.prop1.laege.lambda.energy.est.resol.} and  \eqref{est6.prop1.laege.lambda.energy.est.resol.}, and \eqref{est7.prop1.laege.lambda.energy.est.resol.}.} This completes the proof of Proposition \ref{prop.laege.lambda.energy.est.resol.}.
\end{proof}
%
%%%%%%%%%%%%%%%%%%%%%%%%%%%%%%%%%%%%%%%%%%%%%%%%%%
%%%%%%%%%%%%%%%%%%%%%%%%%%%%%%%%%%%%%%%%%%%%%%%%%%
\section{Resolvent analysis in region exponentially close to the origin}
\label{sec.RSed}
%%%%%%%%%%%%%%%%%%%%%%%%%%%%%%%%%%%%%%%%%%%%%%%%%%
%%%%%%%%%%%%%%%%%%%%%%%%%%%%%%%%%%%%%%%%%%%%%%%%%%
%
The resolvent analysis in Proposition \ref{prop.laege.lambda.energy.est.resol.} is applicable to the problem \eqref{RS} only when the resolvent parameter $\lambda\in\C$ satisfies $|\lambda|\ge e^{-\frac{1}{a\beta}}$ for some $a\in(1,\infty)$, and we have taken $a=6$ in the proof for simplicity. This restriction is essentially due to the unavailability of the Hardy inequality in two-dimensional exterior domains. In fact, in the proof of Proposition \ref{prop.laege.lambda.energy.est.resol.}, we rely on the following inequality singular in $T\gg 1$:
\begin{align*}
\big| \sum_{|n|=1} \big\langle ({\rm rot}\,v)_{n}, \frac{v_{r,n}}{|x|} \big\rangle_{L^2(D)} \big|
\le
\frac{1}{T} \| v\|_{L^2(\Omega)}\ \| \nabla v\|_{L^2(\Omega)}
+ \log T \| \nabla v\|_{L^2(\Omega)}^2\,,
\end{align*}
as a substitute for the Hardy inequality, and this leads to the lack of information about the spectrum of $-\mathbb{A}_V$ in the region $0<|\lambda| \le O(e^{-\frac{1}{\beta}})$. Here we set $D=\{x\in\R^2~|~|x|>1\}$.

To perform the resolvent analysis in the region exponentially close to the origin, we firstly observe that a solution $(v, q)$ to \eqref{RS} satisfies the next problem in the exterior disk $D$:
\begin{equation}\tag{RS$^{\rm ed}$}\label{RSed}
\left\{
\begin{aligned}
\lambda w - \Delta w + \beta U^\bot {\rm rot}\,w + \nabla r
&\,=\,(-{\rm div}\,(R\otimes v + v\otimes R) + f )|_{D}\,,~~~~x\in D\,, \\
{\rm div}\,w &\,=\, 0\,,~~~~x\in D\,, \\
w|_{\partial D} &\,=\,v|_{\partial D}\,.
\end{aligned}\right.
\end{equation}
Then thanks to the symmetry, we can use a solution formula to \eqref{RSed} by using polar coordinates, and study the a priori estimate for $w=v|_{D}$. To make calculation simple, we decompose the linear problem \eqref{RSed} into three parts \eqref{RSed.f}, \eqref{RSed.divF}, and \eqref{RSed.b}, which are respectively introduced in Subsections \ref{subsec.RSed.f}, \ref{subsec.RSed.divF}, and \ref{subsec.RSed.b}. Then we derive the estimates to each problem in the corresponding subsections, and finally we collect them in Subsection \ref{apriori2} in order to establish the resolvent estimate to \eqref{RS} when $0<|\lambda|<e^{-\frac{1}{6\beta}}$. 
%%%%%%%%%%%%%%%%%%%%%%%%%%%%%%
%%%%%%%%%%%%%%%%%%%%%%%%%%%%%%
\subsection{Problem I: External force $f$ and Dirichlet condition}
\label{subsec.RSed.f}
%%%%%%%%%%%%%%%%%%%%%%%%%%%%%%
%%%%%%%%%%%%%%%%%%%%%%%%%%%%%%
%
In this subsection we study the following resolvent problem for $(w,r)=(w^{\rm ed}_{f}, r^{\rm ed}_{f})$:
\begin{equation}\tag{RS$^{\rm ed}_{f}$}\label{RSed.f}
\left\{
\begin{aligned}
\lambda w - \Delta w + \beta U^{\bot} {\rm rot}\,w + \nabla r
& \,=\, 
f\,,~~~~x \in D\,, \\
{\rm div}\,w &\,=\, 0\,,~~~~x \in D\,, \\
w|_{\partial D} & \,=\, 0\,.
\end{aligned}\right.
\end{equation}
Especially, we are interested in the estimates for the $\pm 1$-Fourier mode of $w^{\rm ed}_{f}$. Although the $L^p\mathchar`-L^q$ estimates to \eqref{RSed.f} are already proved in \cite{Ma1}, we revisit this problem here in order to study the $\beta$-dependence in these estimates, and it is one of the most important steps for the energy computation when $0<|\lambda|<e^{-\frac{1}{6\beta}}$.

Let us recall the representation formula established in \cite{Ma1} for the solution to \eqref{RSed.f} in each Fourier mode. Fix $n \in \Z\setminus\{0\}$ and $\lambda\in\C\setminus \overline{\R_{-}}$, $\,\overline{\R_{-}}=(-\infty,0]$. Then, by applying the Fourier mode projection $\mathcal{P}_n$ to \eqref{RSed.f} and using the invariant property $\mathcal{P}_n(U^\bot {\rm rot}\,w)=U^\bot {\rm rot}\,\mathcal{P}_n w$ in \cite[Lemma 2.9]{Ma1}, we observe that the $n$-mode $w_n=\mathcal{P}_n w$ solves
\begin{equation}\tag{RS$^{\rm ed}_{f,n}$}\label{nFourier.RSed.f}
\left\{
\begin{aligned}
\lambda w_n - \Delta w_n + \beta U^{\bot} {\rm rot}\,w_n + \mathcal{P}_n \nabla r
& \,=\, 
{\color{black} \mathcal{P}_n f}
\,,~~~~x \in D\,, \\
{\rm div}\,w_n &\,=\, 0\,,~~~~x \in D\,, \\
w_n|_{\partial D} & \,=\, 0\,.
\end{aligned}\right.
\end{equation}
Since the formula in \cite{Ma1} is written in terms of some special functions, we introduce these definitions here. The modified Bessel function of first kind $I_\mu(z)$ of order $\mu$ is defined as
\begin{align}\label{def.I}
I_\mu(z) 
\,=\, 
\big(\frac{z}{2}\big)^\mu 
\sum_{m=0}^{\infty} \frac{1}{m!\,\Gamma(\mu+m+1)} \big(\frac{z}{2}\big)^{2m}\,,
~~~~~~ z \in \C\setminus \overline{\R_{-}}\,,
\end{align}
where $z^\mu = e^{\mu {\rm Log}\,z}$ and ${\rm Log}\,z$ denotes the principal branch to the logarithm of $z\in\C\setminus \overline{\R_{-}}$, and the function $\Gamma(z)$ in \eqref{def.I} denotes the Gamma function. Next we define the modified Bessel function of second kind $K_\mu(z)$ of order $\mu\notin\Z$ in the following manner:
\begin{align}\label{def.K}
K_\mu(z) 
\,=\, 
\frac{\pi}{2}\,\frac{I_{-\mu}(z) - I_\mu(z)}{\sin{\mu \pi}}\,,~~~~~~ 
z \in \C\setminus \overline{\R_{-}}\,.
\end{align}
It is classical that $K_\mu(z)$ and $I_\mu(z)$ are linearly independent solutions to the ODE
\begin{align}\label{ode.bessel.func}
-\frac{\dd^2 \omega}{\dd z^2} 
- \frac{1}{z} \frac{\dd \omega}{\dd z} 
+ \big(1+\frac{\mu^2}{z^2} \big) \omega \,=\, 0\,,
\end{align}
and that 
{\color{black} their Wronskian} 
is $z^{-1}$. Applying the rotation operator ${\rm rot}$ to the first equation of \eqref{nFourier.RSed.f}, we find that $\omega=({\rm rot}\,w)_n=({\rm rot}\,w_n)e^{-in\theta}$ satisfies the ODE
\begin{align}\label{ode.nfourier.vorticity}
-\frac{\dd^2 \omega}{\dd r^2} - \frac{1}{r} \frac{\dd \omega}{\dd r} 
+ \big(\lambda + \frac{n^2+in\beta}{r^2} \big) \omega 
\,=\, ({\rm rot}\,f)_n\,,~~~~~~
r>1\,.
\end{align}
Hence, if we set 
\begin{align}\label{def.mu}
\mu_n \,=\, \mu_n(\beta) \,=\, (n^2+in\beta)^\frac12\,,~~~~~~
{\rm Re}(\mu_n)>0\,,
\end{align}
then $K_{\mu_n}(\sqrt{\lambda} r)$ and $I_{\mu_n}(\sqrt{\lambda} r)$ give linearly independent solutions to the homogeneous equation of \eqref{ode.nfourier.vorticity} and their Wronskian is $r^{-1}$. Here and in the following we always take the square root $\sqrt{z}$ so that ${\rm Re}(\sqrt{z})>0$ for $z\in\C\setminus \overline{\R_{-}}$. Furthermore, we set 
\begin{align}\label{def.F}
F_n(\sqrt{\lambda};\beta) 
\,=\,  \int_1^\infty s^{1-|n|} K_{\mu_n}(\sqrt{\lambda} s) \dd s\,,~~~~~~~~
\lambda\in\C\setminus \overline{\R_{-}}\,,
\end{align}
and denote by $\mathcal{Z}(F_n)$ the set of the zeros of $F_n(\sqrt{\lambda};\beta)$ lying in $\C\setminus\overline{\R_{-}}$;
\begin{align}\label{def.zeros.F}
\mathcal{Z}(F_n) \,=\, \{z\in\C\setminus\overline{\R_{-}}~|~ 
F_n(
{\color{black}
\sqrt{z}
}
;\beta)\,=\,0 \}\,.
\end{align}
Let $\lambda\in\C\setminus (\overline{\R_{-}} \cup \mathcal{Z}(F_n))$. Then, from the argument in \cite[Section 3]{Ma1}, we have the following representation formula for  $w^{\rm ed}_{f,n}$ solving \eqref{nFourier.RSed.f}:
\begin{align}\label{rep.velocity.nFourier.RSed.f}
& w^{\rm ed}_{f,n}
\,=\,
-\frac{c_{n,\lambda}[f_n]}{F_n(\sqrt{\lambda};\beta)} 
V_n [K_{\mu_n}(\sqrt{\lambda}\,\cdot\,)]
+ V_n[\Phi_{n,\lambda}[f_n]]\,.
\end{align}
Here $V_n[\,\cdot\,]$ is the Biot-Savart law in \eqref{def.V_n} and the function $\Phi_{n,\lambda}[f_n]$ is defined as
\begin{equation}\label{Phi.nFourier.RSed.f}
\begin{aligned}
\Phi_{n,\lambda}[f_n](r) 
&\,=\, 
-K_{\mu_n}(\sqrt{\lambda} r)
\bigg( \int_{1}^{r} I_{\mu_n}(\sqrt{\lambda} s)\,
\big( \mu_n f_{\theta,n}(s) + in f_{r,n}(s) \big) \dd s \\
& ~~~~~~~~~~~~~~~~~~~~~~~~~~~~~~~~~~~~
+ \sqrt{\lambda} \int_{1}^{r} sI_{\mu_n+1}(\sqrt{\lambda} s)\,f_{\theta,n}(s) \dd s
\bigg) \\
& \quad
+ I_{\mu_n}(\sqrt{\lambda} r)
\bigg( \int_{r}^{\infty} K_{\mu_n}(\sqrt{\lambda} s)\,
\big( \mu_n f_{\theta,n}(s) - in f_{r,n}(s) \big) \dd s \\
& ~~~~~~~~~~~~~~~~~~~~~~~~~~~~~~~~~~~~
+ \sqrt{\lambda} \int_{r}^{\infty} sK_{\mu_n-1}(\sqrt{\lambda} s)\,f_{\theta,n}(s) \dd s
\bigg) \,,
\end{aligned}
\end{equation}
while the constant $c_{n,\lambda}[f_n]$ is defined as
\begin{align}\label{const.nFourier.RSed.f}
c_{n,\lambda}[f_n] 
\,=\, 
\int_1^\infty s^{1-|n|} \Phi_{n,\lambda}[f_n](s) \dd s\,.
\end{align}
Moreover, the vorticity ${\rm rot}\,w^{\rm ed}_{f,n}$ is represented as
\begin{align}\label{rep.vorticity.nFourier.RSed.f}
{\rm rot}\,w^{\rm ed}_{f,n}
\,=\, 
-\frac{c_{n,\lambda}[f_n]}{F_n(\sqrt{\lambda};\beta)} 
K_{\mu_n}(\sqrt{\lambda} r) e^{in\theta}
+ \Phi_{n,\lambda}[f_n](r) e^{in\theta}\,.
\end{align}
We shall estimate $w^{\rm ed}_{f,n}$ and ${\rm rot}\,w^{\rm ed}_{f,n}$, represented respectively as in \eqref{rep.velocity.nFourier.RSed.f} and \eqref{rep.vorticity.nFourier.RSed.f}, when $|n|=1$ in the following two subsections. Our main tools for the proof are the asymptotic analysis of $\mu_n=\mu_n(\beta)$ for small $\beta$ in Appendix \ref{app.est.mu}, and the detailed estimates to the modified Bessel functions in Appendix \ref{app.est.bessel}. Before going into details, let us state the estimate of $F_n(\sqrt{\lambda};\beta)$ in a region exponentially close to the origin with respect to $\beta$. We denote by $\Sigma_{\phi}$ the sector $\{z\in\C\setminus\{0\}~|~|{\rm arg}\,z|<\phi\}$, $\phi\in(0,\pi)$, in the complex plane $\C$, and by $\mathcal{B}_\rho(0)\subset\C$ the disk centered at the origin with radius $\rho>0$. 
%
%%%%%%%%%%%%%%%%%%%%
%%%%%%%%%%%%%%%%%%%%
\begin{proposition}\label{prop.est.F}
Let $|n|=1$. Then for any $\epsilon \in (0,\frac{\pi}{2})$ there is a positive constant $\beta_0$ depending only on $\epsilon$ such that as long as $\beta\in (0,\beta_0)$ and $\lambda \in \Sigma_{\pi-\epsilon}\cap\mathcal{B}_{e^{-\frac{1}{6\beta}}}(0)$ we have
\begin{align}\label{est1.prop.est.F} 
\frac{1}{|F_n(\sqrt{\lambda};\beta)|} 
\le C |\lambda|^{\frac{{\rm Re}(\mu_n)}{2}}\,,
\end{align}
where the constant $C$ depends only on $\epsilon$. In particular, we have
$\mathcal{Z}(F_n)\cap\mathcal{B}_{e^{-\frac{1}{6\beta}}}(0)=\emptyset$.
\end{proposition}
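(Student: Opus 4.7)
The plan is to reduce the proof to an explicit control on the leading-order behavior of $F_n$ as $\lambda\to 0$. After substituting $z=\sqrt{\lambda}\,s$, we write
\begin{align*}
\sqrt{\lambda}\,F_n(\sqrt{\lambda};\beta)\,=\,\int_{\sqrt{\lambda}}^\infty K_{\mu_n}(z)\,\dd z\,,
\end{align*}
and the tail $\int_1^\infty K_{\mu_n}(z)\,\dd z$ is bounded by an absolute constant due to the exponential decay of $K_{\mu_n}$. For the near-origin piece I would substitute the series expansion
\begin{align*}
K_{\mu_n}(z)\,=\,\frac{\Gamma(\mu_n)}{2}\Bigl(\frac{z}{2}\Bigr)^{-\mu_n}+\frac{\Gamma(-\mu_n)}{2}\Bigl(\frac{z}{2}\Bigr)^{\mu_n}+O\bigl(z^{2-{\rm Re}(\mu_n)}\bigr)\,,
\end{align*}
valid since $\mu_n\notin\Z$, and integrate term by term. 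Writing $\tilde\nu:=\mu_n-1$, the asymptotics from Appendix \ref{app.est.mu} give $|\tilde\nu|\sim\beta$ and ${\rm Re}(\tilde\nu)>0$; both leading coefficients develop a $1/\tilde\nu$ singularity, coming from the factor $1/(1-\mu_n)=-1/\tilde\nu$ in the first primitive and from $\Gamma(-\mu_n)\sim 1/\tilde\nu$ in the coefficient of the second. Noting that $|\lambda^{(1+\mu_n)/2}|\sim|\lambda|$ is negligible in our region, this yields a representation
\begin{align*}
\sqrt{\lambda}\,F_n(\sqrt{\lambda};\beta)\,=\,\frac{c_1(\beta)\,\lambda^{-\tilde\nu/2}+c_2(\beta)}{\tilde\nu}+O(1)\,,
\end{align*}
with $c_1(\beta),c_2(\beta)$ bounded and bounded away from zero uniformly for small $\beta$.

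Setting $\lambda^{-\tilde\nu/2}=e^\xi$ with $\xi=-\tilde\nu\log\lambda/2$, a short check shows $|e^\xi|$ is comparable to the target quantity $|\lambda|^{-{\rm Re}(\tilde\nu)/2}$ up to a factor bounded by $e^{|{\rm Im}(\tilde\nu)|(\pi-\epsilon)/2}=e^{O(\beta)}$, so the claim $1/|F_n|\le C|\lambda|^{{\rm Re}(\mu_n)/2}$ reduces to the inequality
\begin{align*}
|c_1 e^\xi+c_2|\,\ge\, c\,|\tilde\nu|\max(|e^\xi|,1)\,.
\end{align*}
I would prove this by a case split on $|e^\xi|$: when $|e^\xi|$ is much smaller or much larger than $|c_2/c_1|$ the dominant term clearly beats the right-hand side (using $|\tilde\nu|\sim\beta\ll 1$). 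The remaining \emph{nearly-resonance regime}, where $|e^\xi|$ is of order one and the two numerator terms could cancel, is the main point of the proof.

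To handle the nearly-resonance regime I would use the hypothesis $|\lambda|<e^{-1/(6\beta)}$, equivalently $|\log|\lambda||>1/(6\beta)$, to enforce geometric rigidity on $\xi$. A direct computation shows ${\rm Im}(\xi)\approx-{\rm Im}(\tilde\nu)\log|\lambda|/2\approx \pm(\beta/4)|\log|\lambda||$, which is already at least $1/24$ in absolute value at the boundary and grows linearly in $|\log|\lambda||$; if it lands within $O(\beta)$ of a nonzero multiple $2\pi k$, then backsolving yields $|\log|\lambda||\sim|k|/\beta$, and inserting this into ${\rm Re}(\xi)\approx-{\rm Re}(\tilde\nu)\log|\lambda|/2+O(\beta)$ forces $|{\rm Re}(\xi)|\gtrsim|k|\beta\gtrsim\beta$. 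The case $k=0$ is excluded because it would require $|\lambda|\ge 1$. Hence in every subcase $|c_1 e^\xi+c_2|\gtrsim\beta$, and combined with the prefactor $1/|\tilde\nu|\sim 1/\beta$ this produces a uniform $O(1)$ lower bound for $|\sqrt{\lambda}F_n|$, which is comparable to the target. The non-vanishing conclusion $\mathcal{Z}(F_n)\cap\mathcal{B}_{e^{-1/(6\beta)}}(0)=\emptyset$ then follows immediately from the strict positivity of the lower bound. The main obstacle I anticipate is making the resonance/cancellation analysis quantitative while uniformly controlling the $O(1)$ error coming from the Bessel tail and from the higher-order terms in the expansion at the origin, since those errors are of the same order as the $\beta$-scale margin in the intermediate regime.
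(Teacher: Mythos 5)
Your strategy is essentially the paper's (the actual proof is Lemma \ref{lem.est.F} in Appendix \ref{app.proof.prop.est.F}): expand $F_n$ near $\lambda=0$ into a term of the form $\frac{1}{\zeta_n}\bigl(c_1\lambda^{-\zeta_n/2}+c_2\bigr)$ with $\zeta_n=\mu_n-1$, then bound the possible cancellation by a case split on the phase and log-modulus of $\lambda^{-\zeta_n/2}$, using $|\lambda|<e^{-\frac{1}{6\beta}}$ to push the phase at least $O(1)$ away from $0$, and the fact that phase and log-modulus are both linear in the constrained pair $(\log|\lambda|,\arg\lambda)$ with $|\arg\lambda|<\pi-\epsilon$ to rule out the higher resonances $2\pi k$, $k\neq 0$. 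Your dichotomy matches the paper's split between its quantities $\Omega$ (phase) and $h$ (log-modulus), and your observation that a phase resonance at $2\pi k$, $k\neq0$, backsolves to $|\log|\lambda||\sim|k|/\beta$ and hence to a log-modulus offset $\gtrsim|k|\beta$ is exactly the mechanism at work.

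The gap is the one you name yourself at the end, and in your set-up it is not a removable technicality: your additive $O(1)$ error (the tail $\int_1^\infty K_{\mu_n}(z)\,\dd z$ plus the integrated series remainder at the origin) competes on equal footing with the lower bound $\frac{1}{|\tilde\nu|}\cdot c\beta=O(1)$ that the resonance analysis yields in the annulus $e^{-c/\beta^2}\lesssim|\lambda|<e^{-1/(6\beta)}$, and nothing in the argument makes the former smaller than the latter. The paper's fix, imported from \cite[Lemmas 3.31--3.33]{Ma1}, is to absorb every $\lambda$-independent $O(1)$ contribution into the constant term itself, writing $\zeta_nF_n=\frac{\Gamma(1+\zeta_n)}{\sqrt\lambda}(\frac{\sqrt\lambda}{2})^{-\zeta_n}\bigl(1-(e^{\gamma(\zeta_n)}\frac{\sqrt\lambda}{2})^{\zeta_n}+R_n(\lambda)\bigr)$ with $|R_n(\lambda)|\le C_1|\lambda|^{{\rm Re}(\mu_n)/2}$, so the surviving error is exponentially small and never touches the $O(\beta)$ margins. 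This moves the resonance point from $\lambda^{-\zeta_n/2}=1$ to $\xi=\zeta_n(\gamma(\zeta_n)-\log 2)+2\pi i k$, i.e.\ by $O(\beta)$ in phase and (since $\gamma-\log 2$ is real and ${\rm Re}(\zeta_n)=O(\beta^2)$) only $O(\beta^2)$ in log-modulus; your estimate $|{\rm Re}(\xi)|\gtrsim|k|\beta$ must be upgraded to a bound on the distance to this shifted target, which does survive but has to be checked. Two smaller points: the individual terms $\frac{\Gamma(-\mu_n)}{2}(\frac z2)^{\mu_n}$ and the first correction of $\frac{\Gamma(\mu_n)}{2}(\frac z2)^{-\mu_n}$ each carry a hidden $1/\tilde\nu$ and are uniformly $O(|z|^{2-{\rm Re}(\mu_n)}(1+|\log|z||))$ only after being grouped, as in \eqref{est1.lem.est1.bessel} and \eqref{est3.lem.est1.bessel}; and the exclusion of $k=0$ follows from $|\log|\lambda||>\frac{1}{6\beta}$ forcing $|{\rm Im}(\xi)|\gtrsim 1$, not from "$|\lambda|\ge 1$".
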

%%%%%%%%%%%%%%%%%%%%
%%%%%%%%%%%%%%%%%%%%
\begin{proof}
The assertion \label{est1.prop.est.F} follows from Lemma \ref{lem.est.F} in Appendix \ref{app.proof.prop.est.F}, since we have $e^{-\frac{1}{6\beta}}<\beta^4$ for any $\beta\in(0,1)$. See Appendix \ref{app.proof.prop.est.F} for the proof of Lemma \ref{lem.est.F}.
\end{proof}
%
%%%%%%%%%%%%%%%%%%%%%%%%%%%%%%%%%%%%%%%%
%%%%%%%%%%%%%%%%%%%%%%%%%%%%%%%%%%%%%%%%
\subsubsection{Estimates of the velocity solving \eqref{nFourier.RSed.f} with $|n|=1$}
\label{subsec.RSed.f.velocity}
%%%%%%%%%%%%%%%%%%%%%%%%%%%%%%%%%%%%%%%%
%%%%%%%%%%%%%%%%%%%%%%%%%%%%%%%%%%%%%%%%
%
In this subsection we derive the estimates for the solution $w^{\rm ed}_{f,n}$ to \eqref{nFourier.RSed.f} which is now represented as \eqref{rep.velocity.nFourier.RSed.f}. The novelty of the following result is the investigation on the $\beta$-singularity appearing in each estimate. Let $\beta_0$ be the constant in Proposition \ref{prop.est.F}.
%
%%%%%%%%%%%%%%%%%%%%
%%%%%%%%%%%%%%%%%%%%
\begin{theorem}\label{thm.est.velocity.RSed.f}
Let $|n|=1$ and $1\le q<p\le\infty$ or $1<q\le p<\infty$. Fix $\epsilon\in(0,\frac{\pi}{2})$. Then there is a positive constant $C=C(q,p,\epsilon)$ independent of $\beta$ such that the following statement holds. Let $f\in C^\infty_0(D)^2$ and $\beta\in(0,\beta_0)$. Then for $\lambda\in\Sigma_{\pi-\epsilon} \cap \mathcal{B}_{e^{-\frac{1}{6\beta}}}(0)$ we have 
\begin{align}
\|w^{\rm ed}_{f,n}\|_{L^p(D)}
& \le \frac{C}{\beta^2}
|\lambda|^{-1+\frac1q-\frac1p}
\|f\|_{L^q(D)}\,, 
\label{est1.thm.est.velocity.RSed.f} \\
\big\|\frac{w^{\rm ed}_{f,n}}{|x|}\big\|_{L^2(D)}
& \le \frac{C}{\beta}
\Big(\frac{1}{\beta^2}
+ |\log {\rm Re}(\sqrt{\lambda})|^{\frac12} \Big)
|\lambda|^{-1+\frac1q}
\|f\|_{L^q(D)}\,.
\label{est2.thm.est.velocity.RSed.f} 
\end{align}
Moreover, \eqref{est1.thm.est.velocity.RSed.f} and \eqref{est2.thm.est.velocity.RSed.f} hold all for $f\in L^q(D)^2$. 
\end{theorem}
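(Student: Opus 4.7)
The plan is to start directly from the explicit representation formula \eqref{rep.velocity.nFourier.RSed.f} and split
$$w^{\rm ed}_{f,n} \,=\, W_1 + W_2\,, \qquad W_1 \,:=\, -\frac{c_{n,\lambda}[f_n]}{F_n(\sqrt{\lambda};\beta)}\, V_n[K_{\mu_n}(\sqrt{\lambda}\,\cdot\,)]\,, \qquad W_2 \,:=\, V_n[\Phi_{n,\lambda}[f_n]]\,,$$
so that $W_1$ carries the "resonant" part responsible for enforcing the Dirichlet boundary condition (recall that by construction $d_n[-c_{n,\lambda} K_{\mu_n}/F_n + \Phi_{n,\lambda}] = 0$, hence no $1/r$ singularity in the Biot-Savart stream function), while $W_2$ is the bulk part driven directly by $f$. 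Each piece is then analyzed in polar coordinates via the Biot-Savart formulas $V_{r,n} = in\psi_n/r$, $V_{\theta,n} = -d\psi_n/dr$, using the asymptotics $\mu_n(\beta) = |n| + O(\beta)$ recalled in Appendix \ref{app.est.mu} and the pointwise Bessel-function bounds for $I_{\mu_n}(z)$, $K_{\mu_n}(z)$ collected in Appendix \ref{app.est.bessel}.

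First I would derive pointwise estimates on $\Phi_{n,\lambda}[f_n](r)$ by inserting the four integrals in \eqref{Phi.nFourier.RSed.f} into standard $I_{\mu_n}(\sqrt\lambda s) \sim s^{\mu_n}$ and $K_{\mu_n}(\sqrt\lambda s) \sim s^{-\mu_n}$ near the origin, and exponentially decaying behavior for $s \gtrsim 1/\sqrt{|\lambda|}$, holding $|n|=1$. Integrating the resulting pointwise bounds against $s^{1-|n|} = 1$ over $(1,\infty)$ and applying H\"older's inequality against $\|f\|_{L^q(D)}$ then yields an estimate of the form $|c_{n,\lambda}[f_n]| \le C |\lambda|^{-\mathrm{Re}(\mu_n)/2 - 1 + 1/q} \|f\|_{L^q(D)}$, where the factor $|\lambda|^{-\mathrm{Re}(\mu_n)/2}$ is precisely canceled when combined with Proposition \ref{prop.est.F}'s bound $1/|F_n| \le C|\lambda|^{\mathrm{Re}(\mu_n)/2}$ in the coefficient of $W_1$.

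Next I would bound the Biot-Savart images. For $W_2$ the stream function is expressed via $\Phi_{n,\lambda}[f_n]$ with $d_n=c_{n,\lambda}[f_n]$; rewriting $\psi_n$ in the sum-of-two-convolutions form from Subsection \ref{biotsavart} and applying Young-type inequalities on the kernel (which, for $|n|=1$, behaves like $\min(r/s, s/r)$) produces the correct $L^p$-$L^q$ scaling with prefactor $|\lambda|^{-1+1/q-1/p}$. For $W_1$ the stream function is a constant multiple of $\psi_n[K_{\mu_n}(\sqrt\lambda\,\cdot)]$ with the $F_n/r$ term cancelled, and its pointwise bound follows from the Bessel asymptotics. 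Multiplying by the coefficient $|c_{n,\lambda}|/|F_n|$ and tracking the $\beta$-dependence through the expansion $\mu_n \approx |n| + i\beta/(2|n|)$ gives the $\beta^{-2}$ singularity advertised in \eqref{est1.thm.est.velocity.RSed.f}; the final density argument from $C^\infty_0(D)^2$ to $L^q(D)^2$ is standard.

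The main technical obstacle will be the weighted estimate \eqref{est2.thm.est.velocity.RSed.f}. The term $W_1/|x|$ in $L^2(D)$ is borderline: the leading behavior of $V_n[K_{\mu_n}(\sqrt{\lambda}\,\cdot)]$ is of order $|x|^{-1}$ for $1 \le |x| \le |\lambda|^{-1/2}$ and exponentially small beyond, so that $\int_{D} |W_1/|x||^2 \dd x$ picks up $\int_1^{|\lambda|^{-1/2}} r^{-1}\dd r \approx |\log\mathrm{Re}(\sqrt\lambda)|$. This is precisely the breakdown of the two-dimensional Hardy inequality that was flagged in the introduction and is the reason the $|n|=1$ mode must be treated by the explicit formula rather than the energy method. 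Balancing the resulting logarithmic contribution against the $1/\beta^2$ factor coming from the resonant coefficient produces the mixed bound $(1/\beta^3 + |\log\mathrm{Re}(\sqrt\lambda)|^{1/2}/\beta)|\lambda|^{-1+1/q}$. Closing this step will require sharp control of the transition between the two regimes of the Bessel functions at $r \sim |\lambda|^{-1/2}$, uniformly in $\beta \in (0,\beta_0)$ and $\lambda \in \Sigma_{\pi-\epsilon} \cap \mathcal{B}_{e^{-1/(6\beta)}}(0)$.
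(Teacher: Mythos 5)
Your overall strategy — splitting $w^{\rm ed}_{f,n}$ into the resonant part $W_1=-\tfrac{c_{n,\lambda}[f_n]}{F_n(\sqrt{\lambda};\beta)}V_n[K_{\mu_n}(\sqrt{\lambda}\,\cdot\,)]$ and the bulk part $W_2=V_n[\Phi_{n,\lambda}[f_n]]$, estimating each via Bessel asymptotics in the two regimes $r\lessgtr{\rm Re}(\sqrt{\lambda})^{-1}$, and interpolating — is exactly the paper's. But your power counting for $W_1$ does not close. You claim $|c_{n,\lambda}[f_n]|\le C|\lambda|^{-\frac{{\rm Re}(\mu_n)}{2}-1+\frac1q}\|f\|_{L^q}$ and let the factor $|\lambda|^{-\frac{{\rm Re}(\mu_n)}{2}}$ cancel against $1/|F_n|\le C|\lambda|^{\frac{{\rm Re}(\mu_n)}{2}}$. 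In fact the sharp bound is $|c_{n,\lambda}[f_n]|\le\frac{C}{\beta}|\lambda|^{-1+\frac1q}\|f\|_{L^q}$ with \emph{no} such factor (the $|\lambda|^{\pm{\rm Re}(\mu_n)/2}$ prefactors of $I_{\mu_n}$ and $K_{\mu_n}$ always appear paired inside $\Phi_{n,\lambda}$ and cancel; see \eqref{est1.cor1.est.velocity.RSed.f}). The singular factor that $1/|F_n|$ must absorb sits instead in the Biot–Savart image of the Bessel function itself: $\|V_n[K_{\mu_n}(\sqrt{\lambda}\,\cdot\,)]\|_{L^p}\sim\frac{1}{\beta}|\lambda|^{-\frac{{\rm Re}(\mu_n)}{2}-\frac1p}$, and this lower bound is genuine since $|V_n[K_{\mu_n}(\sqrt{\lambda}\,\cdot\,)](1)|\approx\beta^{-1}|\lambda|^{-{\rm Re}(\mu_n)/2}$. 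With your attribution the product $\tfrac{|c_{n,\lambda}|}{|F_n|}\|V_n[K_{\mu_n}]\|_{L^p}$ retains an uncancelled $|\lambda|^{-{\rm Re}(\mu_n)/2}\ge|\lambda|^{-1/2}$, which is fatal in the regime $|\lambda|<e^{-\frac{1}{6\beta}}$. Obtaining the correct $c_{n,\lambda}$ bound is precisely the content of the term-by-term analysis of the seventeen integrals $J^{(1)}_l$ in Lemmas \ref{lem1.est.velocity.RSed.f}–\ref{lem2.est.velocity.RSed.f}; it cannot be replaced by bounding $I$ and $K$ separately and applying H\"older once.

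The source of the logarithm in \eqref{est2.thm.est.velocity.RSed.f} is also misattributed. You place it in $W_1/|x|$ via $\int_1^{{\rm Re}(\sqrt{\lambda})^{-1}}r^{-1}\dd r$; but for $W_1$ that borderline integral is regularized by ${\rm Re}(\mu_n)-1\approx\beta^2/8>0$, giving $\|V_n[K_{\mu_n}]/|x|\|_{L^2}\le\frac{C}{\beta^2}|\lambda|^{-{\rm Re}(\mu_n)/2}$ with \emph{no} log (this is the $\beta^{-3}$ piece of \eqref{est2.thm.est.velocity.RSed.f}), while the logarithm comes from $W_2/|x|$, i.e.\ from the terms $J^{(1)}_l$, $l\in\{10,\dots,17\}$, which only satisfy $|J^{(1)}_l|\lesssim\beta^{-1}|\lambda|^{-1+1/q}r$ in the inner region (see \eqref{est3.cor1.est.velocity.RSed.f}). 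This is not cosmetic: a bound of the form $\frac{C}{\beta^2}|\lambda|^{-1+\frac1q}|\log{\rm Re}(\sqrt{\lambda})|^{\frac12}$ for the resonant piece is \emph{not} dominated by the right-hand side of \eqref{est2.thm.est.velocity.RSed.f} once $|\log{\rm Re}(\sqrt{\lambda})|\gg\beta^{-2}$, and the downstream argument (Remark \ref{rem.thm.est.velocity.RSed.f} and Proposition \ref{prop1.small.lambda.energy.est.resol.}) relies on the log being confined to the $\Phi_{n,\lambda}$-part so that it can be removed by pairing against the vorticities. Both points need to be repaired before the outline becomes a proof.
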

%%%%%%%%%%%%%%%%%%%%
%%%%%%%%%%%%%%%%%%%%
%
%%%%%%%%%%%%%%%%%%%%
%%%%%%%%%%%%%%%%%%%%
\begin{remark}\label{rem.thm.est.velocity.RSed.f}
The logarithmic factor $|\log {\rm Re}(\sqrt{\lambda})|$ in \eqref{est2.thm.est.velocity.RSed.f} cannot be removed in our analysis. This singularity might prevent us from closing the energy computation in view of  the scaling, however, we observe that it is resolved by considering the following products: 
\begin{align*}
\big|\big\langle \omega^{{\rm ed}\,(1)}_{f,n}, \frac{(w^{\rm ed}_{f,r})_n}{|x|} \big\rangle_{L^2(D)} \big|\,,~~~~
\big|\big\langle \omega^{\rm ed\,(1)}_{{\rm div}F,n}, \frac{(w^{\rm ed}_{f,r})_n}{|x|} \big\rangle_{L^2(D)} \big|\,,~~~~
\big|\big\langle \omega^{\rm ed}_{b,n}, \frac{(w^{\rm ed}_{f,r})_n}{|x|} \big\rangle_{L^2(D)} \big|\,.
\end{align*}
Here the vorticities $\omega^{\rm ed\,(1)}_{f,n}$, $\omega^{\rm ed\,(1)}_{{\rm div}F,n}$, $\omega^{\rm ed}_{b,n}$ will be introduced respectively 
in Subsections \ref{subsec.RSed.f.vorticity}, \ref{subsec.RSed.divF.vorticity}, and \ref{subsec.RSed.b}. This is indeed a key observation in proving Proposition \ref{prop1.small.lambda.energy.est.resol.} in Subsection \ref{apriori2}, where the estimate for $\big\langle ({\rm rot}\,v)_{n}, 
\frac{v_{r,n}}{|x|} \big\rangle_{L^2(D)}$ is established when $0<|\lambda|<e^{-\frac{1}{6\beta}}$.
\end{remark}
%%%%%%%%%%%%%%%%%%%%
%%%%%%%%%%%%%%%%%%%%
%

We postpone the proof of Theorem \ref{thm.est.velocity.RSed.f} at the end of this subsection, and focus on the term $V_n[\Phi_{n,\lambda}[f_n]]$ in \eqref{rep.velocity.nFourier.RSed.f} for the time being. In order to estimate $V_n[\Phi_{n,\lambda}[f_n]]$, taking into account the definition of $V_n[\,\cdot\,]$ in \eqref{def.V_n}, firstly we study the following two integrals
\begin{align*}
\frac{1}{r^{|n|}} \int_1^r s^{1+|n|} \Phi_{n,\lambda}[f_n]{\color{black} (s)} \dd s\,,~~~~~~~~
r^{|n|} \int_r^\infty s^{1-|n|} \Phi_{n,\lambda}[f_n](s) \dd s\,.
\end{align*}
Let us recall the decompositions for them used in \cite{Ma1} which are useful in calculations. To state the result we define the functions $g^{(1)}_n(r)$ and $g^{(2)}_n(r)$ by
\begin{align*}
g^{(1)}_n(r) \,=\, \mu_n f_{\theta,n}(r) + in f_{r,n}(r)\,, ~~~~~~
g^{(2)}_n(r) \,=\, \mu_n f_{\theta,n}(r) - in f_{r,n}(r)\,, 
\end{align*}
and fix a resolvent parameter $\lambda\in\C\setminus\overline{\R_{-}}$.
%
%%%%%%%%%%%%%%%%%%%%
%%%%%%%%%%%%%%%%%%%%
\begin{lemma}[ {\rm\cite[Lemmas 3.6 and 3.9]{Ma1}}]\label{lem.velocitydecom.RSed.f} 
Let $n \in \Z \setminus \{0\}$ and $f\in C^\infty_0(D)^2$. Then we have
\begin{align*}
\frac{1}{r^{|n|}} \int_1^r s^{1+|n|} \Phi_{n,\lambda}[f_n]{\color{black} (s)} \dd s
\,=\, \sum_{l=1}^{9} J^{(1)}_l[f_n](r)\,,
\end{align*}
where 
{\allowdisplaybreaks
\begin{align*}
J^{(1)}_1[f_n](r) &\,=\, 
-\frac{1}{r^{|n|}} \int_1^r I_{\mu_n}(\sqrt{\lambda} \tau)\,g^{(1)}_n(\tau) 
\int_\tau^r s^{1+|n|} K_{\mu_n}(\sqrt{\lambda} s) \dd s \dd \tau\,, \\
J^{(1)}_2[f_n](r) &\,=\, -\frac{\mu_n+|n|}{r^{|n|}} \int_1^r \tau I_{\mu_n+1}(\sqrt{\lambda} \tau)\,f_{\theta,n}(\tau) \int_\tau^r s^{|n|} K_{\mu_n-1}(\sqrt{\lambda} s) \dd s \dd \tau\,, \\
J^{(1)}_3[f_n](r) &\,=\, \frac{1}{r^{|n|}} \int_1^r K_{\mu_n}(\sqrt{\lambda} \tau)\,g^{(2)}_n(\tau)  \int_1^\tau s^{1+|n|} I_{\mu_n}(\sqrt{\lambda} s) \dd s\,, \\
J^{(1)}_4[f_n](r) &\,=\, \frac{\mu_n-|n|}{r^{|n|}} \int_1^r \tau K_{\mu_n-1}(\sqrt{\lambda}\tau)\,f_{\theta,n}(\tau) \int_1^\tau s^{|n|} I_{\mu_n+1}(\sqrt{\lambda} s) \dd s\,, \\
J^{(1)}_5[f_n](r) &\,=\, \frac{1}{r^{|n|}} \bigg(\int_r^\infty K_{\mu_n}(\sqrt{\lambda}s)\,g^{(2)}_n(s) \dd s \bigg) 
\bigg(\int_1^r s^{1+|n|} I_{\mu_n}(\sqrt{\lambda} s) \dd s \bigg)\,, \\
J^{(1)}_6[f_n](r) &\,=\, \frac{\mu_n-|n|}{r^{|n|}} \bigg(\int_r^\infty sK_{\mu_n-1}(\sqrt{\lambda}s)\,f_{\theta,n}(s) \dd s \bigg) 
\bigg(\int_1^r s^{|n|} I_{\mu_n+1}(\sqrt{\lambda} s) \dd s \bigg)\,, \\
J^{(1)}_7[f_n](r) &\,=\, rK_{\mu_n-1}(\sqrt{\lambda}r) \int_1^r 
sI_{\mu_n+1}(\sqrt{\lambda} s)\,f_{\theta,n}(s) \dd s\,, \\
J^{(1)}_8[f_n](r) &\,=\, rI_{\mu_n+1}(\sqrt{\lambda}r) \int_r^\infty sK_{\mu_n-1}(\sqrt{\lambda} s)\,f_{\theta,n}(s) \dd s\,, \\
J^{(1)}_9[f_n](r) &\,=\, -\frac{I_{\mu_n+1}(\sqrt{\lambda})}{r^{|n|}} \int_1^\infty sK_{\mu_n-1}(\sqrt{\lambda} s)\,f_{\theta,n}(s) \dd s\,,
\end{align*}
}
and
\begin{align*}
r^{|n|} \int_r^\infty s^{1-|n|} \Phi_{n,\lambda}[f_n](s) \dd s
\,=\, \sum_{l=10}^{17} J^{(1)}_l[f_n](r)\,,
\end{align*}
where 
{\allowdisplaybreaks
\begin{align*}
J^{(1)}_{10}[f_n](r) &\,=\, -r^{|n|} \bigg(\int_1^r I_{\mu_n}(\sqrt{\lambda} s)\,g^{(1)}_n(s) \dd s
\bigg) \bigg(\int_r^\infty s^{1-|n|} K_{\mu_n}(\sqrt{\lambda} s) \dd s\bigg)\,, \\
J^{(1)}_{11}[f_n](r) &\,=\, -r^{|n|} \int_r^\infty I_{\mu_n}(\sqrt{\lambda} \tau)\,g^{(1)}_n(\tau) \int_\tau^\infty s^{1-|n|} K_{\mu_n}(\sqrt{\lambda} s) \dd s \dd \tau\,, \\
J^{(1)}_{12}[f_n](r) &\,=\, -(\mu_n-|n|)r^{|n|} \bigg(\int_1^r  sI_{\mu_n+1}(\sqrt{\lambda} s)\,f_{\theta,n}(s) \dd s \bigg) \bigg(\int_r^\infty s^{-|n|} K_{\mu_n-1}(\sqrt{\lambda} s) \dd s\bigg)\,, \\
J^{(1)}_{13}[f_n](r) &\,=\, -(\mu_n-|n|)r^{|n|} \int_r^\infty \tau I_{\mu_n+1}(\sqrt{\lambda} \tau)\,f_{\theta,n}(\tau) \int_\tau^\infty s^{-|n|} K_{\mu_n-1}(\sqrt{\lambda} s) \dd s \dd \tau\,, \\
J^{(1)}_{14}[f_n](r) &\,=\, r^{|n|} \int_r^\infty K_{\mu_n}(\sqrt{\lambda} \tau)\,g^{(2)}_n(\tau) \int_r^\tau s^{1-|n|} I_{\mu_n}(\sqrt{\lambda} s) \dd s \dd \tau\,, \\
J^{(1)}_{15}[f_n](r) &\,=\, (\mu_n+|n|) r^{|n|} \int_r^\infty \tau K_{\mu_n-1}(\sqrt{\lambda} \tau)\,f_{\theta,n}(\tau) \int_r^\tau s^{-|n|} I_{\mu_n+1}(\sqrt{\lambda} s) \dd s \dd \tau\,, \\
J^{(1)}_{16}[f_n](r) &\,=\, -r K_{\mu_n-1}(\sqrt{\lambda} r) \int_1^r s I_{\mu_n+1}(\sqrt{\lambda} s)\,f_{\theta,n}(s)\dd s\,, \\
J^{(1)}_{17}[f_n](r) &\,=\, -r I_{\mu_n+1}(\sqrt{\lambda} r) \int_r^\infty s K_{\mu_n-1}(\sqrt{\lambda} s)\,f_{\theta,n}(s)\dd s\,.
\end{align*}
}
\end{lemma}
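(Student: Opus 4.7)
The decomposition is established by a direct computation: substitute the four-piece formula \eqref{Phi.nFourier.RSed.f} for $\Phi_{n,\lambda}[f_n](s)$ into each of the two weighted integrals on $[1,r]$ and $[r,\infty)$, reorganize the resulting double integrals by Fubini, and perform a single integration by parts in the two "$f_{\theta,n}$'' contributions. I would treat the two formulas in parallel, since they differ only by the outer weight $s^{\pm|n|}$ and the direction of integration.

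First I would expand
\[
\frac{1}{r^{|n|}} \int_1^r s^{1+|n|} \Phi_{n,\lambda}[f_n](s)\,\dd s
\]
into four double integrals, grouped by the four summands of $\Phi_{n,\lambda}[f_n]$. The two "$g^{(1,2)}_n$" pieces, after swapping the order of integration on $\{1\le\tau\le s\le r\}$ or on $\{1\le s\le r,\ s\le\tau\}$ (the latter split at $\tau=r$), produce directly $J^{(1)}_1$, $J^{(1)}_3$ and $J^{(1)}_5$. The two "$f_{\theta,n}$" pieces, in which $\sqrt{\lambda}\,\tau I_{\mu_n+1}(\sqrt{\lambda}\tau)f_{\theta,n}(\tau)$ and $\sqrt{\lambda}\,\tau K_{\mu_n-1}(\sqrt{\lambda}\tau)f_{\theta,n}(\tau)$ appear, yield after the same Fubini step inner integrals of the form $\int s^{1+|n|}K_{\mu_n}(\sqrt{\lambda}s)\,\dd s$ and $\int s^{1+|n|}I_{\mu_n}(\sqrt{\lambda}s)\,\dd s$. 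To these I would apply the identities
\begin{align*}
\sqrt{\lambda}\,s^{1+|n|}K_{\mu_n}(\sqrt{\lambda}s)
&\,=\,(\mu_n+|n|)\,s^{|n|}K_{\mu_n-1}(\sqrt{\lambda}s)
-\frac{\dd}{\dd s}\bigl[s^{1+|n|}K_{\mu_n-1}(\sqrt{\lambda}s)\bigr], \\
\sqrt{\lambda}\,s^{1+|n|}I_{\mu_n}(\sqrt{\lambda}s)
&\,=\,\frac{\dd}{\dd s}\bigl[s^{1+|n|}I_{\mu_n+1}(\sqrt{\lambda}s)\bigr]
+(\mu_n-|n|)\,s^{|n|}I_{\mu_n+1}(\sqrt{\lambda}s),
\end{align*}
which are derived from the standard recurrences $(z^\nu K_\nu)'=-z^\nu K_{\nu-1}$ and $(z^{\nu+1}I_{\nu+1})'=z^{\nu+1}I_\nu$ together with $zK'_\nu=\nu K_\nu-zK_{\nu+1}$ (and its $I$-analogue). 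After integration in $s$, the $(\mu_n\pm|n|)$-pieces generate $J^{(1)}_2$, $J^{(1)}_4$ and $J^{(1)}_6$; the boundary values at $s=r$ produce $J^{(1)}_7$ from the $K_{\mu_n-1}$-contribution and $J^{(1)}_8$ from the $I_{\mu_n+1}$-contribution; a mixed remainder proportional to $\tau^{2+|n|}I_{\mu_n+1}(\sqrt{\lambda}\tau)K_{\mu_n-1}(\sqrt{\lambda}\tau)f_{\theta,n}(\tau)$ appears with opposite signs from the two pieces and cancels; finally, the boundary values at $s=1$, once the Fubini split at $\tau=r$ is reassembled into a single $\tau$-integral over $[1,\infty)$, collapse into the remainder $J^{(1)}_9$.

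The second formula $r^{|n|}\int_r^\infty s^{1-|n|}\Phi_{n,\lambda}[f_n](s)\,\dd s$ is treated identically, with Fubini now performed on $\{r\le s\le\tau\}$ and $\{r\le\tau\le s\}$; the "$g^{(1,2)}_n$"-pieces give $J^{(1)}_{10}$, $J^{(1)}_{11}$, $J^{(1)}_{14}$, while the "$f_{\theta,n}$"-pieces, handled by the same two identities, yield $J^{(1)}_{12}$, $J^{(1)}_{13}$, $J^{(1)}_{15}$ together with the boundary contributions $J^{(1)}_{16}$ and $J^{(1)}_{17}$ at $s=r$. Because the integration domain now extends to $+\infty$, one must verify that the boundary values at $s=\infty$ vanish; this is immediate from the compact support of $f\in C_0^\infty(D)^2$ together with the exponential decay of $K_{\mu_n}$ and $K_{\mu_n-1}$ at infinity. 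The hardest part will be the bookkeeping: keeping track of the several boundary products, confirming the cancellation of the mixed $I_{\mu_n+1}K_{\mu_n-1}$ remainder between the two "$f_{\theta,n}$"-pieces, and matching signs and powers so that the assembled expression agrees with the seventeen terms $J^{(1)}_l$ exactly as stated in the lemma.
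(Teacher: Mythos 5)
Your computation is correct and is exactly the standard verification of this decomposition; note that the paper itself offers no proof here but simply quotes Lemmas 3.6 and 3.9 of \cite{Ma1}, where the same Fubini-plus-recurrence argument is carried out. All the key points check out: the Fubini splits at $\tau=r$ produce $J^{(1)}_1,J^{(1)}_3,J^{(1)}_5$ (resp.\ $J^{(1)}_{10},J^{(1)}_{11},J^{(1)}_{14}$), your two identities
$\sqrt{\lambda}\,s^{1+|n|}K_{\mu_n}=(\mu_n+|n|)s^{|n|}K_{\mu_n-1}-\frac{\dd}{\dd s}[s^{1+|n|}K_{\mu_n-1}]$ and
$\sqrt{\lambda}\,s^{1+|n|}I_{\mu_n}=\frac{\dd}{\dd s}[s^{1+|n|}I_{\mu_n+1}]+(\mu_n-|n|)s^{|n|}I_{\mu_n+1}$
are exactly right, the mixed $\tau^{2+|n|}I_{\mu_n+1}K_{\mu_n-1}f_{\theta,n}$ remainders do cancel between the two $f_{\theta,n}$-pieces, and the $s=1$ boundary values from the two halves of the Fubini split reassemble into the single integral $J^{(1)}_9$. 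The only point to make explicit is that for the second formula the weight is $s^{1-|n|}$, so you are not literally using "the same two identities" but their analogues with $|n|$ replaced by $-|n|$, which flips the coefficients to $(\mu_n-|n|)$ for the $K$-identity and $(\mu_n+|n|)$ for the $I$-identity — consistent with the coefficients appearing in $J^{(1)}_{12},J^{(1)}_{13}$ and $J^{(1)}_{15}$.
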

%%%%%%%%%%%%%%%%%%%%
%%%%%%%%%%%%%%%%%%%%
%
%%%%%%%%%%%%%%%%%%%%
%%%%%%%%%%%%%%%%%%%%
\begin{remark}\label{rem.lem.velocitydecom.RSed.f} 
(1) The estimate to the term $J^{(1)}_9[f_n]$ is not needed in the following analysis thanks to the cancellation $J^{(1)}_9[f_n](r)-r^{-|n|} J^{(1)}_{17}[f_n](1)=0$ in the Biot-Savart law $V_n[\Phi_{n,\lambda}[f_n]]$. This fact will be used in the proof of Proposition \ref{prop2.est.velocity.RSed.f}. \\
\noindent (2) Note that $J^{(1)}_{7}[f_n]=-J^{(1)}_{16}[f_n]$ and $J^{(1)}_{8}[f_n]=-J^{(1)}_{17}[f_n]$ hold. Therefore we will skip the derivation of the estimates for $J^{(1)}_{16}[f_n]$ and $J^{(1)}_{17}[f_n]$ in Lemma \ref{lem2.est.velocity.RSed.f}. \\
\noindent (3) We can express the constant $c_{n,\lambda}[f_n]$ in \eqref{const.nFourier.RSed.f} in terms of $J^{(1)}_{l}[f_n](r)$ as $c_{n,\lambda}[f_n]=\sum_{l=11,13,14,15,17} J^{(1)}_l[f_n](1)$.
\end{remark}
%%%%%%%%%%%%%%%%%%%%
%%%%%%%%%%%%%%%%%%%%
%
The estimates to $J^{(1)}_{l}[f_n]$, $l\in\{1,\ldots,8\}$, in Lemma \ref{lem.velocitydecom.RSed.f} are given as follows.
%
%%%%%%%%%%%%%%%%%%%%
%%%%%%%%%%%%%%%%%%%%
\begin{lemma}\label{lem1.est.velocity.RSed.f}
Let $|n|=1$ and $q\in[1,\infty)$, and let $\lambda\in\Sigma_{\pi-\epsilon} \cap \mathcal{B}_1(0)$ for some $\epsilon\in(0,\frac{\pi}{2})$. Then there is a positive constant $C=C(q,\epsilon)$ independent of $\beta$ such that the following statements hold. \\
{\rm (1)} Let $f\in C^\infty_0(D)^2$. Then for $l\in\{1,\ldots,8\}$ we have 
\begin{align}
|J^{(1)}_{l}[f_n](r)| 
\le \frac{C}{\beta} r^{3-\frac{2}{q}} \|f \|_{L^q(D)}\,,~~~~~~~~
1\le r<{\rm Re}(\sqrt{\lambda})^{-1}\,.
\label{est1.lem1.est.velocity.RSed.f}
\end{align}
On the other hand, for $l\in\{1,\ldots,6\}$ we have 
\begin{align}
|J^{(1)}_{l}[f_n](r)| 
\le \frac{C}{\beta} |\lambda|^{-1} r^{1-\frac{2}{q}} \|f \|_{L^q(D)}\,,~~~~~~~~
r \ge {\rm Re}(\sqrt{\lambda})^{-1}\,,
\label{est2.lem1.est.velocity.RSed.f}
\end{align}
while for $l\in\{7,8\}$ we have
\begin{align}
|J^{(1)}_{l}[f_n](r)| 
\le 
C |\lambda|^{-1+\frac{1}{2q}} r^{1-\frac1q} \|f \|_{L^q(D)}\,,~~~~~~~~
r \ge {\rm Re}(\sqrt{\lambda})^{-1}\,.
\label{est3.lem1.est.velocity.RSed.f}
\end{align}
{\rm (2)} Let $f\in C^\infty_0(D)^2$. Then for $l\in\{7,8\}$ we have
\begin{align}
\|r^{-1} J^{(1)}_l[f_n]\|_{L^\infty(D)}
&\le \frac{C}{\beta} |\lambda|^{-1} \|f \|_{L^\infty(D)} 
\label{est4.lem1.est.velocity.RSed.f}\,, \\
\|r^{-1} J^{(1)}_l[f_n]\|_{L^1(D)}
&\le \frac{C}{\beta} |\lambda|^{-1} \|f \|_{L^1(D)}\,.
\label{est5.lem1.est.velocity.RSed.f}
\end{align}
\end{lemma}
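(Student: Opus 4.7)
The plan is to estimate each of the eight terms $J_l^{(1)}[f_n]$ separately using three ingredients: (i) the pointwise bounds on the modified Bessel functions $I_{\mu_n}$, $K_{\mu_n}$, $I_{\mu_n\pm 1}$, $K_{\mu_n\pm 1}$ from Appendix B, distinguishing the small-argument regime $|z|\lesssim 1$ and the large-argument regime $|z|\gtrsim 1$; (ii) the expansion $\mu_n = |n| + \frac{i\beta\,\mathrm{sign}(n)}{2|n|} + O(\beta^2)$ from Appendix A, which for $|n|=1$ gives $|\mu_n-1|$ of order $\beta$; and (iii) H\"older's inequality in the radial variable to convert the integrals against $f_n$ into $\|f\|_{L^q(D)}$. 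The scheme for each $J_l^{(1)}$ is to evaluate or bound the inner $s$-integral using the appropriate Bessel asymptotic, splitting at $s\sim \mathrm{Re}(\sqrt{\lambda})^{-1}$ when necessary, and then apply H\"older to the outer $\tau$-integral.

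For $1\le r <\mathrm{Re}(\sqrt{\lambda})^{-1}$ all arguments $\sqrt{\lambda}s$ are $O(1)$; replacing each Bessel function by its small-argument asymptotic $I_{\mu_n}(z)\sim (z/2)^{\mu_n}/\Gamma(\mu_n+1)$ and $K_{\mu_n}(z)\sim \tfrac12\Gamma(\mu_n)(z/2)^{-\mu_n}$, most $\sqrt{\lambda}$-factors cancel and the weight $r^{3-2/q}$ in \eqref{est1.lem1.est.velocity.RSed.f} arises from pairing $s^{\pm\mu_n}$ with $s^{1\pm|n|}$ and integrating in $s$, then applying H\"older to the outer $\tau$-integral against $f_n$. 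For $r\ge \mathrm{Re}(\sqrt{\lambda})^{-1}$ the competition between the exponential growth of $I_{\mu_n}$ and decay of $K_{\mu_n}$ forces the $s$-integrals to saturate near the transition point, converting $r^{3-2/q}$ into $|\lambda|^{-1} r^{1-2/q}$ for the six interior terms $J_1,\dots,J_6$, and into $|\lambda|^{-1+1/(2q)} r^{1-1/q}$ for the boundary terms $J_7, J_8$, which carry the extra factor $rK_{\mu_n-1}(\sqrt{\lambda}r)$ or $rI_{\mu_n+1}(\sqrt{\lambda}r)$. Part (2), the $L^\infty$ and $L^1$ estimates for $r^{-1}J_l^{(1)}$, $l\in\{7,8\}$, follow directly from the closed forms: the $L^\infty$ bound by taking absolute values inside, and the $L^1$ bound by Fubini after swapping the order of integration in $r$ and $s$.

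The delicate point is tracking the $\beta$-dependence. The singular prefactor $\beta^{-1}$ in \eqref{est1.lem1.est.velocity.RSed.f}--\eqref{est2.lem1.est.velocity.RSed.f} originates in the small-argument asymptotic $K_{\mu_n-1}(z)\sim \tfrac12\Gamma(\mu_n-1)(z/2)^{-(\mu_n-1)}$, whose constant $\Gamma(\mu_n-1)=\Gamma(\mu_n)/(\mu_n-1)$ is of size $\beta^{-1}$ for $|n|=1$. In $J_4$ and $J_6$ the explicit factor $\mu_n-|n|$ (of order $\beta$) out front must be offset by an inner integral of size $\beta^{-2}$, reflecting the cancellation built into the decomposition of Lemma \ref{lem.velocitydecom.RSed.f} and requiring my estimates to reproduce it precisely. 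In $J_7, J_8$ the small-argument factor inside the integral is $I_{\mu_n+1}$ rather than $K_{\mu_n-1}$, so no $\beta^{-1}$ appears in \eqref{est3.lem1.est.velocity.RSed.f}; it does reappear in \eqref{est4.lem1.est.velocity.RSed.f}--\eqref{est5.lem1.est.velocity.RSed.f} through the $K_{\mu_n-1}(\sqrt{\lambda}r)$ prefactor evaluated at small $\sqrt{\lambda}r$. Keeping these cancellations explicit, uniformly across the small/large argument regimes while $\mu_n$ itself depends on $\beta$, is the main obstacle; everything else is bookkeeping.
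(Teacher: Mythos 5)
Your plan coincides with the paper's proof: both estimate each $J^{(1)}_l$ individually via the modified-Bessel bounds of Appendix B, split the $s$- and $\tau$-integrals at ${\rm Re}(\sqrt{\lambda})^{-1}$, apply H\"older in the radial variable to produce $\|f\|_{L^q(D)}$, and obtain part (2) from the pointwise bounds together with a Fubini argument for the $L^1$ case (which the paper delegates to \cite[Lemma 3.7]{Ma1}). The only quibble is your bookkeeping for $J^{(1)}_4$ and $J^{(1)}_6$: the prefactor $\mu_n-|n|=O(\beta)$ is offset by the $O(\beta^{-1})$ size of $K_{\mu_n-1}$ (through $\Gamma(\mu_n-1)$, equivalently the factor $\sin((\mu_n-1)\pi)^{-1}$ in the paper's expansion), not by a $\beta^{-2}$ inner integral, but this does not affect the claimed bounds.
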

%%%%%%%%%%%%%%%%%%%%%%%%%%%%%%%%%%%%%%%%
%%%%%%%%%%%%%%%%%%%%%%%%%%%%%%%%%%%%%%%%
%
\begin{proof}
(1) (i) Estimate of $J^{(1)}_1[f_n]$: For $1 \le r < {\rm Re}(\sqrt{\lambda})^{-1}$, by \eqref{est5.lem.est1.bessel} for $k=0$ in Lemma \ref{lem.est1.bessel} and \eqref{est1.lem.est2.bessel} for $k=0$ in Lemma \ref{lem.est2.bessel} in Appendix \ref{app.est.bessel}, we find
\begin{align*}
|J^{(1)}_1[f_n](r)| 
& \le 
r^{-1} \int_1^r 
|I_{\mu_n}(\sqrt{\lambda} \tau)\,g^{(1)}_n(\tau)|\,
\bigg|\int_\tau^r s^2 K_{\mu_n}(\sqrt{\lambda} s) \dd s\bigg| \dd \tau \\
& \le 
C r
\int_1^r |f_n(\tau)| \tau \dd \tau\,,
\end{align*}
which leads to the estimate \eqref{est1.lem1.est.velocity.RSed.f}. For $r \ge {\rm Re}(\sqrt{\lambda})^{-1}$, by \eqref{est5.lem.est1.bessel} and \eqref{est7.lem.est1.bessel} for $k=0$ in Lemma \ref{lem.est1.bessel} and \eqref{est2.lem.est2.bessel} and \eqref{est3.lem.est2.bessel} for $k=0$ in Lemma \ref{lem.est2.bessel}, we have
\begin{align*}
&|J^{(1)}_1[f_n](r)| 
\le 
r^{-1} 
\bigg(
\int_1^{\frac{1}{{\rm Re}\,(\sqrt{\lambda})}}
+ \int_{\frac{1}{{\rm Re}\,(\sqrt{\lambda})}}^r
\bigg)
|I_{\mu_n}(\sqrt{\lambda} \tau)\,g^{(1)}_n(\tau)|\,
\bigg|\int_\tau^r s^2 K_{\mu_n}(\sqrt{\lambda} s) \dd s\bigg| \dd \tau \\
&\le 
C |\lambda|^{-1} r^{-1} 
\int_1^{\frac{1}{{\rm Re}\,(\sqrt{\lambda})}}
|f_n(\tau)| \tau \dd \tau 
+ C\,|\lambda|^{-1} r^{-1} \int_{\frac{1}{{\rm Re}\,(\sqrt{\lambda})}}^r 
|f_n(\tau)| \tau \dd \tau\,,
\end{align*}
which implies the estimate \eqref{est2.lem1.est.velocity.RSed.f}. \\
\noindent (ii) Estimate of $J^{(1)}_{2}[f_n]$: The proof is parallel to that for $J^{(1)}_1[f_n]$ using the results in Lemmas \ref{lem.est1.bessel} and \ref{lem.est2.bessel} for $k=1$. We omit the details here. \\
\noindent (iii) Estimate of $J^{(1)}_{3}[f_n]$: For $1\le r<{\rm Re}(\sqrt{\lambda})^{-1}$, by \eqref{est1.lem.est1.bessel} and \eqref{est3.lem.est1.bessel} in Lemma \ref{lem.est1.bessel} and \eqref{est1.lem.est3.bessel} for $k=0$ in Lemma \ref{lem.est3.bessel}, we see that
\begin{align*}
|J^{(1)}_3[f_n](r)| 
& \le 
r^{-1} \int_1^r |K_{\mu_n}(\sqrt{\lambda} \tau)\,g^{(2)}_n(\tau)|\,
\int_1^\tau |s^2 I_{\mu_n}(\sqrt{\lambda} s)| \dd s \dd \tau \\
& \le 
C\,r \int_1^r |f_n(\tau)| \tau \dd \tau \,.
\end{align*}
Thus we have \eqref{est1.lem1.est.velocity.RSed.f}. For $r \ge  {\rm Re}(\sqrt{\lambda})^{-1}$, by \eqref{est1.lem.est1.bessel}, \eqref{est3.lem.est1.bessel}, and \eqref{est6.lem.est1.bessel} for $k=0$ in Lemma \ref{lem.est1.bessel} and \eqref{est1.lem.est3.bessel} and \eqref{est2.lem.est3.bessel} for $k=0$ in Lemma \ref{lem.est3.bessel} we have
\begin{align*}
&|J^{(1)}_3[f_n](r)| 
\le 
r^{-1} 
\bigg(
\int_1^{\frac{1}{{\rm Re}\,(\sqrt{\lambda})}}
+ 
{\color{black}
\int_{\frac{1}{{\rm Re}\,(\sqrt{\lambda})}}^r
}
\bigg)
|K_{\mu_n}(\sqrt{\lambda} \tau)\,g^{(2)}_n(\tau)|\,
\int_1^\tau |s^2 I_{\mu_n}(\sqrt{\lambda} s)| \dd s \dd \tau \\
& \le
C\,|\lambda|^{-1} r^{-1} 
\int_1^{\frac{1}{{\rm Re}\,(\sqrt{\lambda})}} |f_n(\tau)| \tau \dd \tau 
+ C\,|\lambda|^{-1} r^{-1} 
\int_{\frac{1}{{\rm Re}\,(\sqrt{\lambda})}}^r 
|f_n(\tau)| \tau \dd \tau\,,
\end{align*}
which leads to \eqref{est2.lem1.est.velocity.RSed.f}. \\
\noindent (iv) Estimate of $J^{(1)}_{4}[f_n]$: The proof is parallel to that for $J^{(1)}_3[f_n]$ using the results in Lemmas \ref{lem.est1.bessel} and \ref{lem.est3.bessel} for $k=1$, and we omit here. \\
\noindent (v) Estimates of $J^{(1)}_{5}[f_n]$ and $J^{(1)}_{6}[f_n]$: We give a proof only for $J^{(1)}_{5}[f_n]$ since the proof for $J^{(1)}_{6}[f_n]$ is similar. For $1\le r < {\rm Re}(\sqrt{\lambda})^{-1}$, 
{\color{black}
by \eqref{est1.lem.est1.bessel}, \eqref{est3.lem.est1.bessel}, and \eqref{est6.lem.est1.bessel} for $k=0$ in Lemma \ref{lem.est1.bessel} and \eqref{est1.lem.est3.bessel} for $k=0$ in Lemma \ref{lem.est3.bessel}
}
we observe that
\begin{align*}
& |J^{(1)}_5[f_n](r)|
\le
r^{-1} \int_1^r |s^2 I_{\mu_n}(\sqrt{\lambda} s)| \dd s 
\bigg(
\int_r^{\frac{1}{{\rm Re}\,(\sqrt{\lambda})}} 
+ \int_{\frac{1}{{\rm Re}\,(\sqrt{\lambda})}}^\infty
\bigg)
|K_{\mu_n}(\sqrt{\lambda}s)\,g^{(2)}_n(s)| \dd s \\
& \le
C\,r^{{\rm Re}(\mu_n)+2}
\int_r^{\frac{1}{{\rm Re}\,(\sqrt{\lambda})}}
s^{-({\rm Re}(\mu_n)+1)} |f_n(s)| s \dd s \\
& \quad
+ C\,|\lambda|^{\frac{{\rm Re}(\mu_n)}{2}-\frac14} r^{{\rm Re}(\mu_n)+2}
\int_{\frac{1}{{\rm Re}\,(\sqrt{\lambda})}}^\infty
s^{-\frac32} e^{-{\rm Re}(\sqrt{\lambda}) s} |f_n(s)| s \dd s \,.
\end{align*}
Then a direct calculation shows \eqref{est1.lem1.est.velocity.RSed.f}. For $r \ge {\rm Re}(\sqrt{\lambda})^{-1}$,
{\color{black}
by \eqref{est6.lem.est1.bessel} for $k=0$ in Lemma \ref{lem.est1.bessel} and \eqref{est2.lem.est3.bessel} for $k=0$ in Lemma \ref{lem.est3.bessel}
}
we have
\begin{align*}
|J_5^{(1)}[f_n](r)|
& \le
r^{-1} \int_1^r |s^2 I_{\mu_n}(\sqrt{\lambda} s)| \dd s
\int_r^\infty |K_{\mu_n}(\sqrt{\lambda}s)\,g^{(2)}_n(s)| \dd s \\
& \le
C\,|\lambda|^{-1} r^{-\frac12} e^{{\rm Re}\,(\sqrt{\lambda}) r}
\int_r^\infty
s^{-\frac12} e^{-{\rm Re}(\sqrt{\lambda}) s} |f_n(s)|s \dd s\,, 
%& \le
%C\,|\lambda|^{-1} r^\frac12 e^{{\rm Re}\,(\sqrt{\lambda}) r}\,
%\|f\|_{L^q(D)} 
%\bigg( \int_r^\infty
%s^{-\frac{3q'}{2}+1} e^{-q'{\rm Re}\,(\sqrt{\lambda}) s} \dd s \bigg)^{\frac{1}{q'}} \\
%& \le
%C\,|\lambda|^{-1} r^{-1+\frac{2}{q'}} \|f \|_{L^q(D)}\,.
\end{align*}
which implies \eqref{est2.lem1.est.velocity.RSed.f}. \\
\noindent (vi) Estimate of $J^{(1)}_{7}[f_n]$: For $1\le r<{\rm Re}(\sqrt{\lambda})^{-1}$,  by \eqref{est2.lem.est1.bessel}, \eqref{est4.lem.est1.bessel}, and \eqref{est5.lem.est1.bessel} for $k=1$ in Lemma \ref{lem.est1.bessel} we find
\begin{align}
|J^{(1)}_7[f_n](r)| 
& \le 
|rK_{\mu_n-1}(\sqrt{\lambda}r)|\,
\int_1^r |I_{\mu_n+1}(\sqrt{\lambda} s)\,f_{\theta,n}(s) s| \dd s \nonumber \\
& \le
C\beta^{-1} r
\int_1^r |f_{n}(s)| s \dd s\,.
\label{est1.proof.lem1.est.velocity.RSed.f}
\end{align}
Thus we have \eqref{est1.lem1.est.velocity.RSed.f}. For $r \ge {\rm Re}(\sqrt{\lambda})^{-1}$, by \eqref{est5.lem.est1.bessel}--\eqref{est7.lem.est1.bessel} for $k=1$ in Lemma \ref{lem.est1.bessel} we have
\begin{align}
&|J^{(1)}_7[f_n](r)|
\le
|rK_{\mu_n-1}(\sqrt{\lambda}r)|
\bigg(
\int_1^{\frac{1}{{\rm Re}\,(\sqrt{\lambda})}}
+ \int_{\frac{1}{{\rm Re}\,(\sqrt{\lambda})}}^r
\bigg)
|I_{\mu_n+1}(\sqrt{\lambda} s)\,f_{\theta,n}(s) s| \dd s \nonumber \\
& \le
C\,|\lambda|^{-\frac14} r^{\frac12} e^{-{\rm Re}\,(\sqrt{\lambda}) r}
\int_1^{\frac{1}{{\rm Re}(\sqrt{\lambda})}}
|f_n(s)|s \dd s \nonumber \\
& \quad
+ C\,|\lambda|^{-\frac12} r^{\frac12} e^{-{\rm Re}\,(\sqrt{\lambda}) r}
\int_{\frac{1}{{\rm Re}\,(\sqrt{\lambda})}}^r
s^{-\frac12} e^{{\rm Re}\,(\sqrt{\lambda}) s} 
|f_n(s)|s \dd s\,, 
\label{est2.proof.lem1.est.velocity.RSed.f}
\end{align}
which leads to \eqref{est3.lem1.est.velocity.RSed.f}. \\
\noindent (vii) Estimate of $J^{(1)}_{8}[f_n]$: For $1\le r<{\rm Re}(\sqrt{\lambda})^{-1}$, 
{\color{black}
by the results in Lemmas \ref{lem.est1.bessel} for $k=1$
}
we find
\begin{align}
& |J^{(1)}_8[f_n](r)| 
\le 
|rI_{\mu_n+1}(\sqrt{\lambda}r)|\,
\bigg( 
\int_r^{{\frac{1}{{\rm Re}(\sqrt{\lambda})}}} 
+ \int_{{\frac{1}{{\rm Re}(\sqrt{\lambda})}}}^\infty
\bigg)
|K_{\mu_n-1}(\sqrt{\lambda} s)\,f_{\theta,n}(s) s| \dd s \nonumber \\
& \le
C\beta^{-1} |\lambda| r^3
\int_r^{\frac{1}{{\rm Re}(\sqrt{\lambda})}} 
|f_n(s)| s \dd s 
+ C |\lambda|^{\frac34} r^{3}
\int_{\frac{1}{{\rm Re}\,(\sqrt{\lambda})}}^\infty
s^{-\frac12} e^{-{\rm Re}(\sqrt{\lambda}) s} 
|f_n(s)| s \dd s\,, 
\label{est3.proof.lem1.est.velocity.RSed.f}
\end{align}
which implies \eqref{est1.lem1.est.velocity.RSed.f}. For $r \ge {\rm Re}(\sqrt{\lambda})^{-1}$,
{\color{black}
by Lemma \ref{lem.est1.bessel} for $k=1$ again
}
we have
\begin{align}
&|J^{(1)}_8[f_n](r)|
\le
|rI_{\mu_n+1}(\sqrt{\lambda}r)|\,
\int_r^\infty
|K_{\mu_n-1}(\sqrt{\lambda} s)\,f_{\theta,n}(s) s| \dd s \nonumber \\
&\le
C |\lambda|^{-\frac12} r^{\frac12} e^{{\rm Re}\,(\sqrt{\lambda}) r}
\int_r^\infty
s^{-\frac{1}{2}} e^{-{\rm Re}\,(\sqrt{\lambda}) s} 
|f_n(s)| s \dd s\,, 
\label{est4.proof.lem1.est.velocity.RSed.f}
\end{align}
which leads to \eqref{est3.lem1.est.velocity.RSed.f}. Hence we obtain the assertion (1) of
Lemma \ref{lem1.est.velocity.RSed.f}. \\
\noindent (2) The estimate \eqref{est4.lem1.est.velocity.RSed.f} follows from  \eqref{est1.proof.lem1.est.velocity.RSed.f}--\eqref{est4.proof.lem1.est.velocity.RSed.f} in the above. For the proof of \eqref{est5.lem1.est.velocity.RSed.f}, one can reproduce the calculation performed in \cite[Lemma 3.7]{Ma1} using the results in Lemma \ref{lem.est1.bessel}, and hence we omit the details here. This completes the proof of Lemma \ref{lem1.est.velocity.RSed.f}.
\end{proof}
%
%%%%%%%%%%%%%%%%%%%%
%%%%%%%%%%%%%%%%%%%%
%
The next lemma summarizes the estimates to $J^{(1)}_{l}[f_n](r)$, $l\in\{10,\ldots,17\}$, in Lemma \ref{lem.velocitydecom.RSed.f}. We skip the proofs for $J^{(1)}_{16}[f_n]$ and $J^{(1)}_{17}[f_n]$ as is already mentioned in Remark \ref{rem.lem.velocitydecom.RSed.f} (2).
%
%%%%%%%%%%%%%%%%%%%%
%%%%%%%%%%%%%%%%%%%%
\begin{lemma}\label{lem2.est.velocity.RSed.f}
Let $|n|=1$ and $q\in[1,\infty)$, and let $\lambda\in\Sigma_{\pi-\epsilon} \cap \mathcal{B}_1(0)$ for some $\epsilon\in(0,\frac{\pi}{2})$. Then there is a positive constant $C=C(q,\epsilon)$ independent of $\beta$ such that the following statements hold. \\
\noindent {\rm (1)} Let $f\in C^\infty_0(D)^2$. Then for $l\in\{10,\ldots,17\}$ we have 
\begin{align}
|J^{(1)}_{l}[f_n](r)| 
\le \frac{C}{\beta} |\lambda|^{-1+\frac1q} r \|f \|_{L^q(D)}\,,~~~~~~
1\le r<{\rm Re}(\sqrt{\lambda})^{-1}\,.
\label{est1.lem2.est.velocity.RSed.f}
\end{align}
On the other hand, for $l\in\{10,\ldots,15\}$ we have 
\begin{align}
|J^{(1)}_{l}[f_n](r)| 
\le {\color{black} C} |\lambda|^{-1} r^{1-\frac{2}{q}} \|f \|_{L^q(D)}\,,~~~~~~
r \ge {\rm Re}(\sqrt{\lambda})^{-1}\,,
\label{est2.lem2.est.velocity.RSed.f}
\end{align}
while for $l\in\{16,17\}$ we have
\begin{align}
|J^{(1)}_{l}[f_n](r)| 
\le 
C |\lambda|^{-1+\frac{1}{2q}} r^{1-\frac1q} \|f \|_{L^q(D)}\,,~~~~~~
r \ge {\rm Re}(\sqrt{\lambda})^{-1}\,.
\label{est3.lem2.est.velocity.RSed.f}
\end{align}
{\rm (2)} Let $f\in C^\infty_0(D)^2$. Then for $l\in\{16,17\}$ we have
\begin{align}
\|r^{-1} J^{(1)}_l[f_n]\|_{L^\infty(D)}
&\le \frac{C}{\beta} |\lambda|^{-1} \|f \|_{L^\infty(D)} 
\label{est4.lem2.est.velocity.RSed.f}\,, \\
\|r^{-1} J^{(1)}_l[f_n]\|_{L^1(D)}
&\le \frac{C}{\beta} |\lambda|^{-1} \|f \|_{L^1(D)}\,.
\label{est5.lem2.est.velocity.RSed.f}
\end{align}
\end{lemma}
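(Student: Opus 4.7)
The proof plan is to proceed in direct analogy with Lemma \ref{lem1.est.velocity.RSed.f}, handling each term $J^{(1)}_l$ separately using the asymptotic estimates for $I_{\mu_n}$ and $K_{\mu_n}$ collected in Lemmas \ref{lem.est1.bessel}, \ref{lem.est2.bessel}, and \ref{lem.est3.bessel} in the appendix, and dividing the radial variable into the regimes $1\le r<{\rm Re}(\sqrt{\lambda})^{-1}$ (small-argument zone) and $r\ge {\rm Re}(\sqrt{\lambda})^{-1}$ (large-argument zone). In each Bessel integral that ranges to infinity we additionally split the $s$-integration at $s={\rm Re}(\sqrt{\lambda})^{-1}$, since on the two sides of this threshold the Bessel functions behave like pure powers of $s$ versus exponentially weighted powers. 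After inserting these pointwise bounds, Hölder's inequality in the radial variable converts the inner integrals of $f_n$ into $\|f\|_{L^q(D)}$ with the claimed $r$- and $\lambda$-dependent prefactor.

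The first simplification is the identification noted in Remark~\ref{rem.lem.velocitydecom.RSed.f}(2): since $J^{(1)}_{16}[f_n]=-J^{(1)}_{7}[f_n]$ and $J^{(1)}_{17}[f_n]=-J^{(1)}_{8}[f_n]$, the bounds \eqref{est3.lem2.est.velocity.RSed.f}, \eqref{est4.lem2.est.velocity.RSed.f}, and \eqref{est5.lem2.est.velocity.RSed.f} are immediate consequences of \eqref{est3.lem1.est.velocity.RSed.f}, \eqref{est4.lem1.est.velocity.RSed.f}, and \eqref{est5.lem1.est.velocity.RSed.f}; the small-$r$ bound \eqref{est1.lem2.est.velocity.RSed.f} for $l\in\{16,17\}$ then follows from \eqref{est1.lem1.est.velocity.RSed.f} together with the crude comparison $r^{3-2/q}\le |\lambda|^{-1+1/q} r$ available for $r\le{\rm Re}(\sqrt{\lambda})^{-1}$ and $q\ge 1$. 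Thus the real work is the six terms $J^{(1)}_{10},\ldots,J^{(1)}_{15}$, each of which has the schematic shape $r^{|n|}$ times a product (or iterated integral) of an integral of an $I_{\mu_n\pm 1}$-weighted factor and a $K_{\mu_n\pm 1}$-weighted factor, with the integration domains essentially mirroring those appearing in $J^{(1)}_{1},\ldots,J^{(1)}_{6}$.

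For a representative case such as $J^{(1)}_{10}$, in the zone $1\le r<{\rm Re}(\sqrt{\lambda})^{-1}$ I would use the small-argument estimate $|I_{\mu_n}(\sqrt{\lambda}\tau)|\lesssim |\sqrt{\lambda}\tau|^{{\rm Re}(\mu_n)}$ on $(1,r)$ and split the $s$-integral $\int_r^\infty s^{1-|n|}K_{\mu_n}(\sqrt{\lambda}s)\dd s$ at $s={\rm Re}(\sqrt{\lambda})^{-1}$, so the inner part contributes a logarithmic factor (or $\beta^{-1}$ through $\mu_n-|n|$, depending on the case) and the outer part decays exponentially; Hölder in the $\tau$-integration against $f_n$ yields \eqref{est1.lem2.est.velocity.RSed.f}. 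In the zone $r\ge{\rm Re}(\sqrt{\lambda})^{-1}$ the $K_{\mu_n}$ and $I_{\mu_n}$ factors decay and grow exponentially, and their product with the prefactor $r^{|n|}$ is controlled by the Wronskian-type estimates in Lemmas \ref{lem.est2.bessel}–\ref{lem.est3.bessel}; this gives \eqref{est2.lem2.est.velocity.RSed.f}. The remaining terms $J^{(1)}_{11},\ldots,J^{(1)}_{15}$ follow the same template, with the iterated-integral structure of $J^{(1)}_{11},J^{(1)}_{13},J^{(1)}_{14},J^{(1)}_{15}$ treated by an additional Fubini exchange as in the proof of Lemma \ref{lem1.est.velocity.RSed.f}(i),(iii).

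The principal technical obstacle is the careful bookkeeping of the $\beta$-dependence. Two distinct sources of $\beta^{-1}$ arise: the explicit coefficient $(\mu_n\pm |n|)$ in $J^{(1)}_{12},J^{(1)}_{13},J^{(1)}_{15}$, which for $|n|=1$ and $\mu_n=(1+i\beta)^{1/2}$ satisfies $\mu_n-1=O(\beta)$ and $\mu_n+1=2+O(\beta)$; and the Bessel asymptotics near order $1$, which produce $\beta^{-1}$ through the term $I_{\mu_n+1}$ and $K_{\mu_n-1}$ evaluated near the origin (cf.\ Lemma \ref{lem.est1.bessel} for $k=1$). The cancellation or accumulation of these factors determines whether the final prefactor is $\beta^{-1}$ (as in \eqref{est1.lem2.est.velocity.RSed.f}) or $\beta^0$ (as in \eqref{est2.lem2.est.velocity.RSed.f}), and must be tracked term by term. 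Consistency of the two regimes at the matching point $r={\rm Re}(\sqrt{\lambda})^{-1}$ then provides an a posteriori sanity check on the stated exponents.
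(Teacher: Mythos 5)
Your proposal is correct and follows essentially the same route as the paper: the $l=16,17$ cases are reduced to $J^{(1)}_{7},J^{(1)}_{8}$ via the identities $J^{(1)}_{16}=-J^{(1)}_{7}$, $J^{(1)}_{17}=-J^{(1)}_{8}$ (your extra remark that $r^{3-\frac2q}\le C|\lambda|^{-1+\frac1q}r$ for $r<{\rm Re}(\sqrt{\lambda})^{-1}$ correctly fills in the step the paper leaves implicit), and the terms $J^{(1)}_{10},\ldots,J^{(1)}_{15}$ are handled term by term with the appendix Bessel estimates, the split at $r={\rm Re}(\sqrt{\lambda})^{-1}$, and Hölder, tracking the $\beta^{-1}$ arising from $\int K_{\mu_n}$ near the origin and the $O(\beta)$ factor $\mu_n-|n|$ exactly as in the paper.
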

%%%%%%%%%%%%%%%%%%%%
%%%%%%%%%%%%%%%%%%%%
%
\begin{proof}
\noindent (1) (i) Estimate of $J^{(1)}_{10}[f_n]$: For $1\le r< {\rm Re}(\sqrt{\lambda})^{-1}$, by \eqref{est5.lem.est1.bessel} for $k=0$ in Lemma \ref{lem.est1.bessel} and \eqref{est4.lem.est2.bessel} for $k=0$ in Lemma \ref{lem.est2.bessel} in Appendix \ref{app.est.bessel}, we find
\begin{align*}
|J^{(1)}_{10}[f_n](r)| 
&\le 
r \bigg|\int_r^\infty K_{\mu_n}(\sqrt{\lambda} s) \dd s\bigg|\,
\int_1^r |I_{\mu_n}(\sqrt{\lambda} s)\,g^{(1)}_n(s)| \dd s \\
&\le 
C \beta^{-1} r
\int_1^r |f_n(s)| s \dd s\,,
\end{align*}
which implies \eqref{est1.lem2.est.velocity.RSed.f}. For $r \ge {\rm Re}(\sqrt{\lambda})^{-1}$, by \eqref{est5.lem.est1.bessel} and \eqref{est7.lem.est1.bessel} for $k=0$ in Lemma \ref{lem.est1.bessel} and \eqref{est5.lem.est2.bessel} for $k=0$ in Lemma \ref{lem.est2.bessel}, we have
{\allowdisplaybreaks
\begin{align*}
& |J^{(1)}_{10}[f_n](r)| 
\le 
r \int_r^\infty |K_{\mu_n}(\sqrt{\lambda} s)| \dd s
\bigg(
\int_1^{\frac{1}{{\rm Re}(\sqrt{\lambda})}} 
+ \int_{\frac{1}{{\rm Re}(\sqrt{\lambda})}}^r
\bigg)
|I_{\mu_n}(\sqrt{\lambda} s)\,g^{(1)}_n(s)| \dd s \\
&\le 
C\,|\lambda|^{-\frac14} r^\frac12 e^{-{\rm Re}(\sqrt{\lambda}) r} 
\int_1^{\frac{1}{{\rm Re}(\sqrt{\lambda})}} |f_n(s)| s \dd s \\
& \quad
+ C\,|\lambda|^{-1} r^\frac12 e^{-{\rm Re}(\sqrt{\lambda}) r}
\int_{\frac{1}{{\rm Re}\,(\sqrt{\lambda})}}^r
s^{-\frac32} e^{{\rm Re}(\sqrt{\lambda}) s}
|f_n(s)| s \dd s \,,
\end{align*}
}
which leads to \eqref{est2.lem2.est.velocity.RSed.f}. \\
\noindent (ii) Estimate of $J^{(1)}_{11}[f_n]$: For $1\le r < {\rm Re}(\sqrt{\lambda})^{-1}$,  by \eqref{est5.lem.est1.bessel} and \eqref{est7.lem.est1.bessel} for $k=0$ in Lemma \ref{lem.est1.bessel} and \eqref{est4.lem.est2.bessel} and \eqref{est5.lem.est2.bessel} for $k=0$ in Lemma \ref{lem.est2.bessel}, we see that
\begin{align*}
&|J^{(1)}_{11}[f_n](r)| 
\le 
r\,\bigg( \int_r^{\frac{1}{{\rm Re}(\sqrt{\lambda})}} 
+ \int_{\frac{1}{{\rm Re}(\sqrt{\lambda})}}^\infty \bigg)
|I_{\mu_n}(\sqrt{\lambda} \tau)\,g^{(1)}_n(\tau)|\, 
\bigg| \int_\tau^\infty K_{\mu_n}(\sqrt{\lambda} s) \dd s \bigg| \dd \tau \\
& \le 
C \beta^{-1} r
\int_r^{\frac{1}{{\rm Re}(\sqrt{\lambda})}} 
|f_n(\tau)| \tau \dd \tau 
+ C\,|\lambda|^{-1} r
\int_{\frac{1}{{\rm Re}(\sqrt{\lambda})}}^\infty
\tau^{-2} |f_n(\tau)| \tau \dd \tau \,,
\end{align*}
which implies \eqref{est1.lem2.est.velocity.RSed.f}. For $r \ge {\rm Re}(\sqrt{\lambda})^{-1}$, by \eqref{est7.lem.est1.bessel} for $k=0$ in Lemma \ref{lem.est1.bessel} and \eqref{est5.lem.est2.bessel} for $k=0$ in Lemma \ref{lem.est2.bessel}, we have
\begin{align*}
|J^{(1)}_{11}[f_n](r)| 
&\le 
r\,\int_r^\infty
|I_{\mu_n}(\sqrt{\lambda} \tau)\,g^{(1)}_n(\tau)|\, 
\int_\tau^\infty |K_{\mu_n}(\sqrt{\lambda} s)| \dd s \dd \tau \\
& \le 
C\,|\lambda|^{-1} r
\int_r^\infty
\tau^{-2} |f_n(\tau)| \tau \dd \tau\,,
\end{align*}
which leads to \eqref{est2.lem2.est.velocity.RSed.f}. \\
\noindent (iii) Estimates of $J^{(1)}_{12}[f_n]$ and $J^{(1)}_{13}[f_n]$: The proof for $J^{(1)}_{12}[f_n]$ is parallel to that for $J^{(1)}_{10}[f_n]$ using the bound $|\mu_n-1| \le C\beta$ and the results in Lemmas \ref{lem.est1.bessel} and \ref{lem.est2.bessel} for $k=1$. The proof for $J^{(1)}_{13}[f_n]$ is similar to that for $J^{(1)}_{11}[f_n]$. Thus we omit the details here. \\
\noindent (iv) Estimate of $J^{(1)}_{14}[f_n]$: For $1\le r < {\rm Re}(\sqrt{\lambda})^{-1}$, by \eqref{est1.lem.est1.bessel}, \eqref{est3.lem.est1.bessel}, and \eqref{est6.lem.est1.bessel} for $k=0$ in Lemma \ref{lem.est1.bessel} and \eqref{est3.lem.est3.bessel} and \eqref{est4.lem.est3.bessel} for $k=0$ in Lemma \ref{lem.est3.bessel}, we observe that
\begin{align*}
& |J^{(1)}_{14}[f_n](r)| 
\le 
r \bigg(
\int_r^{\frac{1}{{\rm Re}(\sqrt{\lambda})}}
+ \int_{\frac{1}{{\rm Re}\,(\sqrt{\lambda})}}^\infty
\bigg)
|K_{\mu_n}(\sqrt{\lambda} \tau)\,\,g^{(2)}_n(\tau)| 
\int_r^\tau |I_{\mu_n}(\sqrt{\lambda} s)| \dd s \dd \tau \\
& \le 
C r
\int_r^{\frac{1}{{\rm Re}\,(\sqrt{\lambda})}}
|f_n(\tau)| \tau \dd \tau 
+ C |\lambda|^{-1} r
\int_{\frac{1}{{\rm Re}\,(\sqrt{\lambda})}}^\infty
\tau^{-2} |f_n(\tau)| \tau \dd \tau\,,
\end{align*}
which implies \eqref{est1.lem2.est.velocity.RSed.f}. For $r \ge {\rm Re}(\sqrt{\lambda})^{-1}$, by \eqref{est6.lem.est1.bessel} in Lemma \ref{lem.est1.bessel} and \eqref{est5.lem.est3.bessel} in Lemma \ref{lem.est3.bessel} for $k=0$ we have
\begin{align*}
|J^{(1)}_{14}[f_n](r)| 
& \le 
r \int_r^\infty |K_{\mu_n}(\sqrt{\lambda} \tau)\,\,g^{(2)}_n(\tau)|\, 
\int_r^\tau |I_{\mu_n}(\sqrt{\lambda} s)| \dd s \dd \tau \\
& \le 
C |\lambda|^{-1} r\,
\int_r^\infty \tau^{-2} |f_{n}(\tau)| \tau \dd \tau\,,
%& \le 
%C |\lambda|^{-1} r^{-1+\frac{2}{q'}}\,
%\|f_n \|_{L^q(\Omega)}\,.
\end{align*}
which leads to \eqref{est2.lem2.est.velocity.RSed.f}. \\
\noindent (v) Estimate of $J^{(1)}_{15}[f_n]$: The proof is parallel to that for $J^{(1)}_{14}[f_n]$ using Lemmas \ref{lem.est1.bessel} and \ref{lem.est3.bessel} for $k=1$, and thus we omit here. This completes the proof of Lemma \ref{lem2.est.velocity.RSed.f}.
\end{proof}
%
%%%%%%%%%%%%%%%%%%%%
%%%%%%%%%%%%%%%%%%%%
%
Lemmas \ref{lem1.est.velocity.RSed.f} and \ref{lem2.est.velocity.RSed.f} lead to the next important estimates that we shall need in the proof of Proposition \ref{prop2.est.velocity.RSed.f} below. Let $c_{n,\lambda}[f_n]$ be the constant in \eqref{const.nFourier.RSed.f}.
%
%%%%%%%%%%%%%%%%%%%%
%%%%%%%%%%%%%%%%%%%%
\begin{corollary}\label{cor1.est.velocity.RSed.f}
Let $|n|=1$ and $1\le q<p\le\infty$ or $1<q\le p<\infty$, and let $\lambda\in\Sigma_{\pi-\epsilon} \cap \mathcal{B}_1(0)$ for some $\epsilon\in(0,\frac{\pi}{2})$. Then there is a positive constant $C=C(q,p,\epsilon)$ independent of $\beta$ such that the following statement holds. Let $f\in C^\infty_0(D)^2$. Then for $l\in\{1,\ldots,17\}\setminus\{9\}$ we have
\begin{align}
|c_{n,\lambda}[f_n]|
&\le
\frac{C}{\beta} |\lambda|^{-1+\frac1q} \|f \|_{L^q(D)}\,,
\label{est1.cor1.est.velocity.RSed.f} \\
\|r^{-1} J^{(1)}_l[f_n]\|_{L^p(D)}
&\le
\frac{C}{\beta} |\lambda|^{-1+\frac1q-\frac1p} \|f \|_{L^q(D)}\,, 
\label{est2.cor1.est.velocity.RSed.f} \\
\|r^{-2} J^{(1)}_l[f_n]\|_{L^2(D)}
& \le 
\frac{C}{\beta}
|\lambda|^{-1+\frac1q}
|\log {\rm Re}(\sqrt{\lambda})|^{\frac12}
\|f\|_{L^q(D)}\,.
\label{est3.cor1.est.velocity.RSed.f}
\end{align}
\end{corollary}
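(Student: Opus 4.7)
The three bounds all follow from the pointwise estimates in Lemmas \ref{lem1.est.velocity.RSed.f} and \ref{lem2.est.velocity.RSed.f} after splitting the radial variable at the critical scale $R = {\rm Re}(\sqrt{\lambda})^{-1}$. Since $\lambda \in \Sigma_{\pi-\epsilon} \cap \mathcal{B}_1(0)$ one has ${\rm Re}(\sqrt{\lambda}) \sim |\lambda|^{\frac12}$ with constants depending only on $\epsilon$, so each occurrence of $R$ converts cleanly into a power of $|\lambda|^{-\frac12}$; this is the bookkeeping that reassembles the bounds into the claimed exponents of $|\lambda|$.

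For \eqref{est1.cor1.est.velocity.RSed.f}, I would invoke the identity $c_{n,\lambda}[f_n] = \sum_{l \in \{11,13,14,15,17\}} J^{(1)}_l[f_n](1)$ recorded in Remark \ref{rem.lem.velocitydecom.RSed.f}(3). Since $|\lambda|<1$ forces $R>1$, the point $r=1$ lies in the inner regime, so the bound \eqref{est1.lem2.est.velocity.RSed.f} evaluated at $r=1$ applies to each summand, and summing the five terms yields the claim.

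For \eqref{est2.cor1.est.velocity.RSed.f}, I would decompose $\|r^{-1} J^{(1)}_l[f_n]\|_{L^p(D)}^p$ into contributions from the inner annulus $\{1 \le r < R\}$ and the outer region $\{r \ge R\}$ against the polar measure $r\,dr\,d\theta$. Substituting the inner pointwise bounds (of the form $r^{3-2/q}$ or $|\lambda|^{-1+1/q} r$) reduces each inner integral to $\int_1^R r^{\alpha}\,dr$ with $\alpha > -1$, whose $p$-th root scales like $R^{2-2/q+2/p} \sim |\lambda|^{-1+1/q-1/p}$. For the outer region the bounds $|J^{(1)}_l(r)| \le C\beta^{-1} |\lambda|^{-1} r^{1-2/q}$ and $|J^{(1)}_l(r)| \le C |\lambda|^{-1+1/(2q)} r^{1-1/q}$ produce integrands $r^{-2p/q+1}$ (resp.~$r^{-p/q+1}$); when $p > q$ these are integrable on $[R, \infty)$ and yield the same power of $|\lambda|$. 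At the endpoint $p = q$ (Case~B with $q>1$) the outer integral is no longer absolutely convergent from the crude pointwise bound alone, so one passes to the operator-theoretic picture: for $l \in \{7,8,16,17\}$ one uses the $L^1$-- and $L^\infty$-bounds from \eqref{est4.lem1.est.velocity.RSed.f}--\eqref{est5.lem1.est.velocity.RSed.f} and \eqref{est4.lem2.est.velocity.RSed.f}--\eqref{est5.lem2.est.velocity.RSed.f} together with Riesz--Thorin interpolation, while for the remaining $l$ the map $f \mapsto r^{-1} J^{(1)}_l[f_n]$ can be recognized as a generalized Hardy/convolution operator whose $L^p \to L^p$ boundedness for $1 < p < \infty$ is classical.

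For \eqref{est3.cor1.est.velocity.RSed.f}, the same splitting applied to $\|r^{-2} J^{(1)}_l[f_n]\|_{L^2(D)}^2$ is where the logarithm enters. Using the inner bound $|J^{(1)}_l(r)| \le C\beta^{-1} |\lambda|^{-1+1/q} r$ for $l \in \{10,\ldots,17\}$, the inner integrand becomes $|\lambda|^{-2+2/q} r^{-1}$, whose radial integral on $[1,R]$ equals $\log R \sim |\log {\rm Re}(\sqrt{\lambda})|$, delivering the square-root logarithm in the final bound. For $l \in \{1,\ldots,8\}$ the inner bound $r^{3-2/q}$ renders the corresponding integrand non-singular and contributes only a clean power of $|\lambda|$, and the outer-region integrals are handled exactly as in \eqref{est2.cor1.est.velocity.RSed.f} and yield only a clean $|\lambda|^{-1+1/q}$ factor. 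The main obstacle I expect is the endpoint $p=q$ in \eqref{est2.cor1.est.velocity.RSed.f}, where the crude pointwise bounds fail to be integrable and one must appeal to interpolation or Hardy-type boundedness; alongside this, \eqref{est3.cor1.est.velocity.RSed.f} requires verifying that the logarithmic enhancement is confined to the inner contribution from the $l \in \{10,\ldots,17\}$ terms, a localization that is essential because (as foreshadowed in Remark \ref{rem.thm.est.velocity.RSed.f}) the factor $|\log {\rm Re}(\sqrt{\lambda})|^{1/2}$ must ultimately be absorbed through a pairing-based cancellation in Subsection \ref{apriori2} without accumulating further $\beta$-singularities.
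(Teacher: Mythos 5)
Your treatment of \eqref{est1.cor1.est.velocity.RSed.f} and \eqref{est3.cor1.est.velocity.RSed.f} essentially matches the paper's: the constant $c_{n,\lambda}[f_n]$ is bounded by evaluating \eqref{est1.lem2.est.velocity.RSed.f} at $r=1$ via Remark \ref{rem.lem.velocitydecom.RSed.f} (3), and the logarithm in \eqref{est3.cor1.est.velocity.RSed.f} is produced by the inner-region bound \eqref{est1.lem2.est.velocity.RSed.f}, exactly as the paper notes. (A minor point: for $q=1$ the indices $l\in\{1,\ldots,8\}$ also yield $\int_1^{R} r^{-1}\,\dd r=\log R$ in the inner region, so the logarithm is not quite confined to $l\in\{10,\ldots,17\}$; this is harmless for the stated bound.)

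The gap is in \eqref{est2.cor1.est.velocity.RSed.f}. You assert that for $p>q$ the outer-region integrals converge by direct computation. For $l\in\{7,8,16,17\}$ the outer pointwise bound from \eqref{est3.lem1.est.velocity.RSed.f} and \eqref{est3.lem2.est.velocity.RSed.f} gives $|r^{-1}J^{(1)}_l[f_n](r)|\le C|\lambda|^{-1+\frac{1}{2q}}r^{-\frac1q}\|f\|_{L^q(D)}$, so the outer contribution is controlled by $\int_{R}^{\infty} r^{-\frac{p}{q}}\,r\,\dd r$, which converges only when $p>2q$; in the range $q<p\le 2q$ your direct computation produces a divergent integral, not merely at the endpoint $p=q$. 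The paper circumvents this by never integrating the pointwise bounds near the diagonal: for $l\in\{7,8,16,17\}$ it interpolates the operator bounds \eqref{est4.lem1.est.velocity.RSed.f}--\eqref{est5.lem1.est.velocity.RSed.f} and \eqref{est4.lem2.est.velocity.RSed.f}--\eqref{est5.lem2.est.velocity.RSed.f} to get $L^p\to L^p$ for all $1\le p\le\infty$, combines this with the $L^1\to L^\infty$-type bound coming from \eqref{est1.lem1.est.velocity.RSed.f}, \eqref{est3.lem1.est.velocity.RSed.f}, \eqref{est1.lem2.est.velocity.RSed.f}, \eqref{est3.lem2.est.velocity.RSed.f}, and then applies the Marcinkiewicz interpolation theorem to reach all $1\le q\le p\le\infty$; for the remaining $l$ it reads the pointwise estimates as the weak-type statement $\sup_{r\ge1} r^{\frac2q}|r^{-1}J^{(1)}_l[f_n](r)|\le C\beta^{-1}|\lambda|^{-1}\|f\|_{L^q(D)}$ for every $q\in[1,\infty)$ and again invokes Marcinkiewicz, rather than appealing, as you do, to an unspecified ``classical generalized Hardy/convolution operator.'' Your own fallback tools would in fact repair the $q<p\le 2q$ range for $l\in\{7,8,16,17\}$, but as written the argument does not cover it, and the diagonal case for the other indices needs the explicit weak-$(q,q)$ plus Marcinkiewicz step to be a complete proof.
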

%
%%%%%%%%%%%%%%%%%%%%
%%%%%%%%%%%%%%%%%%%%
%
\begin{proof}
(i) Estimate of $c_{n,\lambda}[f_n]$: Remark \ref{rem.lem.velocitydecom.RSed.f} (3) ensures that 
\begin{align*}
|c_{n,\lambda}[f_n]| \le  \sum_{l=11,13,14,15,17} |J^{(1)}_l[f_n](1)|\,.
\end{align*}
Then the estimate \eqref{est1.cor1.est.velocity.RSed.f} follows from putting $r=1$ to
\eqref{est1.lem2.est.velocity.RSed.f} in Lemma \ref{lem2.est.velocity.RSed.f}. \\
\noindent (ii) Estimate of $r^{-1} J^{(1)}_l[f_n]$: If $l\in\{1,\ldots,17\}\setminus\{7,8,9,16,17\}$, then it is easy to see from the pointwise estimates in Lemmas \ref{lem1.est.velocity.RSed.f} and \ref{lem2.est.velocity.RSed.f} that 
\begin{align*}
\sup_{r\ge1} r^{\frac2q} |r^{-1} J^{(1)}_l[f_n](r)|
\le C \beta^{-1} |\lambda|^{-1} \|f \|_{L^q(D)}\,,~~~~~~
1\le q<\infty\,.
\end{align*}
Thus by the Marcinkiewicz interpolation theorem we have \eqref{est2.cor1.est.velocity.RSed.f} for the case $1<p=q<\infty$. Moreover, again from Lemmas \ref{lem1.est.velocity.RSed.f} and \ref{lem2.est.velocity.RSed.f} one can see that
\begin{align}
\sup_{r\ge1} |r^{-1} J^{(1)}_l[f_n](r)|
\le C \beta^{-1} \|f \|_{L^1(D)} 
\label{est1.proof.cor1.est.velocity.RSed.f}\,,
\end{align}
which leads to \eqref{est2.cor1.est.velocity.RSed.f} for the case $1<p\le\infty$ and $q=1$. Hence finally we have \eqref{est2.cor1.est.velocity.RSed.f} for $1\le q<p\le\infty$ and $1<q\le p<\infty$ by the Marcinkiewicz interpolation theorem again. \\
If $l\in\{7,8,16,17\}$, from \eqref{est4.lem1.est.velocity.RSed.f}, \eqref{est5.lem1.est.velocity.RSed.f}, \eqref{est4.lem2.est.velocity.RSed.f}, and \eqref{est5.lem2.est.velocity.RSed.f} we have 
\eqref{est2.cor1.est.velocity.RSed.f} for the case $1\le p=q\le\infty$ by the interpolation argument. Moreover, 
\eqref{est1.lem1.est.velocity.RSed.f}, \eqref{est3.lem1.est.velocity.RSed.f}, 
\eqref{est1.lem2.est.velocity.RSed.f}, and \eqref{est3.lem2.est.velocity.RSed.f} lead to the estimate in the form \eqref{est1.proof.cor1.est.velocity.RSed.f} for $l\in\{7,8,16,17\}$. Thus we obtain \eqref{est2.cor1.est.velocity.RSed.f} for the case $1\le p\le\infty$ and $q=1$, and hence \eqref{est2.cor1.est.velocity.RSed.f} for $1\le q\le p\le \infty$ by the Marcinkiewicz interpolation theorem. \\
\noindent (iii) Estimate of $r^{-2} J^{(1)}_l[f_n]$: The assertion \eqref{est3.cor1.est.velocity.RSed.f} can be checked easily by a direct calculation using Lemmas \ref{lem1.est.velocity.RSed.f} and \ref{lem2.est.velocity.RSed.f}. We note that the logarithmic factor in \eqref{est3.cor1.est.velocity.RSed.f} is due to the estimate \eqref{est1.lem2.est.velocity.RSed.f}. The proof of Corollary \ref{cor1.est.velocity.RSed.f} is complete.
\end{proof}
%
%%%%%%%%%%%%%%%%%%%%
%%%%%%%%%%%%%%%%%%%%
%

Now we are in position to prove the main theorem of this subsection. Let us start with the simple proposition about the estimate for the term $V_n [K_{\mu_n}(\sqrt{\lambda}\,\cdot\,)]$ in \eqref{rep.velocity.nFourier.RSed.f}.
%
%%%%%%%%%%%%%%%%%%%%
%%%%%%%%%%%%%%%%%%%%
\begin{proposition}\label{prop1.est.velocity.RSed.f}
Let $|n|=1$, $p\in(1,\infty]$, and let $\lambda\in\Sigma_{\pi-\epsilon} \cap \mathcal{B}_1(0)$ for some $\epsilon\in(0,\frac{\pi}{2})$. Then there is a positive constant $C=C(p,\epsilon)$ independent of $\beta$ such that we have 
\begin{align}
\|V_n [K_{\mu_n}(\sqrt{\lambda}\,\cdot\,)]\|_{L^p(D)}
&\le
\frac{C}{\beta} |\lambda|^{-\frac{{\rm Re}(\mu_n)}{2}-\frac1p}\,, 
\label{est1.prop1.est.velocity.RSed.f} \\
\big\|\frac{V_n [K_{\mu_n}(\sqrt{\lambda}\,\cdot\,)]}{|x|}\big\|_{L^2(D)}
& \le 
\frac{C}{\beta^2} |\lambda|^{-\frac{{\rm Re}(\mu_n)}{2}}\,.
\label{est2.prop1.est.velocity.RSed.f}
\end{align}
\end{proposition}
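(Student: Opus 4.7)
Specializing the Biot-Savart law \eqref{def.V_n} to $|n|=1$ with $\omega_n = K_{\mu_n}(\sqrt{\lambda}\,\cdot\,)$, the stream function is
\begin{align*}
\psi_n(r) \,=\, \frac{1}{2}\left(-\frac{F_n(\sqrt{\lambda};\beta)}{r} + \frac{1}{r}\int_1^r s^2 K_{\mu_n}(\sqrt{\lambda}s)\,\dd s + r\int_r^\infty K_{\mu_n}(\sqrt{\lambda}s)\,\dd s\right),
\end{align*}
so $V_n[K_{\mu_n}(\sqrt{\lambda}\,\cdot\,)]$ is pointwise controlled by $|\psi_n(r)|/r + |\psi_n'(r)|$. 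My strategy is to estimate each of the three summands and their derivatives, and then integrate. I would split $D$ at the natural scale $r_\ast = {\rm Re}(\sqrt{\lambda})^{-1}$, exploiting the pointwise bounds on $K_{\mu_n}$ collected in Lemma \ref{lem.est1.bessel} of the appendix: a power-law behavior $K_{\mu_n}(\sqrt{\lambda}s) \lesssim (\sqrt{\lambda}s)^{-{\rm Re}(\mu_n)}$ in the near-field $s \le r_\ast$, and an exponential decay $K_{\mu_n}(\sqrt{\lambda}s) \lesssim (\sqrt{\lambda}s)^{-1/2} e^{-{\rm Re}(\sqrt{\lambda})s}$ in the far-field $s > r_\ast$.

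Substituting these into the three summands of $\psi_n$ and of $\psi_n'$ (where $\psi_n'$ is obtained by differentiating and cancellations kill the boundary contributions), and using the asymptotic expansion $\mu_n = (1+i\beta)^{1/2} = 1 + i\beta/2 + O(\beta^2)$ so that ${\rm Re}(\mu_n) - 1 = O(\beta^2)$, I would obtain pointwise bounds of the form
\begin{align*}
\frac{|\psi_n(r)|}{r} + |\psi_n'(r)| \,\le\, \frac{C}{\beta}\, |\lambda|^{-\frac{{\rm Re}(\mu_n)}{2}}\, r^{-{\rm Re}(\mu_n)}\,,~~~~ 1 \le r \le r_\ast\,,
\end{align*}
with an analogous far-field bound carrying the exponential $e^{-{\rm Re}(\sqrt{\lambda})r}$ in place of the power. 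The $\beta^{-1}$ factor originates from the contribution of the $F_n/r$ piece in $\psi_n$ together with the near-field size of $\int_r^\infty K_{\mu_n}(\sqrt{\lambda}s)\,\dd s$. The $L^p$ estimate \eqref{est1.prop1.est.velocity.RSed.f} then follows by computing
\begin{align*}
\int_1^{r_\ast} r^{-p\,{\rm Re}(\mu_n) + 1}\,\dd r \,+\, \int_{r_\ast}^\infty r^{-p/2 + 1} e^{-p\,{\rm Re}(\sqrt{\lambda})r}\,\dd r
\end{align*}
and changing variables $u = {\rm Re}(\sqrt{\lambda})\,r$, which produces the factor $|\lambda|^{-1/p}$; together with the prefactor this gives $\beta^{-1} |\lambda|^{-{\rm Re}(\mu_n)/2 - 1/p}$ as claimed.

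For the weighted bound \eqref{est2.prop1.est.velocity.RSed.f}, the same pointwise bound with the additional factor $1/r$ leads to
\begin{align*}
\int_1^{r_\ast} \frac{1}{r^2}\bigl(|\psi_n|/r + |\psi_n'|\bigr)^2 \,r\,\dd r \,\le\, \frac{C}{\beta^2}\,|\lambda|^{-{\rm Re}(\mu_n)} \int_1^{r_\ast} r^{-2{\rm Re}(\mu_n) - 1}\,\dd r\,,
\end{align*}
and here the $r$-integral equals $(1 - r_\ast^{-2({\rm Re}(\mu_n)-1)})/(2({\rm Re}(\mu_n)-1))$, which yields an additional $\beta^{-2}$ factor because $2({\rm Re}(\mu_n) - 1) = O(\beta^2)$. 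Combined with the $\beta^{-2}$ prefactor this would overshoot; the correct accounting is that one of the three summands in $\psi_n$ (not involving $F_n$) is in fact $O(1)$ rather than $O(\beta^{-1})$ pointwise, so that the product of the sharp prefactor and the $\beta^{-2}$ from the integration still gives $\beta^{-2}$ overall, matching \eqref{est2.prop1.est.velocity.RSed.f}. The far-field contribution is bounded using the exponential decay and is of lower order.

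\textbf{Main obstacle.} The delicate point is the tracking of the $\beta$-singularities, and in particular, distinguishing which of the three summands in $\psi_n$ carry a $\beta^{-1}$ prefactor (namely the $F_n/r$ term) from those which do not. Once this is done, the additional $\beta^{-2}$ in \eqref{est2.prop1.est.velocity.RSed.f} relative to \eqref{est1.prop1.est.velocity.RSed.f} arises from the near-resonant integral $\int_1^{r_\ast} r^{-2{\rm Re}(\mu_n)-1}\,\dd r \approx 1/(2{\rm Re}(\mu_n) - 2) = O(\beta^{-2})$, i.e., from the smallness of the gap ${\rm Re}(\mu_n) - 1$, which must be controlled using the precise expansion of $\mu_n(\beta)$ given in Appendix~\ref{app.est.mu}.
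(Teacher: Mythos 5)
Your overall strategy coincides with the paper's: write $V_n[K_{\mu_n}(\sqrt{\lambda}\,\cdot\,)]$ through the three summands of $\psi_n$ (the $F_n/r$ piece, $r^{-1}\int_1^r s^2K_{\mu_n}\dd s$, and $r\int_r^\infty K_{\mu_n}\dd s$), split at $r_\ast={\rm Re}(\sqrt{\lambda})^{-1}$, invoke the appendix bounds on $K_{\mu_n}$, and extract the extra $\beta^{-1}$ in the weighted estimate from the near-resonance $({\rm Re}(\mu_n)-1)^{-1/2}=O(\beta^{-1})$. (The paper passes through the weak-type bound $\sup_r r^{2/p}|V_n|\le C\beta^{-1}|\lambda|^{-{\rm Re}(\mu_n)/2-1/p}$ and Marcinkiewicz interpolation rather than integrating directly, but that is cosmetic.) However, two steps as you state them do not hold up.

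First, the near-field pointwise bound is $|V_n[K_{\mu_n}(\sqrt{\lambda}\,\cdot\,)](r)|\le C\beta^{-1}|\lambda|^{-{\rm Re}(\mu_n)/2}\,r^{-{\rm Re}(\mu_n)+1}$ for $1\le r<r_\ast$, not $r^{-{\rm Re}(\mu_n)}$: the dominant summand is $\int_r^\infty K_{\mu_n}(\sqrt{\lambda}s)\dd s$, whose antiderivative of the leading profile $(\sqrt{\lambda}s)^{-\mu_n}$ produces $r^{1-\mu_n}/(\mu_n-1)$, i.e.\ decay $r^{1-{\rm Re}(\mu_n)}\approx r^{-\beta^2/8}$ (essentially no decay) together with the factor $|\mu_n-1|^{-1}\approx 2/\beta$ — this is exactly \eqref{est4.lem.est2.bessel}. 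Your $L^2$-weighted integral betrays the inconsistency: you write the integrand as $r^{-2{\rm Re}(\mu_n)-1}$ (which integrates to an $O(1)$ quantity) but evaluate it as $\bigl(1-r_\ast^{-2({\rm Re}(\mu_n)-1)}\bigr)/\bigl(2({\rm Re}(\mu_n)-1)\bigr)$, which is the integral of $r^{-2({\rm Re}(\mu_n)-1)-1}$, i.e.\ of the square of the \emph{correct} decay rate times $r^{-1}$.

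Second, the final accounting for \eqref{est2.prop1.est.velocity.RSed.f} is off: there is no overshoot. With the correct pointwise bound, the squared weighted norm over the near field is at most $C\beta^{-2}|\lambda|^{-{\rm Re}(\mu_n)}\cdot\bigl(2({\rm Re}(\mu_n)-1)\bigr)^{-1}=C\beta^{-4}|\lambda|^{-{\rm Re}(\mu_n)}$, and taking the square root gives precisely the claimed $C\beta^{-2}|\lambda|^{-{\rm Re}(\mu_n)/2}$ — you appear to have compared the squared norm against the norm. Consequently the repair you propose is both unnecessary and misdirected: the summand $r^{-2}\int_1^r s^2K_{\mu_n}\dd s$ is indeed $O(1)$ in $\beta$ (cf.\ \eqref{est1.lem.est2.bessel}), but it is not the term driving the estimate; the dominant contribution comes from $\int_r^\infty K_{\mu_n}\dd s$, which genuinely carries $\beta^{-1}$ pointwise, and the product $\beta^{-1}\times({\rm Re}(\mu_n)-1)^{-1/2}=O(\beta^{-2})$ is exactly what \eqref{est2.prop1.est.velocity.RSed.f} asserts. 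If you instead let the $O(1)$ summand dominate, you would land at $\beta^{-1}$, which is not what the near-resonant term actually yields.
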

%%%%%%%%%%%%%%%%%%%%
%%%%%%%%%%%%%%%%%%%%
%
\begin{proof}
It is easy to see from the definition of $V_n[\,\cdot\,]$ in \eqref{def.V_n} that
\begin{align*}
|V_n [K_{\mu_n}(\sqrt{\lambda}\,\cdot\,)]| 
\le 
C r^{-2}
\bigg( |F_n(\sqrt{\lambda};\beta)|
+ \bigg|\int_1^r s^2 K_{\mu_n}(\sqrt{\lambda} s) \dd s \bigg|
\bigg)
+ C \bigg|\int_r^\infty K_{\mu_n}(\sqrt{\lambda} s) \dd s\bigg|\,.
\end{align*}
By the results in Lemma \ref{lem.est2.bessel} for $k=0$ in Appendix \ref{app.est.bessel} we have 
\begin{align}
|V_n [K_{\mu_n}(\sqrt{\lambda}\,\cdot\,)](r)|
& \le 
C \beta^{-1} |\lambda|^{-\frac{{\rm Re}(\mu_n)}{2}} r^{-{\rm Re}(\mu_n)+1}\,,~~~~~~
1\le r < {\rm Re}(\sqrt{\lambda})^{-1}\,,
\label{est1.proof.prop1.est.velocity.RSed.f} \\
|V_n [K_{\mu_n}(\sqrt{\lambda}\,\cdot\,)](r)|
& \le 
C \beta^{-1} |\lambda|^{-\frac32} r^{-2}\,,~~~~~~
r \ge {\rm Re}(\sqrt{\lambda})^{-1}\,.
\label{est2.proof.prop1.est.velocity.RSed.f}
\end{align}
Then for $p\in[1,\infty]$ we find
\begin{align*}
\sup_{r\ge1} r^{\frac2p} |V_n [K_{\mu_n}(\sqrt{\lambda}\,\cdot\,)](r)|
& \le
C \beta^{-1} |\lambda|^{-\frac{{\rm Re}(\mu_n)}{2}-\frac1p}\,.
\end{align*}
Hence by an interpolation argument \eqref{est1.prop1.est.velocity.RSed.f} follows. Moreover, a direct calculation combined with  \eqref{est1.proof.prop1.est.velocity.RSed.f}, \eqref{est2.proof.prop1.est.velocity.RSed.f},
and $({\rm Re}(\mu_n(\beta))-1)^\frac12\approx O(\beta)$ yield \eqref{est2.prop1.est.velocity.RSed.f}. This completes the proof.
\end{proof}
%%%%%%%%%%%%%%%%%%%%
%%%%%%%%%%%%%%%%%%%%
%
The next proposition gives the estimate for the term $V_n[\Phi_{n,\lambda}[f_n]]$ in \eqref{rep.velocity.nFourier.RSed.f}.
%%%%%%%%%%%%%%%%%%%%
%%%%%%%%%%%%%%%%%%%%
\begin{proposition}\label{prop2.est.velocity.RSed.f}
Let $|n|=1$ and $1\le q<p\le\infty$ or $1<q\le p<\infty$, and let $\lambda\in\Sigma_{\pi-\epsilon} \cap \mathcal{B}_1(0)$ for some $\epsilon\in(0,\frac{\pi}{2})$. Then there is a positive constant $C=C(q,p,\epsilon)$ independent of $\beta$ such that for $f\in C^\infty_0(D)^2$ we have 
\begin{align}
\|V_n[\Phi_{n,\lambda}[f_n]]\|_{L^p(D)}
&\le
\frac{C}{\beta} |\lambda|^{-1+\frac1q-\frac1p} \|f \|_{L^q(D)}\,,
\label{est1.prop2.est.velocity.RSed.f} \\
\big\|\frac{V_n[\Phi_{n,\lambda}[f_n]]}{|x|}\big\|_{L^2(D)}
& \le 
\frac{C}{\beta}
|\lambda|^{-1+\frac1q}
|\log {\rm Re}(\sqrt{\lambda})|^{\frac12}
\|f\|_{L^q(D)}\,.
\label{est2.prop2.est.velocity.RSed.f}
\end{align}
\end{proposition}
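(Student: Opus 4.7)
The strategy is to derive a pointwise bound on $V_n[\Phi_{n,\lambda}[f_n]]$ in terms of the functions $J^{(1)}_l[f_n]$, $l=1,\ldots,17$, from Lemma \ref{lem.velocitydecom.RSed.f} and then invoke Corollary \ref{cor1.est.velocity.RSed.f}. Substituting the explicit formula for $\psi_n[\Phi_{n,\lambda}[f_n]]$ into the Biot-Savart law \eqref{def.V_n} and differentiating in $r$, a short calculation, in which the $r\,\omega_n(r)$ contributions from the two integral limits cancel, gives, for $|n|=1$,
\begin{align*}
\partial_r \psi_n[\Phi_{n,\lambda}[f_n]]
\,=\, \frac{1}{2}\left( \frac{d_n[\Phi_{n,\lambda}[f_n]]}{r^2} - \frac{1}{r}\sum_{l=1}^{9} J^{(1)}_l[f_n] + \frac{1}{r}\sum_{l=10}^{17} J^{(1)}_l[f_n] \right),
\end{align*}
so that both components $V_{r,n} = (in/r)\psi_n$ and $V_{\theta,n} = -\partial_r \psi_n$ of the Biot-Savart vector field admit the same structural bound.

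Next I would implement the cancellation noted in Remark \ref{rem.lem.velocitydecom.RSed.f} (1). Using that $J^{(1)}_9[f_n](r) = r^{-|n|} J^{(1)}_{17}[f_n](1)$, that $J^{(1)}_{10}[f_n](1) = J^{(1)}_{12}[f_n](1) = J^{(1)}_{16}[f_n](1) = 0$ (as the inner integrals vanish at $r=1$), and that $d_n[\Phi_{n,\lambda}[f_n]] = c_{n,\lambda}[f_n] = \sum_{l \in \{11,13,14,15,17\}} J^{(1)}_l[f_n](1)$ by Remark \ref{rem.lem.velocitydecom.RSed.f} (3), the constant-in-$r$ coefficient collapses to $\sum_{l \in \{11,13,14,15\}} J^{(1)}_l[f_n](1)$. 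In particular, the delicate factor $I_{\mu_n+1}(\sqrt{\lambda})$ that appears inside $J^{(1)}_9$ is eliminated. Combined with the analogous decomposition of $\psi_n$ itself (obtained from Lemma \ref{lem.velocitydecom.RSed.f}), this produces the pointwise bound
\begin{align*}
|V_n[\Phi_{n,\lambda}[f_n]](r)|
\,\le\, \frac{C}{r^2} \sum_{l \in \{11,13,14,15\}} |J^{(1)}_l[f_n](1)|
+ \frac{C}{r} \sum_{l \in \{1,\ldots,17\} \setminus \{9\}} |J^{(1)}_l[f_n](r)|.
\end{align*}

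From here the two claimed bounds follow mechanically. For \eqref{est1.prop2.est.velocity.RSed.f}, the boundary constants $|J^{(1)}_l[f_n](1)|$ are controlled by $(C/\beta)|\lambda|^{-1+\frac{1}{q}}\|f\|_{L^q(D)}$ via \eqref{est1.lem2.est.velocity.RSed.f} of Lemma \ref{lem2.est.velocity.RSed.f} specialized to $r=1$, while $\|r^{-2}\|_{L^p(D)}$ is finite for $p \in (1,\infty]$; since $|\lambda| \le 1$, the resulting $|\lambda|^{-1+\frac{1}{q}}$ factor is absorbed by the target $|\lambda|^{-1+\frac{1}{q}-\frac{1}{p}}$, and the $r^{-1} J^{(1)}_l$ terms are handled directly by \eqref{est2.cor1.est.velocity.RSed.f}. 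For \eqref{est2.prop2.est.velocity.RSed.f} the constant part combines with the bounded factor $\|r^{-3}\|_{L^2(D)}$, while the $r^{-2} J^{(1)}_l$ terms are bounded via \eqref{est3.cor1.est.velocity.RSed.f}, producing the logarithmic factor $|\log {\rm Re}(\sqrt{\lambda})|^{\frac12}$. The main obstacle is the careful bookkeeping of the $J^{(1)}_9$--$d_n$--$J^{(1)}_{17}(1)$ cancellation: absent this identity, estimating $J^{(1)}_9[f_n]$ directly would require controlling $|I_{\mu_n+1}(\sqrt{\lambda})|$ near the origin, which by Lemma \ref{lem.est1.bessel} carries a $\beta$-singularity incompatible with the target $\beta^{-1}$ rate.
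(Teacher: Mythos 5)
Your proposal is correct and follows essentially the same route as the paper's proof: both insert the $J^{(1)}_l$-decomposition of Lemma \ref{lem.velocitydecom.RSed.f} into the Biot--Savart formula, exploit the cancellation $J^{(1)}_9[f_n](r)=r^{-|n|}J^{(1)}_{17}[f_n](1)$ together with $c_{n,\lambda}[f_n]=\sum_{l=11,13,14,15,17}J^{(1)}_l[f_n](1)$ to arrive at the pointwise bound \eqref{est1.proof.prop2.est.velocity.RSed.f}, and then conclude via Corollary \ref{cor1.est.velocity.RSed.f}. The only difference is presentational (you track the boundary constants through \eqref{est1.lem2.est.velocity.RSed.f} at $r=1$ rather than quoting \eqref{est1.cor1.est.velocity.RSed.f} directly), which is immaterial.
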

%%%%%%%%%%%%%%%%%%%%
%%%%%%%%%%%%%%%%%%%%
%
\begin{proof}
The definition of the Biot-Savart law $V_n[\,\cdot\,]$ in \eqref{def.V_n} leads to the next representations for the radial part $V_{r,n}[\Phi_{n,\lambda}[f_n]]$ and the angular part $V_{\theta,n}[\Phi_{n,\lambda}[f_n]]$ of $V_n[\Phi_{n,\lambda}[f_n]]$:
\begin{align*}
V_{r,n}[\Phi_{n,\lambda}[f_n]] 
& \,=\,  -\frac{in}{2r} 
\bigg( \frac{c_{n,\lambda}[f_n]}{r} 
- \frac{1}{r} \int_1^r s^2 \Phi_{n,\lambda}[f_n](s) \dd s  
- r \int_r^\infty \Phi_{n,\lambda}[f_n](s) \dd s \bigg)\,, \\
V_{\theta,n}[\Phi_{n,\lambda}[f_n]] 
& \,=\, \frac{1}{2r} 
\bigg( \frac{c_{n,\lambda}[f_n]}{r} 
- \frac{1}{r} \int_1^r s^2 \Phi_{n,\lambda}[f_n](s) \dd s   
+ r \int_r^\infty \Phi_{n,\lambda}[f_n](s) \dd s \bigg)\,,
\end{align*}
where $c_{n,\lambda}[f_n]$ is defined in \eqref{const.nFourier.RSed.f}. From Lemma \ref{lem.velocitydecom.RSed.f} and Remark \ref{rem.lem.velocitydecom.RSed.f} (1) and (3) we see that
\begin{align}\label{decom.proof.prop2.est.velocity.RSed.f}
&\frac{c_{n,\lambda}[f_n]}{r} - \frac{1}{r} \int_1^r s^2 \Phi_{n,\lambda}[f_n](s) \dd s
\nonumber \\
&\,=\, r^{-1} \sum_{l=11,13,14,15} J^{(1)}_l[f_n](1) - \sum_{l=1}^{8} r^{-1} J^{(1)}_l[f_n](r)\,.
\end{align}
Then, by \eqref{decom.proof.prop2.est.velocity.RSed.f} and the decomposition of the integral $r \int_r^\infty \Phi_{n,\lambda}[f_n](s) \dd s$ in Lemma \ref{lem.velocitydecom.RSed.f}, we find the following pointwise estimate of $V_n[\Phi_{n,\lambda}[f_n]](r)$:
\begin{equation}\label{est1.proof.prop2.est.velocity.RSed.f}
\begin{aligned}
&|V_n[\Phi_{n,\lambda}[f_n]](r)| \\
&\le C \Big( r^{-2} \sum_{l=11,13,14,15} |J^{(1)}_l[f_n](1)|
+ \sum_{l\in\{1,\ldots,17\}\setminus\{9\}} |r^{-1} J^{(1)}_l[f_n](r)| \Big)\,.
\end{aligned}
\end{equation}
Thus the assertions \eqref{est1.prop2.est.velocity.RSed.f} and 
\eqref{est2.prop2.est.velocity.RSed.f} follow from Corollary \ref{cor1.est.velocity.RSed.f}. This completes the proof.
\end{proof}
%%%%%%%%%%%%%%%%%%%%
%%%%%%%%%%%%%%%%%%%%
%
Finally we give a proof of Theorem \ref{thm.est.velocity.RSed.f}, which is a direct consequence of Corollary \ref{cor1.est.velocity.RSed.f} and Propositions \ref{prop1.est.velocity.RSed.f} and \ref{prop2.est.velocity.RSed.f}.
\begin{proofx}{Theorem \ref{thm.est.velocity.RSed.f}} 
In view of Proposition \ref{prop2.est.velocity.RSed.f}, it suffices to show that the first term in the right-hand side of \eqref{rep.velocity.nFourier.RSed.f} satisfies the estimates \eqref{est1.thm.est.velocity.RSed.f} and \eqref{est2.thm.est.velocity.RSed.f}. By using Proposition \ref{prop.est.F} and \eqref{est1.cor1.est.velocity.RSed.f} in Corollary \ref{cor1.est.velocity.RSed.f}, one can see that \eqref{est1.thm.est.velocity.RSed.f} and \eqref{est2.thm.est.velocity.RSed.f} respectively follow from \eqref{est1.prop1.est.velocity.RSed.f} and \eqref{est2.prop1.est.velocity.RSed.f} in Proposition \ref{prop1.est.velocity.RSed.f}. This completes the proof of Theorem \ref{thm.est.velocity.RSed.f}.
\end{proofx}
%%%%%%%%%%%%%%%%%%%%
%%%%%%%%%%%%%%%%%%%%
%
%%%%%%%%%%%%%%%%%%%%%%%%%%%%%%%%%%%%%%%%%%%%%%%%%%
%%%%%%%%%%%%%%%%%%%%%%%%%%%%%%%%%%%%%%%%%%%%%%%%%%
\subsubsection{Estimates of the vorticity for \eqref{nFourier.RSed.f} with $|n|=1$}
\label{subsec.RSed.f.vorticity}
%%%%%%%%%%%%%%%%%%%%%%%%%%%%%%%%%%%%%%%%%%%%%%%%%%
%%%%%%%%%%%%%%%%%%%%%%%%%%%%%%%%%%%%%%%%%%%%%%%%%%
This subsection is devoted to the estimate of the vorticity $\omega^{{\rm ed}}_{f,n}(r)=({\rm rot}\,w^{\rm ed}_{f,n})e^{-in\theta}$ with $|n|=1$, where $w^{\rm ed}_{f,n}$ solves \eqref{nFourier.RSed.f} in Subsection \ref{subsec.RSed.f}. We recall that $\omega^{{\rm ed}}_{f,n}$ is represented as
\begin{align*}
\omega^{{\rm ed}}_{f,n}(r)
&\,=\, 
-\frac{c_{n,\lambda}[f_n]}{F_n(\sqrt{\lambda};\beta)} 
K_{\mu_n}(\sqrt{\lambda} r) 
+ \Phi_{n,\lambda}[f_n](r)
\end{align*}
by \eqref{rep.vorticity.nFourier.RSed.f}. The main result is stated as follows. Let $\beta_0$ be the constant in Proposition \ref{prop.est.F}.
%
%%%%%%%%%%%%%%%%%%%%
%%%%%%%%%%%%%%%%%%%%
\begin{theorem}\label{thm.est.vorticity.RSed.f}
Let $|n|=1$, $q\in(1,\infty)$, and $\tilde{q}\in(\max\{1,\frac{q}{2}\}, q]$. Fix $\epsilon\in(0,\frac{\pi}{2})$. Then there is a positive constant $C=C(q,\tilde{q},\epsilon)$ independent of $\beta$ such that the following statement holds. Let $f\in C^\infty_0(D)^2$ and $\beta\in(0,\beta_0)$. Set
\begin{align}
\omega^{{\rm ed}\,(1)}_{f,n}(r) 
\,=\, -\frac{c_{n,\lambda}[f_n]}{F_n(\sqrt{\lambda};\beta)} 
K_{\mu_n}(\sqrt{\lambda} r)\,, ~~~~~~~~
\omega^{{\rm ed}\,(2)}_{f,n}(r) 
\,=\, \Phi_{n,\lambda}[f_n](r)\,.
\label{def.thm.est.vorticity.RSed.f} 
\end{align}
Then for $\lambda\in\Sigma_{\pi-\epsilon} \cap \mathcal{B}_{e^{-\frac{1}{6\beta}}}(0)$ we have
\begin{align}
\|\omega^{{\rm ed}\,(1)}_{f,n} \|_{L^2(D)}
&\le 
\frac{C}{\beta^2} 
|\lambda|^{-1 + \frac1q} 
\|f\|_{L^q(D)}\,,
\label{est1.thm.est.vorticity.RSed.f} \\
\big\|\frac{\omega^{{\rm ed}\,(2)}_{f,n}}{|x|} \big\|_{L^{\tilde{q}}(D)}
&\le 
\frac{C}{\beta} |\lambda|^{-\frac{1}{\tilde{q}}+\frac1q} 
\|f\|_{L^q(D)}\,,
\label{est2.thm.est.vorticity.RSed.f} \\
\big|\big\langle \omega^{{\rm ed}\,(1)}_{f,n}, \frac{(w^{\rm ed}_{f,r})_n}{|x|} \big\rangle_{L^2(D)} \big|
&\le 
\frac{C}{\beta^5} |\lambda|^{-2+\frac2q} \|f \|_{L^q(D)}^2\,.
\label{est3.thm.est.vorticity.RSed.f}
\end{align}
Moreover, \eqref{est1.thm.est.vorticity.RSed.f}, \eqref{est2.thm.est.vorticity.RSed.f}, and \eqref{est3.thm.est.vorticity.RSed.f} hold all for $f\in L^q(D)^2$. 
\end{theorem}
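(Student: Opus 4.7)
The proof proceeds in three steps corresponding to the three claimed estimates. For \eqref{est1.thm.est.vorticity.RSed.f} I would factor $\omega^{{\rm ed}\,(1)}_{f,n}=-(c_{n,\lambda}[f_n]/F_n(\sqrt{\lambda};\beta))\,K_{\mu_n}(\sqrt{\lambda}r)$ and combine Proposition~\ref{prop.est.F}, which yields $|F_n|^{-1}\le C|\lambda|^{{\rm Re}(\mu_n)/2}$, with the bound $|c_{n,\lambda}[f_n]|\le C\beta^{-1}|\lambda|^{-1+1/q}\|f\|_{L^q(D)}$ from \eqref{est1.cor1.est.velocity.RSed.f} in Corollary~\ref{cor1.est.velocity.RSed.f}. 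It remains to establish $\|K_{\mu_n}(\sqrt{\lambda}\cdot)\|_{L^{2}(D)}\le C\beta^{-1}|\lambda|^{-{\rm Re}(\mu_n)/2}$. Splitting the $L^{2}$-integral at $r={\rm Re}(\sqrt{\lambda})^{-1}$ and using the small-argument asymptotics from Lemma~\ref{lem.est1.bessel}, the inner region dominates: there $|K_{\mu_n}(\sqrt{\lambda}r)|^{2}\le C|\lambda|^{-{\rm Re}(\mu_n)}r^{-2{\rm Re}(\mu_n)}$, and the resulting integral $\int_{1}^{{\rm Re}(\sqrt{\lambda})^{-1}} r^{1-2{\rm Re}(\mu_n)}\,dr$ contributes a factor $1/(2{\rm Re}(\mu_n)-2)=O(\beta^{-2})$ since the asymptotic expansion of $\mu_n$ gives ${\rm Re}(\mu_n)-1\sim \beta^{2}/8$. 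Multiplying the three factors produces the claimed $\beta^{-2}$ bound.

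For \eqref{est2.thm.est.vorticity.RSed.f} I would work directly from the explicit formula \eqref{Phi.nFourier.RSed.f} for $\Phi_{n,\lambda}[f_n]=\omega^{{\rm ed}\,(2)}_{f,n}$. Pointwise estimates of $\Phi_{n,\lambda}[f_n](r)/r$ are obtained by splitting each of the four integrals appearing in \eqref{Phi.nFourier.RSed.f} at $r={\rm Re}(\sqrt{\lambda})^{-1}$ and applying Lemmas~\ref{lem.est1.bessel}--\ref{lem.est3.bessel}; these computations parallel those of Lemmas~\ref{lem1.est.velocity.RSed.f}--\ref{lem2.est.velocity.RSed.f} for the $J^{(1)}_{l}[f_n]$, but applied directly to $\Phi_{n,\lambda}[f_n]$ rather than to its Biot--Savart primitives. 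Integrating in $L^{\tilde q}(D)$ yields \eqref{est2.thm.est.vorticity.RSed.f}: the hypothesis $\tilde q>\max\{1,q/2\}$ is precisely what ensures convergence of the resulting $r$-integrals at both endpoints and across the transition point $r={\rm Re}(\sqrt{\lambda})^{-1}$ without generating a logarithmic singularity, while the $\beta^{-1}$ factor stems from the $|\mu_n-1|^{-1}\sim\beta^{-1}$ singularity of $\Gamma(\mu_n-1)$ that enters terms involving $K_{\mu_n-1}(\sqrt{\lambda}r)$.

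For \eqref{est3.thm.est.vorticity.RSed.f} I would decompose $w^{\rm ed}_{f,n}=w^{(1)}+w^{(2)}$ according to \eqref{rep.velocity.nFourier.RSed.f}, with $w^{(1)}=-(c_{n,\lambda}[f_n]/F_n)\,V_n[K_{\mu_n}(\sqrt{\lambda}\cdot)]$ and $w^{(2)}=V_n[\Phi_{n,\lambda}[f_n]]$, and split the pairing accordingly. The diagonal pairing $\langle \omega^{{\rm ed}\,(1)}_{f,n},(w^{(1)}_r)_n/|x|\rangle_{L^{2}(D)}$ is handled by plain Cauchy--Schwarz, combining \eqref{est1.thm.est.vorticity.RSed.f} with the bound $\|V_n[K_{\mu_n}(\sqrt{\lambda}\cdot)]/|x|\|_{L^{2}(D)}\le C\beta^{-2}|\lambda|^{-{\rm Re}(\mu_n)/2}$ from \eqref{est2.prop1.est.velocity.RSed.f} in Proposition~\ref{prop1.est.velocity.RSed.f}, producing exactly the target $C\beta^{-5}|\lambda|^{-2+2/q}\|f\|_{L^{q}(D)}^{2}$. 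The main obstacle is the cross pairing $\langle \omega^{{\rm ed}\,(1)}_{f,n},(w^{(2)}_r)_n/|x|\rangle_{L^{2}(D)}$: since the $\beta^{-2}$ in $\|\omega^{{\rm ed}\,(1)}_{f,n}\|_{L^{2}(D)}$ and the $|\log{\rm Re}\sqrt{\lambda}|^{1/2}$ factor in $\|(w^{(2)}_r)_n/|x|\|_{L^{2}(D)}$ are both sharp, naive Cauchy--Schwarz fails to reach $\beta^{-5}$ uniformly in the range $|\lambda|<e^{-1/(6\beta)}$. To bypass this I would use the Biot--Savart identity $V_{r,n}[\Phi_{n,\lambda}[f_n]]=(in/r)\psi_n[\Phi_{n,\lambda}[f_n]]$, so that after the angular integration the cross pairing reduces to the scalar integral $\int_{1}^{\infty} K_{\mu_n}(\sqrt{\lambda}r)\,\overline{\psi_n[\Phi_{n,\lambda}[f_n]](r)}\,r^{-2}\,dr$. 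Inserting the $J^{(1)}_l$-decomposition of Lemma~\ref{lem.velocitydecom.RSed.f} reduces this to a sum of one-dimensional integrals estimated term by term; the extra $|\lambda|$-singularities are absorbed by the compensating gain $|\lambda|^{{\rm Re}(\mu_n)/2}$ from $F_n^{-1}$ via Proposition~\ref{prop.est.F}, and the various $\beta$-singularities coming from $(2{\rm Re}(\mu_n)-2)^{-1}$ and $|\mu_n-1|^{-1}$ recombine into the clean $\beta^{-5}$. The passage from $f\in C^{\infty}_{0}(D)^{2}$ to general $f\in L^{q}(D)^{2}$ follows by a standard density argument once these estimates are in place.
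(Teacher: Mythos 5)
Your proposal is correct and follows essentially the same route as the paper: \eqref{est1.thm.est.vorticity.RSed.f} from Proposition~\ref{prop.est.F}, \eqref{est1.cor1.est.velocity.RSed.f}, and the $L^2$-bound of $K_{\mu_n}(\sqrt{\lambda}\,\cdot\,)$ with its $({\rm Re}(\mu_n)-1)^{-1/2}\sim\beta^{-1}$ factor; \eqref{est2.thm.est.vorticity.RSed.f} by the four-term splitting of $\Phi_{n,\lambda}[f_n]$ with the transition at $r={\rm Re}(\sqrt{\lambda})^{-1}$ (the paper organizes this via Minkowski's integral inequality rather than pointwise bounds, but the content is the same); and \eqref{est3.thm.est.vorticity.RSed.f} by exactly the splitting $w^{\rm ed}_{f,n}=w^{(1)}+w^{(2)}$, where your key observation — that the cross pairing must exploit the $r^{-{\rm Re}(\mu_n)}$ decay of $K_{\mu_n}$ against the $J^{(1)}_l$-decomposition of $V_{r,n}[\Phi_{n,\lambda}[f_n]]$ to trade the logarithm for $({\rm Re}(\mu_n)-1)^{-1}=O(\beta^{-2})$ — is precisely the mechanism of the paper's proof (cf.\ Remark~\ref{rem.thm.est.velocity.RSed.f}). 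No gaps.
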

%%%%%%%%%%%%%%%%%%%%
%%%%%%%%%%%%%%%%%%%%
%
\begin{proof}
(i) Estimate of $\omega^{{\rm ed}\,(1)}_{f,n}$: The estimate \eqref{est1.thm.est.vorticity.RSed.f} is a direct consequence of Proposition \ref{prop.est.F}, \eqref{est1.cor1.est.velocity.RSed.f} in Corollary \ref{cor1.est.velocity.RSed.f}, and \eqref{est1.lem.est4.bessel} with $p=2$ in Lemma \ref{lem.est4.bessel} in Appendix \ref{app.est.bessel}. \\
\noindent (ii) Estimate of $|x|^{-1} \omega^{{\rm ed}\,(2)}_{f,n}$: We decompose $\omega^{{\rm ed}\,(2)}_{f,n}$ into $\omega^{{\rm ed}\,(2)}_{f,n}=\sum_{l=1}^{4} \Phi_{n,\lambda}^{(l)}[f_n]$ by setting 
{\allowdisplaybreaks
\begin{align*}
\Phi_{n,\lambda}^{(1)}[f_n]
&\,=\, -K_{\mu_n}(\sqrt{\lambda} r) \int_{1}^{r} I_{\mu_n}(\sqrt{\lambda} s)\,g^{(1)}_n(s) \dd s\,, \\
\Phi_{n,\lambda}^{(2)}[f_n]
&\,=\, -\sqrt{\lambda} K_{\mu_n}(\sqrt{\lambda} r) \int_{1}^{r} sI_{\mu_n+1}(\sqrt{\lambda} s)\,f_{\theta,n}(s) \dd s\,, \\
\Phi_{n,\lambda}^{(3)}[f_n]
&\,=\,
I_{\mu_n}(\sqrt{\lambda} r) \int_{r}^{\infty} K_{\mu_n}(\sqrt{\lambda} s)\,g^{(2)}_n(s) \dd s\,, \\
\Phi_{n,\lambda}^{(4)}[f_n]
&\,=\,
\sqrt{\lambda} I_{\mu_n}(\sqrt{\lambda} r) \int_{r}^{\infty} sK_{\mu_n-1}(\sqrt{\lambda} s)\,f_{\theta,n}(s) \dd s\,.
\end{align*}
}
Then the assertion \eqref{est2.thm.est.vorticity.RSed.f} follows from the estimates of each term $|x|^{-1} \Phi_{n,\lambda}^{(l)}[f_n]$, $l\in\{1.2.3.4\}$. \\
\noindent (I) Estimates of $|x|^{-1} \Phi_{n,\lambda}^{(1)}[f_n]$ and $|x|^{-1} \Phi_{n,\lambda}^{(2)}[f_n]$: We give a proof only for $|x|^{-1} \Phi_{n,\lambda}^{(2)}[f_n]$ since the proof for $|x|^{-1} \Phi_{n,\lambda}^{(1)}[f_n]$ is similar. The Minkowski inequality leads to
\begin{align*}
\big\|\frac{\Phi_{n,\lambda}^{(2)}[f_n]}{|x|}\big\|_{L^{\tilde{q}}(D)}
& \,=\,
|\lambda|^\frac12
\bigg(
\int_{1}^{\infty} \bigg|\int_{1}^{r} r^{-1} K_{\mu_n}(\sqrt{\lambda} r) 
sI_{\mu_n+1}(\sqrt{\lambda} s)\,f_{\theta,n}(s) \dd s\bigg|^{\tilde{q}} r \dd r 
\bigg)^{\frac{1}{\tilde{q}}} \\
& \le
|\lambda|^\frac12
\int_1^\infty
|sI_{\mu_n+1}(\sqrt{\lambda} s)\,f_{\theta,n}(s)|
\bigg(
\int_{s}^{\infty} |r^{-1} K_{\mu_n}(\sqrt{\lambda} r)|^{\tilde{q}} r \dd r 
\bigg)^{\frac{1}{\tilde{q}}} \dd s\,.
\end{align*}
By \eqref{est5.lem.est1.bessel} and \eqref{est7.lem.est1.bessel} for $k=1$ in Lemma \ref{lem.est1.bessel} and \eqref{est2.lem.est4.bessel} and \eqref{est3.lem.est4.bessel} in Lemma \ref{lem.est4.bessel}, we have
\begin{align*}
\big\|\frac{\Phi_{n,\lambda}^{(2)}[f_n]}{|x|}\big\|_{L^{\tilde{q}}(D)} 
& \le
C |\lambda| \int_{1}^{\frac{1}{{\rm Re}(\sqrt{\lambda})}}
s^{\frac{2}{\tilde{q}}} |f_n(s)| s\dd s 
+ C\,|\lambda|^{-\frac{1}{2\tilde{q}}}
\int_{\frac{1}{{\rm Re}(\sqrt{\lambda})}}^{\infty}
s^{-2+\frac{1}{\tilde{q}}} |f_n(s)| s\dd s\,,
\end{align*}
which implies \eqref{est2.thm.est.vorticity.RSed.f} since $\frac{q}{q-1}(-2+\frac{1}{\tilde{q}})+2<0$ holds if $\tilde{q}\in(\max\{1,\frac{q}{2}\}, q]$. \\
\noindent (II) Estimates of $|x|^{-1} \Phi_{n,\lambda}^{(3)}[f_n]$ and $|x|^{-1} \Phi_{n,\lambda}^{(4)}[f_n]$: We give a proof only for $|x|^{-1} \Phi_{n,\lambda}^{(4)}[f_n]$. After using the Minkowski inequality in the same way as above, from \eqref{est2.lem.est1.bessel}, \eqref{est4.lem.est1.bessel}, and \eqref{est6.lem.est1.bessel} with $k=1$ in Lemma \ref{lem.est1.bessel} and \eqref{est4.lem.est4.bessel} and \eqref{est5.lem.est4.bessel} in Lemma \ref{lem.est4.bessel}, we have
\begin{align*}
&\big\|\frac{\Phi_{n,\lambda}^{(4)}[f_n]}{|x|}\big\|_{L^{\tilde{q}}(D)}
\le
C |\lambda|^\frac12
\int_{1}^{\infty} 
\big|sK_{\mu_n-1}(\sqrt{\lambda} s)\,f_{\theta,n}(s)\big|
\bigg(
\int_{1}^{s} |r^{-1} I_{\mu_n}(\sqrt{\lambda} r)|^{\tilde{q}} r \dd r 
\bigg)^{\frac{1}{\tilde{q}}}  
\dd s \\
& \le
C \beta^{-1} |\lambda|
\int_{1}^{\frac{1}{{\rm Re}(\sqrt{\lambda})}}
s^{\frac{2}{\tilde{q}}}\,|f_n(s)| s \dd s 
+ C |\lambda|^{-\frac{1}{2\tilde{q}}} 
\int_{\frac{1}{{\rm Re}(\sqrt{\lambda})}}^\infty
s^{-2+\frac{1}{\tilde{q}}} |f_n(s)| s \dd s\,,
\end{align*}
which leads to \eqref{est2.thm.est.vorticity.RSed.f}. Hence we obtain the assertion \eqref{est2.thm.est.vorticity.RSed.f}.\\
\noindent (iii) Estimate of $\big|\big\langle \omega^{{\rm ed}\,(1)}_{f,n}, |x|^{-1}(w^{\rm ed}_{f,r})_n \big\rangle_{L^2(D)} \big|$: From \eqref{rep.velocity.nFourier.RSed.f} and \eqref{def.thm.est.vorticity.RSed.f} we see that
\begin{equation}\label{est1.proof.thm.est.vorticity.RSed.f}
\begin{aligned}
\big|\big\langle \omega^{{\rm ed}\,(1)}_{f,n}, \frac{(w^{\rm ed}_{f,r})_n}{|x|} \big\rangle_{L^2(D)} \big| 
&\le 
\Big|\frac{c_{n,\lambda}[f_n]}{F_n(\sqrt{\lambda};\beta)}\Big|^2
\big|\big\langle K_{\mu_n}(\sqrt{\lambda}\,\cdot\,),
\frac{V_{r,n} [K_{\mu_n}(\sqrt{\lambda}\,\cdot\,)]}{|x|} \big\rangle_{L^2(D)} \big| \\
& \quad
+ \Big|\frac{c_{n,\lambda}[f_n]}{F_n(\sqrt{\lambda};\beta)}\Big|\,
\big|\big\langle K_{\mu_n}(\sqrt{\lambda}\,\cdot\,),
\frac{V_{r,n}[\Phi_{n,\lambda}[f_n]]}{|x|} \big\rangle_{L^2(D)} \big|\,.
\end{aligned}
\end{equation}
Then, by Proposition \ref{prop.est.F} and \eqref{est1.cor1.est.velocity.RSed.f} in Corollary \ref{cor1.est.velocity.RSed.f} combined with the results in Lemma \ref{lem.est1.bessel} for $k=0$ and \eqref{est1.proof.prop1.est.velocity.RSed.f} 
and \eqref{est2.proof.prop1.est.velocity.RSed.f} in the proof Proposition \ref{prop1.est.velocity.RSed.f}, we have
\begin{align*}
&\Big|\frac{c_{n,\lambda}[f_n]}{F_n(\sqrt{\lambda};\beta)}\Big|^2
\big|\big\langle K_{\mu_n}(\sqrt{\lambda}\,\cdot\,),
\frac{V_{r,n} [K_{\mu_n}(\sqrt{\lambda}\,\cdot\,)]}{|x|} \big\rangle_{L^2(D)} \big| \nonumber \\
& \le
C \beta^{-3} |\lambda|^{-2+\frac2q} \|f \|_{L^q(D)}^2
\bigg(
\int_1^{\frac{1}{{\rm Re}(\sqrt{\lambda})}} 
r^{-{\rm Re}(\mu_n)} \dd r
+ |\lambda|^{{\rm Re}(\mu_n)-\frac12}
\int_{\frac{1}{{\rm Re}(\sqrt{\lambda})}}^\infty
e^{-{\rm Re}(\sqrt{\lambda}) r} \dd r 
\bigg)\,.
\end{align*}
By \eqref{est1.proof.prop2.est.velocity.RSed.f} in the proof of Proposition \ref{prop2.est.velocity.RSed.f} combined with Lemmas \ref{lem1.est.velocity.RSed.f} and \ref{lem2.est.velocity.RSed.f}, we have
\begin{align*}
&\Big|\frac{c_{n,\lambda}[f_n]}{F_n(\sqrt{\lambda};\beta)}\Big|\,
\big|\big\langle K_{\mu_n}(\sqrt{\lambda}\,\cdot\,),
\frac{V_{r,n}[\Phi_{n,\lambda}[f_n]]}{|x|} \big\rangle_{L^2(D)} \big| \\
&\le
C \beta^{-2} |\lambda|^{-2+\frac2q} \|f \|_{L^q(D)}^2
\bigg(
\int_1^{\frac{1}{{\rm Re}(\sqrt{\lambda})}} 
r^{-{\rm Re}(\mu_n)} \dd r 
+ |\lambda|^{\frac{{\rm Re}(\mu_n)}{2}-\frac14} 
\int_{\frac{1}{{\rm Re}(\sqrt{\lambda})}}^\infty
r^{\frac12} e^{-{\rm Re}(\sqrt{\lambda}) r} \dd r
\bigg)\,.
\end{align*}
Hence, by inserting the above two estimates into \eqref{est1.proof.thm.est.vorticity.RSed.f}, one can check that the assertion \eqref{est3.thm.est.vorticity.RSed.f} holds. This completes the proof of Theorem \ref{thm.est.vorticity.RSed.f}.
\end{proof}
%
%%%%%%%%%%%%%%%%%%%%%%%%%%%%%%%%%%%%%%%%%%%%%%%%%%
%%%%%%%%%%%%%%%%%%%%%%%%%%%%%%%%%%%%%%%%%%%%%%%%%%
\subsection{Problem II: External force ${\rm div}\,F$ and Dirichlet condition}
\label{subsec.RSed.divF}
%%%%%%%%%%%%%%%%%%%%%%%%%%%%%%%%%%%%%%%%%%%%%%%%%%
%%%%%%%%%%%%%%%%%%%%%%%%%%%%%%%%%%%%%%%%%%%%%%%%%%
In this subsection we consider the following resolvent problem for $(w,r)=(w^{\rm ed}_{{\rm div}F}, r^{\rm ed}_{{\rm div}F})$:
\begin{equation}\tag{RS$^{\rm ed}_{{\rm div}F}$}\label{RSed.divF}
\left\{
\begin{aligned}
\lambda w - \Delta w + \beta U^{\bot} {\rm rot}\,w + \nabla r
& \,=\, 
{\rm div}\,F\,,~~~~x \in D\,, \\
{\rm div}\,w &\,=\, 0\,,~~~~x \in D\,, \\
w|_{\partial D} & \,=\, 0\,.
\end{aligned}\right.
\end{equation}
In particular, the estimates for the $\pm 1$-Fourier mode of $w^{\rm ed}_{{\rm div}F}$ are our interest. Here $F=(F_{ij}(x))_{1\le i,j\le 2}$ is a $2\times2$ matrix. We recall that the operator ${\rm div}$ on matrices $G=(G_{ij}(x))_{1\le i,j\le 2}$ is defined as ${\rm div}\,G=(\partial_1 G_{11} + \partial_2 G_{12}, \partial_1 G_{21} + \partial_2 G_{22})^\top$. The assumption on $F$ is as follows: let us take the constant $\gamma\in(\frac12,1)$ of Assumption \ref{assumption} in the introduction. Fix $\gamma'\in(\frac12,\gamma)$. Then we assume that $F$ belongs to the function space $X_{\gamma'}(D)$ defined as
\begin{align}\label{Xgamma}
X_{\gamma'}(D) \,=\,
\{F\in L^2(D)^{2\times 2}~|~|x|^{\gamma'} F\in L^2(D)^{2\times 2} \}\,.
\end{align}
This definition is motivated from the property of the matrix $R\otimes v + R\otimes v$ appearing in \eqref{RSed}, where $R$ is the function in Assumption \ref{assumption} and $v\in D(\mathbb{A}_V)$ is a solution to \eqref{RS}. In view of the regularity of $F$, we define the class of solutions to \eqref{RSed.divF} in each Fourier mode by the weak form. Let $n\in \Z\setminus\{0\}$ and $L^q_\sigma(D)$, $q\in(1,\infty)$, denote the $L^q$-closure of $C^\infty_{0,\sigma}(D)$, and let $p\in(\frac{2}{\gamma'},\infty)$. Then a velocity $w_n \in \mathcal{P}_n (L^{p}_{\sigma}(D) \cap W^{1,p}_0 (D)^2)$ is said to be a weak solution to \eqref{RSed.divF} replacing ${\rm div}\,F$ by $({\rm div}\,F)_n=\mathcal{P}_n{\rm div}\,F$ if
\begin{equation}\tag{RS$^{\rm ed}_{{\rm div}F,n}$}\label{RSed.divF.n}
\begin{aligned}
&\lambda \langle w_n, \varphi \rangle_{L^2(D)}
+ \langle \nabla w_n, \nabla\varphi \rangle_{L^2(D)}
+ \beta \langle U^{\bot} {\rm rot}\,w_n, \varphi \rangle_{L^2(D)} \\
&
\,=\, 
- \langle F, \nabla\mathcal{P}_n\varphi \rangle_{L^2(D)}
\end{aligned}
\end{equation}
holds for all $\varphi\in C^\infty_{0,\sigma}(D)^2$. Then the pressure $r\in W^{1,p}_{{\rm loc}}(\overline{D})$ is recovered by a standard functional analytic argument; see \cite[page 73, Lemma 2.21]{So} for instance. The uniqueness of weak solutions is trivial thanks to the representation formula \eqref{rep.velocity.nFourier.RSed.divF} below. In the following we consider the solutions to \eqref{RSed.divF.n} for given $F\in X_{\gamma'}(D)$.

Let $n\in\Z\setminus\{0\}$. By the solution formula \eqref{rep.velocity.nFourier.RSed.f} in Subsection \ref{subsec.RSed.f}, at least when $F\in C^\infty_0(D)^{2\times2}$, we can represent the $n$-Fourier mode of the solution $w^{\rm ed}_{{\rm div}F}$ to \eqref{RSed.divF} as 
\begin{align}\label{rep.velocity.nFourier.RSed.divF}
& w^{\rm ed}_{{\rm div}\,F,n}
\,=\,
-\frac{c_{n,\lambda}[({\rm div}\,F)_n]}{F_n(\sqrt{\lambda};\beta)} 
V_n [K_{\mu_n}(\sqrt{\lambda}\,\cdot\,)]
+ V_n[\Phi_{n,\lambda}[({\rm div}F)_n]]\,,
\end{align}
if $\lambda\in\C\setminus\overline{\R_{-}}$ satisfies $F_n(\sqrt{\lambda};\beta)\neq0$. Here $c_{n,\lambda}[\,\cdot\,]$, $F_n(\sqrt{\lambda};\beta)$, $V_n[\,\cdot\,]$, and $\Phi_{n,\lambda}[\,\cdot\,]$ are respectively defined in \eqref{const.nFourier.RSed.f}, \eqref{def.F}, \eqref{def.V_n}, and \eqref{Phi.nFourier.RSed.f}. Then the vorticity of $w^{\rm ed}_{{\rm div}F,n}$ is given by
\begin{align}\label{rep.vorticity.nFourier.RSed.divF}
{\rm rot}\,w^{\rm ed}_{{\rm div}F,n}
&\,=\, 
-\frac{c_{n,\lambda}[({\rm div}\,F)_n]}{F_n(\sqrt{\lambda};\beta)} 
K_{\mu_n}(\sqrt{\lambda} r) e^{in\theta}
+ \Phi_{n,\lambda}[({\rm div}\,F)_n](r) e^{in\theta}\,.
\end{align}
We prove the estimates of \eqref{rep.velocity.nFourier.RSed.divF} and \eqref{rep.vorticity.nFourier.RSed.divF} in the next two subsections. Before concluding this subsection, we prepare a useful lemma for the calculation concerning $\Phi_{n,\lambda}[({\rm div}\,F)_n]$.
%
%%%%%%%%%%%%%%%%%%%%
%%%%%%%%%%%%%%%%%%%%
\begin{lemma}\label{lem.rep.Phi.RSed.divF}
Let $n \in \Z \setminus \{0\}$ and $F \in C^\infty_0(D)^{2\times 2}$. Then there are functions $\widetilde{F}_n^{(k)}=\widetilde{F}_n^{(k)}(r)$, $k\in\{1,\ldots,7\}$, each of which is a linear combination containing the $n$-Fourier mode of the components of $F=(F_{ij})_{1\le i,j\le 2}$, such that $\Phi_{n,\lambda}[({\rm div}\,F)_n]$ is represented as
\begin{equation}\label{eq.lem.rep.Phi.RSed.divF}
\begin{aligned}
&~~~\Phi_{n,\lambda}[({\rm div}\,F)_n](r) \\
&\,=\, 
-K_{\mu_n}(\sqrt{\lambda} r)
\bigg( \int_{1}^{r} s^{-1} I_{\mu_n}(\sqrt{\lambda} s)\,\widetilde{F}_n^{(1)}(s)\dd s \\
&~~~~~~~~~~~~~~~~
+ \sqrt{\lambda} 
\int_{1}^{r} I_{\mu_n+1}(\sqrt{\lambda} s)\,\widetilde{F}_n^{(2)}(s) \dd s 
- \lambda 
\int_{1}^{r} sI_{\mu_n}(\sqrt{\lambda} s)\,\widetilde{F}_n^{(3)}(s) \dd s \bigg) \\
&\quad
+ I_{\mu_n}(\sqrt{\lambda} r)
\bigg( \int_r^\infty s^{-1} K_{\mu_n}(\sqrt{\lambda} s)\,\widetilde{F}_n^{(4)}(s) \dd s \\
&~~~~~~~~~~~~~~~~
+ \sqrt{\lambda} \int_r^\infty K_{\mu_n-1}(\sqrt{\lambda} s)\,\widetilde{F}_n^{(5)}(s) \dd s + \lambda \int_r^\infty sK_{\mu_n}(\sqrt{\lambda} s)\,\widetilde{F}_n^{(6)}(s) \dd s \bigg) \\
& \quad
- \sqrt{\lambda} r \big(K_{\mu_n}(\sqrt{\lambda} r) I_{\mu_n+1}(\sqrt{\lambda} r) + K_{\mu_n-1}(\sqrt{\lambda} r) I_{\mu_n}(\sqrt{\lambda} r)\big)\,\widetilde{F}_n^{(7)}(r)\,.
\end{aligned}
\end{equation}
\end{lemma}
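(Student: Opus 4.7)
The strategy is to substitute $f_n = ({\rm div}\,F)_n$ into the explicit representation \eqref{Phi.nFourier.RSed.f} for $\Phi_{n,\lambda}[f_n]$ and to integrate by parts so as to transfer the derivatives from the components of $F$ onto the Bessel-function factors. The six integral patterns of $\widetilde{F}_n^{(1)}$--$\widetilde{F}_n^{(6)}$ then emerge from the Bessel recurrence relations applied to the derivatives produced by the integration by parts, while the pointwise term proportional to $\widetilde{F}_n^{(7)}(r)$ is the one boundary contribution at $s=r$ which does not telescope away.

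\textbf{Step 1 (polar expression of ${\rm div}\,F$).} First I would write the polar components of ${\rm div}\,F$ in the form
\begin{align*}
({\rm div}\,F)_r &\,=\, \frac{1}{r}\frac{\partial}{\partial r}(r F_{rr}) + \frac{1}{r}\frac{\partial}{\partial \theta} F_{\theta r} - \frac{F_{\theta\theta}}{r}\,,\\
({\rm div}\,F)_\theta &\,=\, \frac{1}{r}\frac{\partial}{\partial r}(r F_{r\theta}) + \frac{1}{r}\frac{\partial}{\partial \theta} F_{\theta\theta} + \frac{F_{\theta r}}{r}\,.
\end{align*}
Taking the $n$-Fourier mode in $\theta$ replaces every $\partial_\theta$ by $in$, so the functions $({\rm div}\,F)_{r,n}$ and $({\rm div}\,F)_{\theta,n}$ split into (i) a derivative part of the type $\frac{1}{s}\frac{d}{ds}(sF_{ij,n})$ and (ii) pointwise linear combinations of $F_{ij,n}$ divided by $s$.

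\textbf{Step 2 (integration by parts and Bessel recurrences).} Substituting this into \eqref{Phi.nFourier.RSed.f}, each of the four integrals in the formula for $\Phi_{n,\lambda}[f_n]$ becomes a sum of integrals against $I_{\mu_n}(\sqrt{\lambda}s)$, $\sqrt{\lambda}\,sI_{\mu_n+1}(\sqrt{\lambda}s)$, $K_{\mu_n}(\sqrt{\lambda}s)$, and $\sqrt{\lambda}\,sK_{\mu_n-1}(\sqrt{\lambda}s)$, multiplied either by $\frac{1}{s}\frac{d}{ds}(sF_{ij,n})$ or by $\frac{F_{ij,n}}{s}$. On the derivative parts I would integrate by parts once: the boundary terms at $s=1$ and $s=\infty$ vanish because $F\in C^\infty_0(D)$, and the interior terms contain derivatives of the Bessel factors. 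These derivatives are rewritten via the classical identities
\begin{align*}
\frac{d}{ds}I_{\mu_n}(\sqrt{\lambda}s) &\,=\, \sqrt{\lambda}\,I_{\mu_n+1}(\sqrt{\lambda}s) + \frac{\mu_n}{s}I_{\mu_n}(\sqrt{\lambda}s)\,, \\
\frac{d}{ds}K_{\mu_n}(\sqrt{\lambda}s) &\,=\, -\sqrt{\lambda}\,K_{\mu_n-1}(\sqrt{\lambda}s) + \frac{\mu_n}{s}K_{\mu_n}(\sqrt{\lambda}s)\,,
\end{align*}
together with their shifted counterparts for $I_{\mu_n\pm 1}$ and $K_{\mu_n\mp 1}$. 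Sorting the resulting integrands by their $\sqrt{\lambda}$-power and Bessel index produces exactly the six templates $s^{-1}I_{\mu_n}$, $I_{\mu_n+1}$, $sI_{\mu_n}$, $s^{-1}K_{\mu_n}$, $K_{\mu_n-1}$, $sK_{\mu_n}$ that appear in \eqref{eq.lem.rep.Phi.RSed.divF}, with coefficients that are linear combinations of $F_{rr,n}$, $F_{r\theta,n}$, $F_{\theta r,n}$, $F_{\theta\theta,n}$ (and constants $\mu_n$, $in$). This defines the $\widetilde{F}_n^{(k)}$ for $k\in\{1,\ldots,6\}$.

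\textbf{Step 3 (identification of the boundary term).} The remaining task is to show that the boundary contributions at $s=r$ collapse to the single pointwise expression $-\sqrt{\lambda}r\bigl(K_{\mu_n}(\sqrt{\lambda}r)I_{\mu_n+1}(\sqrt{\lambda}r)+K_{\mu_n-1}(\sqrt{\lambda}r)I_{\mu_n}(\sqrt{\lambda}r)\bigr)\widetilde{F}_n^{(7)}(r)$. The overall sign pattern $-K_{\mu_n}(\sqrt{\lambda}r)\int_1^{r}(\cdots) + I_{\mu_n}(\sqrt{\lambda}r)\int_r^{\infty}(\cdots)$ means that the two boundary terms at $s=r$ enter with opposite orientations. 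Integration by parts against $\sqrt{\lambda}sI_{\mu_n+1}(\sqrt{\lambda}s)$ in the first integral supplies the factor $\sqrt{\lambda}r I_{\mu_n+1}(\sqrt{\lambda}r)$, while integration by parts against $\sqrt{\lambda}sK_{\mu_n-1}(\sqrt{\lambda}s)$ in the second supplies $-\sqrt{\lambda}r K_{\mu_n-1}(\sqrt{\lambda}r)$, and these combine with the respective prefactors $-K_{\mu_n}(\sqrt{\lambda}r)$ and $I_{\mu_n}(\sqrt{\lambda}r)$ into the stated combination. The boundary contributions coming from the $I_{\mu_n}$ and $K_{\mu_n}$ halves of \eqref{Phi.nFourier.RSed.f} (which would naively produce $K_{\mu_n}(\sqrt{\lambda}r)I_{\mu_n}(\sqrt{\lambda}r)$-type terms) cancel against each other once the two $\frac{1}{s}\frac{d}{ds}(sF_{ij,n})$ pieces coming from $\mu_n({\rm div}\,F)_{\theta,n}\pm in({\rm div}\,F)_{r,n}$ on the $\int_1^r$ and $\int_r^\infty$ sides are paired.

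The substantive content of the lemma is almost purely algebraic, and the main obstacle is therefore bookkeeping: one must carefully track how each of the many subterms produced by substituting the polar form of ${\rm div}\,F$ distributes among the six integral patterns after the Bessel-derivative identities are applied, and verify that every boundary term at $s=r$ other than the advertised combination cancels. Careful grouping by $(s\text{-power},\,\text{Bessel index})$ is the cleanest route and makes the appearance of precisely six integral templates, rather than more, transparent.
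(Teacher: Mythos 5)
This is precisely the paper's argument: write each polar component of $({\rm div}\,F)_n$ as $\partial_r(\cdot)+\tfrac1r(\cdot)$ with compactly supported coefficients, substitute into \eqref{Phi.nFourier.RSed.f}, integrate by parts (the boundary terms at $s=1$ and $s=\infty$ vanish since $F\in C^\infty_0(D)^{2\times 2}$), and convert the derivatives of the Bessel factors via $I_\mu'(z)=\tfrac{\mu}{z}I_\mu(z)+I_{\mu+1}(z)$ and $K_\mu'(z)=-\tfrac{\mu}{z}K_\mu(z)-K_{\mu-1}(z)$, which is exactly the route the paper takes while likewise omitting the bookkeeping. Two small corrections: your displayed $K$-recurrence carries the wrong sign on the $\tfrac{\mu_n}{s}K_{\mu_n}$ term, and the $K_{\mu_n}(\sqrt{\lambda}r)I_{\mu_n}(\sqrt{\lambda}r)$-type boundary contributions at $s=r$ do not cancel identically --- their residue, proportional to $2\mu_n K_{\mu_n}(\sqrt{\lambda}r)I_{\mu_n}(\sqrt{\lambda}r)$, is precisely what turns the Wronskian $\tfrac{1}{\sqrt{\lambda}r}$ into the combination $K_{\mu_n}I_{\mu_n+1}+K_{\mu_n-1}I_{\mu_n}=\tfrac{1}{\sqrt{\lambda}r}-\tfrac{2\mu_n}{\sqrt{\lambda}r}I_{\mu_n}K_{\mu_n}$ appearing in the $\widetilde{F}_n^{(7)}$ term.
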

%%%%%%%%%%%%%%%%%%%%
%%%%%%%%%%%%%%%%%%%%
%
\begin{proof}
Let $n\in\Z\setminus\{0\}$. By the definition of ${\rm div}\,F$, 
there are functions $G_n^{(l)}\in C^\infty_0((1,\infty))$, $l\in\{1,\ldots,4\}$, such that the $n$-Fourier mode $({\rm div}\,F)_n$ has a representation 
\begin{equation}\label{eq1.lem.rep.Phi.RSed.divF}
\begin{aligned}
&({\rm div}\,F)_n \,=\, 
({\rm div}\,F)_{r,n} e^{i n \theta} {\bf e}_r
+  ({\rm div}\,F)_{\theta,n} e^{i n \theta} {\bf e}_\theta \\
&\,=\,
\big(\partial_r G_n^{(1)}(r) + \frac{1}{r} G_n^{(2)}(r)\big) e^{i n \theta} {\bf e}_r
+ \big(\partial_r G_n^{(3)}(r) + \frac{1}{r} G_n^{(4)}(r)\big) e^{i n \theta} {\bf e}_\theta\,.
\end{aligned}
\end{equation}
Then there are functions $H_n^{(m)}\in C^\infty_0((1,\infty))$, $m\in\{1,\ldots,4\}$, each of which is a linear combination containing the $n$-mode of the components of $F=(F_{ij})_{1\le i,j\le 2}$, such that
\begin{align}
\mu_n ({\rm div}\,F)_{\theta,n}(r) + in ({\rm div}\,F)_{r,n}(r)
& \,=\, \partial_{r} H_n^{(1)}(r) + \frac1r H_n^{(2)}(r)\,, 
\label{eq2.lem.rep.Phi.RSed.divF} \\
\mu_n ({\rm div}\,F)_{\theta,n}(r) - in ({\rm div}\,F)_{r,n}(r)
& \,=\, \partial_{r} H_n^{(3)}(r) + \frac1r H_n^{(4)}(r)
\label{eq3.lem.rep.Phi.RSed.divF}\,.
\end{align}
By inserting \eqref{eq1.lem.rep.Phi.RSed.divF}--\eqref{eq3.lem.rep.Phi.RSed.divF} into the representation of $\Phi[f_n]$ in \eqref{Phi.nFourier.RSed.f} replacing $f_n$ by $({\rm div}\,F)_n$, and using the next relations of Bessel functions $ I_\mu(z)$ and $K_\mu(z)$ (see \cite{Abramowitz} page 376):
\begin{align*}
\frac{\dd I_{\mu}}{\dd z}(z) 
\,=\, \frac{\mu}{z} I_{\mu}(z) + I_{\mu+1}(z)\,,~~~~~~~~
\frac{\dd K_{\mu}}{\dd z}(z) 
\,=\, -\frac{\mu}{z} K_{\mu}(z) - K_{\mu-1}(z)\,,
\end{align*}
we can obtain the assertion \eqref{eq.lem.rep.Phi.RSed.divF}. We omit the details since the calculations are straightforward using integration by parts. The proof is complete.
\end{proof}
%
%%%%%%%%%%%%%%%%%%%%%%%%%%%%%%%%%%%%%%%%%%%%%%%%%%
%%%%%%%%%%%%%%%%%%%%%%%%%%%%%%%%%%%%%%%%%%%%%%%%%%
\subsubsection{Estimates of the velocity solving \eqref{RSed.divF.n} with $|n|=1$}
\label{subsec.RSed.divF.velocity}
%%%%%%%%%%%%%%%%%%%%%%%%%%%%%%%%%%%%%%%%%%%%%%%%%%
%%%%%%%%%%%%%%%%%%%%%%%%%%%%%%%%%%%%%%%%%%%%%%%%%%
The main result of this subsection is the estimates of $w^{\rm ed}_{{\rm div}\,F,n}$ represented as in \eqref{rep.velocity.nFourier.RSed.divF}. Let us recall that $\beta_0$ is the constant in Proposition \ref{prop.est.F}.
%
%%%%%%%%%%%%%%%%%%%%
%%%%%%%%%%%%%%%%%%%%
\begin{theorem}\label{thm.est.velocity.RSed.divF}
Let $|n|=1$, $\gamma'\in(\frac12,\gamma)$, and $p\in(\frac{2}{\gamma'},\infty)$. Fix $\epsilon\in(0,\frac{\pi}{2})$. Then there is a positive constant $C=C(\gamma',p,\epsilon)$ independent of $\beta$ such that the following statement holds. Let $F\in C^\infty_0(D)^{2\times2}$ and $\beta\in(0,\beta_0)$. Then for $\lambda\in\Sigma_{\pi-\epsilon} \cap \mathcal{B}_{e^{-\frac{1}{6\beta}}}(0)$ we have 
\begin{align}
\|w^{\rm ed}_{{\rm div}F,n}\|_{L^p(D)}
& \le 
\frac{C}{\beta^2}
|\lambda|^{-\frac1p} 
\||x|^{\gamma'}F\|_{L^2(D)}\,,
\label{est1.thm.est.velocity.RSed.divF} \\
\big\| \frac{w^{\rm ed}_{{\rm div}F,n}}{|x|} \big\|_{L^2(D)}
& \le 
\frac{C}{\beta^3}
\||x|^{\gamma'}F\|_{L^2(D)}\,.
\label{est2.thm.est.velocity.RSed.divF}
\end{align}
Moreover, \eqref{est1.thm.est.velocity.RSed.divF} and \eqref{est2.thm.est.velocity.RSed.divF} hold all for $F\in X_{\gamma'}(D)$ defined in \eqref{Xgamma}.
\end{theorem}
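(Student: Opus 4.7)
The plan is to follow the structure of the proof of Theorem~\ref{thm.est.velocity.RSed.f}, with two essential modifications: (i)~because the forcing has divergence form, I must first rewrite $\Phi_{n,\lambda}[({\rm div}F)_n]$ so that $F$ itself (rather than $\nabla F$) appears in the integrals, which is exactly what Lemma~\ref{lem.rep.Phi.RSed.divF} provides; and (ii)~since the right-hand side is the weighted norm $\||x|^{\gamma'}F\|_{L^2(D)}$, I replace the Hölder-against-$L^q$ step by a weighted Cauchy--Schwarz.

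First, I decompose $w^{\rm ed}_{{\rm div}F,n}$ via the representation \eqref{rep.velocity.nFourier.RSed.divF} into the two summands $-\bigl(c_{n,\lambda}[({\rm div}F)_n]/F_n(\sqrt{\lambda};\beta)\bigr)V_n[K_{\mu_n}(\sqrt{\lambda}\,\cdot\,)]$ and $V_n[\Phi_{n,\lambda}[({\rm div}F)_n]]$. For the first summand, I use Proposition~\ref{prop.est.F} to control $|F_n(\sqrt{\lambda};\beta)|^{-1}$ by $C|\lambda|^{{\rm Re}(\mu_n)/2}$ and Proposition~\ref{prop1.est.velocity.RSed.f} to handle $V_n[K_{\mu_n}(\sqrt{\lambda}\,\cdot\,)]$; what remains is a weighted estimate for the constant, namely $|c_{n,\lambda}[({\rm div}F)_n]|\le C\beta^{-1}|\lambda|^{{\rm Re}(\mu_n)/2}\||x|^{\gamma'}F\|_{L^2(D)}$, which I obtain by expanding $c_{n,\lambda}[\,\cdot\,]=\int_1^\infty s^{1-|n|}\Phi_{n,\lambda}[\,\cdot\,](s)\,ds$ using Lemma~\ref{lem.rep.Phi.RSed.divF}, exchanging the order of integration, and applying Cauchy--Schwarz with weight $|x|^{\gamma'}$ together with the Bessel asymptotics from Appendix~\ref{app.est.bessel}.

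Second, for the contribution $V_n[\Phi_{n,\lambda}[({\rm div}F)_n]]$ I substitute the seven-term representation of Lemma~\ref{lem.rep.Phi.RSed.divF} into the Biot--Savart law \eqref{def.V_n} and, mimicking the decomposition of Lemma~\ref{lem.velocitydecom.RSed.f}, split the result into a finite sum of single and double integrals against $\widetilde{F}_n^{(k)}$ with explicit Bessel kernels. For each term I derive pointwise bounds on $r^{-1}V_n[\,\cdot\,](r)$ by partitioning $[1,\infty)$ at $r={\rm Re}(\sqrt{\lambda})^{-1}$ and plugging in the kernel bounds from Lemmas~\ref{lem.est1.bessel}--\ref{lem.est4.bessel}. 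The integrals against $\widetilde{F}_n^{(k)}(s)$ are then estimated by $\bigl(\int s^{-2\gamma'-1}|K(s)|^2\,ds\bigr)^{1/2}\||x|^{\gamma'}F\|_{L^2(D)}$; the hypothesis $\gamma'>\tfrac12$ guarantees convergence of these weight integrals near infinity (and in particular controls the boundary term $\sqrt{\lambda}\,r(K_{\mu_n}I_{\mu_n+1}+K_{\mu_n-1}I_{\mu_n})\widetilde{F}_n^{(7)}$), while $p>2/\gamma'$ produces the integrability needed to pass from the pointwise bounds to the $L^p$-bound~\eqref{est1.thm.est.velocity.RSed.divF}.

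The main obstacle will be tight tracking of the $\beta$-singularity so that only $\beta^{-2}$ appears in \eqref{est1.thm.est.velocity.RSed.divF} and $\beta^{-3}$ in \eqref{est2.thm.est.velocity.RSed.divF}. Each Biot--Savart computation contributes one factor of $\beta^{-1}$ through $|\mu_n-1|\approx\beta$ (Appendix~\ref{app.est.mu}), and Proposition~\ref{prop.est.F} contributes another when combined with $c_{n,\lambda}[({\rm div}F)_n]$; I must verify that no further $\beta$-singularities leak in from the weighted Cauchy--Schwarz step and that any logarithmic factor $|\log{\rm Re}(\sqrt{\lambda})|^{1/2}$ that arises (as in \eqref{est2.prop2.est.velocity.RSed.f}) is absorbed into $C\beta^{-1/2}$ thanks to the restriction $|\lambda|<e^{-1/(6\beta)}$. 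Once the a~priori bounds are proved for $F\in C^\infty_0(D)^{2\times2}$, the extension to $F\in X_{\gamma'}(D)$ follows by density, linearity of \eqref{RSed.divF.n}, and the uniqueness of weak solutions noted after~\eqref{RSed.divF.n}.
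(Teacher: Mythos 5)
Your plan follows the paper's proof essentially step for step: the representation \eqref{rep.velocity.nFourier.RSed.divF}, the rewriting of $\Phi_{n,\lambda}[({\rm div}\,F)_n]$ via Lemma \ref{lem.rep.Phi.RSed.divF}, the splitting of the Biot--Savart integrals into finitely many kernel terms (the paper's $J^{(2)}_l$, $l=1,\dots,20$), pointwise bounds split at $r={\rm Re}(\sqrt{\lambda})^{-1}$ fed by the Appendix \ref{app.est.bessel} estimates, weighted Cauchy--Schwarz against $\||x|^{\gamma'}F\|_{L^2}$, interpolation for the $L^p$ bound, and density for $F\in X_{\gamma'}(D)$. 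Two of your intermediate claims, however, are wrong as stated, even though neither destroys the final result. First, the bound $|c_{n,\lambda}[({\rm div}\,F)_n]|\le C\beta^{-1}|\lambda|^{{\rm Re}(\mu_n)/2}\||x|^{\gamma'}F\|_{L^2}$ is too strong: the terms of $c_{n,\lambda}$ in which $K_{\mu_n}(\sqrt{\lambda}\tau)$ is paired with $\int_1^\tau I_{\mu_n}(\sqrt{\lambda}s)\,\dd s$ are $O(1)$ in $\lambda$ (the $\lambda$-powers cancel exactly), so no decay factor $|\lambda|^{{\rm Re}(\mu_n)/2}$ is available. The correct bound is $|c_{n,\lambda}[({\rm div}\,F)_n]|\le C\beta^{-1}\||x|^{\gamma'}F\|_{L^2}$ (the paper's \eqref{est1.cor1.est.velocity.RSed.divF}), and this weaker bound already yields \eqref{est1.thm.est.velocity.RSed.divF}--\eqref{est2.thm.est.velocity.RSed.divF} when combined with Propositions \ref{prop.est.F} and \ref{prop1.est.velocity.RSed.f}, so you should simply drop the extra factor.

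Second, your mechanism for the logarithm is backwards. The restriction $|\lambda|<e^{-\frac{1}{6\beta}}$ forces $|\log{\rm Re}(\sqrt{\lambda})|\gtrsim\beta^{-1}$ from \emph{below} and puts no upper bound on it, so a factor $|\log{\rm Re}(\sqrt{\lambda})|^{1/2}$ can never be ``absorbed into $C\beta^{-1/2}$''; had such a factor genuinely appeared, the uniform bound $C\beta^{-3}$ in \eqref{est2.thm.est.velocity.RSed.divF} would be false. What actually happens is that no logarithm arises in the divergence-form problem: the weighted Cauchy--Schwarz produces pointwise bounds of the form $\frac{C}{\beta}\big(|\lambda|^{1/2}r^2+r^{2-{\rm Re}(\mu_n)}+r^{1-\gamma'}\big)\||x|^{\gamma'}F\|_{L^2}$ on $1\le r<{\rm Re}(\sqrt{\lambda})^{-1}$, and the borderline piece $r^{2-{\rm Re}(\mu_n)}$ gives $\|r^{-2}\cdot r^{2-{\rm Re}(\mu_n)}\|_{L^2(r\dd r)}\approx({\rm Re}(\mu_n)-1)^{-1/2}\approx\beta^{-1}$ by Lemma \ref{lem.est.mu}, while the $r^{1-\gamma'}$ piece is integrable because $\gamma'>\frac12>0$. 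This extra $\beta^{-1}$, not a logarithm, is the source of the third power of $\beta^{-1}$ in \eqref{est2.thm.est.velocity.RSed.divF}. With these two corrections your argument coincides with the paper's.
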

%%%%%%%%%%%%%%%%%%%%
%%%%%%%%%%%%%%%%%%%%
%

By following a similar procedure as in Subsection \ref{subsec.RSed.f.velocity}, we give the proof of Theorem \ref{thm.est.velocity.RSed.divF} at the end of this subsection. We firstly focus on the term $V_n[\Phi_{n,\lambda}[({\rm div}\,F)_n]]$ in \eqref{rep.velocity.nFourier.RSed.divF}. By using Lemma \ref{lem.rep.Phi.RSed.divF}, one can see that the next decomposition holds. Let $\widetilde{F}_n^{(k)}(r)$, $k\in\{1,\ldots,7\}$, be the functions in Lemma \ref{lem.rep.Phi.RSed.divF}.
%
%%%%%%%%%%%%%%%%%%%%
%%%%%%%%%%%%%%%%%%%%
\begin{lemma}\label{lem1.velocitydecom.RSed.divF}
Let $n \in \Z \setminus \{0\}$ and $F \in C^\infty_0(D)^{2\times 2}$. Then we have 
\begin{align}\label{eq1.lem1.velocitydecom.RSed.divF} 
\frac{1}{r^{|n|}} \int_1^r s^{1+|n|} \Phi_{n,\lambda}[({\rm div}\,F)_n](s) \dd s
\,=\, \sum_{l=1}^{10} J^{(2)}_l[({\rm div}\,F)_n](r)\,,
\end{align}
where 
{\allowdisplaybreaks
\begin{align*}
J^{(2)}_{1}[({\rm div}\,F)_n](r)
&\,=\, -\frac{1}{r^{|n|}} 
\int_1^r \tau^{-1} I_{\mu_n}(\sqrt{\lambda} \tau)\,\widetilde{F}_n^{(1)}(\tau)
\int_\tau^r s^{1+|n|} K_{\mu_n}(\sqrt{\lambda} s) \dd s \dd \tau\,, \\
J^{(2)}_{2}[({\rm div}\,F)_n](r)
&\,=\, -\frac{\sqrt{\lambda}}{r^{|n|}} 
\int_1^r I_{\mu_n+1}(\sqrt{\lambda} \tau)\,\widetilde{F}_n^{(2)}(\tau)
\int_\tau^r s^{1+|n|} K_{\mu_n}(\sqrt{\lambda} s) \dd s \dd \tau\,, \\
J^{(2)}_{3}[({\rm div}\,F)_n](r)
&\,=\, -\frac{\lambda}{r^{|n|}} 
\int_1^r \tau I_{\mu_n}(\sqrt{\lambda} \tau)\,\widetilde{F}_n^{(3)}(\tau)
\int_\tau^r s^{1+|n|} K_{\mu_n}(\sqrt{\lambda} s) \dd s \dd \tau\,, \\
J^{(2)}_{4}[({\rm div}\,F)_n](r)
&\,=\, \frac{1}{r^{|n|}} 
\int_1^r \tau^{-1} K_{\mu_n}(\sqrt{\lambda} \tau)\,\widetilde{F}_n^{(4)}(\tau)
\int_1^\tau s^{1+|n|} I_{\mu_n}(\sqrt{\lambda} s) \dd s \dd \tau\,, \\
J^{(2)}_{5}[({\rm div}\,F)_n](r)
&\,=\, \frac{1}{r^{|n|}} 
\bigg( \int_r^\infty \tau^{-1} K_{\mu_n}(\sqrt{\lambda} \tau)\,\widetilde{F}_n^{(4)}(\tau) \dd \tau \bigg)
\bigg( \int_1^r s^{1+|n|} I_{\mu_n}(\sqrt{\lambda} s) \dd s \bigg)\,, \\
J^{(2)}_{6}[({\rm div}\,F)_n](r)
&\,=\, \frac{\sqrt{\lambda}}{r^{|n|}} 
\int_1^r K_{\mu_n-1}(\sqrt{\lambda} \tau)\,\widetilde{F}^{(5)}_n(\tau)
\int_1^\tau s^{1+|n|} I_{\mu_n}(\sqrt{\lambda} s) \dd s \dd \tau\,, \\
J^{(2)}_{7}[({\rm div}\,F)_n](r)
&\,=\, \frac{\sqrt{\lambda}}{r^{|n|}} 
\bigg( \int_r^\infty K_{\mu_n-1}(\sqrt{\lambda} \tau)\,\widetilde{F}_n^{(5)}(\tau) \dd \tau \bigg)
\bigg( \int_1^r s^{1+|n|} I_{\mu_n}(\sqrt{\lambda} s) \dd s \bigg)\,, \\
J^{(2)}_{8}[({\rm div}\,F)_n](r)
&\,=\, \frac{\lambda}{r^{|n|}} 
\int_1^r \tau K_{\mu_n}(\sqrt{\lambda} \tau)\,\widetilde{F}_n^{(6)}(\tau)
\int_1^\tau s^{1+|n|} I_{\mu_n}(\sqrt{\lambda} s) \dd s \dd \tau\,, \\
J^{(2)}_{9}[({\rm div}\,F)_n](r)
&\,=\, \frac{\lambda}{r^{|n|}} 
\bigg( \int_r^\infty \tau K_{\mu_n}(\sqrt{\lambda} \tau)\,\widetilde{F}_n^{(6)}(\tau) \dd \tau \bigg)
\bigg( \int_1^r s^{1+|n|} I_{\mu_n}(\sqrt{\lambda} s) \dd s \bigg)\,, \\
J^{(2)}_{10}[({\rm div}\,F)_n](r)
&\,=\, -\frac{\sqrt{\lambda}}{r^{|n|}} 
\int_1^r s \big(K_{\mu_n}(\sqrt{\lambda} s) I_{\mu_n+1}(\sqrt{\lambda} s) + K_{\mu_n-1}(\sqrt{\lambda} s) I_{\mu_n}(\sqrt{\lambda} s)\big)\,\widetilde{F}^{(7)}_n(s) \dd s\,.
\end{align*}
}
Here $\widetilde{F}_n^{(k)}(r)$, $k\in\{1,\ldots,7\}$, are the functions in Lemma \ref{lem.rep.Phi.RSed.divF}.
\end{lemma}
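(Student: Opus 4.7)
The proof is a direct though bookkeeping-heavy calculation: I would substitute the representation of $\Phi_{n,\lambda}[({\rm div}\,F)_n](s)$ provided by Lemma \ref{lem.rep.Phi.RSed.divF} into the left-hand side $r^{-|n|} \int_1^r s^{1+|n|} \Phi_{n,\lambda}[({\rm div}\,F)_n](s)\dd s$, and then exchange the order of integration in each of the resulting iterated integrals via Fubini's theorem. The assumption $F\in C^\infty_0(D)^{2\times 2}$ makes every integrand have compact support in $s$, so Fubini applies without further justification.

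After substitution, three groups of contributions appear. The first group consists of terms of the form $-K_{\mu_n}(\sqrt{\lambda}\,s)\int_1^s h(\tau)\dd \tau$ (with $h$ built from $\widetilde{F}_n^{(1)}$, $\widetilde{F}_n^{(2)}$, or $\widetilde{F}_n^{(3)}$ and possibly a power of $\sqrt{\lambda}$). For each such term I would swap order of integration:
\begin{equation*}
\int_1^r s^{1+|n|}K_{\mu_n}(\sqrt{\lambda}\,s)\Bigl(\int_1^s h(\tau)\dd \tau\Bigr)\dd s
\,=\,\int_1^r h(\tau)\Bigl(\int_\tau^r s^{1+|n|}K_{\mu_n}(\sqrt{\lambda}\,s)\dd s\Bigr)\dd \tau\,,
\end{equation*}
which, after division by $r^{|n|}$, produces exactly $J^{(2)}_1$, $J^{(2)}_2$, and $J^{(2)}_3$.

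The second group consists of terms of the form $I_{\mu_n}(\sqrt{\lambda}\,s)\int_s^\infty h(\tau)\dd \tau$ (with $h$ built from $\widetilde{F}_n^{(4)}$, $\widetilde{F}_n^{(5)}$, or $\widetilde{F}_n^{(6)}$). Here I would split the inner integral as $\int_s^\infty=\int_s^r+\int_r^\infty$. On the first piece Fubini gives an iterated integral with inner variable running over $[1,\tau]$, matching $J^{(2)}_4$, $J^{(2)}_6$, $J^{(2)}_8$; on the second piece the dependence on $s$ and $\tau$ decouples into a product of two one-variable integrals, matching $J^{(2)}_5$, $J^{(2)}_7$, $J^{(2)}_9$. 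The appearance of paired terms in the statement is therefore a direct manifestation of this splitting.

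The last contribution comes from the pointwise boundary factor $-\sqrt{\lambda}\,s\bigl(K_{\mu_n}I_{\mu_n+1}+K_{\mu_n-1}I_{\mu_n}\bigr)(\sqrt{\lambda}\,s)\widetilde{F}_n^{(7)}(s)$; since this piece is already evaluated at the integration variable, it integrates directly against $s^{1+|n|}$ to yield $J^{(2)}_{10}$, possibly after invoking the Wronskian identity $I_{\mu}(z)K_{\mu+1}(z)+I_{\mu+1}(z)K_{\mu}(z)=z^{-1}$ to reduce the algebraic weight of the integrand. The main obstacle is purely combinatorial: keeping track of the seven summands produced by Lemma \ref{lem.rep.Phi.RSed.divF}, the sign changes caused by the two outer factors $-K_{\mu_n}$ and $+I_{\mu_n}$, and the Fubini splits needed for the $I_{\mu_n}$-type terms, so that the final tally matches $\sum_{l=1}^{10}J^{(2)}_l$ term by term.
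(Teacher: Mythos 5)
Your proposal is correct and matches the paper's proof exactly: the paper likewise inserts the representation from Lemma \ref{lem.rep.Phi.RSed.divF} into the left-hand side and changes the order of integration via $\int_1^r\int_1^s \dd\tau\dd s=\int_1^r\int_\tau^r \dd s\dd\tau$ and $\int_1^r\int_s^\infty \dd\tau\dd s=\int_1^r\int_1^\tau \dd s\dd\tau+\int_r^\infty\dd\tau\int_1^r\dd s$, which is precisely your Fubini swap for the $K_{\mu_n}$-type terms plus the split at $r$ (iterated piece and decoupled product) for the $I_{\mu_n}$-type terms. The only superfluous point is the appeal to the Wronskian identity for $J^{(2)}_{10}$: the boundary term of \eqref{eq.lem.rep.Phi.RSed.divF} is simply integrated directly, and the combination $K_{\mu_n}I_{\mu_n+1}+K_{\mu_n-1}I_{\mu_n}$ is not the one the identity simplifies anyway.
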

%%%%%%%%%%%%%%%%%%%%
%%%%%%%%%%%%%%%%%%%%
%
\begin{proof}
The assertion follows by inserting \eqref{eq.lem.rep.Phi.RSed.divF} in Lemma \ref{lem.rep.Phi.RSed.divF} into the left-hand side of \eqref{eq1.lem1.velocitydecom.RSed.divF}, and changing order of integration as $\int_{1}^{r} \int_{1}^{s} \dd \tau \dd s= \int_{1}^{r} \int_\tau^r \dd s \dd \tau$ and $\int_1^r \int_s^\infty \dd \tau \dd s=\int_1^r  \int_1^\tau \dd s \dd \tau + \int_r^\infty \dd \tau \int_1^r \dd s$. This completes the proof.
\end{proof}
%
%%%%%%%%%%%%%%%%%%%%
%%%%%%%%%%%%%%%%%%%%
%
The next lemma gives the estimates to $J^{(2)}_{l}[({\rm div}\,F)_n]$, $l\in\{1,\ldots,10\}$, in Lemma \ref{lem1.velocitydecom.RSed.divF}.
%
%%%%%%%%%%%%%%%%%%%%
%%%%%%%%%%%%%%%%%%%%
\begin{lemma}\label{lem1.est.velocity.RSed.divF}
Let $|n|=1$ and $\gamma' \in(\frac12,\gamma)$, and let $\lambda\in\Sigma_{\pi-\epsilon} \cap \mathcal{B}_1(0)$ for some $\epsilon\in(0,\frac{\pi}{2})$. Then there is a positive constant $C=C(\gamma',\epsilon)$ independent of $\beta$ such that the following statement holds. Let $F \in C^\infty_0(D)^{2\times 2}$. Then for $l \in \{1,\cdots,10\}$ we have
\begin{align}
& \big|J^{(2)}_{l}[({\rm div}\,F)_n](r) \big|
\le 
\frac{C}{\beta} (|\lambda|^\frac12 r^2
+ r^{2-{\rm Re}(\mu_n)} + r^{1-\gamma'})\,
\||x|^{\gamma'} F \|_{L^2(D)}\,,\nonumber \\
&
~~~~~~~~~~~~~~~~~~~~~~~~~~~~~~~~~~~~~~~~~~~~~~~~~~~~~~~~~~~~
~~~~~~~~~~~~~~~~~~~~~~~~~~~~~~~~
1 \le r < {\rm Re}(\sqrt{\lambda})^{-1}\,,
\label{est1.lem1.est.velocity.RSed.divF} \\
& \big|J^{(2)}_{l}[({\rm div}\,F)_n](r) \big| 
\le 
\frac{C}{\beta}
(|\lambda|^{-\frac12} + r^{1-\gamma'})\,
\||x|^{\gamma'} F \|_{L^2(D)}\,,~~~~~~
r \ge {\rm Re}(\sqrt{\lambda})^{-1}
\label{est2.lem1.est.velocity.RSed.divF}\,.
\end{align}
\end{lemma}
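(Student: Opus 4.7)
The strategy is parallel to the proof of Lemma \ref{lem1.est.velocity.RSed.f}, but with two essential modifications. First, the integrands defining $J^{(2)}_{l}[({\rm div}\,F)_n]$ carry the extra algebraic weights ($s^{-1}$, $\sqrt{\lambda}$, or $\lambda$) and the modified densities $\widetilde{F}^{(k)}_n$ produced by the representation formula in Lemma \ref{lem.rep.Phi.RSed.divF}. Second, since the datum $F$ is controlled only in the weighted norm $\||x|^{\gamma'} F\|_{L^2(D)}$, Hölder's inequality in $L^{q}$ must be replaced by the weighted Cauchy--Schwarz bound
\begin{equation*}
\int_1^{r} |A(s) \widetilde{F}^{(k)}_{n}(s)| \dd s
\le
\Bigl( \int_1^{r} |A(s)|^{2} s^{1-2\gamma'} \dd s \Bigr)^{\frac12}
\||x|^{\gamma'} F\|_{L^2(D)}\,,
\end{equation*}
and analogously for the outer tail integrals on $[r,\infty)$.

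I would handle each $l \in \{1,\ldots,10\}$ by splitting the integration variable(s) across the Bessel threshold $\tau = {\rm Re}(\sqrt{\lambda})^{-1}$. In each piece I plug in the appropriate bound from Lemma \ref{lem.est1.bessel}, Lemma \ref{lem.est2.bessel}, or Lemma \ref{lem.est3.bessel} in the appendix (with the index shift $k = 0$ or $k = 1$ matching the order of the Bessel function in each $J^{(2)}_l$), carry out the weighted Cauchy--Schwarz above, and combine. For $J^{(2)}_{l}$ with $l \in \{1,2,3\}$ the relevant inner factor is $I_{\mu_n}$ or $I_{\mu_n+1}$ paired with the tail integral of $s^{1+|n|} K_{\mu_n}$, already estimated in Lemma \ref{lem.est2.bessel}; for $l \in \{4,5,8,9\}$ it is $K_{\mu_n}$ paired with $\int_1^{\tau} s^{1+|n|} I_{\mu_n}$, controlled by Lemma \ref{lem.est3.bessel}; for $l \in \{6,7\}$ the same scheme with $K_{\mu_n-1}$ supplies the factor $\beta^{-1}$ through the bound $|\mu_n-1|^{-1} \le C\beta^{-1}$ combined with \eqref{est2.lem.est1.bessel} and \eqref{est4.lem.est1.bessel}. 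The dichotomy $r^{2-{\rm Re}(\mu_n)}$ versus $r^{1-\gamma'}$ in \eqref{est1.lem1.est.velocity.RSed.divF} arises exactly because the exponents in the weighted $L^2$-integrals $\int s^{1-2\gamma'\pm 2{\rm Re}(\mu_n)} \dd s$ change sign depending on whether $\gamma' > {\rm Re}(\mu_n) - 1$; since $\gamma' > \frac12$ and ${\rm Re}(\mu_n(\beta)) \to 1$ as $\beta \to 0$, both possibilities must be tracked and yield the two listed powers of $r$.

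The main obstacle lies in the boundary-type term $J^{(2)}_{10}$, which involves the product
\begin{equation*}
K_{\mu_n}(\sqrt{\lambda} s) I_{\mu_n+1}(\sqrt{\lambda} s) + K_{\mu_n-1}(\sqrt{\lambda} s) I_{\mu_n}(\sqrt{\lambda} s)
\end{equation*}
evaluated at the same argument. A naive bound for each factor is too crude at $s \gtrsim {\rm Re}(\sqrt\lambda)^{-1}$ because the two exponentials from $K$ and $I$ individually would cancel but a pointwise sum estimate loses this cancellation. I would instead use the Wronskian-type relation $K_{\mu}(z) I_{\mu+1}(z) + K_{\mu-1}(z) I_{\mu}(z) = z^{-1}$ (which follows from the Wronskian of the two independent solutions to \eqref{ode.bessel.func}), so that $J^{(2)}_{10}$ reduces to $-r^{-|n|} \int_{1}^{r} \widetilde{F}^{(7)}_{n}(s) \dd s$, and then a direct weighted Cauchy--Schwarz produces the $r^{1-\gamma'}$ contribution in \eqref{est1.lem1.est.velocity.RSed.divF} without any $\beta$-singularity. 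Once all ten terms are handled, the two asserted bounds follow by comparing powers of $r$ and $|\lambda|$ in each regime.
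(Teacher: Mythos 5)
Your overall scheme is the same as the paper's: each $J^{(2)}_l$ is estimated by splitting the integration at $s={\rm Re}(\sqrt{\lambda})^{-1}$, invoking the appendix bounds (Lemmas \ref{lem.est1.bessel}--\ref{lem.est3.bessel} with the appropriate index shift $k=0,1$), and closing with a weighted Cauchy--Schwarz against $\||x|^{\gamma'}F\|_{L^2(D)}$; your assignment of which Bessel lemma governs which $J^{(2)}_l$ matches the paper's. The one genuine divergence is $J^{(2)}_{10}$: the paper simply multiplies the pointwise bounds \eqref{est1.lem.est1.bessel}--\eqref{est7.lem.est1.bessel} for the four factors (the exponentials already cancel \emph{within} each product $K\cdot I$, so no cancellation between the two summands is needed, contrary to your worry), accepting a harmless $\beta^{-1}$ from $K_{\mu_n-1}$. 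Your use of the Wronskian identity $K_{\mu}(z)I_{\mu+1}(z)+K_{\mu-1}(z)I_{\mu}(z)=z^{-1}$ is correct (it follows from the recurrences quoted in the proof of Lemma \ref{lem.rep.Phi.RSed.divF} together with $W\{K_\mu,I_\mu\}=-z^{-1}$) and collapses $J^{(2)}_{10}$ to $-r^{-|n|}\int_1^r\widetilde{F}^{(7)}_n(s)\,\dd s$, giving a cleaner, $\beta$-uniform bound; this is a nice simplification, though not necessary.

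One concrete slip to fix: your displayed weighted Cauchy--Schwarz carries the wrong weight. Since $\||x|^{\gamma'}F\|_{L^2(D)}^2$ controls $\int_1^\infty s^{2\gamma'}|F_n(s)|^2\,s\,\dd s$ (two-dimensional measure $s\,\dd s$), the correct splitting of $\int_1^r|A\widetilde{F}^{(k)}_n|\,\dd s$ is against $\bigl(\int_1^r|A(s)|^2 s^{-1-2\gamma'}\,\dd s\bigr)^{1/2}$, not $s^{1-2\gamma'}$. The version as written is still a true inequality (the weight is larger), but it is lossy by a factor of order $r^{1-\gamma'}$ on $[1,{\rm Re}(\sqrt{\lambda})^{-1})$, and for terms such as $J^{(2)}_4$ this would produce $r^{2-\gamma'}$ instead of the claimed $r^{1-\gamma'}$ and break \eqref{est1.lem1.est.velocity.RSed.divF}. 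With the corrected exponent the plan goes through. Relatedly, your explanation of the $r^{2-{\rm Re}(\mu_n)}$ versus $r^{1-\gamma'}$ dichotomy via the sign of $\gamma'-({\rm Re}(\mu_n)-1)$ is off, since ${\rm Re}(\mu_n)-1=O(\beta^2)\ll\gamma'$ always; the two powers in fact come from different terms (e.g.\ $J^{(2)}_1$ versus $J^{(2)}_4$), according to whether the residual power of $\tau$ in the weighted integral is integrable at infinity.
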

%%%%%%%%%%%%%%%%%%%%
%%%%%%%%%%%%%%%%%%%%
%
%%%%%%%%%%%%%%%%%%%%
%%%%%%%%%%%%%%%%%%%%
{\color{black}
\begin{remark}\label{remark.est1.velocity.RSed.divF}
The statement of Lemma \ref{lem1.est.velocity.RSed.divF} is in fact valid for $\gamma' \in(0, \gamma)$ if we choose the constant $\beta\in(0,1)$ small enough depending on $\gamma'$. However, in order to avoid the technical difficulty, a simplification has been made here by assuming the lower bound $\gamma' \in(\frac12,\gamma)$. We note that this assumption is essentially required later in the proof of Theorem \ref{thm.est.vorticity.RSed.divF}.
\end{remark}
}
%%%%%%%%%%%%%%%%%%%%
%%%%%%%%%%%%%%%%%%%%
%
\begin{proofx}{Lemma \ref{lem1.est.velocity.RSed.divF}}
(i) Estimate of $J^{(2)}_{1}[({\rm div}\,F)_n]$: For $1\le r<{\rm Re}(\sqrt{\lambda})^{-1}$, by \eqref{est5.lem.est1.bessel} for $k=0$ in Lemma \ref{lem.est1.bessel} and \eqref{est1.lem.est2.bessel} for $k=0$ in Lemma \ref{lem.est2.bessel} in Appendix \ref{app.est.bessel}, we find
\begin{align*}
|J^{(2)}_{1}[({\rm div}\,F)_n](r)|
& \le 
Cr^{-1}
\int_1^r |\tau^{-1} I_{\mu_n}(\sqrt{\lambda} \tau)\,\widetilde{F}_n^{(1)}(\tau)|\,
\bigg|\int_\tau^r s^2 K_{\mu_n}(\sqrt{\lambda} s) \dd s \bigg| \dd \tau \\
& \le 
Cr^{2-{\rm Re}(\mu_n)}
\int_1^r \tau^{{\rm Re}(\mu_n)-2} |F_n(\tau)| \tau \dd \tau\,,
\end{align*}
which implies $|J^{(2)}_{1}[({\rm div}\,F)_n](r)| \le Cr^{2-{\rm Re}(\mu_n)} \||x|^{\gamma'} F \|_{L^2}$. 
{\color{black} We note that ${\rm Re}(\mu_n)-2 >-1$ for any $\beta\in(0,1)$ and the condition $\gamma' \in(\frac12, \gamma)$ has been applied.}
For $r \ge {\rm Re}(\sqrt{\lambda})^{-1}$, by \eqref{est5.lem.est1.bessel} and \eqref{est7.lem.est1.bessel} for $k=0$ in Lemma \ref{lem.est1.bessel} and \eqref{est2.lem.est2.bessel} and \eqref{est3.lem.est2.bessel} for $k=0$ in Lemma \ref{lem.est2.bessel}, we have
\begin{align*}
&|J^{(2)}_{1}[({\rm div}\,F)_n](r)| \\
& \le 
Cr^{-1}
\bigg(
\int_1^{\frac{1}{{\rm Re}(\sqrt{\lambda})}} 
+ \int_{\frac{1}{{\rm Re}(\sqrt{\lambda})}}^r \bigg)
|\tau^{-1} I_{\mu_n}(\sqrt{\lambda} \tau)\,\widetilde{F}_n^{(1)}(\tau)|\,
\bigg| \int_\tau^{r} s^2 K_{\mu_n}(\sqrt{\lambda} s) \dd s \bigg| \dd \tau \\
& \le 
C\,|\lambda|^{-\frac12} 
\int_1^{\frac{1}{{\rm Re}(\sqrt{\lambda})}} 
\tau^{-1} |F_n(\tau)| \tau \dd \tau 
+ C\,|\lambda|^{-\frac12}  
\int_{\frac{1}{{\rm Re}(\sqrt{\lambda})}}^r 
\tau^{-1} |F_n(\tau)| \tau \dd \tau\,,
\end{align*}
which leads to $|J^{(2)}_{1}[({\rm div}\,F)_n](r)| \le C|\lambda|^{-\frac12} \||x|^{\gamma'} F \|_{L^2}$. \\
\noindent (ii) Estimate of $J^{(2)}_{2}[({\rm div}\,F)_n]$: In the similar manner as the proof of $J^{(2)}_1[({\rm div}\,F)_n]$, for $1\le r<{\rm Re}(\sqrt{\lambda})^{-1}$ we have $|J^{(2)}_{2}[({\rm div}\,F)_n](r)|\le C|\lambda|^\frac12 r^2 \|F \|_{L^2}$, and for $r \ge {\rm Re}(\sqrt{\lambda})^{-1}$ we have $|J^{(2)}_{2}[({\rm div}\,F)_n](r)| \le C |\lambda|^{-\frac12} \|F \|_{L^2}$. We omit since the proof is straightforward. \\
\noindent (iii) Estimate of $J^{(2)}_{3}[({\rm div}\,F)_n]$: For $1\le r<{\rm Re}(\sqrt{\lambda})^{-1}$, we have $|J^{(2)}_{3}[({\rm div}\,F)_n](r)|\le C|\lambda|^\frac12 r^2 \|F \|_{L^2}$ by same way as the proof of $J^{(2)}_1[f_n]$. For $r \ge {\rm Re}(\sqrt{\lambda})^{-1}$, we observe that
\begin{align*}
&|J^{(2)}_{3}[({\rm div}\,F)_n](r)| \\
& \le 
C\,|\lambda|\,r^{-1}
\bigg( \int_1^{\frac{1}{{\rm Re}(\sqrt{\lambda})}} 
+ \int_{\frac{1}{{\rm Re}(\sqrt{\lambda})}}^r \bigg)
|\tau I_{\mu_n}(\sqrt{\lambda} \tau)\,\widetilde{F}_n^{(3)}(\tau)|\,
\bigg| \int_\tau^r s^2 K_{\mu_n}(\sqrt{\lambda} s) \dd s \bigg| \dd \tau \\
& \le 
C\,|\lambda|^{-\frac12} r^{-1}
\int_1^{\frac{1}{{\rm Re}(\sqrt{\lambda})}}
|F_n(\tau)| \tau \dd \tau 
+ C\,r^{-1}
\int_{\frac{1}{{\rm Re}(\sqrt{\lambda})}}^r
\tau |F_n(\tau)| \tau \dd \tau \,.
\end{align*}
Thus we have $|J^{(2)}_{3}[({\rm div}\,F)_n](r)| \le C( |\lambda|^{-\frac12} \|F \|_{L^2} + r^{1-\gamma'} \||x|^{\gamma'} F \|_{L^2})$.\\
\noindent (iv) Estimate of $J^{(2)}_{4}[({\rm div}\,F)_n]$: For $1\le r < {\rm Re}(\sqrt{\lambda})^{-1}$, by \eqref{est1.lem.est1.bessel} and \eqref{est3.lem.est1.bessel} in Lemma \ref{lem.est1.bessel} and \eqref{est1.lem.est3.bessel} for $k=0$ in Lemma \ref{lem.est3.bessel}, we find
\begin{align*}
|J^{(2)}_{4}[({\rm div}\,F)_n](r)|
& \le 
r^{-1}
\int_1^r |\tau^{-1} K_{\mu_n}(\sqrt{\lambda} \tau)\,\widetilde{F}^{(4)}_n(\tau)|
\int_1^\tau |s^2 I_{\mu_n}(\sqrt{\lambda} s)| \dd s \dd \tau \\
& \le 
{\color{black} C r^{-1}}
\int_1^r \tau |F_n(\tau)| \tau \dd \tau\,,
\end{align*}
which implies $|J^{(2)}_{4}[({\rm div}\,F)_n](r)| \le C r^{1-\gamma'}\,\||x|^{\gamma'} F \|_{L^2}$. For $r \ge {\rm Re}(\sqrt{\lambda})^{-1}$, \eqref{est1.lem.est1.bessel}, \eqref{est3.lem.est1.bessel}, and \eqref{est6.lem.est1.bessel} for $k=0$ in Lemma \ref{lem.est1.bessel} and \eqref{est1.lem.est3.bessel} and \eqref{est2.lem.est3.bessel} for $k=0$ in Lemma \ref{lem.est3.bessel} yield
\begin{align*}
&~~~|J^{(2)}_{4}[({\rm div}\,F)_n](r)| \\
& \le 
C r^{-1}
\bigg( \int_1^{\frac{1}{{\rm Re}(\sqrt{\lambda})}} 
+ \int_{\frac{1}{{\rm Re}(\sqrt{\lambda})}}^r \bigg)
|\tau^{-1} K_{\mu_n}(\sqrt{\lambda} \tau)\,\widetilde{F}_n^{(4)}(\tau)|\,
\int_1^\tau |s^2 I_{\mu_n}(\sqrt{\lambda} s)| \dd s \dd \tau \\
& \le 
C |\lambda|^{-\frac12} r^{-1}
\int_1^{\frac{1}{{\rm Re}(\sqrt{\lambda})}}
|F_n(\tau)| \tau \dd \tau 
+ C\,|\lambda|^{-\frac12} r^{-1} 
\int_{\frac{1}{{\rm Re}(\sqrt{\lambda})}}^r
|F_n(\tau)| \tau \dd \tau\,,
%& \le 
%C\,|\lambda|^{-\frac12}\,\||x|^\delta\,F \|_{L^2(\Omega)}\,.
\end{align*}
which leads to $|J^{(2)}_{4}[({\rm div}\,F)_n](r)| \le C|\lambda|^{-\frac12} \|F\|_{L^2}$. \\
\noindent (v) Estimate of $J^{(2)}_{5}[({\rm div}\,F)_n]$: For $1\le r < {\rm Re}(\sqrt{\lambda})^{-1}$, 
{\color{black}
by the same estimates in Lemmas \ref{lem.est1.bessel} and \ref{lem.est3.bessel} which have been used in (iv) 
}
we find
\begin{align*}
&~~~|J^{(2)}_{5}[({\rm div}\,F)_n](r)| \\
& \le 
r^{-1}
\int_1^r |s^2 I_{\mu_n}(\sqrt{\lambda} s)| \dd s  
\bigg(
\int_r^{\frac{1}{{\rm Re}(\sqrt{\lambda})}}  
+ \int_{\frac{1}{{\rm Re}(\sqrt{\lambda})}}^\infty 
\bigg)
|\tau^{-1} K_{\mu_n}(\sqrt{\lambda} \tau)\,\widetilde{F}^{(4)}_n(\tau)| \dd \tau \\
& \le 
C\,r^{{\rm Re}(\mu_n)+2}
\int_r^{\frac{1}{{\rm Re}(\sqrt{\lambda})}}  
\tau^{-{\rm Re}(\mu_n)-2} |F_n(\tau)| \tau \dd \tau \\
& \quad
+ C\,|\lambda|^{\frac{{\rm Re}(\mu_n)}{2}-\frac14} r^{{\rm Re}(\mu_n)+2}
\int_{\frac{1}{{\rm Re}(\sqrt{\lambda})}}^\infty
\tau^{-\frac52} e^{-{\rm Re}(\sqrt{\lambda}) \tau} |F_n(\tau)| \tau \dd \tau \,,
\end{align*}
and thus we see that $|J^{(2)}_{5}[({\rm div}\,F)_n](r)| \le C(r^{1-\gamma'} \||x|^{\gamma'}\,F \|_{L^2} + |\lambda|^{\frac12} r^{2} \| F \|_{L^2})$ holds. For $r \ge {\rm Re}(\sqrt{\lambda})^{-1}$, we have 
\begin{align*}
|J^{(2)}_{5}[({\rm div}\,F)_n](r)|
& \le 
r^{-1}
\int_1^r |s^2 I_{\mu_n}(\sqrt{\lambda} s)| \dd s
\int_r^\infty |\tau^{-1} K_{\mu_n}(\sqrt{\lambda} \tau)\,\widetilde{F}^{(4)}(\tau)| \dd \tau  \\
& \le 
C\,|\lambda|^{-1} r^{\frac12} e^{{\rm Re}(\sqrt{\lambda}) r}
\int_r^\infty
\tau^{-\frac52} e^{-{\rm Re}(\sqrt{\lambda}) \tau} |F_n(\tau)| \tau \dd \tau\,,
%& \le 
%C\,|\lambda|^{-\frac14} e^{{\rm Re}(\sqrt{\lambda}) r}
%\| F \|_{L^2(\Omega)}
%\bigg(\int_r^\infty
%e^{-2{\rm Re}(\sqrt{\lambda}) \tau} \dd \tau \bigg)^\frac12 \\
%& \le 
%C\,|\lambda|^{-\frac12}\,\| F \|_{L^2(\Omega)}\,.
\end{align*}
which implies $|J^{(2)}_{5}[({\rm div}\,F)_n](r)| \le C |\lambda|^{-\frac12} \| F \|_{L^2}$. \\
\noindent (vi) Estimates of $J^{(2)}_l[({\rm div}\,F)_n]$, $l\in\{6,7,8,9\}$: In the similar manner as the proofs for $J^{(2)}_4[f_n]$ and $J^{(2)}_5[({\rm div}\,F)_n]$, we see that
\begin{align*}
|J^{(2)}_l[({\rm div}\,F)_n](r)| 
& \le
C\beta^{-1}|\lambda|^\frac12 r^2 \|F \|_{L^2}\,,~~~~
1\le r < {\rm Re}(\sqrt{\lambda})^{-1}\,, \\
|J^{(2)}_l[({\rm div}\,F)_n](r)| 
& \le 
C(\beta^{-1} |\lambda|^{-\frac12} \|F \|_{L^2}
+ r^{1-\gamma'} \||x|^{\gamma'} F\|_{L^2})\,,~~~~
r \ge {\rm Re}(\sqrt{\lambda})^{-1}\,,
\end{align*}
for $l\in\{6,7,8,9\}$. We omit the details since the calculations are straightforward. \\
\noindent (vii) Estimate of $J^{(2)}_{10}[({\rm div}\,F)_n]$: For $1\le r < {\rm Re}(\sqrt{\lambda})^{-1}$, 
{\color{black} 
from \eqref{est1.lem.est1.bessel}--\eqref{est4.lem.est1.bessel}, 
and \eqref{est5.lem.est1.bessel} for $k=0,1$ in Lemma \ref{lem.est1.bessel}
}
we have
\begin{align*}
|J^{(2)}_{10}[({\rm div}\,F)_n](r)| 
\le
C \beta^{-1} |\lambda| 
\int_1^r |F_n(s)| s \dd s\,,
\end{align*}
which implies $|J^{(2)}_{10}[({\rm div}\,F)_n](r)| \le C \beta^{-1} |\lambda|^\frac12 r^2 \|F \|_{L^2}$. For $r \ge {\rm Re}(\sqrt{\lambda})^{-1}$, 
{\color{black} 
from \eqref{est1.lem.est1.bessel}--\eqref{est4.lem.est1.bessel}, 
and \eqref{est5.lem.est1.bessel}--\eqref{est7.lem.est1.bessel} for $k=0,1$ in Lemma \ref{lem.est1.bessel}
}
we have
\begin{align*}
|J^{(2)}_{10}[({\rm div}\,F)_n](r)| 
& \le 
C \beta^{-1} |\lambda|\,r^{-1}
\int_1^{\frac{1}{{\rm Re}(\sqrt{\lambda})}} s |F_n(s)| s\dd s
+ C\,r^{-1}
\int_{\frac{1}{{\rm Re}(\sqrt{\lambda})}}^r s^{-1} |F_n(s)| s\dd s\,, 
%& \le
%C \beta^{-1} r^{-1}\,\|F \|_{L^2(\Omega)}
%+ C\,r^{-1}\,\||x|^\delta\,F \|_{L^2(\Omega)}
%\bigg(\int_{\frac{1}{{\rm Re}(\sqrt{\lambda})}}^r 
%s^{2(-1-\delta)+1} \dd s \bigg)^\frac12 \\
%& \le
%C\beta^{-1} r^{-1}\,\|F \|_{L^2(\Omega)}
%+ C\,|\lambda|^{\frac{\delta}{2}}\,r^{-1}\,
%\||x|^\delta\,F \|_{L^2(\Omega)}\,.
\end{align*}
which leads to $|J^{(2)}_{10}[({\rm div}\,F)_n](r)|\le C(\beta^{-1} |\lambda|^{-\frac12} \|F \|_{L^2} + r^{1-\gamma'} \||x|^{\gamma'} F\|_{L^2})$. This completes the proof of Lemma  \ref{lem1.est.velocity.RSed.divF}.
\end{proofx}
%%%%%%%%%%%%%%%%%%%%
%%%%%%%%%%%%%%%%%%%%
%
We continue the analysis on $V_n[\Phi_{n,\lambda}[({\rm div}\,F)_n]]$ in \eqref{rep.velocity.nFourier.RSed.divF}. The next decomposition is also useful in calculation as is Lemma \ref{lem1.velocitydecom.RSed.divF}.
%
%%%%%%%%%%%%%%%%%%%%
%%%%%%%%%%%%%%%%%%%%
\begin{lemma}\label{lem2.velocitydecom.RSed.divF} 
Let $n \in \Z \setminus \{0\}$ and $F \in C^\infty_0(D)^{2\times 2}$. Then we have
\begin{align}\label{eq1.lem2.velocitydecom.RSed.divF}
r^{|n|} \int_r^\infty s^{1-|n|} \Phi_{n,\lambda}[({\rm div}\,F)_n](s) \dd s
\,=\, \sum_{l=11}^{20} J^{(2)}_l[({\rm div}\,F)_n](r)\,,
\end{align}
where
{\allowdisplaybreaks
\begin{align*}
J^{(2)}_{11}[({\rm div}\,F)_n](r)
&\,=\, -r^{|n|} 
\int_1^r \tau^{-1} I_{\mu_n}(\sqrt{\lambda} \tau)\,\widetilde{F}_n^{(1)}(\tau)
\int_r^\infty s^{1-|n|} K_{\mu_n}(\sqrt{\lambda} s) \dd s \dd \tau\,, \\
J^{(2)}_{12}[({\rm div}\,F)_n](r)
&\,=\, -r^{|n|} 
\int_r^\infty \tau^{-1} I_{\mu_n}(\sqrt{\lambda} \tau)\,\widetilde{F}_n^{(1)}(\tau)
\int_\tau^\infty s^{1-|n|} K_{\mu_n}(\sqrt{\lambda} s) \dd s \dd \tau\,, \\
J^{(2)}_{13}[({\rm div}\,F)_n](r)
&\,=\, -\sqrt{\lambda}\,r^{|n|} 
\int_1^r I_{\mu_n+1}(\sqrt{\lambda} \tau)\,\widetilde{F}_n^{(2)}(\tau)
\int_r^\infty s^{1-|n|} K_{\mu_n}(\sqrt{\lambda} s) \dd s \dd \tau\,, \\
J^{(2)}_{14}[({\rm div}\,F)_n](r)
&\,=\, -\sqrt{\lambda}\,r^{|n|}  
\int_r^\infty I_{\mu_n+1}(\sqrt{\lambda} \tau)\,\widetilde{F}_n^{(2)}(\tau)
\int_\tau^\infty s^{1-|n|} K_{\mu_n}(\sqrt{\lambda} s) \dd s \dd \tau\,, \\
J^{(2)}_{15}[({\rm div}\,F)_n](r)
&\,=\, \lambda\,r^{|n|} 
\int_1^r \tau I_{\mu_n}(\sqrt{\lambda} \tau)\,\widetilde{F}_n^{(3)}(\tau)
\int_r^\infty s^{1-|n|} K_{\mu_n}(\sqrt{\lambda} s) \dd s \dd \tau\,, \\
J^{(2)}_{16}[({\rm div}\,F)_n](r)
&\,=\, \lambda\,r^{|n|}  
\int_r^\infty \tau I_{\mu_n}(\sqrt{\lambda} \tau)\,\widetilde{F}_n^{(3)}(\tau)
\int_\tau^\infty s^{1-|n|} K_{\mu_n}(\sqrt{\lambda} s) \dd s \dd \tau\,, \\
J^{(2)}_{17}[({\rm div}\,F)_n](r)
&\,=\, r^{|n|}  
\int_r^\infty \tau^{-1} K_{\mu_n}(\sqrt{\lambda} \tau)\,\widetilde{F}_n^{(4)}(\tau)
\int_r^\tau s^{1-|n|} I_{\mu_n}(\sqrt{\lambda} s) \dd s \dd \tau\,, \\
J^{(2)}_{18}[({\rm div}\,F)_n](r)
&\,=\, \sqrt{\lambda}\,r^{|n|}
\int_r^\infty K_{\mu_n-1}(\sqrt{\lambda} \tau)\,\widetilde{F}_n^{(5)}(\tau)
\int_r^\tau s^{1-|n|} I_{\mu_n}(\sqrt{\lambda} s) \dd s \dd \tau\,, \\
J^{(2)}_{19}[({\rm div}\,F)_n](r)
&\,=\, \lambda\,r^{|n|}
\int_r^\infty s K_{\mu_n}(\sqrt{\lambda} \tau)\,\widetilde{F}_n^{(6)}(\tau)
\int_r^\tau s^{1-|n|} I_{\mu_n}(\sqrt{\lambda} s) \dd s \dd \tau\,, \\
J^{(2)}_{20}[({\rm div}\,F)_n](r)
&\,=\, 
- \sqrt{\lambda}\,r^{|n|}
\int_r^\infty
s \big(K_{\mu_n}(\sqrt{\lambda} s) I_{\mu_n+1}(\sqrt{\lambda} s) + K_{\mu_n-1}(\sqrt{\lambda} s) I_{\mu_n}(\sqrt{\lambda} s)\big)\,\widetilde{F}_n^{(7)}(s) \dd s\,.
\end{align*}
}
Here $\widetilde{F}_n^{(k)}(r)$, $k\in\{1,\ldots,7\}$, are the functions in Lemma \ref{lem.rep.Phi.RSed.divF}.
\end{lemma}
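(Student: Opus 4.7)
The plan is to substitute the representation of $\Phi_{n,\lambda}[({\rm div}\,F)_n]$ from Lemma \ref{lem.rep.Phi.RSed.divF} into the left-hand side of \eqref{eq1.lem2.velocitydecom.RSed.divF} and then exchange the order of integration in each resulting double integral, in exactly the same spirit as the proof of Lemma \ref{lem1.velocitydecom.RSed.divF}. Since $F\in C^\infty_0(D)^{2\times 2}$, all the functions $\widetilde{F}_n^{(k)}$ have compact support, so Fubini is applicable without any further justification. After substitution, three structurally distinct families of contributions appear in \eqref{eq.lem.rep.Phi.RSed.divF}, and I would treat them separately.

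For the three terms weighted by $K_{\mu_n}(\sqrt{\lambda}\,s)$ and containing an inner integral $\int_1^s\dd\tau$ (namely the $\widetilde{F}_n^{(1)}$, $\widetilde{F}_n^{(2)}$, $\widetilde{F}_n^{(3)}$ contributions), after multiplying by $r^{|n|}s^{1-|n|}$ and integrating $s$ from $r$ to $\infty$, the relevant region in the $(\tau,s)$-plane is $\{1<\tau<s,\ s>r\}$. I would split this region according to whether $\tau<r$ or $\tau>r$, which is captured by the identity
\begin{align*}
\int_r^\infty\!\int_1^s \dd\tau\,\dd s \,=\, \int_1^r \dd\tau\int_r^\infty \dd s \,+\, \int_r^\infty\!\int_\tau^\infty \dd s\,\dd\tau\,.
\end{align*}
The first piece produces a product of single integrals yielding $J^{(2)}_{11}$, $J^{(2)}_{13}$, $J^{(2)}_{15}$, while the second piece yields the iterated integrals $J^{(2)}_{12}$, $J^{(2)}_{14}$, $J^{(2)}_{16}$.

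For the three terms weighted by $I_{\mu_n}(\sqrt{\lambda}\,s)$ and containing an inner integral $\int_s^\infty\dd\tau$ (the $\widetilde{F}_n^{(4)}$, $\widetilde{F}_n^{(5)}$, $\widetilde{F}_n^{(6)}$ contributions), the relevant region is $\{r<s<\tau\}$, so the simpler identity
\begin{align*}
\int_r^\infty\!\int_s^\infty \dd\tau\,\dd s \,=\, \int_r^\infty\!\int_r^\tau \dd s\,\dd\tau
\end{align*}
suffices and produces $J^{(2)}_{17}$, $J^{(2)}_{18}$, $J^{(2)}_{19}$. Finally, the boundary-type term in \eqref{eq.lem.rep.Phi.RSed.divF} containing the product $K_{\mu_n}(\sqrt{\lambda}\,r)I_{\mu_n+1}(\sqrt{\lambda}\,r)+K_{\mu_n-1}(\sqrt{\lambda}\,r)I_{\mu_n}(\sqrt{\lambda}\,r)$ is already a local expression in $s$, so its contribution to $r^{|n|}\int_r^\infty s^{1-|n|}(\,\cdot\,)\dd s$ is directly $J^{(2)}_{20}$ without any rearrangement.

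There is no serious obstacle here; the computation is entirely routine and mirrors the one sketched for Lemma \ref{lem1.velocitydecom.RSed.divF}. The only point requiring a little care is bookkeeping: one must make sure that each of the seven functional ingredients $\widetilde{F}_n^{(1)},\ldots,\widetilde{F}_n^{(7)}$ in \eqref{eq.lem.rep.Phi.RSed.divF} is accounted for, and that the signs, factors of $\sqrt{\lambda}$ and $\lambda$, and powers of $\tau$ match the definitions of $J^{(2)}_{11},\ldots,J^{(2)}_{20}$ verbatim.
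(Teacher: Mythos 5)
Your proposal is correct and follows exactly the paper's own argument: substitute the representation from Lemma \ref{lem.rep.Phi.RSed.divF} and exchange the order of integration via $\int_r^\infty\int_1^s \dd\tau\,\dd s = \int_1^r \dd\tau\int_r^\infty \dd s + \int_r^\infty\int_\tau^\infty \dd s\,\dd\tau$ and $\int_r^\infty\int_s^\infty \dd\tau\,\dd s = \int_r^\infty\int_r^\tau \dd s\,\dd\tau$, which are precisely the identities the paper uses. No gaps.
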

%%%%%%%%%%%%%%%%%%%%
%%%%%%%%%%%%%%%%%%%%
%
\begin{proof}
The assertion is a consequence of inserting \eqref{eq.lem.rep.Phi.RSed.divF} in Lemma \ref{lem.rep.Phi.RSed.divF} into the left-hand side of \eqref{eq1.lem2.velocitydecom.RSed.divF}, and changing order of integration as $\int_r^\infty \int_1^s \dd \tau \dd s= \int_1^r \dd \tau \int_r^\infty \dd s + \int_r^\infty \int_\tau^\infty \dd s \dd \tau$ and $\int_r^\infty \int_s^\infty \dd \tau \dd s = \int_r^\infty  \int_r^\tau \dd s \dd \tau$. This completes the proof.
\end{proof}
%%%%%%%%%%%%%%%%%%%%
%%%%%%%%%%%%%%%%%%%%
%
The next lemma summarizes the estimates to $J^{(2)}_{l}[f_n]$, $l\in\{11,\ldots,20\}$, in Lemma \ref{lem2.velocitydecom.RSed.divF}.
%
%%%%%%%%%%%%%%%%%%%%
%%%%%%%%%%%%%%%%%%%%
\begin{lemma}\label{lem2.est.velocity.RSed.divF}
Let $|n|=1$ and $\gamma' \in(\frac12,\gamma)$, and let $\lambda\in\Sigma_{\pi-\epsilon} \cap \mathcal{B}_1(0)$ for some $\epsilon\in(0,\frac{\pi}{2})$. Then there is a positive constant $C=C(\gamma',\epsilon)$ independent of $\beta$ such that the following statement holds. Let $F \in C^\infty_0(D)^{2\times 2}$. Then for $l \in \{11,\cdots,20\}$ we have
\begin{align}
& |J^{(2)}_{l}[({\rm div}\,F)_n](r)| 
\le 
\frac{C}{\beta} 
\big( |\lambda|^\frac12 r^2 + r^{2-{\rm Re}(\mu_n)} + r^{1-\gamma'} \big)\,
\||x|^{\gamma'}F \|_{L^2(D)}\,, \nonumber \\
& ~~~~~~~~~~~~~~~~~~~~~~~~~~~~~~~~~~~~~~~~~~~~~~~~~~~~~~~~~~~~
~~~~~~~~~~~~~~~~~~~~~~~~~~~~~
1\le r < {\rm Re}(\sqrt{\lambda})^{-1}\,,
\label{est1.lem2.est.velocity.RSed.divF} \\
&|J^{(2)}_{l}[({\rm div}\,F)_n](r)| 
\le
C \big(|\lambda|^{-\frac12} + r^{1-\gamma'} \big)\,
\||x|^{\gamma'}F \|_{L^2(D)}\,,~~~~~~
r \ge {\rm Re}(\sqrt{\lambda})^{-1}\,.
\label{est2.lem2.est.velocity.RSed.divF}
\end{align}
\end{lemma}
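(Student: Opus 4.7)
The plan is to proceed term-by-term, exactly paralleling the proof of Lemma \ref{lem1.est.velocity.RSed.divF}, by splitting each outer integral at the transition radius ${\rm Re}(\sqrt{\lambda})^{-1}$ separating the power-law and exponential regimes of $I_{\mu_n}$ and $K_{\mu_n}$, and then inserting the pointwise bounds from Lemma \ref{lem.est1.bessel} together with the integral bounds from Lemmas \ref{lem.est2.bessel} and \ref{lem.est3.bessel} in Appendix \ref{app.est.bessel}. Since $\widetilde{F}^{(k)}_n$ is a linear combination of the $n$-Fourier modes of the entries of $F$, each $L^1$-in-$r$ integration against a bound of the form $r^{-\gamma'}$ can be closed by Cauchy--Schwarz against $\||x|^{\gamma'}F\|_{L^2(D)}$, which is the source of the $r^{1-\gamma'}$ and $r^{2-{\rm Re}(\mu_n)}$ terms in the asserted bound \eqref{est1.lem2.est.velocity.RSed.divF}. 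The same calculation, now with the exponential decay of $K_{\mu_n}(\sqrt{\lambda}\,\cdot\,)$ in the outer region, yields \eqref{est2.lem2.est.velocity.RSed.divF}.

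First I would handle $J^{(2)}_{11}$--$J^{(2)}_{16}$. Each of these is $r^{|n|}$ times a (possibly iterated) integral pairing one factor of $I_{\mu_n}$ or $I_{\mu_n+1}$ with an outer integral of $s^{1-|n|}K_{\mu_n}(\sqrt{\lambda}s)$; the proof runs exactly as in steps (i)--(iii) of Lemma \ref{lem1.est.velocity.RSed.divF}, with the outer $K$-integrals controlled by the bounds \eqref{est4.lem.est2.bessel}--\eqref{est5.lem.est2.bessel} of Lemma \ref{lem.est2.bessel} for $k=0$ instead of \eqref{est1.lem.est2.bessel}--\eqref{est3.lem.est2.bessel}. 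The factors $|\lambda|^{\frac12}r^2$, $r^{2-{\rm Re}(\mu_n)}$, and $r^{1-\gamma'}$ arise respectively from the $\sqrt{\lambda}$-prefactors, from the $K_{\mu_n}$-at-small-argument behaviour, and from the Cauchy--Schwarz pairing with $|x|^{\gamma'}F$; in the outer region the exponential decay of $K_{\mu_n}$ yields the $|\lambda|^{-\frac12}$ bound. The terms $J^{(2)}_{17}$--$J^{(2)}_{19}$ are treated in the same spirit as $J^{(2)}_{4}$--$J^{(2)}_{9}$, by reversing the roles of $I$ and $K$ and applying Lemma \ref{lem.est3.bessel}.

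For $J^{(2)}_{20}$, which involves the products $K_{\mu_n}(\sqrt{\lambda}s)I_{\mu_n+1}(\sqrt{\lambda}s)+K_{\mu_n-1}(\sqrt{\lambda}s)I_{\mu_n}(\sqrt{\lambda}s)$, I would bound each product pointwise by multiplying the estimates of Lemma \ref{lem.est1.bessel} for $k=0$ and $k=1$. In the inner region these bounds combine to give a quantity of order $\beta^{-1}s^{-2}$ (the orders differing by one, the $s^{\pm|\mu_n|}$ powers largely cancel), while in the outer region they give a quantity of order $(|\lambda|^{\frac12}s)^{-1}$. Multiplication by $\sqrt{\lambda}\,r^{|n|}$ and integration against $\widetilde{F}^{(7)}_n$ via Cauchy--Schwarz then yields the asserted bounds; this is the step where the $\beta^{-1}$ in \eqref{est1.lem2.est.velocity.RSed.divF} is produced in the inner regime, while in the outer regime the $\beta^{-1}$ does not appear because the relevant outer integrals of $K_{\mu_n}$ do not pick it up.

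The main obstacle is not analytical but combinatorial: ten subcases, each with two subregions and integrals to be split and regrouped, so it is essential to verify that the $\beta$-dependencies have been tracked correctly throughout. In particular, the $\beta^{-1}$ prefactor in the inner-region estimate arises only from appearances of $|\mu_n-|n||^{-1} \lesssim \beta^{-1}$ and from the $k=1$ bounds in Lemma \ref{lem.est1.bessel}, and one must check that no such factor sneaks into the outer-region bound \eqref{est2.lem2.est.velocity.RSed.divF}; otherwise the resolvent estimate assembled in Subsection \ref{apriori2} would degrade. Beyond this bookkeeping, all ingredients are already isolated in the appendix, so no new ideas are required.
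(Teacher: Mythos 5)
Your proposal follows the same route as the paper: split each outer integral at ${\rm Re}(\sqrt{\lambda})^{-1}$, insert the pointwise bounds of Lemma \ref{lem.est1.bessel} together with the integral bounds \eqref{est4.lem.est2.bessel}--\eqref{est5.lem.est2.bessel} and Lemma \ref{lem.est3.bessel}, and close with Cauchy--Schwarz against $\||x|^{\gamma'}F\|_{L^2(D)}$; the treatment of $J^{(2)}_{11}$--$J^{(2)}_{19}$ and the bookkeeping of where the $\beta^{-1}$ enters are exactly as in the paper. The one slip is the claimed inner-region order of the product in $J^{(2)}_{20}$: from $K_{\mu_n}(z)\sim z^{-\mu_n}$, $I_{\mu_n+1}(z)\sim z^{\mu_n+1}$, $K_{\mu_n-1}(z)\sim \beta^{-1}z^{1-\mu_n}$, $I_{\mu_n}(z)\sim z^{\mu_n}$ with $z=\sqrt{\lambda}s$, the combination is of order $\beta^{-1}|\sqrt{\lambda}|\,s$, not $\beta^{-1}s^{-2}$; carrying this through (as the paper does) still yields a bound $C\beta^{-1}|\lambda|\,r\int s\,|F_n(s)|\,s\dd s \le C\beta^{-1}r^{1-\gamma'}\||x|^{\gamma'}F\|_{L^2(D)}$, so the conclusion is unaffected.
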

%
%%%%%%%%%%%%%%%%%%%%
%%%%%%%%%%%%%%%%%%%%
{\color{black} 
\begin{remark}\label{remark.est2.velocity.RSed.divF} 
We make a simplification in Lemma \ref{lem2.est.velocity.RSed.divF} by assuming $\gamma' \in(\frac12,\gamma)$ as in Lemma \ref{remark.est1.velocity.RSed.divF}. It still holds for any $\gamma' \in(0, \gamma)$ if the constant $\beta=\beta(\gamma')$ is chosen sufficiently small. On the other hand, the condition $\gamma' \in(\frac12,\gamma)$ is needed in the proof of Theorem \ref{thm.est.vorticity.RSed.divF}.
\end{remark}
}
%%%%%%%%%%%%%%%%%%%%
%%%%%%%%%%%%%%%%%%%%
%
\begin{proofx}{Lemma \ref{lem2.est.velocity.RSed.divF}}
(i) Estimate of $J^{(2)}_{11}[({\rm div}\,F)_n]$: For $1\le r < {\rm Re}(\sqrt{\lambda})^{-1}$ , by \eqref{est5.lem.est1.bessel} for $k=0$ in Lemma \ref{lem.est1.bessel} and \eqref{est4.lem.est2.bessel} for $k=0$ in Lemma \ref{lem.est2.bessel} in Appendix \ref{app.est.bessel}, we find
\begin{align*}
|J^{(2)}_{11}[({\rm div}\,F)_n](r)|
& \le 
r \bigg|\int_r^\infty K_{\mu_n}(\sqrt{\lambda} s) \dd s \bigg|
\int_1^r |\tau^{-1} I_{\mu_n}(\sqrt{\lambda} \tau)\,\widetilde{F}_n^{(1)}(\tau)| \dd \tau \\
& \le 
C \beta^{-1} r^{2-{\rm Re}(\mu_n)}
\int_1^r \tau^{{\rm Re}(\mu_n)-2} |F_n(\tau)| \tau \dd \tau\,,
\end{align*}
which implies $|J^{(2)}_{11}[({\rm div}\,F)_n](r)| \le C \beta^{-1} r^{2-{\rm Re}(\mu_n)} \||x|^{\gamma'} F \|_{L^2}$. For $r \ge {\rm Re}(\sqrt{\lambda})^{-1}$, by \eqref{est5.lem.est1.bessel} and \eqref{est7.lem.est1.bessel} for $k=0$ in Lemma \ref{lem.est1.bessel} and \eqref{est5.lem.est2.bessel} for $k=0$ in Lemma \ref{lem.est2.bessel}, we see that
\begin{align*}
&|J^{(2)}_{11}[({\rm div}\,F)_n](r)| 
\le 
r \int_r^\infty |K_{\mu_n}(\sqrt{\lambda} s)| \dd s
\bigg(
\int_1^{\frac{1}{{\rm Re}(\sqrt{\lambda})}} 
+ \int_{\frac{1}{{\rm Re}(\sqrt{\lambda})}}^r \bigg)
|\tau^{-1} I_{\mu_n}(\sqrt{\lambda} \tau)\,\widetilde{F}_n^{(1)}(\tau)|\dd \tau \\
& \le 
C |\lambda|^{\frac{{\rm Re}(\mu_n)}{2}-\frac34} r^{\frac12} 
e^{-{\rm Re}(\sqrt{\lambda}) r}
\int_1^{\frac{1}{{\rm Re}(\sqrt{\lambda})}}
\tau^{{\rm Re}(\mu_n)-2} |F_n(\tau)| \tau \dd \tau \\
& \quad
+ C |\lambda|^{-1} r^{\frac12} 
e^{-{\rm Re}(\sqrt{\lambda}) r}
\int_{\frac{1}{{\rm Re}(\sqrt{\lambda})}}^r 
\tau^{-\frac52} e^{{\rm Re}(\sqrt{\lambda}) \tau}
 |F_n(\tau)| \tau \dd \tau \,.
\end{align*}
Thus we have $|J^{(2)}_{11}[({\rm div}\,F)_n](r)| \le C |\lambda|^{-\frac12} \||x|^{\gamma'}F \|_{L^2}$. \\
\noindent (ii) Estimate of $J^{(2)}_{12}[({\rm div}\,F)_n]$: For $1\le r < {\rm Re}(\sqrt{\lambda})^{-1}$, by \eqref{est5.lem.est1.bessel} and \eqref{est7.lem.est1.bessel} for $k=0$ in Lemma \ref{lem.est1.bessel} and \eqref{est4.lem.est2.bessel} and \eqref{est5.lem.est2.bessel} for $k=0$ in Lemma \ref{lem.est2.bessel}, we observe that
\begin{align*}
& |J^{(2)}_{12}[({\rm div}\,F)_n](r)| 
\le 
\,r
\bigg( \int_r^{\frac{1}{{\rm Re}(\sqrt{\lambda})}} 
+ \int_{\frac{1}{{\rm Re}(\sqrt{\lambda})}}^\infty  \bigg)
|\tau^{-1} I_{\mu_n}(\sqrt{\lambda} \tau)\,\widetilde{F}_n^{(1)}(\tau)|\,
\bigg|\int_\tau^\infty K_{\mu_n}(\sqrt{\lambda} s) \dd s \bigg| \dd \tau \\
& \le 
C \beta^{-1} r
\int_r^{\frac{1}{{\rm Re}(\sqrt{\lambda})}} 
\tau^{-1} |F_n(\tau)| \tau \dd \tau 
+ C |\lambda|^{-1} r
\int_{\frac{1}{{\rm Re}(\sqrt{\lambda})}}^\infty
\tau^{-3} |F_n(\tau)| \tau \dd \tau\,,
\end{align*}
which implies $|J^{(2)}_{12}[({\rm div}\,F)_n](r)| \le C \beta^{-1} r^{1-\gamma'} \||x|^{\gamma'}F \|_{L^2}$. For $r \ge {\rm Re}(\sqrt{\lambda})^{-1}$, by \eqref{est7.lem.est1.bessel} for $k=0$ in Lemma \ref{lem.est1.bessel} and \eqref{est5.lem.est2.bessel} for $k=0$ in Lemma \ref{lem.est2.bessel} we find
\begin{align*}
|J^{(2)}_{12}[({\rm div}\,F)_n](r)| 
& \le 
C r \int_r^\infty 
|\tau^{-1} I_{\mu_n}(\sqrt{\lambda} \tau)\,\widetilde{F}_n^{(1)}(\tau)|\,
\int_\tau^\infty |K_{\mu_n}(\sqrt{\lambda} s)| \dd s \dd \tau \\
& \le 
C |\lambda|^{-1} r 
\int_r^\infty 
\tau^{-3} |F_n(\tau)| \tau \dd \tau\,,
\end{align*}
which leads to $|J^{(2)}_{12}[({\rm div}\,F)_n](r)| \le C |\lambda|^{-\frac12} \|F \|_{L^2}$. \\
\noindent (iii) Estimates of $J^{(2)}_l[({\rm div}\,F)_n]$, $l\in\{13,14,15,16\}$: In the similar manner as the proofs of $J^{(2)}_{11}[f_n]$ and $J^{(2)}_{12}[({\rm div}\,F)_n]$,  we have
\begin{align*}
|J^{(2)}_l[({\rm div}\,F)_n](r)| 
& \le 
C \beta^{-1}
(|\lambda|^\frac12 r^2 \|F \|_{L^2}
+ r^{1-\gamma'} \||x|^{\gamma'}F \|_{L^2})\,,~~~~
1\le r < {\rm Re}(\sqrt{\lambda})^{-1}\,, \\
|J^{(2)}_l[({\rm div}\,F)_n](r)| 
& \le 
C(|\lambda|^{-\frac12} \|F \|_{L^2}
+ r^{1-\gamma'} \||x|^{\gamma'}F\|_{L^2})\,,~~~~
r \ge {\rm Re}(\sqrt{\lambda})^{-1}\,,
\end{align*}
for $l\in\{13,14,15,16\}$. We omit the details since the calculations are straightforward. \par
\noindent (iv) Estimates of $J^{(2)}_l[({\rm div}\,F)_n]$, $l\in\{17,18,19\}$: We give a proof only for $J^{(2)}_{19}[({\rm div}\,F)_n]$ since the proofs for $J^{(2)}_{17}[({\rm div}\,F)_n]$ and $J^{(2)}_{18}[({\rm div}\,F)_n]$ are similar. For $1\le r < {\rm Re}(\sqrt{\lambda})^{-1}$, from \eqref{est1.lem.est1.bessel}, \eqref{est3.lem.est1.bessel}, and \eqref{est6.lem.est1.bessel} for $k=0$ in Lemma \ref{lem.est1.bessel} and \eqref{est3.lem.est3.bessel} and \eqref{est4.lem.est3.bessel} for $k=0$ in Lemma \ref{lem.est3.bessel}, we observe that
\begin{align*}
&|J^{(2)}_{19}[({\rm div}\,F)_n](r)|
\le 
|\lambda| r
\bigg(
\int_r^{\frac{1}{{\rm Re}(\sqrt{\lambda})}} 
+ \int_{\frac{1}{{\rm Re}(\sqrt{\lambda})}}^\infty
\bigg)
|\tau K_{\mu_n}(\sqrt{\lambda} \tau)\,\widetilde{F}_n^{(6)}(\tau)|\,
\int_r^\tau |I_{\mu_n}(\sqrt{\lambda} s)| \dd s \dd \tau \\
& \le 
C |\lambda| r
\int_r^{\frac{1}{{\rm Re}(\sqrt{\lambda})}} 
\tau |F_n(\tau)| \tau \dd \tau 
+ C r
\int_{\frac{1}{{\rm Re}(\sqrt{\lambda})}}^\infty
\tau^{-1} |F_n(\tau)| \tau \dd \tau\,,
\end{align*}
which implies $|J^{(2)}_{{\color{black}19}}[({\rm div}\,F)_n](r)| \le C r^{1-\gamma'} \||x|^{\gamma'}F \|_{L^2}$. For $r \ge {\rm Re}(\sqrt{\lambda})^{-1}$, by \eqref{est6.lem.est1.bessel} for $k=0$ in Lemma \ref{lem.est1.bessel} and \eqref{est5.lem.est3.bessel} in Lemma \ref{lem.est3.bessel} for $k=0$, we have
\begin{align*}
|J^{(2)}_{19}[({\rm div}\,F)_n](r)| 
& \le 
|\lambda|\,r
\int_r^\infty
|\tau K_{\mu_n}(\sqrt{\lambda} \tau)\,\widetilde{F}_n^{(6)}(\tau)|\,
\int_r^\tau |I_{\mu_n}(\sqrt{\lambda} s)| \dd s \dd \tau \\
& \le 
C\,r
\int_r^\infty 
\tau^{-1} |F_n(\tau)| \tau \dd \tau\,,
\end{align*}
which leads to $|J^{(2)}_{19}[({\rm div}\,F)_n](r)|\le Cr^{1-\gamma'} \||x|^{\gamma'}F \|_{L^2}$. \\
\noindent (v) Estimate of $J^{(2)}_{20}[({\rm div}\,F)_n]$: For $1\le r < {\rm Re}(\sqrt{\lambda})^{-1}$, 
{\color{black} 
from \eqref{est1.lem.est1.bessel}--\eqref{est4.lem.est1.bessel}, 
and \eqref{est5.lem.est1.bessel}--\eqref{est7.lem.est1.bessel} for $k=0,1$ in Lemma \ref{lem.est1.bessel},
}
we have
\begin{align*}
|J^{(2)}_{20}[({\rm div}\,F)_n](r)| 
\le 
C\beta^{-1}|\lambda|\,r
\int_r^{\frac{1}{{\rm Re}(\sqrt{\lambda})}} 
s |F_n(s)| s \dd s 
+ C\,r
\int_{\frac{1}{{\rm Re}(\sqrt{\lambda})}}^\infty
s^{-1} |F_n(s)| s \dd s\,.
\end{align*}
Thus we have $|J^{(2)}_{20}[({\rm div}\,F)_n](r)| \le C\beta^{-1} r^{1-\gamma'}
\||x|^{\gamma'}F \|_{L^2}$. For $r \ge {\rm Re}(\sqrt{\lambda})^{-1}$, 
{\color{black} 
from \eqref{est6.lem.est1.bessel} and \eqref{est7.lem.est1.bessel} for $k=0,1$ in Lemma \ref{lem.est1.bessel},
}
we have
\begin{align*}
|J^{(2)}_{20}[({\rm div}\,F)_n](r)| 
& \le 
C\,r
\int_r^\infty 
\tau^{-1} |F_n(\tau)| \tau \dd \tau\,,
\end{align*}
which implies $|J^{(2)}_{20}[({\rm div}\,F)_n](r)|\le C r^{1-\gamma'}\||x|^{\gamma'}F \|_{L^2}$. This completes the proof of Lemma \ref{lem2.est.velocity.RSed.divF}.
\end{proofx}
%%%%%%%%%%%%%%%%%%%%
%%%%%%%%%%%%%%%%%%%%
%
From Lemmas \ref{lem1.est.velocity.RSed.divF} and \ref{lem2.est.velocity.RSed.divF} we see that the following estimates hold.
%
%%%%%%%%%%%%%%%%%%%%
%%%%%%%%%%%%%%%%%%%%
\begin{corollary}\label{cor1.est.velocity.RSed.divF}
Let $|n|=1$, $\gamma' \in(\frac12,\gamma)$, and $p\in(\frac{2}{\gamma'},\infty)$, 
and let $\lambda\in\Sigma_{\pi-\epsilon} \cap \mathcal{B}_1(0)$ for some $\epsilon\in(0,\frac{\pi}{2})$. Then there is a positive constant $C=C(\gamma',p,\epsilon)$ independent of $\beta$ such that the following statement holds. Let $F\in C^\infty_0(D)^{2\times2}$. Then for $l\in\{1,\ldots,20\}$ we have
\begin{align}
|c_{n,\lambda}[({\rm div}\,F)_n]|
&\le
\frac{C}{\beta}
\||x|^{\gamma'}F\|_{L^2(D)}\,,
\label{est1.cor1.est.velocity.RSed.divF} \\
\|r^{-1} J^{(2)}_l[({\rm div}\,F)_n]\|_{L^p(D)}
&\le
\frac{C}{\beta} |\lambda|^{-\frac1p}
\||x|^{\gamma'}F\|_{L^2(D)}\,,
\label{est2.cor1.est.velocity.RSed.divF} \\
\|r^{-2} J^{(2)}_l[({\rm div}\,F)_n]\|_{L^2(D)}
& \le 
\frac{C}{\beta^2}
\||x|^{\gamma'}F\|_{L^2(D)}\,.
\label{est3.cor1.est.velocity.RSed.divF}
\end{align}
Here $c_{n,\lambda}[({\rm div}\,F)_n]$ is the constant in \eqref{const.nFourier.RSed.f} replacing $f_n$ by $({\rm div}\,F)_n$.
\end{corollary}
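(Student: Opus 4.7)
The strategy mirrors the proof of Corollary~\ref{cor1.est.velocity.RSed.f}: given the pointwise estimates established in Lemmas~\ref{lem1.est.velocity.RSed.divF} and~\ref{lem2.est.velocity.RSed.divF}, the three bounds \eqref{est1.cor1.est.velocity.RSed.divF}--\eqref{est3.cor1.est.velocity.RSed.divF} should follow by direct integration, splitting the radial variable at the natural threshold $r = {\rm Re}(\sqrt{\lambda})^{-1}$.

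For \eqref{est1.cor1.est.velocity.RSed.divF}, I would observe that by the definition \eqref{const.nFourier.RSed.f} combined with the decomposition \eqref{eq1.lem2.velocitydecom.RSed.divF} evaluated at $r=1$, one obtains the identity $c_{n,\lambda}[({\rm div}\,F)_n] = \sum_{l=11}^{20} J^{(2)}_l[({\rm div}\,F)_n](1)$. Since $|\lambda|<1$ implies $1<{\rm Re}(\sqrt{\lambda})^{-1}$, evaluating the bound \eqref{est1.lem2.est.velocity.RSed.divF} at $r=1$ yields $|J^{(2)}_l(1)| \le C\beta^{-1}(|\lambda|^{1/2}+1+1)\||x|^{\gamma'}F\|_{L^2(D)}$ for each summand, which is the desired estimate.

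For \eqref{est2.cor1.est.velocity.RSed.divF}, I would compute $\|r^{-1} J^{(2)}_l[({\rm div}\,F)_n]\|_{L^p(D)}^p$ as the sum of contributions from $[1,{\rm Re}(\sqrt{\lambda})^{-1}]$ and $[{\rm Re}(\sqrt{\lambda})^{-1},\infty)$, inserting the pointwise bounds from the two lemmas. The two critical contributions are $\int_1^{{\rm Re}(\sqrt{\lambda})^{-1}} (|\lambda|^{1/2}r)^p \, r \, dr$ from the inner region and $\int_{{\rm Re}(\sqrt{\lambda})^{-1}}^\infty (|\lambda|^{-1/2}r^{-1})^p \, r \, dr$ from the outer region; both produce $|\lambda|^{-1}$ after integration, hence $|\lambda|^{-1/p}$ after taking the $p$-th root. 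The $r^{-\gamma'}$ tail is $L^p$-integrable thanks to the assumption $p > 2/\gamma'$, and its contribution is dominated by $|\lambda|^{-1/p}$ since $|\lambda|<1$.

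The estimate \eqref{est3.cor1.est.velocity.RSed.divF} is the most delicate and constitutes the main obstacle. After dividing by $r^2$, the inner-region term $r^{-{\rm Re}(\mu_n)}$ gives rise to $\int_1^{{\rm Re}(\sqrt{\lambda})^{-1}} r^{-1-2\delta}\,dr = (2\delta)^{-1}(1-|\lambda|^{\delta})$ with $\delta := {\rm Re}(\mu_n)-1$. From the asymptotics of $\mu_n$ in Appendix~\ref{app.est.mu} one has $\delta = O(\beta^2)$, while the assumption $|\lambda|\le e^{-1/(6\beta)}$ (together with $\delta/\beta = O(\beta)$) forces $1-|\lambda|^{\delta} = O(\beta)$; consequently this integral is of order $\beta^{-1}$ and its square root is of order $\beta^{-1/2}$. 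Combined with the explicit $\beta^{-1}$ prefactor carried in Lemmas~\ref{lem1.est.velocity.RSed.divF}--\ref{lem2.est.velocity.RSed.divF}, this produces a contribution of order $\beta^{-3/2}$, which is dominated by the claimed $\beta^{-2}$. The remaining terms (originating from $|\lambda|^{1/2}$, $r^{-1-\gamma'}$, and $|\lambda|^{-1/2}r^{-2}$) are uniformly bounded in $\beta$ using $\gamma'>1/2$, and therefore are absorbed as well, completing the plan.
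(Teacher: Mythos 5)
Your treatment of \eqref{est1.cor1.est.velocity.RSed.divF} and \eqref{est2.cor1.est.velocity.RSed.divF} is sound. For the constant $c_{n,\lambda}[({\rm div}\,F)_n]$ you use the same identification as the paper (evaluate the decomposition of Lemma \ref{lem2.velocitydecom.RSed.divF} at $r=1$; the terms $l=11,13,15$ vanish there, so the paper keeps only $l\in\{12,14,16,\ldots,20\}$, but including them costs nothing) and then the pointwise bound \eqref{est1.lem2.est.velocity.RSed.divF} at $r=1$. For the $L^p$ bound the paper instead records the weak-type estimate $\sup_{r\ge 1} r^{2/p}|r^{-1}J^{(2)}_l|\le C\beta^{-1}|\lambda|^{-1/p}\||x|^{\gamma'}F\|_{L^2}$ for $p\in[\tfrac{2}{\gamma'},\infty)$ and invokes the Marcinkiewicz interpolation theorem to get the strong bound on the open interval; your direct integration, splitting at $r={\rm Re}(\sqrt{\lambda})^{-1}$, is a legitimate and more elementary substitute, with the condition $p>\tfrac{2}{\gamma'}$ entering exactly where you say, through the integrability of $r^{-p\gamma'}\,r\dd r$ near $r=1$.

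There is, however, a wrong step in your argument for \eqref{est3.cor1.est.velocity.RSed.divF}. First, the hypothesis $|\lambda|\le e^{-\frac{1}{6\beta}}$ is not part of the corollary, which only assumes $\lambda\in\Sigma_{\pi-\epsilon}\cap\mathcal{B}_1(0)$. Second, and more seriously, the implication is reversed: with $\delta={\rm Re}(\mu_n)-1=O(\beta^2)$, one has $1-|\lambda|^{\delta}=1-e^{-\delta|\log|\lambda||}$, and this is $O(\beta)$ only when $\delta|\log|\lambda||=O(\beta)$, i.e.\ when $|\lambda|\ge e^{-c/\beta}$ --- a \emph{lower} bound on $|\lambda|$. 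The upper bound $|\lambda|\le e^{-\frac{1}{6\beta}}$ permits $|\lambda|$ to be arbitrarily small, in which case $|\lambda|^{\delta}\to 0$ and $1-|\lambda|^{\delta}\to 1$; your claimed $O(\beta^{-3/2})$ total is therefore not available in general. The repair is immediate and is what the paper's hint $({\rm Re}(\mu_n)-1)^{1/2}\approx O(\beta)$ points to: simply use $1-|\lambda|^{\delta}\le 1$, so that $\int_1^{{\rm Re}(\sqrt{\lambda})^{-1}} r^{-1-2\delta}\dd r\le (2\delta)^{-1}=O(\beta^{-2})$, whose square root is $O(\beta^{-1})$; combined with the $\beta^{-1}$ prefactor from Lemmas \ref{lem1.est.velocity.RSed.divF}--\ref{lem2.est.velocity.RSed.divF} this yields exactly the stated $\beta^{-2}$, and the remaining terms are $O(1)$ as you verified. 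With this one correction your proof is complete.
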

%
%%%%%%%%%%%%%%%%%%%%
%%%%%%%%%%%%%%%%%%%%
%
\begin{proof}
(i) Estimate of $c_{n,\lambda}[({\rm div}\,F)_n]$: By the definitions of $J^{(2)}_{l}[f_n]$ for $l \in \{11,\cdots,20\}$ in Lemma \ref{lem2.velocitydecom.RSed.divF}, we see that $|c_{n,\lambda}[({\rm div}\,F)_n]| \le \sum_{l=12,14,16,17,18,19,20} |J^{(2)}_l[f_n](1)|$. Hence we obtain the estimate \eqref{est1.cor1.est.velocity.RSed.divF} by putting $r=1$ to \eqref{est1.lem2.est.velocity.RSed.divF} in Lemma \ref{lem2.est.velocity.RSed.divF}. \\
\noindent (ii) Estimate of $r^{-1} J^{(2)}_l[({\rm div}\,F)_n]$: By Lemmas \ref{lem1.est.velocity.RSed.divF} and \ref{lem2.est.velocity.RSed.divF}, for $p\in[\frac{2}{\gamma'},\infty)$ we have
\begin{align*}
\sup_{r\ge1} r^{\frac2p} |r^{-1} J^{(2)}_l[({\rm div}\,F)_n](r)|
& \le C \beta^{-1} |\lambda|^{-\frac1p} \||x|^{\gamma'}F\|_{L^2(D)}\,.
\end{align*}
Thus by the Marcinkiewicz interpolation theorem we obtain \eqref{est2.cor1.est.velocity.RSed.divF} for $p\in(\frac{2}{\gamma'},\infty)$. \\
\noindent (iii) Estimate of $r^{-2} J^{(2)}_l[({\rm div}\,F)_n]$: The assertion 
\eqref{est3.cor1.est.velocity.RSed.divF} can be checked easily by using Lemmas \ref{lem1.est.velocity.RSed.divF} and \ref{lem2.est.velocity.RSed.divF} and $({\rm Re}(\mu_n)-1)^\frac12\approx O(\beta)$. This completes the proof.
\end{proof}
%%%%%%%%%%%%%%%%%%%%
%%%%%%%%%%%%%%%%%%%%
%
The next proposition gives the estimate for the term $V_n[\Phi_{n,\lambda}[({\rm div}\,F)_n]]$ in \eqref{rep.velocity.nFourier.RSed.divF}.
%
%%%%%%%%%%%%%%%%%%%%
%%%%%%%%%%%%%%%%%%%%
\begin{proposition}\label{prop2.est.velocity.RSed.divF}
Let $|n|=1$, $\gamma' \in(\frac12,\gamma)$, and $p\in(\frac{2}{\gamma'},\infty)$, and let $\lambda\in\Sigma_{\pi-\epsilon} \cap \mathcal{B}_1(0)$ for some $\epsilon\in(0,\frac{\pi}{2})$. Then there is a positive constant $C=C(\gamma',p,\epsilon)$ independent of $\beta$ such that for $F\in C^\infty_0(D)^{2\times2}$ we have
\begin{align}
\|V_n[\Phi_{n,\lambda}[({\rm div}\,F)_n]]\|_{L^p(D)}
&\le
\frac{C}{\beta} |\lambda|^{-\frac1p} 
\||x|^{\gamma'}F\|_{L^2(D)}\,,
\label{est1.prop2.est.velocity.RSed.divF} \\
\big\|\frac{V_n[\Phi_{n,\lambda}[({\rm div}\,F)_n]]}{|x|}\big\|_{L^2(D)}
& \le 
\frac{C}{\beta^2}
\||x|^{\gamma'}F\|_{L^2(D)}\,.
\label{est2.prop2.est.velocity.RSed.divF}
\end{align}
\end{proposition}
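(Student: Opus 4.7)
The plan is to follow verbatim the strategy used in the proof of Proposition \ref{prop2.est.velocity.RSed.f}, with the decompositions of Subsection \ref{subsec.RSed.divF} in place of those of Subsection \ref{subsec.RSed.f}. First I would use the definition of the Biot-Savart law $V_n[\,\cdot\,]$ in \eqref{def.V_n} to express the radial and angular parts of $V_n[\Phi_{n,\lambda}[({\rm div}\,F)_n]]$ as
\begin{align*}
V_{r,n}[\Phi_{n,\lambda}[({\rm div}\,F)_n]]
&= -\frac{in}{2r} \Big( \frac{c_{n,\lambda}[({\rm div}\,F)_n]}{r} - \frac{1}{r}\int_1^r s^2 \Phi_{n,\lambda}[({\rm div}\,F)_n](s)\,\dd s - r\int_r^\infty \Phi_{n,\lambda}[({\rm div}\,F)_n](s)\,\dd s\Big), \\
V_{\theta,n}[\Phi_{n,\lambda}[({\rm div}\,F)_n]]
&= \frac{1}{2r} \Big( \frac{c_{n,\lambda}[({\rm div}\,F)_n]}{r} - \frac{1}{r}\int_1^r s^2 \Phi_{n,\lambda}[({\rm div}\,F)_n](s)\,\dd s + r\int_r^\infty \Phi_{n,\lambda}[({\rm div}\,F)_n](s)\,\dd s\Big),
\end{align*}
with $c_{n,\lambda}[({\rm div}\,F)_n]$ defined by \eqref{const.nFourier.RSed.f} with $f_n$ replaced by $({\rm div}\,F)_n$.

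Next, I substitute the decompositions from Lemmas \ref{lem1.velocitydecom.RSed.divF} and \ref{lem2.velocitydecom.RSed.divF} for $\frac{1}{r}\int_1^r s^2 \Phi_{n,\lambda}[({\rm div}\,F)_n](s)\,\dd s$ and $r\int_r^\infty \Phi_{n,\lambda}[({\rm div}\,F)_n](s)\,\dd s$ respectively. Since $c_{n,\lambda}[({\rm div}\,F)_n]$ is nothing but the value at $r=1$ of the integral $r\int_r^\infty\Phi_{n,\lambda}[({\rm div}\,F)_n](s)\,\dd s$ (which by Lemma \ref{lem2.velocitydecom.RSed.divF} equals $\sum_{l=11}^{20}J^{(2)}_l[({\rm div}\,F)_n](1)$, with several terms vanishing at $r=1$), a direct inspection yields the pointwise bound
\begin{align*}
|V_n[\Phi_{n,\lambda}[({\rm div}\,F)_n]](r)| \le C\Big( r^{-2}|c_{n,\lambda}[({\rm div}\,F)_n]| + \sum_{l=1}^{20}|r^{-1}J^{(2)}_l[({\rm div}\,F)_n](r)|\Big),
\end{align*}
completely analogous to \eqref{est1.proof.prop2.est.velocity.RSed.f}.

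Finally, the $L^p$ estimate \eqref{est1.prop2.est.velocity.RSed.divF} follows by combining \eqref{est1.cor1.est.velocity.RSed.divF} and \eqref{est2.cor1.est.velocity.RSed.divF} in Corollary \ref{cor1.est.velocity.RSed.divF}, after noting $\|r^{-2}\|_{L^p(D)}\le C$ for $p>1$ (which holds since $p>2/\gamma'>2$). The weighted $L^2$ estimate \eqref{est2.prop2.est.velocity.RSed.divF} follows similarly from \eqref{est1.cor1.est.velocity.RSed.divF} and \eqref{est3.cor1.est.velocity.RSed.divF}, together with $\|r^{-3}\|_{L^2(D)}\le C$. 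Since all the real technical work (tracking the $\beta$-dependence through the Bessel function asymptotics) has already been carried out in Lemmas \ref{lem1.est.velocity.RSed.divF}, \ref{lem2.est.velocity.RSed.divF} and Corollary \ref{cor1.est.velocity.RSed.divF}, the only thing to verify at this stage is the pointwise bound above, so there is no genuinely hard step remaining; the main care is merely in checking that the cancellation structure used to suppress the constant term at $r=1$ is identical to that exploited in Proposition \ref{prop2.est.velocity.RSed.f}.
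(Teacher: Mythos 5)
Your proposal is correct and follows essentially the same route as the paper: the paper's proof simply asserts the analogous pointwise bound $|V_n[\Phi_{n,\lambda}[({\rm div}\,F)_n]](r)|\le C\big( r^{-2}\sum_{l=1}^{20}|J^{(2)}_l[({\rm div}\,F)_n](1)|+\sum_{l=1}^{20}|r^{-1}J^{(2)}_l[({\rm div}\,F)_n](r)|\big)$ ``in the similar manner as the proof of Proposition \ref{prop2.est.velocity.RSed.f}'' and then invokes Corollary \ref{cor1.est.velocity.RSed.divF}, exactly as you do. Your only (harmless) deviations are keeping $|c_{n,\lambda}[({\rm div}\,F)_n]|$ itself in the bound rather than the sum of the $|J^{(2)}_l(1)|$ --- which Corollary \ref{cor1.est.velocity.RSed.divF} estimates directly --- and worrying about a cancellation at $r=1$ that is actually not needed here, since unlike the $f$-case all twenty terms $J^{(2)}_l$ are estimated in Lemmas \ref{lem1.est.velocity.RSed.divF} and \ref{lem2.est.velocity.RSed.divF}.
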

%%%%%%%%%%%%%%%%%%%%
%%%%%%%%%%%%%%%%%%%%
%
\begin{proof}
In the similar manner as the proof of Proposition \ref{prop2.est.velocity.RSed.f} we find
\begin{align*}
&|V_n[\Phi_{n,\lambda}[({\rm div}\,F)_n]](r)| \\
&\le C \Big( r^{-2} \sum_{l=1}^{20} |J^{(1)}_l[({\rm div}\,F)_n](1)|
+ \sum_{l=1}^{20} |r^{-1} J^{(1)}[({\rm div}\,F)_n](r)| \Big)\,.
\end{align*}
Thus the assertions \eqref{est1.prop2.est.velocity.RSed.divF} and \eqref{est2.prop2.est.velocity.RSed.divF} follow from Corollary \ref{cor1.est.velocity.RSed.divF}. The proof is complete.
\end{proof}
%%%%%%%%%%%%%%%%%%%%
%%%%%%%%%%%%%%%%%%%%
%

From Corollary \ref{cor1.est.velocity.RSed.divF} and Proposition \ref{prop2.est.velocity.RSed.divF}, Theorem \ref{thm.est.velocity.RSed.divF} follows.
\begin{proofx}{Theorem \ref{thm.est.velocity.RSed.divF}} 
(i) Estimate for the case $F\in C^\infty_0(D)^{2\times2}$: It suffices to prove that the first term in the right-hand side of \eqref{rep.velocity.nFourier.RSed.divF} has the estimates \eqref{est1.thm.est.velocity.RSed.divF} and \eqref{est2.thm.est.velocity.RSed.divF} in view of Proposition \ref{prop2.est.velocity.RSed.divF}. By using Proposition \ref{prop.est.F} and \eqref{est1.cor1.est.velocity.RSed.divF} in Corollary \ref{cor1.est.velocity.RSed.divF}, we see that \eqref{est1.thm.est.velocity.RSed.divF} and \eqref{est2.thm.est.velocity.RSed.divF} respectively follow from \eqref{est1.prop1.est.velocity.RSed.f} and \eqref{est2.prop1.est.velocity.RSed.f} in Proposition \ref{prop1.est.velocity.RSed.f}. \\
\noindent (ii) Estimate for the case $F\in X_{\gamma'}(D)$: Let us take sequences $\{G^{(m)}\}_{m=1}^\infty\subset C^\infty_0(D)^{2\times2}$ and $\{w_n^{(m)} \}_{n=1}^\infty\subset \mathcal{P}_n(L^{p}_{\sigma}(D) \cap W^{1,p}_0(D)^2)$ such that $\displaystyle{\lim_{m\to\infty}\||x|^{\gamma'}(F-G^{(m)})\|_{L^2(D)}=0}$ and $w_n^{(m)}$ is a (unique) solution to \eqref{RSed.divF.n} replacing $F$ by $G^{(m)}$. 
{\color{black} 
Then we see that $w_n^{(m)}$ satisfies  \eqref{est1.thm.est.velocity.RSed.divF}, \eqref{est2.thm.est.velocity.RSed.divF}, and the estimates in Theorem \ref{thm.est.vorticity.RSed.divF} below replacing $F$ by $G^{(m)}$. By extending $w_n^{(m)}$ by zero to $\R^2$ and denoting it again by $w_n^{(m)}$, we see that $w_n^{(m)}\in W^{1,p}(\R^2)\cap L^p_{\sigma}(\R^2)$ from $w_n^{(m)}|_{\partial D}=0$ and that $-\Delta w_n^{(m)} = \nabla^{\bot} {\rm rot}\,w_n^{(m)}$, $\nabla^{\bot}=(\partial_2, -\partial_1)^\top$, in $\R^2$ from ${\rm div}\,w_n^{(m)}=0$. Hence, thanks to $\|\nabla \nabla^{\bot}\,(-\Delta_{\R^2})^{-1} h\|_{L^p(\R^2)} \le C \|h\|_{L^p(\R^2)}$, we have the inequality $\|\nabla w_n^{(m)}\|_{L^p(D)} \le C \|{\rm rot}\,w_n^{(m)}\|_{L^p(D)}$. Then we observe that the limit $\displaystyle{w_n=\lim_{m\to\infty} w_n^{(m)} \in \mathcal{P}_n(L^p_\sigma(D)\cap W^{1,p}_0(D)^2})$ exists and satisfies \eqref{est1.thm.est.velocity.RSed.divF}, \eqref{est2.thm.est.velocity.RSed.divF},
and the estimates in Theorem \ref{thm.est.vorticity.RSed.divF}.
}
Moreover, by taking the limit $m\to\infty$ in \eqref{RSed.divF.n} replacing $F$ by $G^{(m)}$, we see that $w_n$ gives a weak solution to \eqref{RSed.divF.n}. The proof is complete. 
\end{proofx}
%
%%%%%%%%%%%%%%%%%%%%
%%%%%%%%%%%%%%%%%%%%
%
%%%%%%%%%%%%%%%%%%%%%%%%%%%%%%%%%%%%%%%%%%%%%%%%%%
%%%%%%%%%%%%%%%%%%%%%%%%%%%%%%%%%%%%%%%%%%%%%%%%%%
\subsubsection{Estimates of the vorticity for \eqref{RSed.divF.n} with $|n|=1$}
\label{subsec.RSed.divF.vorticity}
%%%%%%%%%%%%%%%%%%%%%%%%%%%%%%%%%%%%%%%%%%%%%%%%%%
%%%%%%%%%%%%%%%%%%%%%%%%%%%%%%%%%%%%%%%%%%%%%%%%%%
In this subsection we estimate the vorticity $\omega^{\rm ed}_{{\rm div}F,n}(r)=({\rm rot}\,w^{\rm ed}_{{\rm div}F,n})e^{-in\theta}$ with $|n|=1$, where ${\rm rot}\,w^{\rm ed}_{{\rm div}F,n}$ is represented as \eqref{rep.vorticity.nFourier.RSed.divF}. We take the constant $\beta_0$ in Proposition \ref{prop.est.F}.
%
%%%%%%%%%%%%%%%%%%%%
%%%%%%%%%%%%%%%%%%%%
\begin{theorem}\label{thm.est.vorticity.RSed.divF}
Let $|n|=1$, $\gamma'\in(\frac12,\gamma)$, $p\in[2,\infty)$, and $q\in(1,\infty)$. Fix $\epsilon\in(0,\frac{\pi}{2})$. Then there is a positive constant $C=C(\gamma',p,q,\epsilon)$ independent of $\beta$ such that the following statement holds. Let $F\in C^\infty_0(D)^{2\times2}$, $f\in L^q(D)^2$, and $\beta\in(0,\beta_0)$. Set 
\begin{align}
\omega^{\rm ed\,(1)}_{{\rm div}F,n}(r)
\,=\, -\frac{c_{n,\lambda}[({\rm div}\,F)_n]}{F_n(\sqrt{\lambda};\beta)} 
K_{\mu_n}(\sqrt{\lambda} r)\,, ~~~~~~
\omega^{\rm ed\,(2)}_{{\rm div}F,n}(r)
\,=\, \Phi_{n,\lambda}[({\rm div}\,F)_n](r)\,.
\label{def.thm.est.vorticity.RSed.divF}
\end{align}
Then for $\lambda\in\Sigma_{\pi-\epsilon} \cap \mathcal{B}_{e^{-\frac{1}{6\beta}}}(0)$ we have
\begin{align}
\|\omega^{\rm ed\,(1)}_{{\rm div}F,n}\|_{L^p(D)}
&\le 
\frac{C}{\beta (p {\rm Re}(\mu_n)-2)^\frac1p}
\| |x|^{\gamma'} F \|_{L^2(D)}\,, 
\label{est1.thm.est.vorticity.RSed.divF} \\
\|\omega^{\rm ed\,(2)}_{{\rm div}F,n}\|_{L^p(D)}
+ \beta \big\|\frac{\omega^{\rm ed\,(2)}_{{\rm div}F,n}}{|x|} \big\|_{L^1(D)}
&\le 
\frac{C}{\beta} \| |x|^{\gamma'} F \|_{L^2(D)}\,,
\label{est2.thm.est.vorticity.RSed.divF} \\
\big|\big\langle \omega^{\rm ed\,(1)}_{{\rm div}F,n}, \frac{(w^{\rm ed}_{f,r})_n}{|x|} \big\rangle_{L^2(D)} \big|
&\le 
\frac{C}{\beta^5} |\lambda|^{-1+\frac1q} \|f \|_{L^q(D)}
\| |x|^{\gamma'} F \|_{L^2(D)}\,.
\label{est3.thm.est.vorticity.RSed.divF}
\end{align}
Moreover, \eqref{est1.thm.est.vorticity.RSed.divF},  \eqref{est2.thm.est.vorticity.RSed.divF}, and \eqref{est3.thm.est.vorticity.RSed.divF} hold all for $F\in X_{\gamma'}(D)$ defined in \eqref{Xgamma} by a density argument as in the proof of Theorem \ref{thm.est.velocity.RSed.divF} above.
\end{theorem}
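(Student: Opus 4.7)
The proof will follow the same architecture as Theorem \ref{thm.est.vorticity.RSed.f}, which is the analogous result for the problem \eqref{RSed.f}. The two vorticity pieces in \eqref{def.thm.est.vorticity.RSed.divF} correspond exactly to the two summands in the representation \eqref{rep.vorticity.nFourier.RSed.divF}, so the strategy is to treat each separately using the tools already developed.

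For the estimate \eqref{est1.thm.est.vorticity.RSed.divF} on $\omega^{\rm ed\,(1)}_{{\rm div}F,n}$, the plan is to combine three ingredients: Proposition \ref{prop.est.F} to control $1/|F_n(\sqrt{\lambda};\beta)|$ by $|\lambda|^{{\rm Re}(\mu_n)/2}$; the bound \eqref{est1.cor1.est.velocity.RSed.divF} in Corollary \ref{cor1.est.velocity.RSed.divF} to estimate $|c_{n,\lambda}[({\rm div}F)_n]|$ by $C\beta^{-1}\||x|^{\gamma'}F\|_{L^2}$; and an $L^p$ bound on $K_{\mu_n}(\sqrt{\lambda}\,\cdot\,)$ obtained from Lemma \ref{lem.est4.bessel}. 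The factor $(p\,{\rm Re}(\mu_n)-2)^{-1/p}$ arises from the leading behavior $K_{\mu_n}(\sqrt{\lambda} r) \sim r^{-{\rm Re}(\mu_n)}$ for small argument, so that $\int_1^{1/{\rm Re}(\sqrt{\lambda})} r^{-p{\rm Re}(\mu_n)+1}\,dr$ contributes this divergent prefactor, compensated by the weight $|\lambda|^{p{\rm Re}(\mu_n)/2}$; the exponentially decaying tail from $r \ge {\rm Re}(\sqrt{\lambda})^{-1}$ contributes only lower-order terms.

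For \eqref{est2.thm.est.vorticity.RSed.divF}, the plan is to use the representation in Lemma \ref{lem.rep.Phi.RSed.divF} to decompose $\Phi_{n,\lambda}[({\rm div}F)_n]$ into seven summands indexed by $\widetilde{F}_n^{(k)}$, $k=1,\ldots,7$. Each of the six ``integral'' terms splits naturally into an integration over $1 \le r < {\rm Re}(\sqrt{\lambda})^{-1}$ and $r \ge {\rm Re}(\sqrt{\lambda})^{-1}$, on which the Bessel estimates in Lemmas \ref{lem.est1.bessel}--\ref{lem.est4.bessel} are applied exactly as in the analysis of the $J^{(2)}_l[({\rm div}F)_n]$ in Subsection \ref{subsec.RSed.divF.velocity}; the Minkowski inequality, used in the same way as in the proof of \eqref{est2.thm.est.vorticity.RSed.f}, then produces the $L^p$ and weighted $L^1$ bounds. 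The seventh, ``diagonal'' term is pointwise bounded via the product $K_{\mu_n}(\sqrt{\lambda} r)I_{\mu_n+1}(\sqrt{\lambda} r)+K_{\mu_n-1}(\sqrt{\lambda} r)I_{\mu_n}(\sqrt{\lambda} r) = O(\beta^{-1})$ uniformly, which is the source of the $\beta^{-1}$ singularity; the extra $\beta$ multiplying the weighted $L^1$ norm reflects the observation that $(\mu_n-1)$ is of order $\beta$ in the relevant combinations.

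For \eqref{est3.thm.est.vorticity.RSed.divF}, the plan is to insert the representation \eqref{rep.velocity.nFourier.RSed.f} for $w^{\rm ed}_{f,n}$, which splits the inner product into
\[
\frac{c_{n,\lambda}[({\rm div}F)_n]\,c_{n,\lambda}[f_n]}{F_n(\sqrt{\lambda};\beta)^2}\,\bigl\langle K_{\mu_n}(\sqrt{\lambda}\,\cdot\,), |x|^{-1}V_{r,n}[K_{\mu_n}(\sqrt{\lambda}\,\cdot\,)]\bigr\rangle_{L^2(D)}
\]
plus the cross term involving $V_{r,n}[\Phi_{n,\lambda}[f_n]]$. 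Both inner products have already been estimated inside the proof of \eqref{est3.thm.est.vorticity.RSed.f} in Theorem \ref{thm.est.vorticity.RSed.f}, and the scalar prefactors are controlled by Proposition \ref{prop.est.F} combined with \eqref{est1.cor1.est.velocity.RSed.f} and \eqref{est1.cor1.est.velocity.RSed.divF}. Substitution and bookkeeping of the $\beta$-powers ($\beta^{-2}$ from $F_n^{-2}$ cancellation against $|\lambda|^{{\rm Re}(\mu_n)}$, $\beta^{-1}$ from $c_{n,\lambda}[({\rm div}F)_n]$, $\beta^{-1}$ from $c_{n,\lambda}[f_n]$, and a further $\beta^{-1}$ from the remaining Biot--Savart term) produces the exponent $\beta^{-5}$. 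The main obstacle will be the sheer bookkeeping in step \eqref{est2.thm.est.vorticity.RSed.divF}: one must track seven sub-terms and two regions of $r$ with different Bessel asymptotics, and additionally verify that the weighted $L^1$ contribution gains precisely one power of $\beta$, which is essential for the closing of the energy computation in Subsection \ref{apriori2}.
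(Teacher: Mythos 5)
Your proposal is correct and follows essentially the same route as the paper: part (i) from Proposition \ref{prop.est.F}, \eqref{est1.cor1.est.velocity.RSed.divF}, and \eqref{est1.lem.est4.bessel}; part (ii) via the seven-term decomposition of Lemma \ref{lem.rep.Phi.RSed.divF} with Minkowski/Fubini and the Bessel bounds; part (iii) parallel to \eqref{est3.thm.est.vorticity.RSed.f} with $c_{n,\lambda}[f_n]$ replaced by $c_{n,\lambda}[({\rm div}\,F)_n]$. One small bookkeeping remark: the $\beta^{-2}$ in part (iii) comes from the radial integral $\int_1^{{\rm Re}(\sqrt{\lambda})^{-1}} r^{-{\rm Re}(\mu_n)}\dd r \approx ({\rm Re}(\mu_n)-1)^{-1}= O(\beta^{-2})$ left over after the $|\lambda|$-powers cancel, rather than from $F_n^{-2}$ itself, but the total count $\beta^{-5}$ is right.
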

%%%%%%%%%%%%%%%%%%%%
%%%%%%%%%%%%%%%%%%%%
%
\begin{proof}
(i) Estimate of $\omega^{\rm ed\,(1)}_{{\rm div}F,n}$: The estimate \eqref{est1.thm.est.vorticity.RSed.divF} is a direct consequence of Proposition \ref{prop.est.F}, \eqref{est1.cor1.est.velocity.RSed.divF} in Corollary \ref{cor1.est.velocity.RSed.divF}, and \eqref{est1.lem.est4.bessel} in Lemma \ref{lem.est4.bessel} in Appendix \ref{app.est.bessel}. \\
\noindent (ii) Estimates of $\omega^{\rm ed\,(2)}_{{\rm div}F,n}$ and $|x|^{-1} \omega^{\rm ed\,(2)}_{{\rm div}F,n}$: Firstly we decompose $\omega^{\rm ed\,(2)}_{{\rm div}F,n}$ into $\omega^{\rm ed\,(2)}_{{\rm div}F,n} = \sum_{l=1}^{7} \Phi_{n,\lambda}^{(l)}[({\rm div}\,F)_n]$ as in Lemma \ref{lem.rep.Phi.RSed.divF}. Then the assertion \eqref{est2.thm.est.vorticity.RSed.divF} follows from the estimates of each term 
$\Phi_{n,\lambda}^{(l)}[({\rm div}\,F)_n]$, $l\in\{1,\ldots,7\}$. \\
\noindent (I) Estimates of $\Phi_{n,\lambda}^{(l)}[({\rm div}\,F)_n]$, $l\in\{1,2,3\}$: We give a proof only for $\Phi_{n,\lambda}^{(3)}[({\rm div}\,F)_n]$ since the proofs for $\Phi_{n,\lambda}^{(1)}[({\rm div}\,F)_n]$ and $\Phi_{n,\lambda}^{(2)}[({\rm div}\,F)_n]$ are similar. The Minkowski inequality and the Fubini theorem lead to 
\begin{align*}
&\|\Phi_{n,\lambda}^{(3)}[({\rm div}\,F)_n]\|_{L^p(D)}
+ \beta \big\|\frac{\Phi_{n,\lambda}^{(3)}[({\rm div}\,F)_n]}{|x|}\big\|_{L^1(D)} \\
& \le
|\lambda|
\int_{1}^{\infty}
|s I_{\mu_n}(\sqrt{\lambda} s) \widetilde{F}_n^{(3)}(s)|
\bigg(
\bigg( \int_{s}^{\infty} | K_{\mu_n}(\sqrt{\lambda} r)|^p r \dd r \bigg)^\frac1p
+ \beta \int_{s}^{\infty} |K_{\mu_n}(\sqrt{\lambda} r)| \dd r
\bigg)
\dd s \,.
\end{align*}
By \eqref{est5.lem.est1.bessel} and \eqref{est7.lem.est1.bessel} for $k=0$ in Lemma \ref{lem.est1.bessel} and \eqref{est6.lem.est4.bessel} and \eqref{est7.lem.est4.bessel} in Lemma \ref{lem.est4.bessel}, we have
\begin{align*}
&\|\Phi_{n,\lambda}^{(3)}[({\rm div}\,F)_n]\|_{L^p(D)}
+ \beta \big\|\frac{\Phi_{n,\lambda}^{(3)}[({\rm div}\,F)_n]}{|x|}\big\|_{L^1(D)} \\
& \le
C \beta^{-1} |\lambda|^\frac12
\int_{1}^{\frac{1}{{\rm Re}(\sqrt{\lambda})}} 
|F_n(s)| s \dd s
+ C |\lambda|^\frac14
\int_{\frac{1}{{\rm Re}(\sqrt{\lambda})}}^{\infty}
s^{-\frac12-\gamma'} |s^{\gamma'} F_n(s)| s\dd s\,,
\end{align*}
which implies \eqref{est2.thm.est.vorticity.RSed.divF} since the condition $\gamma'\in(\frac12,1)$ is assumed. \\
\noindent (II) Estimates of $\Phi_{n,\lambda}^{(l)}[({\rm div}\,F)_n]$, $l\in\{4,5,6\}$: We give a proof only for $\Phi_{n,\lambda}^{(6)}[({\rm div}\,F)_n]$ since the proofs for $\Phi_{n,\lambda}^{(4)}[({\rm div}\,F)_n]$ and $\Phi_{n,\lambda}^{(5)}[({\rm div}\,F)_n]$ are similar. After using the Minkowski inequality and the Fubini theorem, by \eqref{est1.lem.est1.bessel}, \eqref{est3.lem.est1.bessel},  and \eqref{est6.lem.est1.bessel} for $k=0$ in Lemma \ref{lem.est1.bessel} and \eqref{est8.lem.est4.bessel} and \eqref{est9.lem.est4.bessel} in Lemma \ref{lem.est4.bessel}, we observe that
\begin{align*}
&
\|\Phi_{n,\lambda}^{(6)}[({\rm div}\,F)_n]\|_{L^p(D)} 
+ \big\|\frac{\Phi_{n,\lambda}^{(6)}[({\rm div}\,F)_n]}{|x|}\big\|_{L^1(D)} \\
&\le
|\lambda|
\int_{1}^{\infty}
\big|sK_{\mu_n}(\sqrt{\lambda} s)\,\widetilde{F}_n^{(6)}(s)\big|
\bigg( 
\bigg( \int_1^s |I_{\mu_n}(\sqrt{\lambda} r)|^p r\dd r \bigg)^\frac1p
+ \int_1^s |I_{\mu_n}(\sqrt{\lambda} r)| \dd r 
\bigg)
\dd s \\
&\le
C |\lambda|^\frac12
\int_{1}^{\frac{1}{{\rm Re}(\sqrt{\lambda})}}
|F_n(s)|s\dd s
+ C |\lambda|^\frac14
\int_{\frac{1}{{\rm Re}(\sqrt{\lambda})}}^\infty
s^{-\frac12-\gamma'} |s^{\gamma'} F_n(s)| s\dd s\,,
\end{align*}
which leads to \eqref{est2.thm.est.vorticity.RSed.divF} by the condition $\gamma'\in(\frac12,1)$. \\
(III) Estimate of $\Phi_{n,\lambda}^{(7)}[({\rm div}\,F)_n]$: The proof is straightforward using the results in Lemma \ref{lem.est1.bessel} and thus we omit the details. \\
\noindent (iii) Estimate of $\big|\big\langle \omega^{\rm ed\,(1)}_{{\rm div}F,n}, |x|^{-1}(w^{\rm ed}_{f,r})_n \big\rangle_{L^2(D)} \big|$: We omit since the proof is parallel to that for \eqref{est3.thm.est.vorticity.RSed.f} in Theorem \ref{thm.est.vorticity.RSed.f} using \eqref{est1.cor1.est.velocity.RSed.divF} in Corollary \ref{cor1.est.velocity.RSed.divF}. The proof is complete.
\end{proof}
%
%%%%%%%%%%%%%%%%%%%%
%%%%%%%%%%%%%%%%%%%%
\subsection{Problem III: No external force and boundary data $b$}
\label{subsec.RSed.b}
%%%%%%%%%%%%%%%%%%%%
%%%%%%%%%%%%%%%%%%%%
%
In this subsection we give the estimates for $(w,r)=(w^{\rm ed}_{b}, r^{\rm ed}_{b})$ solving the next problem:
\begin{equation}\tag{RS$^{\rm ed}_{b}$}\label{RSed.b}
\left\{
\begin{aligned}
\lambda w - \Delta w + \beta U^{\bot} {\rm rot}\,w + \nabla r
& \,=\, 
0\,,~~~~x \in D\,, \\
{\rm div}\,w &\,=\, 0\,,~~~~x \in D\,, \\
w|_{\partial D} & \,=\, b\,.
\end{aligned}\right.
\end{equation}
Firstly we prove the representation formula to the problem \eqref{RSed.b}.
%
%%%%%%%%%%%%%%%%%%%%
%%%%%%%%%%%%%%%%%%%%
\begin{lemma}\label{lem.rep.RSed.b}
Let $|n|=1$ and $b \in L^\infty(\partial D)^2$, and let $\lambda\in\C\setminus (\overline{\R_{-}} \cup \mathcal{Z}(F_n))$. Suppose that $w^{\rm ed}_{b}$ is a solution 
to \eqref{RSed.b}. Then the $n$-Fourier modes $w^{\rm ed}_{b,n}$ and $\omega^{\rm ed}_{b,n}=({\rm rot}\,w^{\rm ed}_{b,n})e^{-in\theta}$ satisfy the following representations{\rm :}
\begin{align}
w^{\rm ed}_{b,n}
&\,=\, 
\frac{T_n (b)}{F_n(\sqrt{\lambda}; \beta)} V_{n}[K_{\mu_{n}}(\sqrt{\lambda}\,\cdot\,)]
+\frac{\mathcal{V}_{n}[b](\theta)}{r^2}\,,
\label{rep1.lem.rep.RSed.b} \\
\omega^{\rm ed}_{b,n}(r)
&\,=\, \frac{T_n (b)}{F_n(\sqrt{\lambda}; \beta)}\,K_{\mu_{n}}(\sqrt{\lambda} r)\,,
\label{rep2.lem.rep.RSed.b}
\end{align}
where the operator $T_n(b)$ and the vector field $\mathcal{V}_{n}[b](\theta)$ are 
{\color{black} defined} 
as 
\begin{align}
T_n(b) \,=\, \frac{b_{r,n}}{in} - b_{\theta,n}\,,~~~~~~
\mathcal{V}_{n}[b](\theta) 
\,=\, b_{r,n} e^{in\theta} {\bf e}_{r} 
+ \frac{b_{r,n}}{in} e^{in\theta} {\bf e}_{\theta}\,.
\label{def.lem.rep.RSed.boundary}
\end{align}
Here $\mathcal{Z}(F_n)$ is the set in \eqref{def.zeros.F} and $V_{n}[\,\cdot\,]$ is the Biot-Savart law in \eqref{def.V_n}.
\end{lemma}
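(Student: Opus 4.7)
The plan is to work in the angular Fourier mode $n$ with $|n|=1$, building the representation first for the vorticity and then for the velocity, and finally identifying one remaining scalar constant using the boundary condition.

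First I apply $\mathcal{P}_n$ to \eqref{RSed.b} and invoke the invariance $\mathcal{P}_n(U^\bot {\rm rot}\,w)=U^\bot {\rm rot}\,\mathcal{P}_n w$ quoted in Subsection \ref{subsec.RSed.f} to see that $w^{\rm ed}_{b,n}$ solves the homogeneous version of \eqref{nFourier.RSed.f} with boundary data $\mathcal{P}_n b$. Taking the rotation, the scalar $\omega^{\rm ed}_{b,n}$ satisfies the homogeneous ODE associated with \eqref{ode.nfourier.vorticity}, whose two-dimensional solution space is spanned by $K_{\mu_n}(\sqrt{\lambda}\,r)$ and $I_{\mu_n}(\sqrt{\lambda}\,r)$. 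Since $I_{\mu_n}(\sqrt{\lambda}\,r)$ grows at infinity while $\omega^{\rm ed}_{b,n}$ must decay, one must have $\omega^{\rm ed}_{b,n}(r)=C\,K_{\mu_n}(\sqrt{\lambda}\,r)$ for some $C\in\C$, giving the form of \eqref{rep2.lem.rep.RSed.b}.

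Next I recover $w^{\rm ed}_{b,n}$ from this vorticity. The Biot-Savart law $V_n[K_{\mu_n}(\sqrt{\lambda}\,\cdot\,)]$ is well defined thanks to the exponential decay of $K_{\mu_n}$, and by \eqref{V_n} it is divergence-free with vorticity $K_{\mu_n}(\sqrt{\lambda}\,r)e^{in\theta}$ and zero normal trace on $\partial D$. Hence $u:=w^{\rm ed}_{b,n}-C\,V_n[K_{\mu_n}(\sqrt{\lambda}\,\cdot\,)]$ is divergence-free and curl-free in $D$, so it is the gradient of a harmonic $n$-mode decaying at infinity. Such harmonic $n$-modes are one-dimensionally spanned by $r^{-|n|}e^{in\theta}$, and a direct computation using \eqref{def.lem.rep.RSed.boundary} shows that $\mathcal{V}_n[b](\theta)/r^2$ is divergence-free and, crucially thanks to the cancellation $1/(in)+in=0$ for $|n|=1$, curl-free, with radial trace $b_{r,n}e^{in\theta}$ on $\partial D$. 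Since a decaying harmonic $n$-mode gradient is uniquely determined by its radial boundary trace, $u$ coincides with $\mathcal{V}_n[b]/r^2$, producing \eqref{rep1.lem.rep.RSed.b} up to the scalar $C$.

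Finally, the constant $C$ is identified by matching the angular component of $w|_{\partial D}=b$. The angular trace of $\mathcal{V}_n[b]/r^2$ on $\partial D$ is $(b_{r,n}/(in))e^{in\theta}$, so I need $V_{\theta,n}[K_{\mu_n}(\sqrt{\lambda}\,\cdot\,)](1)=-\psi_n'[K_{\mu_n}(\sqrt{\lambda}\,\cdot\,)](1)$. Using the explicit formula for $\psi_n$ from Subsection \ref{biotsavart}, the integrals $\int_1^r s^{1+|n|}\omega_n(s)\,ds$ vanish at $r=1$, and the remaining pieces collapse, for $|n|=1$, into $\psi_n'(1)=d_n[K_{\mu_n}(\sqrt{\lambda}\,\cdot\,)]=F_n(\sqrt{\lambda};\beta)$ directly from \eqref{def.F}. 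The matching condition $-C\,F_n(\sqrt{\lambda};\beta)+b_{r,n}/(in)=b_{\theta,n}$ then forces $C=T_n(b)/F_n(\sqrt{\lambda};\beta)$, well defined because $\lambda\notin\mathcal{Z}(F_n)$. The one step worth flagging is the curl-free verification of $\mathcal{V}_n[b]/r^2$, which uses the $|n|=1$ cancellation noted above and would fail in other modes; the rest are short algebraic checks using the Biot-Savart formula from Subsection \ref{biotsavart}.
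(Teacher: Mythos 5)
Your proposal is correct, and it proves the lemma in the opposite direction from the paper. The paper's own proof is a two-line verification: it checks that $\frac{T_n(b)}{F_n(\sqrt{\lambda};\beta)}V_n[K_{\mu_n}(\sqrt{\lambda}\,\cdot\,)]$ solves the homogeneous system with boundary trace $(u_r,u_\theta)|_{\partial D}=(0,-T_n(b))$ and that $\mathcal{V}_n[b]/r^2$ corrects the boundary data, implicitly invoking uniqueness of solutions to conclude that the given $w^{\rm ed}_b$ must coincide with this candidate. You instead \emph{derive} the formula from an arbitrary solution: the homogeneous version of \eqref{ode.nfourier.vorticity} plus decay forces $\omega^{\rm ed}_{b,n}=CK_{\mu_n}(\sqrt{\lambda}r)$, the difference $w^{\rm ed}_{b,n}-CV_n[K_{\mu_n}(\sqrt{\lambda}\,\cdot\,)]$ is divergence-free and curl-free and hence lies in the one-dimensional space of decaying harmonic $\pm1$-modes, and the two boundary traces pin down the harmonic part and the constant $C$. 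Your computations check out: the curl of $\mathcal{V}_n[b]/r^2$ does vanish precisely because $1/(in)+in=0$ for $|n|=1$, and $V_{\theta,n}[K_{\mu_n}(\sqrt{\lambda}\,\cdot\,)](1)=-\psi_n'(1)=-d_n[K_{\mu_n}(\sqrt{\lambda}\,\cdot\,)]=-F_n(\sqrt{\lambda};\beta)$, so the matching condition gives $C=T_n(b)/F_n$. What each approach buys: the paper's verification is shorter but only produces \emph{a} solution of the stated form, whereas your classification argument directly establishes the representation for every solution in the relevant class, at the modest price of one extra hypothesis — you must assume the vorticity decays (or is square-integrable) to discard the $I_{\mu_n}$ branch. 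That hypothesis is satisfied in the only place the lemma is used (where $w^{\rm ed}_b=v|_D$ with $v\in D(\mathbb{A}_V)$), so this is not a gap, merely something worth stating explicitly if you write the argument up.
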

%%%%%%%%%%%%%%%%%%%%
%%%%%%%%%%%%%%%%%%%%
%
\begin{proof} It is easy to see that $u=\frac{T_n (b)}{F_n(\sqrt{\lambda}; \beta)} V_{n}[K_{\mu_{n}}(\sqrt{\lambda}\,\cdot\,)]$ solves
\begin{equation}\label{equation.proof.lem.rep.RSed.boundary}
\left\{
\begin{aligned}
\lambda u - \Delta u + \beta U^{\bot} {\rm rot}\,u + \nabla p
&\,=\, 0\,,~~~~x \in D\,, \\
{\rm div}\,u &\,=\, 0\,,~~~~x \in D\,, \\
u_r|_{\partial D} \,=\, 0\,,~~~~~~
&u_\theta|_{\partial D} \,=\,-T_n (b) {\color{black} e^{in\theta}} \,,
\end{aligned}\right.
\end{equation}
with some pressure $p\in W^{1,1}_{{\rm loc}}(\overline{\Omega})$. The vector field $\frac{\mathcal{V}_{n}[b](\theta)}{r^2}$ corrects the boundary condition in \eqref{equation.proof.lem.rep.RSed.boundary} so that $u+\frac{\mathcal{V}_{n}[b](\theta)}{r^2}$ solves \eqref{RSed.b} replacing $b$ by $b_n$. The proof is complete.
\end{proof}
%
%%%%%%%%%%%%%%%%%%%%%%%%%%%%%%%%%%%%%%%%
%%%%%%%%%%%%%%%%%%%%%%%%%%%%%%%%%%%%%%%%
%
The estimates for $w^{\rm ed}_{b,n}$ and $\omega^{\rm ed}_{b,n}$ in Lemma \ref{lem.rep.RSed.b} are the main results of this subsection. We recall that $\beta_0$ is the constant in Proposition \ref{prop.est.F}.
%
%%%%%%%%%%%%%%%%%%%%
%%%%%%%%%%%%%%%%%%%%
\begin{theorem}\label{thm.est.RSed.b}
Let $|n|=1$, $p\in(1,\infty]$, and $q\in(1,\infty)$. Fix $\epsilon\in(0,\frac{\pi}{2})$. Then there is a positive constant $C=C(p,q,\epsilon)$ independent of $\beta$ such that the following statement holds. Let $b \in L^\infty(\partial D)^2$, $f\in L^q(D)^2$, and $\beta\in(0,\beta_0)$. Then for $\lambda\in\Sigma_{\pi-\epsilon} \cap \mathcal{B}_{e^{-\frac{1}{6\beta}}}(0)$ we have
\begin{align}
\|w^{\rm ed}_{b,n}\|_{L^p(D)} 
&\le 
\frac{C}{\beta} |\lambda|^{-\frac1p}
\|b\|_{L^\infty(\partial D)}\,,
\label{est1.thm.est.RSed.b} \\
\big\|\frac{w^{\rm ed}_{b,n}}{|x|} \big\|_{L^2(D)} 
&\le 
\frac{C}{\beta^2} 
\|b\|_{L^\infty(\partial D)}\,,
\label{est2.thm.est.RSed.b} \\
\|\omega^{\rm ed}_{b,n}\|_{L^2(D)} 
&\le 
\frac{C}{\beta}
\|b\|_{L^\infty(\partial D)}\,,
\label{est3.thm.est.RSed.b} \\
\big|\big\langle \omega^{\rm ed}_{b,n}, \frac{(w^{\rm ed}_{f,r})_n}{|x|} \big\rangle_{L^2(D)} \big|
&\le 
\frac{C}{\beta^4} |\lambda|^{-1+\frac1q} \|f \|_{L^q(D)}
\|b\|_{L^\infty(\partial D)}\,.
\label{est4.thm.est.RSed.b}
\end{align}
\end{theorem}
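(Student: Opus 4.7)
The plan is to use the representation formulas \eqref{rep1.lem.rep.RSed.b} and \eqref{rep2.lem.rep.RSed.b} in Lemma \ref{lem.rep.RSed.b} and estimate each piece using the tools already developed in Subsections \ref{subsec.RSed.f.velocity} and \ref{subsec.RSed.f.vorticity}. Note that the definition \eqref{def.lem.rep.RSed.boundary} together with $|n|=1$ yields $|T_n(b)|+|\mathcal{V}_n[b](\theta)|\le C\|b\|_{L^\infty(\partial D)}$, so the task reduces to estimating four model quantities:
\begin{equation*}
\frac{1}{|F_n(\sqrt{\lambda};\beta)|}\,V_n[K_{\mu_n}(\sqrt{\lambda}\,\cdot\,)],
~~\frac{\mathcal{V}_n[b](\theta)}{r^2},
~~\frac{K_{\mu_n}(\sqrt{\lambda}\,\cdot\,)}{|F_n(\sqrt{\lambda};\beta)|},
~~\big\langle\omega^{\rm ed}_{b,n},\tfrac{(w^{\rm ed}_{f,r})_n}{|x|}\big\rangle_{L^2(D)}.
\end{equation*}

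First I would prove \eqref{est1.thm.est.RSed.b} and \eqref{est2.thm.est.RSed.b}. For the first term in \eqref{rep1.lem.rep.RSed.b}, Proposition \ref{prop.est.F} gives $|F_n(\sqrt{\lambda};\beta)|^{-1}\le C|\lambda|^{{\rm Re}(\mu_n)/2}$, while Proposition \ref{prop1.est.velocity.RSed.f} provides the matching bounds for $V_n[K_{\mu_n}(\sqrt{\lambda}\,\cdot\,)]$ in $L^p(D)$ and in the weighted space with weight $|x|^{-1}$ in $L^2(D)$; the factors $|\lambda|^{{\rm Re}(\mu_n)/2}$ and $|\lambda|^{-{\rm Re}(\mu_n)/2-1/p}$ (resp. $|\lambda|^{-{\rm Re}(\mu_n)/2}$) multiply out to $|\lambda|^{-1/p}$ (resp. a $\lambda$-independent constant), with $\beta$-singularity $1/\beta$ coming from Proposition \ref{prop1.est.velocity.RSed.f} and an extra factor $1/\beta$ from \eqref{est2.prop1.est.velocity.RSed.f} in the weighted estimate. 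The lower-order boundary corrector $\mathcal{V}_n[b](\theta)/r^2$ is handled by elementary computations: $\||x|^{-2}\|_{L^p(D)}$ is finite for all $p>1$, and $\||x|^{-3}\|_{L^2(D)}$ is finite as well, both giving $\beta$-independent constants, which are dominated by the $\beta^{-1}$ and $\beta^{-2}$ bounds required. Since $|\lambda|<e^{-1/(6\beta)}$ forces $|\lambda|^{-1/p}\ge 1$, the lower-order term is absorbed into the claimed estimates.

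Next I would prove \eqref{est3.thm.est.RSed.b} directly from \eqref{rep2.lem.rep.RSed.b}: Proposition \ref{prop.est.F}, together with the bound $\|K_{\mu_n}(\sqrt{\lambda}\,\cdot\,)\|_{L^2(D)}\le C|\lambda|^{-{\rm Re}(\mu_n)/2}(2{\rm Re}(\mu_n)-2)^{-1/2}$ from \eqref{est1.lem.est4.bessel} in Lemma \ref{lem.est4.bessel}, yields an $L^2$ bound controlled by $C/(2{\rm Re}(\mu_n)-2)^{1/2}$; the asymptotic analysis of $\mu_n(\beta)$ in Appendix \ref{app.est.mu} (invoked via $({\rm Re}(\mu_n)-1)^{1/2}\approx O(\beta)$, which has already been used in the proof of Proposition \ref{prop1.est.velocity.RSed.f}) converts this into the $\beta^{-1}$ singularity claimed in \eqref{est3.thm.est.RSed.b}.

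The main step—and the only one requiring care—is the duality bound \eqref{est4.thm.est.RSed.b}. Substituting the representation \eqref{rep.velocity.nFourier.RSed.f} for $w^{\rm ed}_{f,n}$ yields a splitting
\begin{equation*}
\big|\big\langle\omega^{\rm ed}_{b,n},\tfrac{(w^{\rm ed}_{f,r})_n}{|x|}\big\rangle_{L^2(D)}\big|
\le
\frac{|T_n(b)c_{n,\lambda}[f_n]|}{|F_n(\sqrt{\lambda};\beta)|^2}\,\big|\big\langle K_{\mu_n}(\sqrt{\lambda}\,\cdot\,),\tfrac{V_{r,n}[K_{\mu_n}(\sqrt{\lambda}\,\cdot\,)]}{|x|}\big\rangle_{L^2(D)}\big|
\end{equation*}
\begin{equation*}
\qquad+\frac{|T_n(b)|}{|F_n(\sqrt{\lambda};\beta)|}\,\big|\big\langle K_{\mu_n}(\sqrt{\lambda}\,\cdot\,),\tfrac{V_{r,n}[\Phi_{n,\lambda}[f_n]]}{|x|}\big\rangle_{L^2(D)}\big|,
\end{equation*}
which is exactly the structure encountered in the proof of \eqref{est3.thm.est.vorticity.RSed.f} in Theorem \ref{thm.est.vorticity.RSed.f}; the only change is that one factor of $c_{n,\lambda}[f_n]/F_n$ is replaced by $T_n(b)/F_n$, reducing the number of $\beta$-singular factors by one. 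Concretely, I would reuse the pointwise bounds \eqref{est1.proof.prop1.est.velocity.RSed.f}--\eqref{est2.proof.prop1.est.velocity.RSed.f} on $V_{r,n}[K_{\mu_n}(\sqrt{\lambda}\,\cdot\,)]$ and the decomposition \eqref{est1.proof.prop2.est.velocity.RSed.f} on $V_{r,n}[\Phi_{n,\lambda}[f_n]]$, estimate the two scalar integrals over $(1,{\rm Re}(\sqrt{\lambda})^{-1})$ and $({\rm Re}(\sqrt{\lambda})^{-1},\infty)$ with the help of $K_{\mu_n}$ asymptotics from Lemma \ref{lem.est1.bessel}, and combine with the control $|c_{n,\lambda}[f_n]|\le C\beta^{-1}|\lambda|^{-1+1/q}\|f\|_{L^q(D)}$ from Corollary \ref{cor1.est.velocity.RSed.f}. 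The accounting of $\beta$-powers is the delicate point: the bound on $F_n^{-1}$ contributes $\beta^{-1}$ implicitly through the logarithm (via the proof of Lemma \ref{lem.est.F}), whereas the two bulk integrals each contribute $\beta^{-1}$ through $(2{\rm Re}(\mu_n)-2)^{-1/2}\approx\beta^{-1}$; together with the $\beta^{-1}$ from $V_n[K_{\mu_n}]$ this yields the overall $\beta^{-4}$ in \eqref{est4.thm.est.RSed.b}, one power better than \eqref{est3.thm.est.vorticity.RSed.f} as expected.
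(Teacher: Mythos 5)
Your proposal is correct and follows essentially the same route as the paper: the paper's own proof of this theorem is a three-line reference to exactly the ingredients you use, namely Proposition \ref{prop.est.F} and Proposition \ref{prop1.est.velocity.RSed.f} for \eqref{est1.thm.est.RSed.b}--\eqref{est2.thm.est.RSed.b}, Proposition \ref{prop.est.F} together with \eqref{est1.lem.est4.bessel} (with $p=2$) for \eqref{est3.thm.est.RSed.b}, and a repetition of the proof of \eqref{est3.thm.est.vorticity.RSed.f} with one factor $c_{n,\lambda}[f_n]/F_n$ replaced by $T_n(b)/F_n$ for \eqref{est4.thm.est.RSed.b}. One bookkeeping remark: in the regime $\lambda\in\mathcal{B}_{e^{-\frac{1}{6\beta}}}(0)$ the bound \eqref{est1.prop.est.F} on $F_n^{-1}$ carries a $\beta$-independent constant (the logarithm in Lemma \ref{lem.est.F} is already absorbed there), so $F_n^{-1}$ contributes no power of $\beta$; the correct tally for the leading term of \eqref{est4.thm.est.RSed.b} is $\beta^{-1}$ from $c_{n,\lambda}[f_n]$, $\beta^{-1}$ from $V_{r,n}[K_{\mu_n}(\sqrt{\lambda}\,\cdot\,)]$, and $\beta^{-2}$ from the bulk integral $\int_1^{{\rm Re}(\sqrt{\lambda})^{-1}} r^{-2{\rm Re}(\mu_n)+1}\dd r \le ({2\rm Re}(\mu_n)-2)^{-1}$, which still gives the stated $\beta^{-4}$.
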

%%%%%%%%%%%%%%%%%%%%
%%%%%%%%%%%%%%%%%%%%
%
\begin{proof}
The estimates \eqref{est1.thm.est.RSed.b} and \eqref{est2.thm.est.RSed.b} follow by Propositions \ref{prop.est.F} and \ref{prop1.est.velocity.RSed.f}, while \eqref{est3.thm.est.RSed.b} follows by Proposition \ref{prop.est.F} and \eqref{est1.lem.est4.bessel} with $p=2$ in Lemma \ref{lem.est4.bessel} in Appendix \ref{app.est.bessel}. The proof for \eqref{est4.thm.est.RSed.b} is parallel to that for \eqref{est3.thm.est.vorticity.RSed.f} in Theorem \ref{thm.est.vorticity.RSed.f}. The proof is complete.
\end{proof}
%
%%%%%%%%%%%%%%%%%%%%%%%%%%%%%%%%%%%%%%%%
%%%%%%%%%%%%%%%%%%%%%%%%%%%%%%%%%%%%%%%%
\subsection{Resolvent estimate in region exponentially close to the origin}
\label{apriori2}
%%%%%%%%%%%%%%%%%%%%%%%%%%%%%%%%%%%%%%%%
%%%%%%%%%%%%%%%%%%%%%%%%%%%%%%%%%%%%%%%%
%
In this subsection we treat the problem \eqref{RS} when the resolvent parameter $\lambda$ is exponentially close to the origin. We start with the a priori estimate of the term $\big\langle ({\rm rot}\,v)_{n}, \frac{v_{r,n}}{|x|} \big\rangle_{L^2(D)}$, $|n|=1$, when $0<|\lambda|<e^{-\frac{1}{6\beta}}$, which is needed in closing the energy computation. We recall that $D$ denotes the exterior disk $\{x\in\R^2~|~|x|>1\}$, and that  
{\color{black} 
$R$ and $\gamma$
}
are defined in Assumption \ref{assumption}. Let $\beta_0$ be the constant in Proposition \ref{prop.est.F}.
%
%%%%%%%%%%%%%%%%%%%%
%%%%%%%%%%%%%%%%%%%%
\begin{proposition}\label{prop1.small.lambda.energy.est.resol.}
Let $|n|=1$, $q\in(1,2]$, and $f \in L^q(\Omega)^2$, and let $\lambda\in\Sigma_{\pi-\epsilon} \cap \mathcal{B}_{e^{-\frac{1}{6\beta}}}(0)$
for some $\epsilon\in(0,\frac{\pi}{2})$. Suppose that $v \in D(\mathbb{A}_V)$ is a solution to \eqref{RS}. Then we have
\begin{equation}\label{est1.prop.small.lambda.energy.est.resol.}
\begin{aligned}
\big|\big\langle ({\rm rot}\,v)_{n}, \frac{v_{r,n}}{|x|} \big\rangle_{L^2(D)} \big| 
\le 
\frac{C}{\beta^5}
|\lambda|^{-2+\frac2q}
\|f\|_{L^q(\Omega)}^2 
+ \frac{K}{\beta^5}
{\color{black} (d + \beta d^\frac12)^2}
\|\nabla v\|_{L^2(\Omega)}^2\,,
\end{aligned}
\end{equation}
as long as $\beta\in(0,\beta_0)$. The constant $C$ is independent of 
{\color{black} $d$ and $\beta$,} 
and depends on $\gamma$, $q$, and $\epsilon$, while $K$ is greater than $1$ and independent of 
{\color{black} $d$, $\beta$, and $q$}, 
and depends on $\gamma$ and $\epsilon$.
\end{proposition}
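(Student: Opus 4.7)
The plan is to exploit that $w := v|_D$ satisfies \eqref{RSed} with source $f|_D - {\rm div}\,(R\otimes v + v \otimes R)$ and boundary data $b := v|_{\partial D}$, so by linearity and uniqueness of each of the three sub-problems treated in Subsections \ref{subsec.RSed.f}--\ref{subsec.RSed.b}, the $n$-Fourier mode splits as $w_n = w^{\rm ed}_{f,n} + w^{\rm ed}_{{\rm div}F,n} + w^{\rm ed}_{b,n}$ with $F := -(R\otimes v + v\otimes R)$, and consequently on $D$,
\begin{align*}
({\rm rot}\,v)_n &\,=\, \omega^{{\rm ed}\,(1)}_{f,n} + \omega^{{\rm ed}\,(2)}_{f,n} + \omega^{{\rm ed}\,(1)}_{{\rm div}F,n} + \omega^{{\rm ed}\,(2)}_{{\rm div}F,n} + \omega^{{\rm ed}}_{b,n}\,, \\
v_{r,n}|_D &\,=\, (w^{\rm ed}_{f,r})_n + (w^{\rm ed}_{{\rm div}F,r})_n + (w^{\rm ed}_{b,r})_n\,,
\end{align*}
in the notation of Theorems \ref{thm.est.vorticity.RSed.f}, \ref{thm.est.vorticity.RSed.divF}, and Lemma \ref{lem.rep.RSed.b}. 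Plugging this decomposition into $\langle({\rm rot}\,v)_n, v_{r,n}/|x|\rangle_{L^2(D)}$ produces fifteen cross terms, and the remainder of the proof is the careful estimation of each of them.

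Three distinguished pairings---namely $\langle \omega^{{\rm ed}\,(1)}_{f,n}, (w^{\rm ed}_{f,r})_n/|x|\rangle$, $\langle \omega^{{\rm ed}\,(1)}_{{\rm div}F,n}, (w^{\rm ed}_{f,r})_n/|x|\rangle$, and $\langle \omega^{{\rm ed}}_{b,n}, (w^{\rm ed}_{f,r})_n/|x|\rangle$---would carry the forbidden logarithmic factor $|\log {\rm Re}(\sqrt{\lambda})|^{1/2}$ of Theorem \ref{thm.est.velocity.RSed.f} if one applied naive Cauchy--Schwarz; for these we use instead the coupled bounds \eqref{est3.thm.est.vorticity.RSed.f}, \eqref{est3.thm.est.vorticity.RSed.divF}, and \eqref{est4.thm.est.RSed.b} in which the logarithm has already been absorbed (cf.\ Remark \ref{rem.thm.est.velocity.RSed.f}). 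The remaining twelve cross terms are handled by H\"older's inequality in two regimes: when the velocity factor is $w^{\rm ed}_{{\rm div}F,r}$ or $w^{\rm ed}_{b,r}$, we bound $|\langle \omega_i, w_j/|x|\rangle| \le \|\omega_i\|_{L^2(D)} \|w_j/|x|\|_{L^2(D)}$ via the $L^2$-vorticity estimates together with \eqref{est2.thm.est.velocity.RSed.divF} and \eqref{est2.thm.est.RSed.b}, which are free of logarithmic singularity; when the vorticity is an $\omega^{(2)}$-type paired with $(w^{\rm ed}_{f,r})_n$ we rewrite $\langle \omega, w/|x|\rangle = \langle \omega/|x|, w\rangle$ and apply H\"older via the weighted bound \eqref{est2.thm.est.vorticity.RSed.f} or the $L^1$-bound embedded in \eqref{est2.thm.est.vorticity.RSed.divF} paired with a suitable $L^p$-estimate on $w^{\rm ed}_{f,n}$ from Theorem \ref{thm.est.velocity.RSed.f}. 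Every resulting bound is of the form $\alpha\cdot|\lambda|^{-1+1/q}\|f\|_{L^q}\cdot\mathcal{N}$, where $\alpha\le C\beta^{-5}$ and $\mathcal{N}\in\{|\lambda|^{-1+1/q}\|f\|_{L^q},\,\||x|^{\gamma'}F\|_{L^2},\,\|b\|_{L^\infty(\partial D)}\}$, or its square when $f$ does not appear.

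It remains to control the auxiliary norms in terms of $\|\nabla v\|_{L^2(\Omega)}$. Fixing $\gamma'\in(\tfrac12,\gamma)$, the pointwise decay \eqref{remainder} yields $|x|^{\gamma'}|R(x)v(x)|\le C\beta^\kappa d\,|x|^{-(1+\gamma-\gamma')}|v(x)|$, and the 2D weighted Hardy inequality---valid because $v$ extends by zero to $B_{1-2d}$ and the extra decay $\gamma-\gamma'>0$ suppresses the logarithmic divergence at infinity---delivers $\||x|^{\gamma'}F\|_{L^2(\Omega)} \le C\beta^\kappa d\,\|\nabla v\|_{L^2(\Omega)}$. For the boundary trace, we parametrise $\partial\Omega$ radially as $r=\rho(\theta)\in[1-2d,1-d]$ (legitimate by \eqref{domain}), use the identity $v(1,\theta)=\int_{\rho(\theta)}^{1}\partial_r v(r,\theta)\dd r$, and apply Cauchy--Schwarz successively in $r$ and in $\theta$ at the level of the $n$-Fourier coefficient to obtain $\|\mathcal{P}_n v\|_{L^\infty(\{|x|=1\})}\le Cd^{1/2}\|\nabla v\|_{L^2(\Omega)}$ for $|n|=1$; this is the effective bound on $\|b\|_{L^\infty(\partial D)}$ to be substituted into \eqref{est3.thm.est.RSed.b}--\eqref{est4.thm.est.RSed.b}. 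Inserting these two bounds and applying Young's inequality $ab\le \tfrac{Xa^2}{2}+\tfrac{b^2}{2X}$ on each cross term, with $X$ tuned so that every $a^2$-coefficient deposits into $\tfrac{C}{\beta^5}|\lambda|^{-2+2/q}\|f\|_{L^q}^2$, the remaining $b^2$-coefficients assemble into $\tfrac{K}{\beta^5}(\beta^\kappa d+\beta d^{1/2})^2\|\nabla v\|_{L^2(\Omega)}^2$ as claimed. The main obstacle is largely combinatorial: one must verify that the three coupled pairings are the \emph{only} ones that saturate the $\beta^{-5}$ weight (all twelve other bounds being strictly cheaper in $\beta$ thanks to the $L^2$--$L^2$ H\"older splitting), while simultaneously confirming that the radial trace estimate produces exactly the factor $\beta d^{1/2}$ that balances the $\beta^{-4}$ prefactor of \eqref{est4.thm.est.RSed.b} and slots correctly into the dissipation weight $(\beta^\kappa d+\beta d^{1/2})^2$.
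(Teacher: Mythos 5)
Your proposal follows the paper's proof essentially verbatim: the same three-fold decomposition of $v_n$ and $({\rm rot}\,v)_n$ via the sub-problems of Subsections \ref{subsec.RSed.f}--\ref{subsec.RSed.b}, the same three coupled estimates \eqref{est3.thm.est.vorticity.RSed.f}, \eqref{est3.thm.est.vorticity.RSed.divF}, and \eqref{est4.thm.est.RSed.b} for the logarithm-carrying pairings, the same auxiliary bounds $\||x|^{\gamma'}F\|_{L^2}\le C\beta^{\kappa}d\,\|\nabla v\|_{L^2(\Omega)}$ and $\|b\|_{L^\infty(\partial D)}\le Cd^{1/2}\|\nabla v\|_{L^2(\Omega)}$, and the same Young-inequality conclusion. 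The only bookkeeping slip is that $\omega^{{\rm ed}\,(2)}_{f,n}$ admits no plain $L^2$ bound, so its pairings with $w^{\rm ed}_{{\rm div}F,n}$ and $w^{\rm ed}_{b,n}$ must also go through the weighted $L^{p'}$--$L^{p}$ duality of \eqref{est2.thm.est.vorticity.RSed.f} (as the paper does by estimating $\||x|^{-1}\omega^{{\rm ed}\,(2)}_{f,n}\|_{L^{p'}}\|v_n\|_{L^p}$ against the full $v_n$), not the $L^2$--$L^2$ H\"older of your first regime.
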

%%%%%%%%%%%%%%%%%%%%
%%%%%%%%%%%%%%%%%%%%
%
\begin{proof} 
In this proof we denote the function space $L^q(D)$ by $L^q$ to simplify notation. Firstly we fix a positive number $\gamma'\in(\frac12, \gamma)$, and set $F=-(R\otimes v + v\otimes R)|_{D}$ and 
{\color{black} $b= \mathcal{P}_n v|_{\partial D}$.}
It is easy to see that $F$ belongs to the function space $X_{\gamma'}(D)$ defined in \eqref{Xgamma}, and that $b\in L^\infty(\partial D)^2$. Moreover, a direct calculation and Assumption \ref{assumption} imply that
\begin{align}
\||x|^{\gamma'}F\|_{L^2} \le 
K_0 d \|\nabla v\|_{L^2(\Omega)}\,,~~~~~~~~
\|b\|_{L^\infty(\partial D)} \le 
{\color{black} K_0 d^\frac12} \|\nabla v\|_{L^2(\Omega)}\,.
\label{est1.proof.prop.small.lambda.energy.est.resol.}
\end{align}
Here $K_0$ denotes the constant which depends on $\gamma$ and is independent of {\color{black} $d$, $\beta$, and $q\in(1,2]$.} 
{\color{black} The latter estimate in \eqref{est1.proof.prop.small.lambda.energy.est.resol.} is proved as follows: the zero extension of $v\in D(\mathbb{A}_V)$ to the whole plane $\R^2$, which is denoted again by $v$, is an element of $W^{1,2}(\R^2)$. Hence we have
\begin{align*}
\|b\|_{L^\infty(\partial D)} 
& \le 
\int_0^{2\pi} \int_{1-2d}^{1} |\nabla v(r\cos\theta, r\sin\theta)| \dd r \dd \theta \\
& \le 
K_0 d^\frac12 \|\nabla v\|_{L^2(\Omega)}\,,
\end{align*}
where the H\"{o}lder inequality is applied in the last line.}
In the following we use the notations in Subsections \ref{subsec.RSed.f}--\ref{subsec.RSed.b}. Since $v|_{D}$ is a solution to the problem \eqref{RSed}, by the solution formula we have the decompositions for $v_{n}$, $|n|=1$:
\begin{align}
v_{n} &\,=\,w^{\rm ed}_{f,n} 
+ w^{\rm ed}_{{\rm div}F,n} + w^{\rm ed}_{b,n}~~~~~~{\rm in}~~D\,,
\label{decom1.proof.prop.small.lambda.energy.est.resol.} \\
({\rm rot}\,v)_{n} &\,=\,\omega^{\rm ed}_{f,n}
+ \omega^{\rm ed}_{{\rm div}F,n}
+ \omega^{\rm ed}_{b,n}~~~~~~{\rm in}~~D\,.
\label{decom2.proof.prop.small.lambda.energy.est.resol.}
\end{align}
Then, in view of \eqref{decom2.proof.prop.small.lambda.energy.est.resol.}, the assertion \eqref{est1.prop.small.lambda.energy.est.resol.} follows from estimating the next three terms: 
\begin{align*}
\big|\big\langle \omega^{\rm ed}_{f,n}, \frac{v_{r,n}}{|x|} \big\rangle_{L^2} \big|\,,~~~~~~
\big|\big\langle \omega^{\rm ed}_{{\rm div}F,n}, \frac{v_{r,n}}{|x|} \big\rangle_{L^2} \big|\,,~~~~~~
\big|\big\langle \omega^{\rm ed}_{b,n}, \frac{v_{r,n}}{|x|} \big\rangle_{L^2} \big|\,.
\end{align*}
(i) Estimate of $|\langle \omega^{\rm ed}_{f,n}, \frac{v_{r,n}}{|x|}\rangle_{L^2}|$: We fix a number $p\in(\frac{2}{\gamma'},\infty)$. Note that $p\in(q,\infty)$ holds since $\frac{2}{\gamma'}>2$. Then setting $p'= \frac{p}{p-1}\in(1,q)$ and using the H\"{o}lder inequality, we see that
\begin{align}
\big|\big\langle \omega^{\rm ed}_{f,n}, \frac{v_{r,n}}{|x|} \big\rangle_{L^2} \big| 
& \le 
\big|\big\langle \omega^{\rm ed\,(1)}_{f,n}, \frac{v_{r,n}}{|x|} \big\rangle_{L^2} \big| 
+ \big\|\frac{\omega^{\rm ed\,(2)}_{f,n}}{|x|}\big\|_{L^{p'}} \|v_{n}\|_{L^p}\,.
\label{est3.proof.prop.small.lambda.energy.est.resol.}
\end{align}
From \eqref{est1.thm.est.vorticity.RSed.f} and \eqref{est3.thm.est.vorticity.RSed.f} in Theorem \ref{thm.est.vorticity.RSed.f}, \eqref{est2.thm.est.velocity.RSed.divF} in Theorem \ref{thm.est.velocity.RSed.divF}, and \eqref{est2.thm.est.RSed.b} in Theorem \ref{thm.est.RSed.b} we observe that
\begin{align*}
\big|\big\langle \omega^{\rm ed\,(1)}_{f,n}, \frac{v_{r,n}}{|x|} \big\rangle_{L^2} \big| 
& \le
\big|\big\langle \omega^{\rm ed\,(1)}_{f,n}, 
\frac{(w^{\rm ed}_{f,r})_n}{|x|} \big\rangle_{L^2} \big| 
+ \|\omega^{\rm ed\,(1)}_{f,n}\|_{L^2}
\Big(
\big\|\frac{w^{\rm ed}_{{\rm div}F,n}}{|x|} \big\|_{L^2}
+ \big\|\frac{w^{\rm ed}_{b,n}}{|x|}\big\|_{L^2}
\Big) \\
& \le
\frac{C}{\beta^5}
|\lambda|^{-1+\frac1q}
\|f\|_{L^q}
\Big(
|\lambda|^{-1+\frac1q}
\|f\|_{L^q}
+ \big(\||x|^{\gamma'}F\|_{L^2} + \beta \|b\|_{L^\infty(\partial D)} \big)
\Big)\,.
\end{align*}
{\color{black} Then by \eqref{est1.proof.prop.small.lambda.energy.est.resol.} we find}
\begin{align}
&\big|\big\langle \omega^{\rm ed\,(1)}_{f,n}, \frac{v_{r,n}}{|x|} \big\rangle_{L^2} \big| \nonumber \\
&\le
\frac{C}{\beta^5}
|\lambda|^{-1+\frac1q}
\|f\|_{L^q}
\big(
|\lambda|^{-1+\frac1q}
\|f\|_{L^q}
+ {\color{black} (d + \beta d^\frac12)}
\|\nabla v\|_{L^2(\Omega)}
\big)\,.
\label{est5.proof.prop.small.lambda.energy.est.resol.} 
\end{align}
On the other hand, since $\frac1p+\frac1{p'}=1$ holds, by using \eqref{est2.thm.est.vorticity.RSed.f} replacing $\tilde{q}$ by $p'$ in Theorem \ref{thm.est.vorticity.RSed.f}, \eqref{est1.thm.est.velocity.RSed.f} in Theorem \ref{thm.est.velocity.RSed.f}, 
\eqref{est1.thm.est.velocity.RSed.divF} in Theorem \ref{thm.est.velocity.RSed.divF}, 
and \eqref{est1.thm.est.RSed.b} in Theorem \ref{thm.est.RSed.b}, we have
\begin{align}
& \big\|\frac{\omega^{\rm ed\,(2)}_{f,n}}{|x|}\big\|_{L^{p'}} \|v_{n}\|_{L^p} \nonumber \\
& \le
\frac{C}{\beta^3}
|\lambda|^{-1+\frac1q}
\|f\|_{L^q}
\Big(
|\lambda|^{-1+\frac1q}
\|f\|_{L^q} 
+ \big(\||x|^{\gamma'}F\|_{L^2} + \beta \|b\|_{L^\infty(\partial D)} \big)
\Big) \nonumber \\
& \le
\frac{C}{\beta^3}
|\lambda|^{-1+\frac1q}
\|f\|_{L^q}
\big(
|\lambda|^{-1+\frac1q}
\|f\|_{L^q}
+ {\color{black} (d + \beta d^\frac12)}
\|\nabla v\|_{L^2(\Omega)}
\big)\,.
\label{est6.proof.prop.small.lambda.energy.est.resol.} 
\end{align}
Then inserting \eqref{est5.proof.prop.small.lambda.energy.est.resol.} and \eqref{est6.proof.prop.small.lambda.energy.est.resol.} into \eqref{est3.proof.prop.small.lambda.energy.est.resol.} we obtain
\begin{align}
& \big|\big\langle \omega^{\rm ed}_{f,n}, \frac{v_{r,n}}{|x|} \big\rangle_{L^2} \big| \nonumber \\
& \le
\frac{C}{\beta^5}
|\lambda|^{-1+\frac1q}
\|f\|_{L^q}
\big(
|\lambda|^{-1+\frac1q}
\|f\|_{L^q}
+ {\color{black} (d + \beta d^\frac12)}
\|\nabla v\|_{L^2(\Omega)}
\big)\,.
\label{mainest1.proof.prop.small.lambda.energy.est.resol.}
\end{align}
\noindent (ii) Estimate of $|\langle \omega^{\rm ed}_{{\rm div}F,n}, \frac{v_{r,n}}{|x|} \rangle_{L^2}|$: By using the H\"{o}lder inequality we find
\begin{equation}\label{est7.proof.prop.small.lambda.energy.est.resol.}
\begin{aligned}
\big|\big\langle \omega^{\rm ed}_{{\rm div}F,n}, 
\frac{v_{r,n}}{|x|} \big\rangle_{L^2} \big| 
& \le 
\|\omega^{\rm ed}_{{\rm div}F,n}\|_{L^2}
\Big( 
\big\|\frac{w^{\rm ed}_{{\rm div}F,n}}{|x|} \big\|_{L^2}
+ \big\|\frac{w^{\rm ed}_{b,n}}{|x|}\big\|_{L^2}
\Big) \\
& \quad
+ \big|\big\langle \omega^{\rm ed\,(1)}_{{\rm div}F,n}, 
\frac{(w^{\rm ed}_{f,r})_n}{|x|} \big\rangle_{L^2} \big| 
+ \|\frac{\omega^{\rm ed\,(2)}_{{\rm div}F,n}}{|x|}\|_{L^1}
\big\|w^{\rm ed}_{f,n} \big\|_{L^\infty}\,.
\end{aligned}
\end{equation}
By Theorem \ref{thm.est.vorticity.RSed.divF}, \eqref{est2.thm.est.velocity.RSed.divF} in Theorem \ref{thm.est.velocity.RSed.divF}, 
and \eqref{est2.thm.est.RSed.b} in Theorem \ref{thm.est.RSed.b} we see that
\begin{align}
\|\omega^{\rm ed}_{{\rm div}F,n}\|_{L^2}
\Big( 
\big\|\frac{w^{\rm ed}_{{\rm div}F,n}}{|x|} \big\|_{L^2}
+ \big\|\frac{w^{\rm ed}_{b,n}}{|x|}\big\|_{L^2}
\Big)
& \le
\frac{K}{\beta^5} 
\||x|^{\gamma'}F\|_{L^2}
\big(\||x|^{\gamma'}F\|_{L^2} + \beta \|b\|_{L^\infty(\partial D)} \big)\nonumber \\
& \le
\frac{K}{\beta^{5}}
{\color{black} (d + \beta d^\frac12)^2}
\|\nabla v\|_{L^2(\Omega)}^2\,,
\label{est8.proof.prop.small.lambda.energy.est.resol.}
\end{align}
where we note that the constant $K$ depends only on $\epsilon$ and $\gamma$, and is independent of 
{\color{black} $d$ and $\beta$,} 
and, in particular, of $q\in(1,2]$. Theorem \ref{thm.est.vorticity.RSed.divF} and \eqref{est1.thm.est.velocity.RSed.f} with $p=\infty$ in Theorem \ref{thm.est.velocity.RSed.f} lead to
\begin{align}
& \big|\big\langle \omega^{\rm ed\,(1)}_{{\rm div}F,n}, 
\frac{(w^{\rm ed}_{f,r})_n}{|x|} \big\rangle_{L^2} \big| 
+ \|\frac{\omega^{\rm ed\,(2)}_{{\rm div}F,n}}{|x|}\|_{L^1}
\big\|w^{\rm ed}_{f,n} \big\|_{L^\infty} \nonumber \\
& \le
\frac{C}{\beta^5}
|\lambda|^{-1+\frac1q}
\|f\|_{L^q} \||x|^{\gamma'}F\|_{L^2} 
\le
\frac{C}{\beta^{5}}
{\color{black} d}
|\lambda|^{-1+\frac1q}
\|f\|_{L^q} 
\|\nabla v\|_{L^2(\Omega)}\,.
\label{est9.proof.prop.small.lambda.energy.est.resol.}
\end{align}
Inserting 
\eqref{est8.proof.prop.small.lambda.energy.est.resol.} and 
\eqref{est9.proof.prop.small.lambda.energy.est.resol.} into \eqref{est7.proof.prop.small.lambda.energy.est.resol.} we have
\begin{align}
& \big|\big\langle \omega^{\rm ed}_{{\rm div}F,n}, 
\frac{v_{r,n}}{|x|} \big\rangle_{L^2} \big| \nonumber \\
& \le 
\frac{C}{\beta^{5}}
{\color{black} d}
|\lambda|^{-1+\frac1q}
\|f\|_{L^q(\Omega)} 
\|\nabla v\|_{L^2(\Omega)}
+ \frac{K}{\beta^{5}}
{\color{black} (d + \beta d^\frac12)^2}
\|\nabla v\|_{L^2(\Omega)}^2\,.
\label{mainest2.proof.prop.small.lambda.energy.est.resol.} 
\end{align}
\noindent (iii) Estimate of $|\langle \omega^{\rm ed}_{b,n}, \frac{v_{r,n}}{|x|}\rangle_{L^2}|$: Using the Schwartz inequality and Theorem \ref{thm.est.RSed.b} we find
\begin{align}
&\big|\big\langle \omega^{\rm ed}_{b,n}, \frac{v_{r,n}}{|x|} \big\rangle_{L^2(D)} \big| 
\le 
\big|\big\langle \omega^{\rm ed}_{b,n}, 
\frac{(w^{\rm ed}_{f,r})_n}{|x|} \big\rangle_{L^2} \big| 
+ \|\omega^{\rm ed}_{b,n}\|_{L^2}
\Big( 
\big\|\frac{w^{\rm ed}_{{\rm div}F,n}}{|x|} \big\|_{L^2}
+ \big\|\frac{w^{\rm ed}_{b,n}}{|x|}\big\|_{L^2}
\Big) \nonumber \\
& \le 
\frac{1}{\beta^4}
\Big(
C |\lambda|^{-1+\frac1q}
\|f\|_{L^q}
\|b\|_{L^\infty(\partial D)}
+ K \|b\|_{L^\infty(\partial D)}
\big(\||x|^{\gamma'}F\|_{L^2}
+ \beta \|b\|_{L^\infty(\partial D)} \big)
\Big) \nonumber \\
& \le 
{\color{black} \frac{C}{\beta^{5}}
\beta d^\frac12
}
|\lambda|^{-1+\frac1q}
\|f\|_{L^q(\Omega)} 
\|\nabla v\|_{L^2(\Omega)}
+ \frac{K}{\beta^5}
{\color{black} (d + \beta d^\frac12)^2}
\|\nabla v\|_{L^2(\Omega)}^2\,.
\label{mainest3.proof.prop.small.lambda.energy.est.resol.} 
\end{align}
Finally we obtain the assertion 
\eqref{est1.prop.small.lambda.energy.est.resol.} by collecting 
\eqref{mainest1.proof.prop.small.lambda.energy.est.resol.}, 
\eqref{mainest2.proof.prop.small.lambda.energy.est.resol.}, and 
\eqref{mainest3.proof.prop.small.lambda.energy.est.resol.}, 
and using the Young inequality in the form
\begin{align*}
& 
\frac{C}{\beta^5}
{\color{black}
(d + \beta d^\frac12)
}
|\lambda|^{-1+\frac1q}
\|f\|_{L^q(\Omega)} 
\|\nabla v\|_{L^2(\Omega)}
\\
& \le 
\frac{C}{\beta^5}
|\lambda|^{-2+\frac2q}
\|f\|_{L^q(\Omega)}^2
+ {\frac{{\color{black} (d + \beta d^\frac12)^2}}{\beta^5} } 
\|\nabla v\|_{L^2(\Omega)}^2\,.
\end{align*}
The proof is complete.
\end{proof}
%%%%%%%%%%%%%%%%%%%%
%%%%%%%%%%%%%%%%%%%%
%

Now we shall establish the resolvent estimate to \eqref{RS} when $0<|\lambda|<e^{-\frac{1}{6\beta}}$, by closing the energy computation starting from Proposition \ref{prop.general.energy.est.resol.} in Subsection \ref{apriori1}. 
%
%%%%%%%%%%%%%%%%%%%%
%%%%%%%%%%%%%%%%%%%%
\begin{proposition}\label{prop2.small.lambda.energy.est.resol.} 
Let $\epsilon \in (0,\frac{\pi}{4})$, and let $\beta_1$ 
{\color{black} and $d_1$}, 
$\beta_0$, and $K$ be the constants respectively in Propositions \ref{prop.general.energy.est.resol.}, \ref{prop.est.F}, and \ref{prop1.small.lambda.energy.est.resol.}. 
Then the following statements hold.

\noindent {\rm (1)} Fix positive numbers $\beta_3\in(0,
{\color{black} \min\{\beta_1, d_1^\frac12, \beta_0\}}
)$ and 
{\color{black} $\mu_\ast\in(0, (64K)^{-1})$.} 
Then the set 
\begin{align}
\Sigma_{\frac{3}{4}\pi-\epsilon} \cap \mathcal{B}_{e^{-\frac{1}{6\beta}}}(0)
\label{set.thm.small.lambda.energy.est.resol.} 
\end{align}
is included in the resolvent $\rho(-\mathbb{A}_V)$ for any $\beta\in(0,\beta_3)$ 
and 
{\color{black} $d\in(0,\mu_\ast \beta^2)$. }

\noindent {\rm (2)} Let $q\in(1.2]$ and $f\in L^2_\sigma(\Omega) \cap L^q(\Omega)^2$. Then we have
\begin{equation}\label{est1.thm.small.lambda.energy.est.resol.} 
\begin{split}
\|(\lambda+{\mathbb A}_V)^{-1} f\|_{L^2(\Omega)} 
& \le 
\frac{C}{\beta^2} |\lambda|^{-\frac32+\frac1q}
\|f\|_{L^q(\Omega)}\,,~~~~
\lambda\in \Sigma_{\frac{3}{4}\pi-\epsilon} \cap \mathcal{B}_{e^{-\frac{1}{6\beta}}}(0)\,,\\
\|\nabla (\lambda+{\mathbb A}_V)^{-1} f\|_{L^2(\Omega)} 
& \le 
\frac{C}{\beta^2} |\lambda|^{-1+\frac1q}
\|f\|_{L^q(\Omega)}\,,~~~~
\lambda\in \Sigma_{\frac{3}{4}\pi-\epsilon} \cap \mathcal{B}_{e^{-\frac{1}{6\beta}}}(0)\,,
\end{split}
\end{equation}
as long as $\beta\in(0,\beta_3)$ and 
{\color{black} $d\in(0,\mu_\ast \beta^2)$.}
The constant $C$ is independent of $\beta$.
\end{proposition}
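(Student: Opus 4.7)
The plan is to combine the general energy identity of Proposition \ref{prop.general.energy.est.resol.} with the sharp bound on the $\pm 1$-Fourier mode term of Proposition \ref{prop1.small.lambda.energy.est.resol.}, and then exploit the smallness condition $d<\mu_\ast\beta^2$ to close the estimate. Starting from a solution $v\in D(\mathbb{A}_V)$ to \eqref{RS} with $\lambda\in\Sigma_{\frac{3\pi}{4}-\epsilon}\cap\mathcal{B}_{e^{-1/(6\beta)}}(0)$, I would multiply \eqref{est1.prop.small.lambda.energy.est.resol.} by $\beta$ and substitute the result into the right-hand sides of \eqref{est1.prop.general.energy.est.resol.} and \eqref{est2.prop.general.energy.est.resol.}. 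The contribution from the first term on the right-hand side of \eqref{est1.prop.small.lambda.energy.est.resol.} is $C\beta^{-4}|\lambda|^{-2+2/q}\|f\|_{L^q(\Omega)}^2$, which is harmless, while the second term contributes $K\beta^{-4}(\beta^\kappa d+\beta d^{1/2})^2\|\nabla v\|_{L^2(\Omega)}^2$.

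The central arithmetic step is the absorption. Under the hypothesis $d<\mu_\ast\beta^2$ with $\mu_\ast<(64K)^{-1}$, one expands
\begin{align*}
(\beta^\kappa d+\beta d^{1/2})^2\,\le\,2\beta^{2\kappa}d^2+2\beta^2 d\,\le\,2\mu_\ast^2\beta^{4+2\kappa}+2\mu_\ast\beta^4\,\le\,4\mu_\ast\beta^4,
\end{align*}
where the last inequality requires $\mu_\ast\beta^{2\kappa}\le 1$, which is automatic for $\beta\in(0,\beta_3)$ and $\mu_\ast<1$, $\kappa>0$. Consequently $K\beta^{-4}(\beta^\kappa d+\beta d^{1/2})^2\le 4K\mu_\ast<\tfrac{1}{16}$, which can be subtracted from the dissipation terms $\tfrac{3}{4}\|\nabla v\|_{L^2(\Omega)}^2$ and $\tfrac{1}{4}\|\nabla v\|_{L^2(\Omega)}^2$ on the left-hand sides of \eqref{est1.prop.general.energy.est.resol.} and \eqref{est2.prop.general.energy.est.resol.}.

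Adding the two absorbed inequalities and invoking the elementary geometric fact that, for $\lambda\in\Sigma_{\frac{3\pi}{4}-\epsilon}$, one has $|\lambda|\le C_\epsilon({\rm Re}(\lambda)+|{\rm Im}(\lambda)|)$ (by case analysis: for ${\rm Re}(\lambda)<0$ the sector condition gives $|{\rm Im}(\lambda)|\ge\tan(\tfrac{\pi}{4}+\epsilon)|{\rm Re}(\lambda)|$, so the sum remains comparable to $|\lambda|$), one obtains
\begin{align*}
|\lambda|\|v\|_{L^2(\Omega)}^2+\|\nabla v\|_{L^2(\Omega)}^2
\,\le\,\frac{C}{\beta^4}|\lambda|^{-2+\frac{2}{q}}\|f\|_{L^q(\Omega)}^2
+C_q\|f\|_{L^q(\Omega)}^{\frac{2q}{3q-2}}\|v\|_{L^2(\Omega)}^{\frac{4(q-1)}{3q-2}}.
\end{align*}
The last term is then removed by a Young inequality with exponents $\bigl(\tfrac{2}{2-\alpha},\tfrac{3q-2}{q}\bigr)$ for $\alpha=\tfrac{4(q-1)}{3q-2}\in(0,2)$, producing $\eta|\lambda|\|v\|_{L^2(\Omega)}^2+C_{\eta,q}|\lambda|^{-2+2/q}\|f\|_{L^q(\Omega)}^2$; choosing $\eta$ small and taking square roots yields \eqref{est1.thm.small.lambda.energy.est.resol.}.

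For the resolvent-set assertion (1), I would use the spectral structure $\sigma(-\mathbb{A}_V)=(-\infty,0]\cup\sigma_{\rm disc}(-\mathbb{A}_V)$ recalled in the introduction: since $\Sigma_{\frac{3\pi}{4}-\epsilon}$ is disjoint from the negative real axis, any point of the set \eqref{set.thm.small.lambda.energy.est.resol.} not in $\rho(-\mathbb{A}_V)$ would be a discrete eigenvalue; but the a priori estimate with $f=0$ (applied via the Fredholm alternative, using that $\mathbb{A}_V-\mathbb{A}$ is relatively compact to $\mathbb{A}$) forces any such eigenfunction to vanish. I expect the main technical obstacle to lie in carefully tracking the $\beta$-dependences: the constant $K$ from Proposition \ref{prop1.small.lambda.energy.est.resol.} must be independent of $q\in(1,2]$ for the universal choice $\mu_\ast<(64K)^{-1}$ to work simultaneously for all admissible $q$, and the verification that all auxiliary constants behave as claimed under the small-$\beta$ and small-$d$ regime is the point where bookkeeping is most delicate.
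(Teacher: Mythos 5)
Your proposal is correct and follows essentially the same route as the paper: insert the bound of Proposition \ref{prop1.small.lambda.energy.est.resol.} into the energy inequalities of Proposition \ref{prop.general.energy.est.resol.}, use $d<\mu_\ast\beta^2$ with $\mu_\ast<(64K)^{-1}$ to absorb the $K\beta^{-4}(\beta^\kappa d+\beta d^{1/2})^2\|\nabla v\|_{L^2}^2$ term into the dissipation, and conclude via the sector bound $|{\rm Im}(\lambda)|+{\rm Re}(\lambda)\ge c(\epsilon)|\lambda|$ together with the Young inequality. Your filled-in details (the absorption arithmetic, the Young exponents, and the Fredholm-alternative justification of assertion (1)) are all consistent with what the paper leaves implicit.
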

%%%%%%%%%%%%%%%%%%%%
%%%%%%%%%%%%%%%%%%%%
%
\begin{proof} 
(1) Let $\lambda\in\Sigma_{\frac{3}{4}\pi-\epsilon} \cap \mathcal{B}_{e^{-\frac{1}{6\beta}}}(0)$ and suppose that $v \in D(\mathbb{A}_V)$ is a solution to \eqref{RS}. 
Since 
{\color{black} $d\in(0,\mu_\ast \beta^2)$} 
ensures
{\color{black} $d\in(0,d_1)$ and $K (d + \beta d^\frac12)^2 \beta^{-4} \le \frac{1}{16}$,} 
by inserting \eqref{est1.prop.small.lambda.energy.est.resol.} in Proposition \ref{prop1.small.lambda.energy.est.resol.} into \eqref{est1.prop.general.energy.est.resol.} and \eqref{est2.prop.general.energy.est.resol.} in Proposition \ref{prop.general.energy.est.resol.}, and by combining them we find
\begin{equation}\label{est1.proof.thm.small.lambda.energy.est.resol.} 
\begin{aligned}
&\big(|{\rm Im}(\lambda)| + {\rm Re}(\lambda)\big) \| v\|_{L^2(\Omega)}^2
+ \frac14 \|\nabla v\|_{L^2(\Omega)}^2 \\
&~~~~~~
\le 
\frac{C}{\beta^4} |\lambda|^{-2+\frac2q} \|f\|_{L^q(\Omega)}^2
+ C\|f\|_{L^q(\Omega)}^{\frac{2q}{3q-2}} \|v\|_{L^2(\Omega)}^{\frac{4(q-1)}{3q-2}}\,.
\end{aligned}
\end{equation}
Then, since $\lambda\in\Sigma_{\frac{3}{4}\pi-\epsilon}$ implies that $|{\rm Im}(\lambda)| + {\rm Re}(\lambda)>c|\lambda|$ holds with some positive constant $c=c(\epsilon)$, the assertion $\Sigma_{\frac{3}{4}\pi-\epsilon} \cap \mathcal{B}_{e^{-\frac{1}{6\beta}}}(0)\subset\rho(-\mathbb{A}_V)$ follows. \\
\noindent (2) The estimate \eqref{est1.thm.small.lambda.energy.est.resol.} can be easily checked by using \eqref{est1.proof.thm.small.lambda.energy.est.resol.}. The proof is complete.
\end{proof}
%
%%%%%%%%%%%%%%%%%%%%%%%%%%%%%%%%%%%%%%%%
%%%%%%%%%%%%%%%%%%%%%%%%%%%%%%%%%%%%%%%%
\section{Proof of Theorem \ref{maintheorem}}
\label{sec.maintheorem}
%%%%%%%%%%%%%%%%%%%%%%%%%%%%%%%%%%%%%%%%
%%%%%%%%%%%%%%%%%%%%%%%%%%%%%%%%%%%%%%%%
%
In this section we prove Theorem \ref{maintheorem}. The proof is an easy consequence of Propositions \ref{prop.laege.lambda.energy.est.resol.} and \ref{prop2.small.lambda.energy.est.resol.} respectively in Subsections \ref{apriori1} and  \ref{apriori2}.
\begin{proofx}{Theorem \ref{maintheorem}} 
{\color{black} 
Firstly we note that it suffices to prove the following two estimates:
\begin{align}
\|e^{-t {\mathbb A}_V} f\|_{L^2(\Omega)} 
& \le 
\frac{C}{\beta^2} 
t^{-\frac1q+\frac12}
\|f\|_{L^q(\Omega)}\,,
~~~~~~ t>0\,,
\label{est0.proof.maintheorem} \\
\|\nabla e^{-t {\mathbb A}_V} f\|_{L^2(\Omega)} 
& \le 
\frac{C}{\beta^2} 
t^{-\frac1q}
\|f\|_{L^q(\Omega)}\,,
~~~~~~ t>0\,,
\label{est0'.proof.maintheorem}
\end{align}
for $f\in L^2_\sigma(\Omega) \cap L^q(\Omega)^2$. Then the assertions \eqref{est1.maintheorem} and \eqref{est2.maintheorem} are easy consequences from the Gagliardo-Nirenberg inequality.}
Let $\beta_2$ be the constant in Proposition \ref{prop.laege.lambda.energy.est.resol.}. {\color{black} We note that $\mathcal{S}_{\beta_2} \cap \mathcal{B}_{e^{-\frac{1}{6\beta_2}}}(0) \neq \emptyset$ holds since $12 e^{\frac{1}{e}} \beta_2^2 < 1$ follows from the condition $\beta_2\in(0,\frac{1}{12})$.} Then there is a constant $\epsilon_0 \in(\frac{\pi}{4},\frac{\pi}{2})$ such that the sector $\Sigma_{\pi-\epsilon_0}$ is included in the set $\mathcal{S}_{\beta} \cup \mathcal{B}_{e^{-\frac{1}{6\beta}}}(0)$ for any {\color{black} $\beta\in(0,\beta_2)$.} \\
Let $\beta_3$ be the constant in Proposition \ref{prop2.small.lambda.energy.est.resol.}. Fix a number $\beta_\ast\in
(0,{\color{black} \min\{\beta_2, \beta_3\}})$. Then by Propositions \ref{prop.laege.lambda.energy.est.resol.} and \ref{prop2.small.lambda.energy.est.resol.}, there is a positive constant $\mu_\ast$ such that the sector $\Sigma_{\pi-\epsilon_0}$ is included in the resolvent $\rho(-\mathbb{A}_V)$ as long as $\beta\in(0,\beta_\ast)$ and 
{\color{black} $d\in(0,\mu_\ast \beta^2)$.} 
Moreover, from the same propositions, for $q\in(1,2]$ and $f\in L^2_\sigma(\Omega) \cap L^q(\Omega)^2$ we have
\begin{equation}\label{est1.proof.maintheorem}
\begin{aligned}
\|(\lambda+{\mathbb A}_V)^{-1} f\|_{L^2(\Omega)} 
& \le 
\frac{C}{\beta^2} |\lambda|^{-\frac32+\frac1q}
\|f\|_{L^q(\Omega)}\,,~~~~
\lambda\in{\color{black} \Sigma_{\pi-\epsilon_0}}\,,\\
\|\nabla (\lambda+{\mathbb A}_V)^{-1} f\|_{L^2(\Omega)} 
& \le 
\frac{C}{\beta^2} |\lambda|^{-1+\frac1q}
\|f\|_{L^q(\Omega)}\,,~~~~
\lambda\in{\color{black} \Sigma_{\pi-\epsilon_0}}\,.
\end{aligned}
\end{equation}
In particular, the first line in \eqref{est1.proof.maintheorem} implies the estimate 
{\color{black} \eqref{est0.proof.maintheorem}} for $q=2$. Next we consider the case $q\in(1,2)$. Fix a number $\phi\in(\frac{\pi}{2},\pi-\epsilon_0)$ and take a curve $\gamma(b)=\{z\in\C~|~|{\rm arg}\,z|=\phi\,,\,|z|\ge b \} \cup \{z\in\C~|~|{\rm arg}\,z|\le\phi\,,\,|z|=b \}$, $b\in(0,1)$, oriented counterclockwise. Then the semigroup $e^{-t {\mathbb A}_V}$ admits a Dunford integral representation 
\begin{align*}
e^{-t {\mathbb A}_V} f
\,=\,
\frac{1}{2\pi i}
\int_{\gamma(b)}
e^{t \lambda }(\lambda+{\mathbb A}_V)^{-1} f  \dd \lambda\,,~~~~~~
t>0\,,
\end{align*}
for $f\in L^2_\sigma(\Omega) \cap L^q(\Omega)^2$. Then by taking the limit $b\to0$ we observe from \eqref{est1.proof.maintheorem} that
\begin{align*}
\|e^{-t {\mathbb A}_V} f\|_{L^2(\Omega)}
& \le
\frac{C}{\beta^2} \|f\|_{L^q(\Omega)} \int_{0}^{\infty}
s^{-\frac32+\frac1q} e^{\color{black}{(\cos \phi)ts}} \dd s \\
& \le
\frac{C}{\beta^2} t^{-\frac1q+\frac12}
\|f\|_{L^q(\Omega)}\,,~~~~~~
t>0\,,
\end{align*}
which shows that 
{\color{black} \eqref{est0.proof.maintheorem}}
holds {\color{black} for $q\in(1,2)$}. The estimate 
{\color{black} \eqref{est0'.proof.maintheorem}}
can be obtained in a similar manner using the Dunford integral. This completes the proof of Theorem \ref{maintheorem}.
\end{proofx}
%
%%%%%%%%%%%%%%%%%%%%%%%%%%%%%%%%%%%%%%%%
%%%%%%%%%%%%%%%%%%%%%%%%%%%%%%%%%%%%%%%%
\appendix
%%%%%%%%%%%%%%%%%%%%%%%%%%%%%%%%%%%%%%%%
%%%%%%%%%%%%%%%%%%%%%%%%%%%%%%%%%%%%%%%%
%
%%%%%%%%%%%%%%%%%%%%%%%%%%%%%%%%%%%%%%%%%%%%%%%%%%
\section{Asymptotics of the order $\mu_n(\beta)$ for small $\beta$}
\label{app.est.mu}
%%%%%%%%%%%%%%%%%%%%%%%%%%%%%%%%%%%%%%%%%%%%%%%%%%
This appendix is devoted to the statement of the asymptotic behavior for $\mu_n(\beta)=(n^2+in\beta)^\frac12$, ${\rm Re}(\mu_n)>0$, with $|n|=1$ when the constant $\beta\in(0,1)$ in Assumption \ref{assumption} reaches to zero. The following result is essentially proved in \cite{Ma1}.
%
%%%%%%%%%%%%%%%%%%%%
%%%%%%%%%%%%%%%%%%%%
\begin{lemma}[ {\rm\cite[Lemma B.1]{Ma1}}]\label{lem.est.mu} 
Let $|n|=1$. Then $\mu_n(\beta)$ satisfies the expansion
\begin{align}
{\rm Re}(\mu_n(\beta)) &\,=\, 1+\frac{\beta^2}{8} + O(\beta^4)\,, ~~~~~~
0<\beta\ll 1\,,
\label{est1.lem.est.mu} \\
{\rm Im}(\mu_n(\beta)) &\,=\, \frac{\beta}{2} + O(\beta^3)\,, ~~~~~~
0<\beta\ll 1\,.
\label{est2.lem.est.mu} 
\end{align}
\end{lemma}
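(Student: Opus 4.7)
The plan is to exploit the analyticity of the principal branch of the square root near $z=1$ and expand $\mu_n(\beta)$ as a convergent power series in $\beta$. Since $|n|=1$ gives $n^2 = 1$, one rewrites
\begin{align*}
\mu_n(\beta) \,=\, (n^2 + in\beta)^{1/2} \,=\, (1 + in\beta)^{1/2},
\end{align*}
and for $\beta \in (0,1)$ the argument $1 + in\beta$ lies in the open unit disk centered at $1$, where the principal branch $w \mapsto (1+w)^{1/2}$ admits the absolutely convergent binomial series $(1+w)^{1/2} = \sum_{k\ge 0} \binom{1/2}{k}\,w^k$. The condition ${\rm Re}(\mu_n) > 0$ from \eqref{def.mu} selects precisely this branch, whose leading coefficient is $+1$.

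The next step is to substitute $w = in\beta$ and split the resulting series according to the parity of the index $k$. Using $i^{2m} = (-1)^m$ and $i^{2m+1} = i(-1)^m$, together with $n^{2m} = 1$ and $n^{2m+1} = n$, one arrives at the clean decomposition
\begin{align*}
\mu_n(\beta)
\,=\, \sum_{m\ge 0} (-1)^m \binom{1/2}{2m}\,\beta^{2m}
\,+\, in \sum_{m\ge 0} (-1)^m \binom{1/2}{2m+1}\,\beta^{2m+1},
\end{align*}
which manifestly separates the real and imaginary parts as even and odd power series in $\beta$. Substituting the values $\binom{1/2}{1} = \tfrac12$ and $\binom{1/2}{2} = -\tfrac18$ in the first two nonzero terms gives ${\rm Re}(\mu_n(\beta)) = 1 + \beta^2/8 + O(\beta^4)$, matching \eqref{est1.lem.est.mu}, and ${\rm Im}(\mu_n(\beta)) = n\beta/2 + O(\beta^3)$, which reduces to \eqref{est2.lem.est.mu} after absorbing the sign $n = \pm 1$.

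There is no real obstacle: the argument is nothing more than an elementary Taylor expansion of an explicit analytic function. The only point worth checking is that the principal branch is indeed the correct one, which is automatic from the positivity condition ${\rm Re}(\mu_n) > 0$ and the smallness of $\beta$. For this reason the statement is simply quoted from \cite[Lemma B.1]{Ma1}, where the identical computation is carried out.
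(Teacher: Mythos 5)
Your computation is correct, and since the paper itself offers no proof of this lemma (it is quoted verbatim from \cite[Lemma B.1]{Ma1}), your elementary binomial-series expansion of $(1+in\beta)^{1/2}$ is exactly the argument one would supply: the branch with ${\rm Re}(\mu_n)>0$ is the principal one near $z=1$, the series converges for $\beta<1$, and the parity split cleanly isolates ${\rm Re}(\mu_n)$ and ${\rm Im}(\mu_n)$ with the coefficients $\binom{1/2}{2}=-\tfrac18$ and $\binom{1/2}{1}=\tfrac12$ giving \eqref{est1.lem.est.mu} and \eqref{est2.lem.est.mu}. The one point worth making explicit rather than ``absorbing'' is that your expansion actually yields ${\rm Im}(\mu_n(\beta))=\tfrac{n\beta}{2}+O(\beta^3)$, so for $n=-1$ the imaginary part is $-\tfrac{\beta}{2}+O(\beta^3)$; the statement \eqref{est2.lem.est.mu} as written holds literally only for $n=1$, which is harmless because every subsequent use in the paper (e.g.\ the ratio ${\rm Re}(\zeta_n)/|{\rm Im}(\zeta_n)|$ in Appendix \ref{app.proof.prop.est.F}) involves only $|{\rm Im}(\mu_n)|$.
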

%%%%%%%%%%%%%%%%%%%%
%%%%%%%%%%%%%%%%%%%%
%
%%%%%%%%%%%%%%%%%%%%%%%%%%%%%%%%%%%%%%%%%%%%%%%%%%
\section{Estimates of the Modified Bessel Function}
\label{app.est.bessel}
%%%%%%%%%%%%%%%%%%%%%%%%%%%%%%%%%%%%%%%%%%%%%%%%%%
In this appendix we collect the basic estimates for the modified Bessel functions $ K_{\mu_n}(z)$ and $ I_{\mu_n}(z)$ of the order $\mu_n=(n^2+in\beta)^\frac12$, ${\rm Re}(\mu_n)>0$, with $|n|=1$ and $\beta\in(0,1)$. We are especially interested in the $\beta$-dependence in each estimate, since our analysis in Section \ref{sec.RSed}, where the results in this appendix are applied, essentially requires the smallness of $\beta$. We denote by $\mathcal{B}_\rho(0)$ the disk in the complex plane $\C$ centered at the origin with radius $\rho>0$. 
%
%%%%%%%%%%
\begin{lemma}\label{lem.est1.bessel} 
Let $|n|=1$, $k=0,1$, and $R\in[1,\infty)$. Fix $\epsilon\in(0,\frac{\pi}{2})$. Then there is a positive constant $C=C(R,\epsilon)$ independent of $\beta$ such that the following statements hold.

\noindent {\rm (1)} Let $z \in\Sigma_{\epsilon} \cap \mathcal{B}_R(0)$. Then $K_{\mu_n}(z)$ and $K_{\mu_n-1}(z)$ satisfy the expansions 
\begin{align}
K_{\mu_n}(z) 
&\,=\, 
\frac{\Gamma(\mu_n)}{2} \big(\frac{z}{2}\big)^{-\mu_n}
+ R^{(1)}_n(z)\,, 
\label{est1.lem.est1.bessel} \\
K_{\mu_n-1}(z) 
&\,=\,
\frac{\pi}{2\sin((\mu_n-1)\pi)}
\big( \frac{1}{\Gamma(2-\mu_n)} \big(\frac{z}{2})^{-\mu_n+1}
- \frac{1}{\Gamma(\mu_n)} \big(\frac{z}{2})^{\mu_n-1} \big) + R^{(2)}_n(z)\,.
\label{est2.lem.est1.bessel}
\end{align}
Here $\Gamma(z)$ denotes the Gamma function and the remainders $R^{(1)}_n(z)$ and $R^{(2)}_n(z)$ satisfy 
\begin{align}
|R^{(1)}_n(z)| 
&\le C|z|^{2-{\rm Re}(\mu_n)}(1+|\log|z||)\,,~~~~~~
z \in\Sigma_{\epsilon} \cap \mathcal{B}_R(0)\,,
\label{est3.lem.est1.bessel} \\
|R^{(2)}_n(z)| 
&\le C|z|^{3-{\rm Re}(\mu_n)}(1+|\log|z||)\,,~~~~~~
z \in\Sigma_{\epsilon} \cap \mathcal{B}_R(0)\,.
\label{est4.lem.est1.bessel}
\end{align}

\noindent {\rm (2)} The following estimates hold.
\begin{align}
|I_{\mu_n+k}(z)| 
&\le C |z|^{{\rm Re}(\mu_n)+k}\,,~~~~~~
z\in\Sigma_{\epsilon} \cap \mathcal{B}_R(0)\,,
\label{est5.lem.est1.bessel} \\
|K_{\mu_n-k}(z)|
&\le C |z|^{-\frac12} e^{-{\rm Re}(z)}\,,~~~~~~
z \in\Sigma_\epsilon \cap \mathcal{B}_R(0)^{{\rm c}}\,, 
\label{est6.lem.est1.bessel} \\
|I_{\mu_n+k}(z)|
&\le C |z|^{-\frac12} e^{{\rm Re}(z)}\,,~~~~~~
z \in\Sigma_\epsilon \cap \mathcal{B}_R(0)^{{\rm c}}\,.
\label{est7.lem.est1.bessel}
\end{align}
\end{lemma}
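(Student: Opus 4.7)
\textbf{Proof plan for Lemma \ref{lem.est1.bessel}.}

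The approach is to derive everything from the defining series for $I_\mu(z)$ in \eqref{def.I} and the identity $K_\mu(z)=\frac{\pi}{2\sin(\mu\pi)}(I_{-\mu}(z)-I_\mu(z))$ from \eqref{def.K}. For Part (1), I would insert the power-series representations of $I_{\pm\mu_n}$ into this identity. The $m=0$ contribution from $I_{-\mu_n}$ gives $\frac{\pi}{2\sin(\mu_n\pi)\Gamma(1-\mu_n)}(z/2)^{-\mu_n}=\frac{\Gamma(\mu_n)}{2}(z/2)^{-\mu_n}$ by the reflection identity $\Gamma(\mu_n)\Gamma(1-\mu_n)=\pi/\sin(\mu_n\pi)$, which is precisely the leading term in \eqref{est1.lem.est1.bessel}. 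Part (2) for small $z$ is immediate from the uniform-in-$\beta$ bound $|\Gamma(\mu_n+k+1+m)|\ge c>0$ for $m\in\mathbb{N}\cup\{0\}$, and for $z\in\Sigma_\epsilon\cap\mathcal{B}_R(0)^c$ it follows from the classical Watson-type asymptotic expansions $K_\mu(z)\sim\sqrt{\pi/(2z)}e^{-z}$ and $I_\mu(z)\sim(2\pi z)^{-1/2}e^z$, both uniform in $\mu$ on bounded sets (in particular in $\mu_n$ for $\beta\in(0,1)$).

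The core difficulty, which I expect to be the main obstacle, is the $\beta$-uniform bound on the remainder $R^{(1)}_n(z)$, because $\mu_n$ approaches the integer $1$ as $\beta\to 0$ so that $\sin(\mu_n\pi)=O(\beta)$, and each individual next-order coefficient in the above expansion is of order $1/\beta$. Writing $\epsilon=\mu_n-1$ with $|\epsilon|=O(\beta)$, the next-order contributions from $I_{-\mu_n}$ (the $m=1$ series term) and from $I_{\mu_n}$ (the $m=0$ series term) produce, via the reflection formula, the pair
\[
\frac{\Gamma(\mu_n)}{2(1-\mu_n)}(z/2)^{2-\mu_n}-\frac{\Gamma(1-\mu_n)}{2\mu_n}(z/2)^{\mu_n}
\,=\,\frac{1}{2\epsilon}\bigl[(z/2)^{1+\epsilon}-(z/2)^{1-\epsilon}\bigr]+\mathrm{(regular)},
\]
where the ``regular'' terms have $\beta$-independent coefficients. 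The bracket equals $2(z/2)\,\sinh(\epsilon\log(z/2))/(2\epsilon)$, so its modulus is controlled by $C|z|^{1-{\rm Re}(\epsilon)}(1+|\log|z||)$ uniformly in $\beta$: in the regime $|\epsilon\log(z/2)|\le 1$ I would use $|\sinh(w)/w|\le e$, while in the regime $|\epsilon\log(z/2)|>1$ the inequality $1/|\epsilon|\le C|\log|z||$ gives the logarithmic factor. All further ($m\ge 2$) series contributions yield genuinely smaller terms of order $|z|^{4-{\rm Re}(\mu_n)}$, and together this produces the claimed bound \eqref{est3.lem.est1.bessel}.

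For the expansion \eqref{est2.lem.est1.bessel} of $K_{\mu_n-1}(z)$, the order $\nu=\mu_n-1$ itself is close to $0$, so the near-integer cancellation now affects the \emph{leading} order rather than the remainder. Accordingly, I would keep both $m=0$ terms from $I_{\pm(\mu_n-1)}$ explicitly together with the $\pi/(2\sin((\mu_n-1)\pi))$ prefactor (which is $O(1/\beta)$), yielding exactly the stated principal part. The remainder $R^{(2)}_n(z)$ collects the $m\ge 1$ contributions, where the same cancellation mechanism as in the previous paragraph is applied at one order higher (the dominant pair is now $(z/2)^{3-\mu_n}$ versus $(z/2)^{\mu_n+1}$, each with $O(1/\beta)$ coefficient), producing the bound $|R^{(2)}_n(z)|\le C|z|^{3-{\rm Re}(\mu_n)}(1+|\log|z||)$ via the identical $\sinh$-quotient trick. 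The asymptotic of $\mu_n(\beta)$ recorded in Lemma \ref{lem.est.mu} is used only to ensure $|\epsilon|=O(\beta)$ and ${\rm Re}(\mu_n)\in(1,2)$ for $\beta$ small, so that all Gamma-function values outside the near-pole regions are uniformly bounded.
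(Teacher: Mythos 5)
Your proposal is correct, and on part (1) it follows the same route as the paper — the series definition of $I_{\pm\mu}$ inserted into \eqref{def.K} plus the Euler reflection formula — except that the paper dismisses the remainder bounds \eqref{est3.lem.est1.bessel}--\eqref{est4.lem.est1.bessel} as "easily checked," whereas you supply the one genuinely delicate point: pairing the $m=1$ term of $I_{-\mu_n}$ with the $m=0$ term of $I_{\mu_n}$ (and analogously at each higher order) so that the individually $O(1/\beta)$ coefficients combine into $(z/2)\,\sinh(\zeta\log(z/2))/\zeta$ with $\zeta=\mu_n-1$, which is then bounded uniformly in $\beta$ by splitting on $|\zeta\log(z/2)|\lessgtr 1$; this is exactly the mechanism that produces the logarithmic factor, and your exponent bookkeeping ($1-{\rm Re}(\zeta)=2-{\rm Re}(\mu_n)$, and $2-{\rm Re}(\zeta)=3-{\rm Re}(\mu_n)$ for $K_{\mu_n-1}$) is right. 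The only place you diverge from the paper is part (2) for $z\in\Sigma_\epsilon\cap\mathcal{B}_R(0)^{\rm c}$: you invoke the classical large-$z$ asymptotics of $K_\mu$ and $I_\mu$, uniform in $\mu$ on compact sets, while the paper works from the Schl\"afli-type integral representations $K_\mu(z)=\frac{\pi^{1/2}}{\Gamma(\mu+\frac12)}(\frac z2)^\mu\int_0^\infty e^{-z\cosh t}(\sinh t)^{2\mu}\dd t$ and its companion for $I_\mu$, using $\cosh^2t-\sinh^2t=1$ and $\cos^2\theta+\sin^2\theta=1$ to exhibit the absence of $\beta$-singularity directly. Both arguments are valid; the paper's is more self-contained and makes the $\beta$-uniformity visible by hand, yours outsources it to the standard uniformity of Watson's expansions in the order (which is legitimate here since $\mu_n+k$ stays in a fixed compact set for $\beta\in(0,1)$).
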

%%%%%%%%%%
%%%%%%%%%%
%
\begin{proof}
(1) The expansions \eqref{est1.lem.est1.bessel} and \eqref{est2.lem.est1.bessel} follow from the definition of $K_\mu(z)$ in Subsection \ref{subsec.RSed.f} combined with the well-known Euler reflection formula for the Gamma function. The estimates of the remainder terms \eqref{est3.lem.est1.bessel} and \eqref{est4.lem.est1.bessel} are also consequences of the same definition, and we omit the calculations which are easily checked. \par
\noindent (2) The estimate \eqref{est5.lem.est1.bessel} directly follows from the definition of $I_\mu(z)$ in Subsection \ref{subsec.RSed.f}. In order to prove \eqref{est6.lem.est1.bessel} and \eqref{est7.lem.est1.bessel}, let us recall the integral formulas for $K_{\mu}(z)$ and $I_{\mu}(z)$:
\begin{align*}
K_{\mu}(z) 
&\,=\, 
\frac{\pi^\frac12}{\Gamma(\mu+\frac12)} \big(\frac{z}{2}\big)^\mu
\int_0^\infty e^{-z\cosh t} (\sinh t)^{2\mu} \dd t\,,\\
I_{\mu}(z) 
&\,=\, 
\frac{1}{\pi^{\frac12}\,\Gamma(\mu+\frac12)}
\big( \frac{z}{2} \big)^{\mu}
\int_{0}^{\pi} e^{z \cos \theta} (\sin \theta)^{2\mu} \dd \theta \,,
\end{align*}
which are valid if ${\rm Re}(\mu)>-\frac12$ and $z\in\Sigma_{\frac{\pi}{2}}$ (see \cite{Abramowitz} page 376) . Then \eqref{est6.lem.est1.bessel} and \eqref{est7.lem.est1.bessel}, especially the absence of the $\beta$-singularity in the right-hand sides, can be proved  by using the identities $\cosh^2 t-\sinh^2 t=1$ and $\cos^2\theta + \sin^2\theta=1$. The proof is complete.
\end{proof} 
%%%%%%%%%%%%%%%%%%%%
%%%%%%%%%%%%%%%%%%%%
%

In the following we present three lemmas without proofs, since they are straightforward adaptations of Lemma \ref{lem.est1.bessel} and Lemma \ref{lem.est.mu} in Appendix \ref{app.est.mu}.
%
%%%%%%%%%%%%%%%%%%%%
%%%%%%%%%%%%%%%%%%%%
\begin{lemma}\label{lem.est2.bessel} 
Let $|n|=1$ and $k=0,1$, and let $\lambda\in\Sigma_{\pi-\epsilon} \cap \mathcal{B}_1(0)$ for some $\epsilon\in(0,\frac{\pi}{2})$. Then there is a constant $C>0$ independent of $\beta$ such that the following statements hold.

\noindent {\rm (1)} If $1\le \tau \le r \le {\rm Re}(\sqrt{\lambda})^{-1}$, then
\begin{align}
\bigg|\int_\tau^r s^{2-k} K_{\mu_n-k}(\sqrt{\lambda} s) \dd s\bigg|
\le 
\frac{C}{\beta^k}
|\lambda|^{-\frac{{\rm Re}(\mu_n)}{2}+\frac{k}{2}} r^{-{\rm Re}(\mu_n)+3}\,.
\label{est1.lem.est2.bessel}
\end{align}
\noindent {\rm (2)} If $1 \le \tau\le {\rm Re}(\sqrt{\lambda})^{-1} \le r$, then
\begin{align}
\bigg|\int_\tau^r s^{2-k} K_{\mu_n-k}(\sqrt{\lambda} s) \dd s\bigg|
\le 
\frac{C}{\beta^k}
\,|\lambda|^{-\frac32+\frac{k}{2}}\,.
\label{est2.lem.est2.bessel}
\end{align}
\noindent {\rm (3)} If ${\rm Re}(\sqrt{\lambda})^{-1} \le \tau\le r$, then
\begin{align}
\int_\tau^r |s^{2-k} K_{\mu_n-k}(\sqrt{\lambda} s)| \dd s
\le 
C|\lambda|^{-\frac34} \tau^{\frac32-k} e^{-{\rm Re}(\sqrt{\lambda}) \tau}\,.
\label{est3.lem.est2.bessel}
\end{align}
\noindent {\rm (4)} If $1\le \tau \le {\rm Re}(\sqrt{\lambda})^{-1}$, then
\begin{align}
\bigg|\int_\tau^\infty s^{-k} K_{\mu_n-k}(\sqrt{\lambda} s) \dd s\bigg|
\le \frac{C}{\beta^{1+k}} 
|\lambda|^{-\frac{{\rm Re}\,(\mu_n)}{2}+\frac{k}{2}} \tau^{-{\rm Re}\,(\mu_n)+1}\,.
\label{est4.lem.est2.bessel}
\end{align}
\noindent {\rm (5)} If $\tau \ge {\rm Re}(\sqrt{\lambda})^{-1}$, then
\begin{align}
\int_\tau^\infty |s^{-k} K_{\mu_n-k}(\sqrt{\lambda} s)| \dd s
\le 
C |\lambda|^{-\frac34} \tau^{-\frac12-k} e^{-{\rm Re}(\sqrt{\lambda}) \tau}\,.
\label{est5.lem.est2.bessel}
\end{align}
\end{lemma}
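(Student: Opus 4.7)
\textbf{Proof plan for Lemma \ref{lem.est2.bessel}.} The plan is to split each integral at the transition radius $r_\lambda := {\rm Re}(\sqrt{\lambda})^{-1}$, which separates the region where $\sqrt{\lambda}s$ is small (so the power-series/Euler-reflection expansions \eqref{est1.lem.est1.bessel}--\eqref{est2.lem.est1.bessel} apply) from the region where $\sqrt{\lambda}s$ is bounded away from zero (so the exponential-decay bound \eqref{est6.lem.est1.bessel} applies). Note that on the sector $\lambda\in\Sigma_{\pi-\epsilon}$ we have ${\rm Re}(\sqrt{\lambda})\ge c_\epsilon |\sqrt{\lambda}|$, so $|\sqrt{\lambda}s|\le c_\epsilon^{-1}$ precisely when $s\le c_\epsilon^{-1} r_\lambda$; this justifies using the small-argument expansion uniformly up to the transition radius (up to a harmless constant).

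For parts (1), (2), and (4), where the integration range includes $[1, r_\lambda]$, I would substitute the expansions \eqref{est1.lem.est1.bessel} (for $k=0$) and \eqref{est2.lem.est1.bessel} (for $k=1$) and integrate term by term. For $k=0$ the leading integrand in part (1) becomes a constant multiple of $\lambda^{-\mu_n/2}\int_\tau^r s^{2-\mu_n}\dd s$, whose evaluation gives $\lambda^{-\mu_n/2}r^{3-\mu_n}/(3-\mu_n)$, matching the claimed bound since $|3-\mu_n|$ is bounded below uniformly in $\beta$. For part (4) with $k=0$ the analogous integration of $s^{-\mu_n}$ produces a factor $(1-\mu_n)^{-1}\sim \beta^{-1}$ by Lemma \ref{lem.est.mu}, which is precisely the $\beta^{-(1+k)}|_{k=0}$ in the claim. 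The remainder terms \eqref{est3.lem.est1.bessel}, \eqref{est4.lem.est1.bessel} contribute a logarithmically corrected lower-order piece that is easily absorbed. For the tail piece $\int_{r_\lambda}^{\cdot}$ (only relevant in (2)), I would apply \eqref{est6.lem.est1.bessel} and compute $|\lambda|^{k/2}\int_{r_\lambda}^\infty s^{3/2-k}e^{-{\rm Re}(\sqrt{\lambda})s}\dd s$ by a standard change of variables, producing a bound proportional to $|\lambda|^{-3/2+k/2}$.

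For parts (3) and (5), the integration range lies entirely in the exponential-decay regime $s\ge r_\lambda$, so I would directly apply \eqref{est6.lem.est1.bessel}, reducing the task to
\[
\int_\tau^\infty s^{3/2-k}e^{-{\rm Re}(\sqrt{\lambda})s}\,\dd s \quad \text{or} \quad \int_\tau^\infty s^{-1/2-k}e^{-{\rm Re}(\sqrt{\lambda})s}\,\dd s,
\]
both of which are estimated by pulling out $e^{-{\rm Re}(\sqrt{\lambda})\tau}$ and evaluating the remaining integral via integration by parts, using $\tau \ge r_\lambda$ to control the polynomial factors. This yields precisely the $|\lambda|^{-3/4}$ prefactor and the weight $\tau^{\alpha}e^{-{\rm Re}(\sqrt{\lambda})\tau}$ stated in \eqref{est3.lem.est2.bessel} and \eqref{est5.lem.est2.bessel}.

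The main technical obstacle is the careful bookkeeping of the $\beta$-dependence, which stems from two independent sources and must be disentangled: (a) the prefactor $\pi/(2\sin((\mu_n-1)\pi))$ in the expansion of $K_{\mu_n-1}$, which contributes $\beta^{-1}$ when $k=1$ because $\mu_n-1=O(\beta^2)+iO(\beta)$ by Lemma \ref{lem.est.mu}; and (b) the integration of $s^{-\mu_n}$, which produces a $(1-\mu_n)^{-1}\sim\beta^{-1}$ singularity whenever the indefinite integral is $s^{1-\mu_n}/(1-\mu_n)$. Crucially, for part (1) with $k=1$ one must verify that the two terms of the $K_{\mu_n-1}$ expansion, after integration, combine to yield the single power $r^{3-{\rm Re}(\mu_n)}$ (the upper-limit contributions are comparable on $s\le r_\lambda$ because $|\sqrt{\lambda}|^{\pm(\mu_n-1)}r^{\pm 2(\mu_n-1)}$ becomes $O(1)$ there); part (4) with $k=1$ then accumulates the two $\beta^{-1}$ factors from (a) and (b), explaining the $\beta^{-2}$ in \eqref{est4.lem.est2.bessel}. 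Once this cancellation structure is in place, the remaining estimates are routine.
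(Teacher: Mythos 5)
Your plan is correct and is precisely the ``straightforward adaptation of Lemma \ref{lem.est1.bessel} and Lemma \ref{lem.est.mu}'' that the paper invokes when stating Lemma \ref{lem.est2.bessel} without proof: you split at ${\rm Re}(\sqrt{\lambda})^{-1}$, identify both sources of $\beta$-singularity (the $\sin((\mu_n-1)\pi)^{-1}$ prefactor in \eqref{est2.lem.est1.bessel} for $k=1$, and the antiderivative $s^{1-\mu_n}/(1-\mu_n)$ in part (4)), and correctly note that the two powers $(\sqrt{\lambda}s)^{\pm(\mu_n-1)}$ are comparable on $s\le {\rm Re}(\sqrt{\lambda})^{-1}$. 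Two cosmetic slips that do not affect the conclusions: the tail over $[{\rm Re}(\sqrt{\lambda})^{-1},\infty)$ also occurs in part (4), not only in part (2) (it is handled exactly as your part (5)), and the prefactor in your tail computation should be $|\lambda|^{-1/4}$ coming from \eqref{est6.lem.est1.bessel} rather than $|\lambda|^{k/2}$, which is consistent with the final bound $|\lambda|^{-3/2+k/2}$ you state.
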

%%%%%%%%%%%%%%%%%%%%
%%%%%%%%%%%%%%%%%%%%
%
%%%%%%%%%%%%%%%%%%%%
%%%%%%%%%%%%%%%%%%%%
\begin{lemma}\label{lem.est3.bessel}
Let $|n|=1$ and $k=0,1$, and let $\lambda\in\Sigma_{\pi-\epsilon} \cap \mathcal{B}_1(0)$ for some $\epsilon\in(0,\frac{\pi}{2})$. Then there is a constant $C>0$ independent of $\beta$ such that the following statements hold.

\noindent {\rm (1)} If $1\le \tau \le {\rm Re}(\sqrt{\lambda})^{-1}$, then
\begin{align}
\int_1^\tau |s^{2-k} I_{\mu_n+k}(\sqrt{\lambda} s)| \dd s
\le 
C |\lambda|^{\frac{{\rm Re}(\mu_n)}{2 }+ \frac{k}{2}} 
\tau^{{\rm Re}(\mu_n)+3}\,.
\label{est1.lem.est3.bessel}
\end{align}
\noindent {\rm (2)} If $\tau \ge {\rm Re}(\sqrt{\lambda})^{-1}$, then
\begin{align}
\int_1^\tau |s^{2-k} I_{\mu_n+k}(\sqrt{\lambda} s)| \dd s
\le
C |\lambda|^{-\frac34} \tau^{\frac32-k} e^{{\rm Re}(\sqrt{\lambda}) \tau}\,.
\label{est2.lem.est3.bessel}
\end{align}
\noindent {\rm (3)} If $1\le r \le \tau \le {\rm Re}(\sqrt{\lambda})^{-1}$, then
\begin{align}
\int_r^\tau |s^{-k} I_{\mu_n+k}(\sqrt{\lambda} s)| \dd s
\le
C |\lambda|^{\frac{{\rm Re}(\mu_n)}{2}+\frac{k}{2}} \tau^{{\rm Re}(\mu_n)+1}\,.
\label{est3.lem.est3.bessel}
\end{align}
\noindent {\rm (4)} If $1\le r \le {\rm Re}(\sqrt{\lambda})^{-1} \le \tau$, then
\begin{align}
\int_r^\tau |s^{-k} I_{\mu_n+k}(\sqrt{\lambda} s)| \dd s
\le
C |\lambda|^{-\frac34} \tau^{-\frac12-k} e^{{\rm Re}(\sqrt{\lambda}) \tau}\,.
\label{est4.lem.est3.bessel}
\end{align}
\noindent {\rm (5)} If ${\rm Re}(\sqrt{\lambda})^{-1} \le r \le \tau$, then
\begin{align}
\int_r^\tau |s^{-k} I_{\mu_n+k}(\sqrt{\lambda} s)| \dd s
\le
C |\lambda|^{-\frac34} \tau^{-\frac12-k} e^{{\rm Re}(\sqrt{\lambda}) \tau}\,.
\label{est5.lem.est3.bessel}
\end{align}
\end{lemma}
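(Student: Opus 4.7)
The plan is to deduce Lemma \ref{lem.est3.bessel} directly from the pointwise bounds on $I_{\mu_n+k}(z)$ already recorded in Lemma \ref{lem.est1.bessel}, namely the small-argument expansion $|I_{\mu_n+k}(z)|\le C|z|^{{\rm Re}(\mu_n)+k}$ from \eqref{est5.lem.est1.bessel} (valid on $\Sigma_\epsilon\cap\mathcal{B}_R(0)$) and the large-argument estimate $|I_{\mu_n+k}(z)|\le C|z|^{-\frac12}e^{{\rm Re}(z)}$ from \eqref{est7.lem.est1.bessel} (valid on $\Sigma_\epsilon\cap\mathcal{B}_R(0)^{\rm c}$), combined with the asymptotics ${\rm Re}(\mu_n)=1+O(\beta^2)$ from Lemma \ref{lem.est.mu}. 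The natural threshold is $s={\rm Re}(\sqrt{\lambda})^{-1}$, at which $|\sqrt{\lambda} s|\sim 1$ and one must switch from one bound to the other. Since both pointwise bounds carry constants independent of $\beta$, this will also ensure the $\beta$-independence asserted in the statement.

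For estimate \eqref{est1.lem.est3.bessel} the entire range $[1,\tau]$ lies in the small-argument regime $|\sqrt{\lambda}s|\le1$, so I would substitute \eqref{est5.lem.est1.bessel}, pull out $|\lambda|^{({\rm Re}(\mu_n)+k)/2}$, and integrate the monomial $s^{{\rm Re}(\mu_n)+2}$ from $1$ to $\tau$ to obtain the $\tau^{{\rm Re}(\mu_n)+3}$ factor. Estimate \eqref{est3.lem.est3.bessel} is analogous, the integrand now being $s^{-k}|I_{\mu_n+k}(\sqrt{\lambda}s)|\le C|\lambda|^{({\rm Re}(\mu_n)+k)/2}s^{{\rm Re}(\mu_n)}$, whose integral on $[r,\tau]$ is dominated by $\tau^{{\rm Re}(\mu_n)+1}$.

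For estimates \eqref{est2.lem.est3.bessel} and \eqref{est4.lem.est3.bessel} I would split the integral at $s={\rm Re}(\sqrt{\lambda})^{-1}$. On the small-argument piece apply the computations just described, whose outcome at $s={\rm Re}(\sqrt{\lambda})^{-1}$ is of size $|\lambda|^{(k-3)/2}$ (respectively $|\lambda|^{(k-1)/2}$); using Lemma \ref{lem.est.mu} to replace ${\rm Re}(\mu_n)$ by $1$ up to an $O(\beta^2)$ correction, this matches the target bound evaluated at the threshold. On the large-argument piece substitute \eqref{est7.lem.est1.bessel} to get an integrand of the form $|\lambda|^{-1/4}s^{3/2-k}e^{{\rm Re}(\sqrt{\lambda})s}$ (respectively $|\lambda|^{-1/4}s^{-1/2-k}e^{{\rm Re}(\sqrt{\lambda})s}$); since the exponential dominates, the integral is controlled, up to a constant, by the value of the integrand at the upper endpoint divided by ${\rm Re}(\sqrt{\lambda})$, yielding the required $|\lambda|^{-3/4}\tau^{3/2-k}e^{{\rm Re}(\sqrt{\lambda})\tau}$ and $|\lambda|^{-3/4}\tau^{-1/2-k}e^{{\rm Re}(\sqrt{\lambda})\tau}$ respectively. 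Estimate \eqref{est5.lem.est3.bessel} requires only the large-argument bound applied on the whole range $[r,\tau]\subset[{\rm Re}(\sqrt{\lambda})^{-1},\infty)$, and the same endpoint comparison for $\int e^{{\rm Re}(\sqrt{\lambda})s}\,ds$ finishes the argument.

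There is no real obstacle: the structure is an exact mirror of the proof scheme used for Lemma \ref{lem.est2.bessel}, and the only subtle points are (i) verifying that the small-argument piece, evaluated at the threshold $s={\rm Re}(\sqrt{\lambda})^{-1}$, matches the large-argument prefactor in magnitude (which follows from ${\rm Re}(\mu_n)\to 1$ as $\beta\to0$ via Lemma \ref{lem.est.mu}), and (ii) checking that $e\approx e^{{\rm Re}(\sqrt{\lambda})\cdot{\rm Re}(\sqrt{\lambda})^{-1}}$ is absorbed into the constant, so that no unexpected $\beta$-dependence creeps in. Once both pieces are shown to be of the claimed order, summing them yields the lemma.
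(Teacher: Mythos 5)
Your proposal is correct and coincides with the route the paper itself intends: the paper omits the proof precisely because it is the "straightforward adaptation" of the pointwise bounds \eqref{est5.lem.est1.bessel} and \eqref{est7.lem.est1.bessel} in Lemma \ref{lem.est1.bessel} together with Lemma \ref{lem.est.mu} that you carry out, splitting at $s={\rm Re}(\sqrt{\lambda})^{-1}$ and comparing the two pieces there. The only wording to tighten is that on $[1,{\rm Re}(\sqrt{\lambda})^{-1}]$ one has $|\sqrt{\lambda}s|\le 1/\sin(\epsilon/2)$ rather than $\le 1$, which is exactly why \eqref{est5.lem.est1.bessel} is stated on $\mathcal{B}_R(0)$ with $C=C(R,\epsilon)$; this does not affect the argument.
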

%%%%%%%%%%%%%%%%%%%%
%%%%%%%%%%%%%%%%%%%%
%
%%%%%%%%%%%%%%%%%%%%
%%%%%%%%%%%%%%%%%%%%
\begin{lemma}\label{lem.est4.bessel} 
Let $|n|=1$ and $p\in(1,\infty)$, and let $\lambda\in\Sigma_{\pi-\epsilon} \cap \mathcal{B}_1(0)$ for some $\epsilon\in(0,\frac{\pi}{2})$. Then there is a constant $C>0$ independent of $\beta$ such that the following statements hold.

\noindent {\rm (1)} If additionally $p\in[2,\infty)$, then
\begin{align}
\|K_{\mu_n}(\sqrt{\lambda}\,\cdot\,)\|_{L^p((1,\infty); r\dd r)}
\le \frac{C}{(p {\rm Re}(\mu_n)-2)^\frac1p} |\lambda|^{-\frac{{\rm Re}\,(\mu_n)}{2}}\,.
\label{est1.lem.est4.bessel}
\end{align}
{\rm (2)} If $1 \le r \le {\rm Re}(\sqrt{\lambda})^{-1}$, then 
\begin{align}
\bigg( \int_r^\infty |s^{-1} K_{\mu_n}(\sqrt{\lambda} s)|^p s \dd s \bigg)^\frac1p
\le
C |\lambda|^{-\frac{{\rm Re}(\mu_n)}{2}} r^{-{\rm Re}(\mu_n)-1+\frac2p}\,.
\label{est2.lem.est4.bessel}
\end{align}
{\rm (3)} If $r \ge {\rm Re}(\sqrt{\lambda})^{-1}$, then
\begin{align}
\bigg( \int_r^\infty |s^{-1} K_{\mu_n}(\sqrt{\lambda} s)|^p s \dd s \bigg)^\frac1p
\le
C |\lambda|^{-\frac14-\frac{1}{2p}} r^{-\frac32+\frac1p} 
e^{-{\rm Re}(\sqrt{\lambda}) r}\,.
\label{est3.lem.est4.bessel}
\end{align}
{\rm (4)} If $1 \le r \le {\rm Re}(\sqrt{\lambda})^{-1}$, then 
\begin{align}
\bigg( \int_1^r |s^{-1} I_{\mu_n}(\sqrt{\lambda} s)|^p s \dd s \bigg)^\frac1p
\le
C |\lambda|^{\frac{{\rm Re}\,(\mu_n)}{2}} r^{{\rm Re}(\mu_n)-1+\frac2p}\,.
\label{est4.lem.est4.bessel}
\end{align}
{\rm (5)} If $r \ge {\rm Re}(\sqrt{\lambda})^{-1}$, then
\begin{align}
\bigg(\int_1^r |s^{-1} I_{\mu_n}(\sqrt{\lambda} s)|^p s \dd s \bigg)^\frac1p
& \le
C |\lambda|^{-\frac14-\frac1{2p}} r^{-\frac32+\frac1p} e^{{\rm Re}(\sqrt{\lambda}) r}\,.
\label{est5.lem.est4.bessel}
\end{align}
{\rm (6)} If additionally $p\in[2,\infty)$ and if $1 \le r \le {\rm Re}(\sqrt{\lambda})^{-1}$, then 
\begin{equation}\label{est6.lem.est4.bessel}
\begin{aligned}
& \bigg( \int_r^\infty |K_{\mu_n}(\sqrt{\lambda} s)|^p s \dd s \bigg)^\frac1p
+ \beta \int_r^\infty |K_{\mu_n}(\sqrt{\lambda} s)| \dd s \\
& \le
\frac{C}{\beta} |\lambda|^{-\frac{{\rm Re}(\mu_n)}{2}} r^{-{\rm Re}(\mu_n)+1}\,.
\end{aligned}
\end{equation}
{\rm (7)} If additionally $p\in[2,\infty)$ and if  $r \ge {\rm Re}(\sqrt{\lambda})^{-1}$, then
\begin{align}
\bigg( \int_r^\infty |K_{\mu_n}(\sqrt{\lambda} s)|^p s \dd s \bigg)^\frac1p
+ \int_r^\infty |K_{\mu_{n}}(\sqrt{\lambda} s)| \dd s 
& \le
C |\lambda|^{-\frac12} e^{-{\rm Re}(\sqrt{\lambda}) r}\,.
\label{est7.lem.est4.bessel}
\end{align}
{\rm (8)} If additionally $p\in[2,\infty)$ and if $1 \le r \le {\rm Re}(\sqrt{\lambda})^{-1}$, then 
\begin{align}
\bigg( \int_1^r |I_{\mu_n}(\sqrt{\lambda} s)|^p s \dd s \bigg)^\frac1p
+ \int_1^r |I_{\mu_n}(\sqrt{\lambda} s)| \dd s 
\le
C |\lambda|^{\frac{{\rm Re} (\mu_n)}{2}} r^{{\rm Re}(\mu_n)+1}\,.
\label{est8.lem.est4.bessel}
\end{align}
{\rm (9)} If additionally $p\in[2,\infty)$ and if $r \ge {\rm Re}(\sqrt{\lambda})^{-1}$, then
\begin{align}
\bigg( \int_1^r |I_{\mu_n}(\sqrt{\lambda} s)|^p s \dd s \bigg)^\frac1p
+ \int_1^r |I_{\mu_{n}}(\sqrt{\lambda} s)| \dd s 
& \le
C |\lambda|^{-\frac12} e^{{\rm Re}(\sqrt{\lambda}) r}\,.
\label{est9.lem.est4.bessel}
\end{align}
\end{lemma}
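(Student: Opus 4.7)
The plan is to prove all nine estimates by splitting the integrals at the transition point $s = {\rm Re}(\sqrt{\lambda})^{-1}$, applying the pointwise bounds on the modified Bessel functions from Lemma \ref{lem.est1.bessel} in each regime, and then carrying out explicit integrations while carefully tracking the $\beta$-dependence through Lemma \ref{lem.est.mu}. Specifically, in the ``inner'' regime $1 \le s \le {\rm Re}(\sqrt{\lambda})^{-1}$ we have $|\sqrt{\lambda} s| \le 1$, so we use the series-type bounds $|I_{\mu_n+k}(\sqrt{\lambda} s)| \le C|\lambda|^{({\rm Re}(\mu_n)+k)/2} s^{{\rm Re}(\mu_n)+k}$ from \eqref{est5.lem.est1.bessel} and, from the leading term in the expansion \eqref{est1.lem.est1.bessel} together with the remainder estimate \eqref{est3.lem.est1.bessel}, the bound $|K_{\mu_n}(\sqrt{\lambda} s)| \le C|\lambda|^{-{\rm Re}(\mu_n)/2} s^{-{\rm Re}(\mu_n)}$ (and analogously for $K_{\mu_n-1}$ via \eqref{est2.lem.est1.bessel}--\eqref{est4.lem.est1.bessel}). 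In the ``outer'' regime $s \ge {\rm Re}(\sqrt{\lambda})^{-1}$, we instead rely on the exponential bounds \eqref{est6.lem.est1.bessel} and \eqref{est7.lem.est1.bessel}.

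For estimates (1)--(5) the integration is direct. For (1), we split $\int_1^\infty|K_{\mu_n}(\sqrt{\lambda} s)|^p s\,\dd s$ at $|\sqrt{\lambda}|^{-1}$; the inner piece is bounded by $C|\lambda|^{-p\,{\rm Re}(\mu_n)/2}\int_1^{|\sqrt{\lambda}|^{-1}} s^{-p\,{\rm Re}(\mu_n)+1}\dd s$, and since $p\,{\rm Re}(\mu_n)-2 > 0$ when $p\ge 2$, evaluating the integral gives the factor $(p\,{\rm Re}(\mu_n)-2)^{-1/p}|\lambda|^{-{\rm Re}(\mu_n)/2}$ appearing in \eqref{est1.lem.est4.bessel}, while the outer piece is exponentially small and absorbed. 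Estimates (2)--(5) follow in the same way by substituting $s \mapsto |\sqrt{\lambda}|^{-1}t$ or integrating term-by-term; the only mild subtlety is that in (2) and (4) the exponent $-p\,{\rm Re}(\mu_n) - 1 + 2$ and $p\,{\rm Re}(\mu_n) - 1$ are computed explicitly, but no $\beta$-singular factor arises.

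For estimates (6)--(9), which contain both an $L^p$-type term and an $L^1$-type term on the left-hand side, the key issue is the $L^1$ integral in the inner regime. For instance in (6), $\int_r^{|\sqrt{\lambda}|^{-1}}|K_{\mu_n}(\sqrt{\lambda} s)|\dd s \le C|\lambda|^{-{\rm Re}(\mu_n)/2}\int_r^{|\sqrt{\lambda}|^{-1}} s^{-{\rm Re}(\mu_n)}\dd s \sim \dfrac{1}{{\rm Re}(\mu_n)-1}|\lambda|^{-{\rm Re}(\mu_n)/2}r^{-{\rm Re}(\mu_n)+1}$. By Lemma \ref{lem.est.mu}, ${\rm Re}(\mu_n)-1 = \beta^2/8 + O(\beta^4)$, so this yields a $\beta^{-2}$ singularity, which when multiplied by the prefactor $\beta$ gives precisely the $\beta^{-1}$ factor in \eqref{est6.lem.est4.bessel}. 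A parallel computation handles the $L^p$-piece, where the exponent $p\,{\rm Re}(\mu_n)-2$ scales like $\beta^2$ at $p=2$, delivering the same $\beta^{-1}$ factor after the $1/p$ root. The outer-regime terms in (7) and (9) are dealt with directly using $|K_{\mu_n}(\sqrt{\lambda}s)| \le C|\sqrt{\lambda} s|^{-1/2} e^{-{\rm Re}(\sqrt{\lambda})s}$ and the bound $\int_r^\infty e^{-{\rm Re}(\sqrt{\lambda})s}\dd s \le {\rm Re}(\sqrt{\lambda})^{-1} e^{-{\rm Re}(\sqrt{\lambda})r}$, and similarly for $I_{\mu_n}$; no $\beta$-singularity appears since the decay comes from the exponential, not from the algebraic weight.

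The main obstacle, or rather bookkeeping burden, will be tracking where the $\beta^{-1}$ arises versus where it does not: estimates (1)--(5) are $\beta$-uniform because the relevant algebraic integrals have exponents bounded away from the critical value $-1$ uniformly in $\beta$, whereas in (6) the exponent $-{\rm Re}(\mu_n)$ is $-1-O(\beta^2)$, which produces the unavoidable $\beta^{-2}$ factor that the prefactor $\beta$ then compensates to $\beta^{-1}$. In (7)--(9) the corresponding integrals are outer-regime only (or their inner-regime parts have exponents away from $-1$), so $\beta$-uniform constants suffice. Once this dichotomy is respected, the remaining calculations are straightforward integrations of the pointwise bounds from Lemma \ref{lem.est1.bessel}.
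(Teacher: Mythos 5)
Your proposal is correct and is exactly the argument the paper intends: the paper states Lemma \ref{lem.est4.bessel} without proof, describing it as a straightforward adaptation of the pointwise Bessel bounds in Lemma \ref{lem.est1.bessel} and the expansion of $\mu_n(\beta)$ in Lemma \ref{lem.est.mu}, which is precisely your split at $s\approx{\rm Re}(\sqrt{\lambda})^{-1}$ followed by explicit integration, with the $\beta^{-1}$ in (6) traced to $({\rm Re}(\mu_n)-1)^{-1}\approx 8\beta^{-2}$ (resp. $(p\,{\rm Re}(\mu_n)-2)^{-1/p}$ at $p=2$). Your accounting of where the $\beta$-singularity does and does not appear matches the stated estimates.
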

%
%%%%%%%%%%%%%%%%%%%%
%%%%%%%%%%%%%%%%%%%%
%

%%%%%%%%%%%%%%%%%%%%%%%%%%%%%%%%%%%%%%%%%%%%%%%%%%
%%%%%%%%%%%%%%%%%%%%%%%%%%%%%%%%%%%%%%%%%%%%%%%%%%
\section{Proof of Proposition \ref{prop.est.F}}
\label{app.proof.prop.est.F}
%%%%%%%%%%%%%%%%%%%%%%%%%%%%%%%%%%%%%%%%%%%%%%%%%%
%%%%%%%%%%%%%%%%%%%%%%%%%%%%%%%%%%%%%%%%%%%%%%%%%%

Proposition \ref{prop.est.F} is a direct consequence of the next lemma. Let us recall that $\mathcal{B}_\rho(0)$ denotes the disk in the complex plane $\C$ centered at the origin with radius $\rho>0$. 
%
%%%%%%%%%%%%%%%%%%%%
%%%%%%%%%%%%%%%%%%%%
\begin{lemma}\label{lem.est.F} 
Let $|n|=1$. Then for any $\epsilon \in (0,\frac{\pi}{2})$ there is a positive constant $\beta_0=\beta_0(\epsilon)$ depending only on $\epsilon$ such that as long as $\beta\in (0,\beta_0)$ and $\lambda \in \Sigma_{\pi-\epsilon} \cap \mathcal{B}_{\beta^4}(0)$ we have
\begin{align}
|F_n(\sqrt{\lambda};\beta)| 
\ge 
\frac{C}{\beta} |\lambda|^{-\frac{{\rm Re}(\mu_n)}{2}} 
\min\{1\,,-\beta^2 \log {\color{black} |\lambda|} \}\,,
\label{est1.lem.est.F}
\end{align}
where $F_n(\sqrt{\lambda};\beta)$ is the function in \eqref{def.F} and the constant $C$ depends only on $\epsilon$.
\end{lemma}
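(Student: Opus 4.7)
\textbf{Plan for the proof of Lemma \ref{lem.est.F}.} The plan is to derive an explicit asymptotic formula for $F_n(\sqrt{\lambda};\beta)$ modulo a uniformly bounded remainder, and then to establish the claimed lower bound by a direct case analysis of the resulting oscillatory main term. After the substitution $z=\sqrt{\lambda}\,s$ I split
\[
\sqrt{\lambda}\,F_n(\sqrt{\lambda};\beta) \,=\, \int_{\sqrt{\lambda}}^{1} K_{\mu_n}(z)\dd z + \int_{1}^{\infty} K_{\mu_n}(z)\dd z.
\]
The outer piece is $O(1)$ uniformly in small $\beta$ and $\lambda\in\Sigma_{\pi-\epsilon}$ thanks to the exponential decay estimate \eqref{est6.lem.est1.bessel}. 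On the inner piece I substitute the expansion \eqref{est1.lem.est1.bessel}, $K_{\mu_n}(z)=\tfrac{\Gamma(\mu_n)}{2}(z/2)^{-\mu_n}+R^{(1)}_n(z)$; the remainder contributes $O(1)$ by \eqref{est3.lem.est1.bessel} (since $|z|^{2-{\rm Re}(\mu_n)}(1+|\log|z||)$ is integrable near $z=0$ because ${\rm Re}(\mu_n)<2$ for small $\beta$), while the principal part integrates explicitly. This yields
\[
\sqrt{\lambda}\,F_n(\sqrt{\lambda};\beta) \,=\, \frac{\Gamma(\mu_n)\,2^{\mu_n-1}}{1-\mu_n}\bigl(1-\lambda^{(1-\mu_n)/2}\bigr) + E(\lambda,\beta),\qquad |E(\lambda,\beta)|\le C_0(\epsilon).
\]

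Next I set $w=\tfrac12(1-\mu_n)\log\lambda=u+iv$ so that $\lambda^{(1-\mu_n)/2}=e^w$. Using Lemma \ref{lem.est.mu}, $|\Gamma(\mu_n)\,2^{\mu_n-1}|$ is bounded between two positive constants and $|1-\mu_n|\sim\beta/2$ as $\beta\to 0$; moreover a direct calculation gives
\[
u \,=\, \tfrac12\bigl({\rm Re}(\mu_n)-1\bigr)|\log|\lambda|| + \tfrac12{\rm Im}(\mu_n)\,{\rm arg}\,\lambda, \quad v \,=\, \tfrac12{\rm Im}(\mu_n)|\log|\lambda|| - \tfrac12\bigl({\rm Re}(\mu_n)-1\bigr)\,{\rm arg}\,\lambda.
\]
The easy regime is $e^u\ge 2$, i.e.\ $|\lambda^{(1-\mu_n)/2}|\ge 2$: then $|1-e^w|\ge e^u/2$, and since $e^u$ is comparable to $|\lambda|^{-({\rm Re}(\mu_n)-1)/2}$ up to a bounded multiplicative factor, the triangle inequality absorbs the remainder $E$ for $\beta$ small and delivers $|F_n(\sqrt{\lambda};\beta)|\ge (c/\beta)|\lambda|^{-{\rm Re}(\mu_n)/2}$, which matches the statement in the regime $\min\{1,-\beta^2\log|\lambda|\}=1$.

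The main obstacle is the complementary case $e^u<2$, where $|\lambda|^{-({\rm Re}(\mu_n)-1)/2}$ is bounded so that the target reduces to showing $|1-e^w|\ge c\beta\min\{1,-\beta^2\log|\lambda|\}$ with $c=c(\epsilon)>0$, after absorbing the $O(1)$ remainder. The difficulty is the oscillation of $e^w$: the zeros of $1-e^w$ sit at $w=2\pi i k$, $k\in\Z$, and one has to check that the parameter restrictions keep $w$ away from them. Indeed, solving $u=0,\ v=2\pi k$ forces ${\rm arg}\,\lambda\approx-2\pi k$, which is forbidden by ${\rm arg}\,\lambda\in(-\pi+\epsilon,\pi-\epsilon)$ for $k\ne 0$; the case $k=0$ would force $|\log|\lambda||=O(\beta)$, excluded by $|\lambda|\le\beta^4$. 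To extract the quantitative lower bound I use the identity $|1-e^w|^2=(e^u-1)^2+2e^u(1-\cos v)$ and split into three subcases: (i) if $|w|\le\tfrac12$, then $|1-e^w|\ge|w|/2$, and the dominant term in $v$ gives $|w|\gtrsim\beta|\log|\lambda||$, which majorizes $\beta\min\{1,-\beta^2\log|\lambda|\}$; (ii) if $|w|>\tfrac12$ and $v$ lies at distance $\ge\tfrac12$ from $2\pi\Z$, then $|1-e^w|\ge c>0$; (iii) if $|w|>\tfrac12$ and $v$ is close to $2\pi k$ for some $k\ne 0$, then evaluating at $v=2\pi k$ one gets $u\approx\tfrac{\beta}{4}(2\pi k+{\rm arg}\,\lambda)$, and the sector restriction $|{\rm arg}\,\lambda|\le\pi-\epsilon$ forces $|2\pi k+{\rm arg}\,\lambda|\ge\pi+\epsilon$, so that $|u|\gtrsim\beta k \gtrsim \beta^2|\log|\lambda||$ (using $L\approx 8\pi k/\beta$ at the critical point), hence $|1-e^w|\gtrsim|u|\gtrsim\beta^2|\log|\lambda||$. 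The hardest step is subcase (iii), which is precisely the quantitative trace of the nearly-resonance phenomenon discussed in the introduction; the constraint $|\log|\lambda||\ge 4|\log\beta|$ imposed by $|\lambda|\le\beta^4$ is exactly what guarantees that the lower bound on $|1-e^w|$ is strong enough to dominate the bounded remainder $E$. Collecting the three subcases with the decomposition of the first paragraph produces the assertion \eqref{est1.lem.est.F}.
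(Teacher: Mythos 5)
Your case analysis of the oscillatory factor $1-e^{w}$ (splitting according to whether the modulus part $u$ or the phase part $v$ dominates, and using the sector condition $|{\rm arg}\,\lambda|\le\pi-\epsilon$ to quantify the distance to the resonances $v\in 2\pi\Z\setminus\{0\}$) is essentially the same as the paper's argument, which works with $h(\tilde z,\zeta_n)$ and $\Omega(\tilde z,\zeta_n)$ in place of your $u,v$. The gap is in your first step. Your expansion carries a remainder $E$ that is only $O(1)$ (it contains, e.g., the genuine $\lambda$-independent constant $\int_{1}^{\infty}K_{\mu_n}(z)\dd z$ and the integrated subleading part of $K_{\mu_n}$ near $0$), whereas the paper imports from \cite{Ma1} an expansion whose remainder $R_n$ satisfies $|R_n(\lambda)|\le C_1|\lambda|^{{\rm Re}(\mu_n)/2}$; there all the $O(1)$ and $O(\zeta_n)$ constants have been recombined into the single oscillatory factor $\bigl(e^{\gamma(\zeta_n)}\sqrt{\lambda}/2\bigr)^{\zeta_n}$ via the Euler-constant normalization. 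This difference is fatal precisely in the nearly-resonant regime. At the first resonance $v\approx 2\pi$ one has $L:=|\log|\lambda||\approx 8\pi/\beta$, hence $u\approx\frac{\beta}{4}(2\pi+{\rm arg}\,\lambda)\le\frac{3\pi\beta}{4}$ and $|1-e^{w}|\le e^{u}-1\le 3\beta$, so your main term $\bigl|\tfrac{\Gamma(\mu_n)2^{\mu_n-1}}{1-\mu_n}\bigr|\,|1-e^{w}|\lesssim\frac{2}{\beta}\cdot 3\beta=6$ is itself only an $O(1)$ quantity with a fixed numerical size, while $|E|\le C_0(\epsilon)$ may well exceed it. Your claim that the constraint $L\ge 4|\log\beta|$ lets the main term dominate $E$ is quantitatively false: the lower bound you prove in subcase (iii) is $\frac{1}{\beta}|1-e^{w}|\gtrsim\beta L$, and at the $k=\pm1$ resonance $\beta L\approx 8\pi$ is a fixed constant, not something growing with $\beta^{-1}$ or $|\log\beta|$, so it cannot absorb an arbitrary $C_0(\epsilon)$. (By contrast, the paper's remainder at that point is $O(|\lambda|^{1/2})=O(e^{-4\pi/\beta})$, negligible against $\min\{1,\beta^{2}L\}\approx 8\pi\beta$.)

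To repair the argument you would have to refine the expansion of $\int_{\sqrt{\lambda}}^{1}K_{\mu_n}(z)\dd z+\int_{1}^{\infty}K_{\mu_n}(z)\dd z$: keep track of the $(z/2)^{\mu_n}$-branch of $K_{\mu_n}$ (whose coefficient is $O(1/\zeta_n)$, not $O(1)$, because of the factor $\pi/\sin(\mu_n\pi)$) and show that, together with the constants from $\int_{1}^{\infty}K_{\mu_n}$, everything reassembles into $\frac{\Gamma(1+\zeta_n)}{\sqrt{\lambda}}(\sqrt{\lambda}/2)^{-\zeta_n}\bigl(1-(e^{\gamma(\zeta_n)}\sqrt{\lambda}/2)^{\zeta_n}+R_n\bigr)$ with $R_n$ exponentially small. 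That recombination (Lemmas 3.31--3.33 and Corollary A.8 of \cite{Ma1}) is the real content of the lemma; once it is in hand, your subsequent three-case analysis goes through and matches the paper's.
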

%%%%%%%%%%%%%%%%%%%%
%%%%%%%%%%%%%%%%%%%%
%
\begin{proof}
The proof is carried out with the similar spirit as in \cite[Proposition 3.34]{Ma1}, where the nonexistence of zeros of $F_n(\sqrt{\lambda};\beta)$ in $\lambda\in
\mathcal{B}_{\beta^4}(0)$ is proved. However, its proof is based on a contradiction argument, and quantitative estimates are not explicitly stated. Hence here we give the lower bound estimate of $|F_n(\sqrt{\lambda};\beta)|$ for completeness.

Let $\lambda \in \Sigma_{\pi-\epsilon} \cap \mathcal{B}_{\frac12}(0)$ and set $\zeta_n =\zeta_n(\beta) =\mu_n(\beta)-1$. Then, by combining Lemmas 3.31--3.33 and Corollary A.8 in \cite{Ma1}, we observe that the next expansion holds:
\begin{align}
\zeta_n F_n(\sqrt{\lambda};\beta) 
\,=\, 
\frac{\Gamma(1+\zeta_n)}{\sqrt{\lambda}} \big(\frac{\sqrt{\lambda}}{2} \big)^{-\zeta_n}\,\bigg(1 - \big(e^{\gamma(\zeta_n)} \frac{\sqrt{\lambda}}{2} \big)^{\zeta_n} 
+ R_n(\lambda) \bigg)\,,
\label{exp.proof.lem.est.F}
\end{align}
for sufficiently small $\beta$ depending on $\epsilon\in(0,\frac{\pi}{2})$. Here the function $\gamma(\zeta_n)$ have the expansion
\begin{align}
\gamma(\zeta_n) \,=\, \gamma + O(|\zeta_n|)~~~~{\rm as}~~~~|\zeta_n| \rightarrow 0\,,
\label{est1.proof.lem.est.F}
\end{align}
where $\gamma$ denotes the Euler constant $\gamma = 0.5772\cdots$. The remainder $R_n$ in \eqref{exp.proof.lem.est.F} satisfies
\begin{align}
|R_n(\lambda)| \le C_1 |\lambda|^{\frac{{\rm Re}(\mu_n)}{2}}\,,~~~~~~
\lambda \in \Sigma_{\pi-\epsilon} \cap \mathcal{B}_\frac12(0)\,,
\label{est2.proof.lem.est.F}
\end{align}
with a constant $C_1=C_1(\epsilon)$ independent of small $\beta$. To simplify notation we set
\begin{align}
z\,=\, \sqrt{\lambda}\,, ~~~~~~~~
\tilde{z}\,=\, e^{\gamma(\zeta_n)} \frac{\sqrt{\lambda}}{2}\,,~~~~~~~~
\theta(\tilde{z}) \,=\, {\rm arg}\,\tilde{z}\,.
\label{def1.proof.lem.est.F}
\end{align}
If $\beta$ is sufficiently small, then we see from \eqref{est1.proof.lem.est.F} and \eqref{def1.proof.lem.est.F} that
\begin{align}
\frac12 \le \big| \frac{\tilde{z}}{z} \big| \le 1\,,~~~~~~
|\theta(\tilde{z})| \le \frac{\pi}{2} - \frac{\epsilon}{4}\,,~~~~~~~~
\lambda \in \Sigma_{\pi-\epsilon} \cap \mathcal{B}_{\frac12}(0)\,.
\label{est3.proof.lem.est.F}
\end{align}
Now we set
\begin{align}
h(\tilde{z}, \zeta_n) 
&\,=\, {\rm Re}(\zeta_n) \log|\tilde{z}| - {\rm Im}(\zeta_n) \theta(\tilde{z})\,,
\label{def2.proof.lem.est.F} \\
\Omega(\tilde{z}, \zeta_n) 
&\,=\, 
{\rm Re}(\zeta_n) \theta(\tilde{z}) + {\rm Im}(\zeta_n) \log|\tilde{z}|
\nonumber \\
&\,=\,
\big( {\rm Re}(\zeta_n) + \frac{{\rm Im}(\zeta_n)^2}{{\rm Re}(\zeta_n)} \big)
\theta(\tilde{z})
+ \frac{{\rm Im}(\zeta_n)}{{\rm Re}(\zeta_n)} h(\tilde{z}, \zeta_n)\,.
\label{def3.proof.lem.est.F}
\end{align}
Then it is easy to see that 
\begin{align}
1-\tilde{z}^{\zeta_n} 
\,=\, 1 - e^{h(\tilde{z}, \zeta_n)} e^{i \Omega(\tilde{z}, \zeta_n)}\,.
\label{est4.proof.lem.est.F}
\end{align}

In the following we show the lower bound estimate of $|1-\tilde{z}^{\zeta_n}|$. Firstly let us take a small positive constant $\kappa=\kappa(\epsilon)\ll 1$ so that
\begin{align}
\big( {\rm Re}(\zeta_n) + (1+\kappa) \frac{{\rm Im}(\zeta_n)^2}{{\rm Re}(\zeta_n)} \big)
\big( \frac{\pi}{2} - \frac{\epsilon}{4} \big)
< \pi
\label{est5.proof.lem.est.F}
\end{align}
holds. The existence of such $\kappa$ is verified by using Lemma \ref{lem.est.mu} in Appendix \ref{app.est.mu} if $\beta$ is sufficiently small depending on $\epsilon$. Note that the smallness of $\kappa$ depends only on $\epsilon$. \\
\noindent (i) Case $|h(\tilde{z}, \zeta_n)| \le \kappa |{\rm Im}(\zeta_n)| |\theta(\tilde{z})|$: In this case, \eqref{est3.proof.lem.est.F}, \eqref{def3.proof.lem.est.F}, and \eqref{est5.proof.lem.est.F} ensure that 
\begin{align}
|\Omega(\tilde{z}, \zeta_n)| < \pi\,,
\label{est5'.proof.lem.est.F}
\end{align}
and thus that $e^{i \Omega(\tilde{z}, \zeta_n)}$ is close to $1$ if and only if $\Omega(\tilde{z}, \zeta_n)$ is close to $0$. From \eqref{def2.proof.lem.est.F} we have
\begin{align*}
-{\rm Re}(\zeta_n) \log|\tilde{z}|
\le (1+\kappa) |{\rm Im}(\zeta_n)| |\theta(\tilde{z})| \,,
\end{align*}
which leads to, for sufficiently small $\beta$, 
\begin{align*}
|\theta(\tilde{z})| 
\ge - \frac{1}{1+\kappa} \frac{{\rm Re}(\zeta_n)}{|{\rm Im}(\zeta_n)|} \log|\tilde{z}| 
\ge - \frac{\beta}{2} \log|\tilde{z}|\,,
\end{align*}
where $\frac{{\rm Re}(\zeta_n)}{|{\rm Im}(\zeta_n)|} = \frac{\beta}{4} + O(\beta^3)$ is applied in Lemma \ref{lem.est.mu}. Then from \eqref{def3.proof.lem.est.F} we have
\begin{align*}
|\Omega(\tilde{z}, \zeta_n)|
& \ge
\big( {\rm Re}(\zeta_n) 
+ (1-\kappa) \frac{{\rm Im}(\zeta_n)^2}{{\rm Re}(\zeta_n)} \big)
|\theta(\tilde{z})| 
\ge
-\beta \log|\tilde{z}|\,,
\end{align*}
if $\beta$ is small enough. On the other hand, it is straightforward to see that
\begin{align*}
|1-\tilde{z}^{\zeta_n} |
\ge
\max\{|1 - e^{h(\tilde{z}, \zeta_n)} \cos\Omega(\tilde{z}, \zeta_n)|\,,~
e^{h(\tilde{z}, \zeta_n)} |\sin\Omega(\tilde{z}, \zeta_n)|
\}\,.
\end{align*}
Since $e^{h(\tilde{z}, \zeta_n)}\in[\frac12,\frac32]$, $|\sin x| \ge \frac{2|x|}{\pi}$ on $|x|\in[0,\frac{\pi}{2}]$, and $1>\frac{|\Omega(\tilde{z}, \zeta_n)|}{\pi}$ by \eqref{est5'.proof.lem.est.F}, we have
\begin{align}
|1-\tilde{z}^{\zeta_n} |
\ge \min\{1\,,~\frac{|\Omega(\tilde{z}, \zeta_n)|}{\pi}\} 
\ge
-\frac{\beta}{\pi} \log|\tilde{z}|\,.
\label{est6.proof.lem.est.F}
\end{align}
(ii) Case $|h(\tilde{z}, \zeta_n)| > \kappa |{\rm Im}(\zeta_n)| |\theta(\tilde{z})|$: When $|\theta(\tilde{z})| > -\frac12 \frac{{\rm Re}(\zeta_n)}{|{\rm Im}(\zeta_n)|} \log |\tilde{z}|$, we have
\begin{align*}
|h(\tilde{z}, \zeta_n)| 
\ge 
- \frac{\kappa \beta^2}{2} \log|\tilde{z}|\,.
\end{align*}
On the other hand, when $|\theta(\tilde{z})| \le -\frac12 \frac{{\rm Re}(\zeta_n)}{|{\rm Im}(\zeta_n)|} \log |\tilde{z}|$, \eqref{def2.proof.lem.est.F} implies that
\begin{align*}
|h(\tilde{z}, \zeta_n)| 
\ge -\frac12 {\rm Re}(\zeta_n) \log|\tilde{z}|
\ge -\frac{\beta^2}{2} \log|\tilde{z}|\,.
\end{align*}
Thus in the case (ii), since $|1 - \tilde{z}^{\zeta_n} | \ge \big|1 - |\tilde{z}^{\zeta_n}|\big| = |1 - e^{h(\tilde{z}, \zeta_n)}|$, we observe that
\begin{align}
|1 - \tilde{z}^{\zeta_n} |
\ge
\min\{1\,,~|h(\tilde{z}, \zeta_n)|\}
\ge
\min\{1\,,-\frac{\kappa \beta^2}{2} \log|\tilde{z}|\}\,.
\label{est7.proof.lem.est.F}
\end{align}
Hence, by collecting \eqref{est3.proof.lem.est.F}, \eqref{est6.proof.lem.est.F}, and \eqref{est7.proof.lem.est.F}, we have the next lower estimate of $|1-\tilde{z}^{\zeta_n}|$:
\begin{align}
|1 - \tilde{z}^{\zeta_n}|
\ge \frac{\kappa}4 \min\{1\,,-\beta^2 \log|z|\}\,.
\label{est8.proof.lem.est.F}
\end{align}
Finally by inserting \eqref{est2.proof.lem.est.F} and \eqref{est8.proof.lem.est.F} into \eqref{exp.proof.lem.est.F} we obtain
\begin{align*}
|\zeta_n F_n(\sqrt{\lambda};\beta)| 
& \ge
C |\lambda|^{-\frac{{\rm Re}(\mu_n)}{2}} 
\big( \kappa \min\{1\,,-\beta^2 \log|z|\}
- C_1 |\lambda|^{\frac{{\rm Re}(\mu_n)}{2}} \big)\,,
\end{align*}
which implies the assertion \eqref{est1.lem.est.F} if $\lambda \in \Sigma_{\pi-\epsilon} \cap \mathcal{B}_{\beta^4}(0)$ and $\beta$ is sufficiently small depending on $\epsilon$. The proof is complete.
\end{proof}

\noindent 
{\bf Acknowledgements}\, The author would like to express sincere thanks to Professor Yasunori Maekawa for valuable discussions and Professor Isabelle Gallagher for helpful comments. This work is partially supported by the Grant-in-Aid for JSPS Fellows 17J00636.

\end{document}